\documentclass[a4paper, 11pt]{article}
\usepackage[utf8]{inputenc}
\usepackage[T1]{fontenc}

\usepackage[
	sorting = nyt,
	backend = biber,
	hyperref = auto,
	backref = true,
	doi = false,
	url = false
]{biblatex}

\usepackage{amsthm}
\usepackage{amssymb}
\usepackage{mathtools}
\usepackage{stmaryrd}	\SetSymbolFont{stmry}{bold}{U}{stmry}{m}{n}	
\usepackage{bm}
\usepackage{nccrules}
\usepackage{mathrsfs}
\usepackage{euscript}
\usepackage{upgreek}
\usepackage{tikz-cd}
\usetikzlibrary{positioning, shapes.geometric, patterns, graphs, decorations.pathmorphing}

\usepackage{paralist}

\usepackage[colorlinks]{hyperref}
\hypersetup{
	linkcolor=blue,
	citecolor=red,
	urlcolor=teal,
	pdftitle = {Harish-Chandra D-modules for bi-Whittaker reduction},
	pdfauthor = {Wen-Wei Li}
}

\newcommand{\myfatslash}{\mathbin{\mkern-6mu\fatslash}}	

\newcommand{\Z}{\ensuremath{\mathbb{Z}}}
\newcommand{\Q}{\ensuremath{\mathbb{Q}}}
\newcommand{\R}{\ensuremath{\mathbb{R}}}
\newcommand{\CC}{\ensuremath{\mathbb{C}}}

\newcommand{\gr}{\operatorname{gr}}

\newcommand{\Sym}{\operatorname{Sym}}
\newcommand{\Ext}{\operatorname{Ext}}
\newcommand{\Tor}{\operatorname{Tor}}
\newcommand{\gldim}{\operatorname{gl.dim}}
\newcommand{\injdim}{\operatorname{inj.dim}}

\newcommand{\dd}{\mathop{}\!\mathrm{d}}
\newcommand{\Schw}{\ensuremath{\mathcal{S}}}	

\newcommand{\lrangle}[1]{\ensuremath{\langle #1 \rangle}}

\newcommand{\Stab}{\ensuremath{\mathrm{Stab}}}

\newcommand{\Hom}{\operatorname{Hom}}
\newcommand{\RHom}{\operatorname{RHom}}

\newcommand{\rightiso}{\ensuremath{\stackrel{\sim}{\rightarrow}}}

\newcommand{\cate}[1]{\ensuremath{\mathsf{#1}}}	
\newcommand{\dcate}[1]{\ensuremath{\text{-}\mathsf{#1}}}	
\newcommand{\cated}[1]{\ensuremath{\mathsf{#1}\text{-}}}	

\newcommand{\Ker}{\operatorname{ker}}
\newcommand{\Coker}{\operatorname{coker}}
\newcommand{\Image}{\operatorname{im}}
\newcommand{\dotimes}[1]{\ensuremath{\underset{#1}{\otimes}}}

\newcommand{\Lie}{\operatorname{Lie}}
\newcommand{\Ad}{\operatorname{Ad}}
\newcommand{\Spec}{\operatorname{Spec}}
\newcommand{\Gm}{\ensuremath{\mathbb{G}_\mathrm{m}}}
\newcommand{\Ga}{\ensuremath{\mathbb{G}_\mathrm{a}}}

\newcommand{\Supp}{\operatorname{Supp}}

\newcommand{\Wh}{\ensuremath{\mathbb{W}}}	
\newcommand{\Hk}{\ensuremath{\mathbb{H}}}	
\newcommand{\bpsi}{\ensuremath{\uppsi}}	
\newcommand{\Le}{{\ensuremath{\ell}}}		
\newcommand{\Ri}{{\ensuremath{\mathrm{r}}}}	
\newcommand{\LeRi}{{\ensuremath{\ell\mathrm{r}}}}		

\theoremstyle{plain}
\newtheorem{proposition}{Proposition}
\newtheorem{lemma}[proposition]{Lemma}
\newtheorem{theorem}[proposition]{Theorem}
\newtheorem{corollary}[proposition]{Corollary}
\theoremstyle{definition}
\newtheorem{definition}[proposition]{Definition}
\newtheorem{definition-theorem}[proposition]{Definition--Theorem}
\newtheorem{definition-proposition}[proposition]{Definition--Proposition}
\newtheorem{remark}[proposition]{Remark}

\numberwithin{equation}{section}
\numberwithin{proposition}{section}

\renewcommand{\emptyset}{\ensuremath{\varnothing}}	

\usepackage{amsmath}
\usepackage[nottoc]{tocbibind}

\usepackage{geometry}
\title{Harish-Chandra $D$-modules for bi-Whittaker reduction}
\author{Wen-Wei Li}
\date{}

\DeclareFieldFormat{postnote}{#1}	
\PassOptionsToPackage{hyphens,spaces}{url}  
\makeatletter
\renewcommand{\l@section}{\@dottedtocline{1}{1.5em}{2.0em}}
\renewcommand{\l@subsection}{\@dottedtocline{2}{4.0em}{3.0em}}
\makeatother

\begin{document}
	
\maketitle

\begin{abstract}
	Let $D(G)$ be the algebra of algebraic differential operators on a complex reductive group $G$. Denote by $\mathbb{W}$ the bi-Whittaker quantum Hamiltonian reduction of $D(G)$, also known as the quantum Toda lattice. In this article we define the admissible $\mathbb{W}$-modules and the special case of Harish-Chandra modules, the latter being bi-Whittaker variants of the invariant holonomic systems of Hotta--Kashiwara. We then study their torsion properties up to completion relative to the Kazhdan filtration, which in general is unbounded in both directions, through the geometry of universal centralizer. In the case of regular infinitesimal characters, the corresponding $D(G)$-module is shown to be the minimal extension of an irregular connection on the open Bruhat cell. Certain ring-theoretic properties of the completion of $\mathbb{W}$ are also obtained.
\end{abstract}

{\scriptsize
	\begin{tabular}{ll}
		\textbf{MSC (2020)} & Primary 13N10; Secondary 16S32, 20G05 \\
		\textbf{Keywords} & quantum Hamiltonian reduction, holonomic systems, Harish-Chandra D-modules.
	\end{tabular}
}
	
\setcounter{tocdepth}{1}
\tableofcontents

\section{Introduction}
\subsection{Background}
Let $G$ be a complex connected reductive group with Borel subgroup $B$, and let $\bpsi$ be a non-degenerate character of $\mathfrak{n} := \Lie(N)$ where $N = \mathcal{R}_{\mathrm{u}}(B)$, i.e.\ $\bpsi$ is nonzero on each simple root vector. Let us distinguish the left and right translations on $G$ using indices $\Le$ and $\Ri$, so that $G_{\Le} \times G_{\Ri}$ acts on the right of $G$. The \emph{quantum Toda lattice}, or the algebra of bi-Whittaker differential operators on $G$ is $\Wh := (D(G)/D(G)\mathfrak{n}_{\LeRi}^{\bpsi})^{N_{\Le} \times N_{\Ri}}$, where $D(G) = \Gamma(G, \mathscr{D}_G)$ is the algebra of algebraic differential operators on $G$, and $\mathfrak{n}_{\LeRi}^{\bpsi}$ is generated by bilateral Whittaker conditions $\xi - \bpsi(\xi)$ for all $\xi \in \mathfrak{n}_{\Le}$ and $\mathfrak{n}_{\Ri}$. This is a basic example of (twisted) quantum Hamiltonian reduction; an equivalence à la Skryabin identifies $\Wh\dcate{Mod}$ with the abelian category of $D(G)$-modules on which $\mathfrak{n}_{\LeRi}^{\bpsi}$ acts locally nilpotently, see \eqref{eqn:monodromic-equivalence}.

The derived category of $\Wh\dcate{Mod}$ identifies with $\mathcal{D}(N, \bpsi \backslash G / N, \bpsi)$, and plays a prominent role in representation theory, for which we refer to \cite{BG17} for a higher perspective. In contrast, the motivation of the present work comes from harmonic analysis on real groups, and we will focus on the non-derived setting.

Specifically, suppose that $G$ and $N$ are both defined over $\R$, then $\Wh$ acts on $\mathfrak{n}_{\LeRi}^\bpsi$-invariant tempered distributions on $G(\R)$. A key example is the Bessel distribution $\mathcal{B}_\pi$ attached to an admissible representation $\pi$ of $G(\R)$ as considered by Baruch \cite{Ba01}. This is one of the earliest examples of relative characters \cite[Definition 1.2]{AGM16} of admissible representations. It seems reasonable to expect that the differential equations satisfied by Bessel distributions will be crucial for studying their analytic properties, such as local integrability, see eg.\ \cite{GL04}.

Such systems of differential equations are encoded by $\Wh$-modules. Let $\mathfrak{c} = \mathfrak{g}^* \sslash G$. For all $\nu \in \mathfrak{c}$ let $\mathfrak{m}_\nu$ be the corresponding maximal ideal of $\mathcal{Z}(\mathfrak{g}) := Z(\mathcal{U}(\mathfrak{g}))$. The natural homomorphism $\mathcal{Z}(\mathfrak{g}) \to D(G)$ passes to $\Wh$. Define
\begin{align*}
	\mathcal{G}_\nu & := \Wh/\Wh\mathfrak{m}_\nu, \\
	\mathcal{G}^\natural_\nu & := D(G) \big/ \left( D(G) \mathfrak{n}_{\LeRi}^{\bpsi} + D(G)\mathfrak{m}_\nu \right),
\end{align*}
which match each other under the Skryabin equivalence. In the real setting, $\mathcal{G}^\natural_\nu$ encodes the differential equations satisfied by $\mathcal{B}_\pi$ when $\pi$ is of infinitesimal character $\nu$. More generally, one may consider \emph{admissible} $\Wh$-modules, namely the finitely generated $\Wh$-modules on which $\mathcal{Z}(\mathfrak{g})$ acts locally finitely.

Let us call $\mathcal{G}_\nu$ the \emph{Harish-Chandra $\Wh$-module} attached to $\nu \in \mathfrak{c}$. Harish-Chandra modules (or the corresponding $D(G)$-modules) can be seen as bi-Whittaker versions of the invariant holonomic system considered by Hotta--Kashiwara \cite{HK84}, in which the role of $\mathcal{B}_\pi$ is replaced by the Harish-Chandra character of $\pi$. Similar ideas were pursued for certain infinitesimal symmetric spaces in Sekiguchi \cite{Sek85}, Levasseur--Stafford \cite{LS99}, Laurent \cite{La03}, Galina--Laurent \cite{GL04} and Bellamy--Nevins--Stafford \cite{BNS24}.

In all the works cited above, the Harish-Chandra $\mathscr{D}$-modules are regular holonomic, and they turn out to be the minimal extensions of flat connections on a certain principal open subset $U$. This implies the key fact that the invariant eigendistributions (eg.\ Harish-Chandra characters) in their settings are determined by restriction to $U$, see \cite[Corollary 1.3 (2)]{BNS24}.

In the bi-Whittaker setting, $\mathcal{G}^\natural_\nu$ is holonomic, and smooth over the big Bruhat cell $Bw_0 B$ (Proposition \ref{prop:connection-big-cell}), but regularity fails due to the exponential behavior incurred by $\bpsi$. On the other hand, the determination of Bessel distributions by restriction to $Bw_0 B$ has been proven by \cite{AGK15, Lu22} in a broader and simpler framework, without resort to $D$-modules.

Nonetheless, one can still ask whether $\mathcal{G}^{\natural}_\nu$ is the minimal extension of some flat connection on $Bw_0 B$. Such information might be useful for further studies of Bessel distributions, for example the local integrability \cite{GL04}, although these discussions lie beyond the scope of the present article.

\subsection{Main results and idea of proofs}
Fix a principal $\mathfrak{sl}(2)$-triple $(\mathsf{e}, \mathsf{h}, \mathsf{f})$ such that $\mathsf{e}$ corresponds to $\bpsi$ via a chosen invariant form on $\mathfrak{g}$ and $\mathsf{f} \in \mathfrak{n}$. The complement of $Bw_0 B$ in $G$ is defined by some $d \in \CC[G]^{N \times N}$ attached to a regular dominant weight; let $\mathcal{S} = d^{\Z_{\geq 0}}$. Also note that $\CC[G]^{N \times N}$ maps into $\Wh$.

The modest goal of the present work is to understand the admissible $\Wh$-modules, and try to see if the corresponding $D(G)$-module equals a minimal extension from $Bw_0 B$. This amounts to the absence of nonzero submodules or quotients of $\mathcal{S}$-torsion, i.e.\ modules in which every element is annihilated by some element of $\mathcal{S}$. This condition can be tested on the level of $\Wh$-modules (Lemma \ref{prop:support-torsion}).

Our main tool is Ginzburg's results \cite{Gin18}. Among others, he introduced an ascending $\Z$-filtration $\mathrm{F}_\bullet \Wh$ on $\Wh$, deduced from the order filtration on $D(G)$ adjusted by $\mathrm{ad}(\mathsf{h})$-weights (Kazhdan filtration). Moreover, $\Wh$ is identified with the spherical subalgebra of the \emph{degenerate nil-DAHA} (double affine Hecke algebra) associated with $G$, which is first defined in \cite{KK86}.

In general, $\mathrm{F}_\bullet \Wh$ is exhaustive, separating yet unbounded in both directions. Let $\mathfrak{Z}$ be the \emph{universal centralizer} of $G$, and $\kappa: \mathfrak{Z} \to \mathfrak{c}$ the characteristic morphism. There is an isomorphism of Poisson algebras $\gr \Wh \simeq \CC[\mathfrak{Z}]$ inducing $\mathcal{Z}(\mathfrak{g}) \simeq \kappa^* \CC[\mathfrak{c}]$. These statements are given for simply connected $G$ in \textit{loc.\ cit.}, but they carry over to the general case by taking isogenies.

For every finitely generated $\Wh$-module $M$, let $\widehat{M}$ (resp.\ $\gr M$) be its completion (resp.\ graded module) with respect to any good filtration relative to $\mathrm{F}_\bullet \Wh$.

\begin{theorem}[= Corollary \ref{prop:torsion-cor}]
	\label{prop:torsion-cor-preview}
	If $M$ is an admissible $\Wh$-module and $R$ is a subquotient of $M$ of $\mathcal{S}$-torsion, then $\widehat{R} = 0$, or equivalently $\gr(R) = 0$.
\end{theorem}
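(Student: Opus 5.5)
Replace $M$ by the relevant subquotient $R$. A finitely generated $\mathcal{S}$-torsion $\Wh$-module $R$ that is a subquotient of an admissible module is itself admissible: $\Wh$ is Noetherian, since $\gr \Wh \simeq \CC[\mathfrak{Z}]$ is Noetherian and the filtration is exhaustive and separating. So it suffices to show $\gr R = 0$ for every admissible, $\mathcal{S}$-torsion, finitely generated $R$. The equivalence $\widehat{R} = 0 \iff \gr R = 0$ should follow from the standard theory of good filtrations for $\Z$-filtrations unbounded below: $\gr R = 0$ means $\mathrm{F}_n R = \mathrm{F}_{n-1}R$ for all $n$, hence $\widehat{R} = \varprojlim R/\mathrm{F}_n R = 0$; the converse uses completeness, $\gr \widehat{R} = \gr R$.

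The plan is to compute the support of $\gr R$ in $\mathfrak{Z}$ and show it is empty.

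\emph{Step 1 (admissibility).} Since $\mathcal{Z}(\mathfrak{g})$ acts locally finitely, $R$ is annihilated by a product of powers of maximal ideals $\mathfrak{m}_{\nu_1}^{k_1}\cdots\mathfrak{m}_{\nu_r}^{k_r}$. Passing to symbols, where $\mathcal{Z}(\mathfrak{g})$ carries the Kazhdan filtration with $\gr \mathcal{Z}(\mathfrak{g}) = \kappa^*\CC[\mathfrak{c}]$ homogeneous of positive degrees, $\gr R$ is annihilated by a power of the augmentation ideal $\kappa^*\CC[\mathfrak{c}]_{+}$. Hence $\Supp \gr R \subset \kappa^{-1}(0)$, the centralizer fiber over the nilpotent cone, i.e.\ the centralizer of the regular nilpotent, a commutative unipotent group times the center.

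\emph{Step 2 (torsion).} Each element of $R$ is killed by some $d^m$. Take a good filtration generated by finitely many elements and a uniform $m$. Then the symbol $\sigma(d)^m \in \CC[\mathfrak{Z}]$, where $\sigma(d)$ is the image of $d$ under $\CC[G]^{N\times N} \to \gr\Wh$, annihilates $\gr R$ after possibly enlarging $m$; this uses that a power of $\sigma(d)$ kills the associated graded of a cyclic torsion module. So $\Supp \gr R \subset V(\sigma(d))$.

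\emph{Step 3 (geometry; main obstacle).} We must show $\kappa^{-1}(0) \cap V(\sigma(d)) = \emptyset$, i.e.\ that $\sigma(d)$ is invertible on $\kappa^{-1}(0)$. In the Kostant–Whittaker model, $\mathfrak{Z} = \{(g,x) : x \in \mathsf{f}+\mathfrak{b}\text{-slice},\ g\in Z_G(x)\}$, and $d$ is a function of $g$ vanishing off $Bw_0B$. So we need that for $x = \mathsf{f}$ regular nilpotent in the Kostant slice, every $g \in Z_G(\mathsf{f})$ lies in the big cell. The issue is matching conventions: with $\mathsf{f}\in\mathfrak{n}$, $Z_G(\mathsf{f}) \subset Z(G)\cdot N$, while the relevant cell is likely $Bw_0B$ up to the twist identifying the bi-Whittaker reduction with $N\backslash G/N$-data via $w_0$. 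So I would first identify $\sigma(d)$ explicitly as a function on $\mathfrak{Z}$, namely a matrix coefficient $\langle v_{\lambda}^*, g\, w_0 v_\lambda\rangle$-type function, and then check that it is a nowhere-vanishing unit on $Z(G)N$: on $Z(G)N$ after the $w_0$-twist it becomes the character $\lambda$ on the center times $1$. If the conventions instead make $\sigma(d)$ vanish identically there, the fallback is to use the $\Gm$-contraction by $\mathsf{h}$-weights. A closed conic subset of $\mathfrak{Z}$ contained in $V(\sigma(d))$ that meets no fiber over $0$ is empty, and any nonempty $\Gm$-stable closed support must meet $\kappa^{-1}(0)$. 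Getting the precise form of $\sigma(d)$ relative to Ginzburg's isomorphism $\gr\Wh\simeq\CC[\mathfrak{Z}]$ is where I expect the real work.

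Granting Step 3, $\Supp \gr R = \emptyset$. Since $\gr R$ is finitely generated over $\CC[\mathfrak{Z}]$, this gives $\gr R = 0$.
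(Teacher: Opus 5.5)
Your Step 3 is false, and no choice of conventions or ``$w_0$-twist'' rescues it. In the model $\mathfrak{Z} \simeq \mathscr{Z}_{\mathscr{S}}$, every fiber $\kappa^{-1}(\nu)$ is the centralizer $Z_G(x)$ of the point $x \in \mathscr{S}$ over $\nu$; for $\nu_0 = 0$ this point is $x = \mathsf{e}$. So the fiber contains the identity-section point $(x, 1)$.

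The paper checks that the symbol of $d$ under $\gr\Wh \simeq \CC[\mathfrak{Z}]$ is exactly $\iota(d)\colon (x,g) \mapsto d(g)$. Moreover $d(1) = 0$ whenever $G$ is not a torus. Indeed, $d(g) = \lrangle{v^*, gv}$ pairs a vector of weight $\lambda$ with a covector of weight $-w_0\lambda \neq -\lambda$; equivalently, $1 \notin Bw_0B$.

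Hence $\kappa^{-1}(\nu_0) \cap V(\iota(d))$ is nonempty. Since $\iota(d)$ does not vanish identically on any component of the $n$-dimensional fiber (by Lemma~\ref{prop:fiber-cell}), this intersection has dimension $n-1$, where $n = \dim T$. So Steps 1--2 only confine $\Supp \gr R$ to a nonempty closed set of dimension $n-1$. No purely support-theoretic argument can then give $\gr R = 0$. This includes your $\Gm$-contraction fallback: the support already lies in the zero fiber, and that fiber does meet $V(\sigma(d))$.

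The missing ingredient is Gabber's involutivity theorem (Theorem~\ref{prop:integrability}). It holds for $\Z$-filtered rings with Noetherian Rees algebra, hence for $\Wh$ and $\widehat{\Wh}$. It says every irreducible component of $\mathrm{SS}(R)$ is co-isotropic in the $2n$-dimensional symplectic variety $\mathfrak{Z}$, so has dimension $\geq n$. Lemma~\ref{prop:fiber-cell} says $\mathfrak{B}_{w_0}$ meets every connected component of $\kappa^{-1}(\nu_0)$; these components are irreducible of dimension $n$. Therefore $\kappa^{-1}(\nu_0) \smallsetminus \mathfrak{B}_{w_0}$ has dimension $< n$, and the inclusion $\mathrm{SS}(R) \subset \kappa^{-1}(\nu_0) \smallsetminus \mathfrak{B}_{w_0}$ forces $\mathrm{SS}(R) = \emptyset$.

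The paper runs this for $\mathcal{S}$-torsion subquotients of $\widehat{M}$ over the Zariskian ring $\widehat{\Wh}$, then uses exactness of completion. For the corollary as stated, applying the argument to $R$ directly also works.

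Two smaller corrections:
\begin{itemize}
\item $\mathcal{Z}(\mathfrak{g})$ is not central in $\Wh$: its symbol algebra $\kappa^*\CC[\mathfrak{c}]$ is not Poisson-central in $\CC[\mathfrak{Z}]$. So $R$ need not be annihilated by a finite-codimension ideal of $\mathcal{Z}(\mathfrak{g})$. Only the chosen generators are, which suffices because $\gr\Wh$ is commutative.
\item For filtrations unbounded below, Noetherianity of $\Wh$ does not follow from Noetherian $\gr\Wh$ plus separatedness. It should come from Noetherianity of $\Wh_\hbar$ (Proposition~\ref{prop:W-hbar-Noetherian}).
\end{itemize}
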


The strategy is inspired by \cite{LS99, BNS24}, namely by showing $\mathrm{SS}(R) = \emptyset$ from the fact that the singular support $\mathrm{SS}(R)$ of $R$ is co-isotropic in the symplectic variety $\mathfrak{Z}$. There are two main differences, however.
\begin{itemize}
	\item We have to cope with unbounded $\Z$-filtrations and the associated completions for non-commutative algebras. Many standard results apply to this setting by making systematic use of Rees algebras; a summary is given in Section \ref{sec:filtrations} due to the lack of an adequate reference.
	
	Nevertheless, for unbounded filtrations it may happen that $R \neq \{0\}$ whilst $\widehat{R} = \{0\}$. This annoying phenomenon is excluded for \emph{Zariskian} $\Z$-filtered algebras (see eg.\ \cite[Section 1]{ABO89} for the general notion), such as $\widehat{\Wh}$; however $\Wh$ itself is \textsc{not} Zariskian unless $G$ is a torus as shown in Proposition \ref{prop:non-Zariskian}. This explains the appearance of completions for the statement.
	
	\item To prove that $\mathrm{SS}(R) = \emptyset$, we shall show that $\dim \mathrm{SS}(R) < \frac{1}{2} \dim \mathfrak{Z} = \dim \mathfrak{c}$. It boils down to showing that the big cell $\mathfrak{B}_{w_0}$ intersects all connected components of each fiber $\kappa^{-1}(\nu)$ (Lemma \ref{prop:fiber-cell}), where $\mathfrak{B}_{w_0}$ is the preimage of $Bw_0 B$ under a natural map $\mathfrak{Z} \to B \backslash G / B$ of sets. This key geometric fact is established by Kostant \cite{Ko79} and independently by Jin \cite[Section 5.2]{Jin22}.
\end{itemize}

On the positive side, it will be shown in Proposition \ref{prop:Gnu-nonzero} that $\widehat{\mathcal{G}}_\nu \neq \{0\}$ by leveraging finer information from \cite{Gin18}. With this input, it is routine to deduce the following ring-theoretic results.

\begin{proposition}[= Propositions \ref{prop:Auslander-regular} + \ref{prop:gldim}]
	The ring $\widehat{\Wh}$ is Auslander-regular with $\injdim \widehat{\Wh} = n = \gldim \widehat{\Wh}$ where $n$ is the reductive rank of $G$. 
\end{proposition}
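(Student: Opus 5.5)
We plan to deduce Auslander-regularity and the dimension equalities from the general theory of Zariskian filtered rings, since $\widehat{\Wh}$ is complete with respect to its Kazhdan filtration and hence Zariskian.

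\emph{Auslander-regularity.} By the completion formalism of Section \ref{sec:filtrations}, $\gr \widehat{\Wh} \simeq \gr \Wh \simeq \CC[\mathfrak{Z}]$. The universal centralizer $\mathfrak{Z}$ is a smooth affine variety of dimension $2n$, so $\CC[\mathfrak{Z}]$ is a commutative regular Noetherian ring of finite global dimension $2n$, and therefore Auslander-regular. A standard theorem (Björk; Li--Van Oystaeyen, in the form of \cite{ABO89}) says that a Zariskian filtered ring whose associated graded ring is Auslander-regular is itself Auslander-regular. We would cite this theorem, checking that it applies to $\Z$-filtrations unbounded in both directions; the Rees-algebra reduction of Section \ref{sec:filtrations} should handle this. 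The same theorem gives $\gldim \widehat{\Wh} \leq \gldim \gr = 2n$.

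\emph{Sharpening to $n$.} For a nonzero finitely generated $\widehat{\Wh}$-module $M$, Auslander-regularity gives $j(M) + \dim \mathrm{SS}(M) = 2n$, where $j$ is the grade. Gabber's involutivity theorem, which applies to the Rees setting, shows that $\mathrm{SS}(M)$ is co-isotropic in the symplectic variety $\mathfrak{Z}$, so $\dim \mathrm{SS}(M) \geq n$ whenever $\mathrm{SS}(M) \neq \emptyset$. Since $\widehat{\Wh}$ is Zariskian, a nonzero module has $\gr M \neq 0$, so $\mathrm{SS}(M) \neq \emptyset$. Hence $j(M) \leq n$ for every nonzero $M$.

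For Auslander--Gorenstein rings with finite injective dimension, $\injdim = \sup\{ j(M) : M \neq 0 \}$ (Levasseur's result). This gives $\injdim \widehat{\Wh} \leq n$. For the reverse inequality we use $M = \widehat{\mathcal{G}}_\nu$, which is nonzero by Proposition \ref{prop:Gnu-nonzero}. Its singular support lies in $\kappa^{-1}(\nu)$, which has dimension $n$ because $\kappa$ is flat of relative dimension $n$. So $\dim \mathrm{SS}(M) = n$ and $j(M) = n$, giving $\injdim = n$.

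For the global dimension: a ring of finite global dimension has $\gldim = \injdim$ when it is Noetherian on both sides, since both then equal the supremum of projective dimensions, and $\mathrm{pd}\, M = \max\{k : \Ext^k(M, \widehat{\Wh}) \neq 0\}$ for finitely generated $M$. Thus $\gldim \widehat{\Wh} = n$.

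The main obstacle is the lower bound: producing a nonzero module of grade exactly $n$. Here Proposition \ref{prop:Gnu-nonzero} is essential, because without the Zariskian property $\widehat{\mathcal{G}}_\nu$ could a priori vanish. A secondary concern is verifying that Gabber's theorem and the Zariskian transfer theorems apply to two-sided unbounded filtrations via Rees algebras.
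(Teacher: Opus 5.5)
Your proof is correct. For Auslander-regularity and the bound $j(M)\le n$ it coincides with the paper's argument: the completion is Zariskian, $\gr\widehat{\Wh}\simeq\CC[\mathfrak{Z}]$ is regular of dimension $2n$, one has $j(M)=j_{\gr}(\gr M)=2n-\dim\mathrm{SS}(M)$, and Gabber's theorem gives $\dim\mathrm{SS}(M)\ge n$.

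The differences are in how the dimensions are deduced and in the lower bound:
\begin{itemize}
\item \textbf{Deducing the dimensions.} The paper uses Björk's identity $\gldim\widehat{\Wh}=\sup_M j(M)$ for Auslander-regular rings to get $\gldim\le n$, and then $\injdim\le\gldim$. You instead use $\injdim=\sup_M j(M)$ for Auslander--Gorenstein rings, then $\gldim=\injdim$ for Noetherian rings of finite global dimension. This is equally valid.
\item \textbf{The lower bound $\injdim\ge n$.} The paper computes $\Ext^n_{\widehat{\Wh}}(\widehat{\mathcal{G}}_\nu,\widehat{\Wh})\simeq\widehat{\mathcal{G}}_\nu^{\mathrm{r}}$ from a Koszul resolution (Lemma \ref{prop:Ext-HC}, which needs flatness of $\Wh$ over $\mathcal{Z}(\mathfrak{g})$). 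It then applies the right-module version of Proposition \ref{prop:Gnu-nonzero}. You get $j(\widehat{\mathcal{G}}_\nu)=n$ by a dimension count, which is the argument the paper later uses for Proposition \ref{prop:admissible-hh}.
\end{itemize}
Your route avoids the Koszul complex and the flatness input, and works for any nonzero completion of an admissible module. The paper's route gives the explicit identification of $\Ext^n$, which it reuses in Proposition \ref{prop:Ext-admissible}.

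One slip: $\mathrm{SS}(\widehat{\mathcal{G}}_\nu)$ lies in $\kappa^{-1}(\nu_0)$, not in $\kappa^{-1}(\nu)$ (Lemma \ref{prop:SS-bound}). The filtration induced on $\mathcal{Z}(\mathfrak{g})$ is the PBW filtration. So $\gr(\mathfrak{m}_\nu)$ is a proper graded ideal of codimension one, hence equals the augmentation ideal $J^+$ whatever $\nu$ is. This does not harm your argument: every fibre of $\kappa$ has dimension $n$, and $\mathrm{SS}(\widehat{\mathcal{G}}_\nu)\neq\emptyset$ by Proposition \ref{prop:Gnu-nonzero} and the Zariskian property.
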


\begin{proposition}[= Proposition \ref{prop:admissible-hh} + Theorem \ref{prop:Iwanaga}]
	The completions of admissible $\Wh$-modules are holonomic $\widehat{\Wh}$-modules in the ring-theoretic sense (Definition \ref{def:holonomic}). The functor $\Ext_{\widehat{\Wh}}^n(\cdot, \widehat{\Wh})$ gives an equivalence between the categories of left and right holonomic $\widehat{\Wh}$-modules.
\end{proposition}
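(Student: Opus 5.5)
We treat the two assertions in turn, working throughout with the Zariskian filtered ring $\widehat{\Wh}$, whose associated graded ring is $\gr \widehat{\Wh} \simeq \gr \Wh \simeq \CC[\mathfrak{Z}]$. This is a regular commutative ring of dimension $2n$, where $n = \dim \mathfrak{c}$. We use the Auslander-regularity of $\widehat{\Wh}$ with $\injdim \widehat{\Wh} = n = \gldim \widehat{\Wh}$, stated just above.

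\textbf{Holonomicity of completions.} Let $M$ be an admissible $\Wh$-module. Choose a good filtration on $M$ built from finitely many generators $m_1, \ldots, m_k$ that span a $\mathcal{Z}(\mathfrak{g})$-stable subspace. Local finiteness of $\mathcal{Z}(\mathfrak{g})$ gives, for each $i$, a product of maximal ideals $\mathfrak{m}_{\nu_1} \cdots \mathfrak{m}_{\nu_r}$ annihilating $m_i$. Passing to symbols via $\mathcal{Z}(\mathfrak{g}) \simeq \kappa^* \CC[\mathfrak{c}]$, the module $\gr M = \gr \widehat{M}$ is annihilated by a power of the ideal of positive-degree homogeneous invariants. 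Hence $\mathrm{SS}(\widehat{M}) \subset \kappa^{-1}(0)$.

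The key geometric input is $\dim \kappa^{-1}(0) \leq n$. This holds because $\kappa$ is flat with fibres of dimension $n$; the fibres are torsors under the regular centralizer group scheme. Therefore $\dim \mathrm{SS}(\widehat{M}) \leq n$.

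For an Auslander-regular Zariskian ring with regular commutative associated graded ring of dimension $2n$, the grade satisfies
\[ j(\widehat{M}) = 2n - \dim \gr \widehat{M}. \]
This identity is established in Section \ref{sec:filtrations} via the Rees algebra. It gives $j(\widehat{M}) \geq n$. Since $\gldim \widehat{\Wh} = n$, every nonzero $\widehat{M}$ has $j(\widehat{M}) = n$ exactly. This is precisely holonomicity in the sense of Definition \ref{def:holonomic}, taken as pure of grade equal to the injective dimension.

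\textbf{Duality.} Let $\mathcal{H}_\Le$ and $\mathcal{H}_\Ri$ denote the categories of left and right holonomic modules, and put $\mathbb{D} = \Ext^n_{\widehat{\Wh}}(\cdot, \widehat{\Wh})$. The argument follows the standard pattern for Auslander–Gorenstein rings, here with $\injdim \widehat{\Wh} = n$.

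\begin{enumerate}
\item For holonomic $N$ we have $\Ext^i(N, \widehat{\Wh}) = 0$ when $i \neq n$. For $i < n$ this is the definition of grade. For $i > n$ it follows from $\gldim \widehat{\Wh} = n$.
\item The Auslander condition gives $j(\mathbb{D}N) \geq n$. Hence $\mathbb{D}N$ is again holonomic, now on the other side.
\item Take a finite projective resolution of $N$ and dualize. By step 1 the dual complex is a projective resolution of $\mathbb{D}N$ shifted by $n$. Dualizing again yields the biduality isomorphism $N \simeq \mathbb{D}\mathbb{D}N$.
\item By step 1, $\mathbb{D}$ is exact on holonomic modules. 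Together with biduality, this makes $\mathbb{D}$ an anti-equivalence $\mathcal{H}_\Le \to \mathcal{H}_\Ri^{\mathrm{op}}$, with quasi-inverse given by the same formula.
\end{enumerate}

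\textbf{Main obstacle.} We expect the delicate step to be the grade–dimension identity for the completed, doubly-unbounded filtration. It requires passing through the Rees ring $\widetilde{\Wh}$, using that $\gr$ of $\Ext$ is a subquotient of $\Ext_{\CC[\mathfrak{Z}]}(\gr \widehat{M}, \CC[\mathfrak{Z}])$, and using Zariskian-ness to lift vanishing from $\gr$ back to $\widehat{\Wh}$. The argument would apply the standard result from \cite{ABO89}-type theory, recorded in Section \ref{sec:filtrations}.
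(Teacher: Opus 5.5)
Your proof is correct. The holonomicity half is essentially the paper's argument for Proposition \ref{prop:admissible-hh}. The only deviation is that you pin $j(\widehat{M}) = n$ using $j \leq \gldim \widehat{\Wh} = n$, whereas the paper uses Gabber's bound $\dim \mathrm{SS} \geq n$. This is the same input, since the paper derives $\gldim \widehat{\Wh} \leq n$ from Gabber's theorem.

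The genuine difference is in the duality. The paper simply invokes Iwanaga's Theorems 6--8. These need only $\injdim \widehat{\Wh} = n$ (a Gorenstein-type hypothesis, not finite global dimension), and they also give finite length and the Serre-subcategory property. You instead use $\gldim \widehat{\Wh} = n$ to run the standard Cohen--Macaulay biduality argument on a length-$n$ projective resolution. This is more elementary and fully self-contained. It even makes your step 2 redundant: dualizing twice already shows that $\Ext^i(\mathbb{D}N, \widehat{\Wh})$ vanishes for $i \neq n$ and equals $N$ for $i = n$, so the Auslander condition is not needed there.

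A few small points.
\begin{itemize}
\item The grade--dimension identity you flag as the main obstacle is not an obstacle, and it is not recorded in Section \ref{sec:filtrations}. It is exactly Proposition \ref{prop:Auslander-regular}, i.e.\ Bj\"{o}rk's result $j_{\widehat{\Wh}}(\widehat{M}) = j_{\gr\widehat{\Wh}}(\gr \widehat{M})$ for Zariskian rings with regular associated graded ring. This is combined with $\dim + \mathrm{grade} = 2n$ over the regular ring $\CC[\mathfrak{Z}]$, as in the proof of Proposition \ref{prop:gldim}. Nothing about Rees rings needs to be redone.
\item To call $\mathbb{D}$ exact on the category of holonomic modules, you should note that these form a Serre subcategory. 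This follows at once from Proposition \ref{prop:admissible-hh}(i) together with $\mathrm{SS}(M) = \mathrm{SS}(M') \cup \mathrm{SS}(M'')$. The anti-equivalence itself only needs your steps 2 and 3.
\item The fibres of $\kappa$ are the regular centralizers themselves, not torsors under them.
\item The fibre in question is $\kappa^{-1}(\nu_0)$, with $\nu_0$ cut out by $J^+$. It is $0$ under the usual normalization, but nothing in your argument depends on that.
\end{itemize}
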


The above are based on the Zariskian property of $\widehat{\Wh}$. As for $\Wh$-modules, one can only infer that $\Ext^n_{\Wh}(\cdot, \Wh)$ preserves admissibility up to completions and extensions. See Proposition \ref{prop:Ext-admissible} for a precise statement.

Next, enhance $B$ to a Borel pair $(B, T)$ so that $\mathfrak{c} \simeq \mathfrak{t}^* / \mathrm{W}$. Here $\mathrm{W} = \mathrm{W}(G, T)$ is the Weyl group acting linearly on $\mathfrak{t}^*$. Let $\widetilde{\mathrm{W}} := \mathrm{W} \ltimes \mathbf{X}^*(T)$, which acts on $\mathfrak{t}^*$ by affine transformations. Following a familiar paradigm, $\cate{Adm}$ decomposes into blocks $\cate{Adm}_{[\nu]}$, where $[\nu]$ ranges over $\widetilde{\mathrm{W}}$-orbits in $\mathfrak{t}^*$ (Definition \ref{def:block}). In fact, $\cate{Adm}_{[\nu]}$ is generated by $\mathcal{G}_\mu$ where $\mu$ ranges over the $\mathrm{W}$-orbits in $[\nu]$.

\begin{theorem}[= Theorem \ref{prop:simplicity}]
	Suppose that $\nu \in \mathfrak{c}$ is $\widetilde{\mathrm{W}}$-regular in the sense that some (equivalently, any) representative $\dot{\nu} \in \mathfrak{t}^*$ has trivial $\widetilde{\mathrm{W}}$-stabilizer. Then $\mathcal{G}_\nu$ is a simple $\Wh$-module.
\end{theorem}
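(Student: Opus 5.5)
We prove that $\mathcal{G}_\nu$ is simple for $\widetilde{\mathrm{W}}$-regular $\nu$ by combining three ingredients: the torsion theorem (Theorem \ref{prop:torsion-cor-preview}), the nonvanishing $\widehat{\mathcal{G}}_\nu \neq \{0\}$, and an analysis of $\mathcal{G}^\natural_\nu$ restricted to the big cell $Bw_0 B$, where it is a flat (irregular) connection by Proposition \ref{prop:connection-big-cell}.

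\textbf{Step 1 (reduction to the big cell).} Let $0 \neq N \subsetneq \mathcal{G}_\nu$ be a proper nonzero submodule. Both $N$ and $\mathcal{G}_\nu/N$ are admissible, since $\mathfrak{m}_\nu$ annihilates them. Localize at $\mathcal{S}$. Via Lemma \ref{prop:support-torsion}, this amounts to restricting the corresponding $D(G)$-modules to $Bw_0 B$.

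\textbf{Step 2 (irreducibility of the connection).} Pass to the $N_\Le \times N_\Ri$-quotient of $Bw_0 B$, which is essentially the torus $T$ (via $N w_0 T N$). There $\mathcal{G}_\nu[\mathcal{S}^{-1}]$ becomes a module over $D(T)$. Concretely, it is the quotient of $D(T)$ by the quantum Toda Hamiltonians shifted by $\nu$, that is, a rank-$|\mathrm{W}|$ integrable connection on $T$. I would show this connection is irreducible when $\nu$ is $\widetilde{\mathrm{W}}$-regular.

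To do this, compute its formal structure near the boundary of $T$. The exponential factors coming from $\bpsi$ force irregular singularities. At the same time, Mellin/Fourier analysis identifies the formal exponents along cocharacter directions with $\mathrm{W}$-translates of $\dot\nu$, taken modulo $\mathbf{X}^*(T)$ (monodromy). The key claim is that a proper sub-connection would have to single out a proper subset of these exponents, and that this subset would have to be stable under the monodromy/Stokes data. That stability should force a nontrivial element of $\widetilde{\mathrm{W}}$ to fix $\dot\nu$. An alternative is to use Ginzburg's identification of $\Wh$ with the spherical nil-DAHA. For $\widetilde{\mathrm{W}}$-regular parameters, $\mathcal{G}_\nu$ localized should correspond to a simple module over the localized DAHA, by a Cherednik-type irreducibility criterion for the polynomial representation.

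\textbf{Step 3 (conclusion).} Given irreducibility on the cell, exactly one of $N[\mathcal{S}^{-1}]$ and $(\mathcal{G}_\nu/N)[\mathcal{S}^{-1}]$ vanishes, so one of $N$ and $\mathcal{G}_\nu/N$ is $\mathcal{S}$-torsion. By Theorem \ref{prop:torsion-cor-preview}, its completion $\widehat{N}$, respectively $\widehat{\mathcal{G}_\nu/N}$, is zero.

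The remaining task is to upgrade the vanishing of this completion to the vanishing of the module itself. The cleanest approach would show that $\mathcal{G}_\nu$ has no nonzero $\mathcal{S}$-torsion submodule or quotient at all, not merely up to completion. For a submodule, I would use that $\mathcal{G}_\nu$ is cyclic, generated by $1$, and argue via the Skryabin equivalence: an $\mathcal{S}$-torsion $D(G)$-submodule of $\mathcal{G}^\natural_\nu$ is supported on $G \smallsetminus Bw_0 B$. It is also holonomic, $N\times N$-Whittaker-equivariant, and has infinitesimal character $\nu$. By Bruhat decomposition, Whittaker-equivariant sheaves on the smaller cells $BwB$ with $w \neq w_0$ vanish, since $\bpsi$ is nontrivial on the stabilizers. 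This should kill any such module outright, independently of completions. For quotients, I would apply the same argument to the holonomic dual, using Proposition \ref{prop:Ext-admissible} and duality.

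Step 2, the irreducibility of the Toda connection for $\widetilde{\mathrm{W}}$-regular $\nu$, is where I expect the main difficulty. I would attack it first via the DAHA and intertwiner approach.
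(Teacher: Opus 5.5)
Your proposal has a genuine gap, concentrated in Step 3, and Step 2 is not an argument either. Even granting Step 2, you only learn that $\widehat{N}=0$ or $(\mathcal{G}_\nu/N)^{\wedge}=0$. That is no contradiction, because $\mathrm{F}_\bullet\Wh$ is not faithful (Proposition \ref{prop:non-Zariskian}). The paper decompletes on $\cate{Adm}_{\text{reg}}$ only in Theorem \ref{prop:decompletion-reg}, whose proof uses the very simplicity you are trying to establish, so invoking it would be circular.

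Your proposed way around this rests on a false claim. Nonzero $\mathfrak{n}_{\LeRi}^{\bpsi}$-monodromic $D(G)$-modules supported on $G\smallsetminus Bw_0B$ do exist, because some $N_\Le\times N_\Ri$-orbits in smaller Bruhat cells are relevant. Concretely, assume $G$ is not a torus and take $\Wh/\Wh d$.
\begin{itemize}
\item The symbol $\iota(d)$ of $d$ vanishes on $\mathfrak{Z}\smallsetminus\mathfrak{B}_{w_0}$. This set contains every $(x,z)\in\mathscr{Z}_{\mathscr{S}}$ with $z\in Z_G$.
\item Symbols are multiplicative, since $\gr\Wh\simeq\CC[\mathfrak{Z}]$ is a domain and the filtration is separating. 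Hence $d$ is not a unit, and $\Wh/\Wh d\neq 0$.
\item By Lemma \ref{prop:support-torsion}, the corresponding $D(G)$-module is supported off $Bw_0B$.
\end{itemize}
Whether the infinitesimal character $\nu$ excludes such objects from the subquotients of $\mathcal{G}_\nu$ is exactly what would need proof, and you give no argument. The duality route for quotients does not help either, since Proposition \ref{prop:Ext-admissible} and Theorem \ref{prop:Iwanaga} only control completions. Finally, Step 2 is heuristic. Irreducibility on the big cell together with the no-torsion statement you need is at least as hard as the theorem itself, since both follow from it via Theorem \ref{prop:minimal-extension}. So the reduction to $Bw_0B$ gains nothing.

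The missing idea is to localize in the spectral variables rather than in $d$. The argument runs as follows.
\begin{itemize}
\item Via Ginzburg's nil-DAHA, the paper obtains filtered embeddings $D(T)^{\mathrm{W}}\subset\Wh\subset\mathcal{U}^{-1}D(T)^{\mathrm{W}}$. Here $\mathcal{U}$ is generated by $\mathrm{W}$-invariant products of the elements $\check\alpha-c$ with $c\in\Z$.
\item $\widetilde{\mathrm{W}}$-regularity implies non-integrality, so every $u\in\mathcal{U}$ is congruent to a nonzero scalar modulo $\mathfrak{m}_\nu$. This gives $\mathcal{G}_\nu\simeq D(T)^{\mathrm{W}}/D(T)^{\mathrm{W}}\mathfrak{m}_\nu$ as $D(T)^{\mathrm{W}}$-modules (Proposition \ref{prop:non-integral-isom}).
\item The Morita equivalence $D(T)^{\mathrm{W}}\dcate{Mod}\simeq(\mathrm{W}\ltimes D(T))\dcate{Mod}$ turns this into $\bigoplus_{\dot\nu\mapsto\nu}D(T)/D(T)\mathfrak{m}_{\dot\nu}$.
\item This is a sum of rank-one connections on $T$ whose $\mathfrak{t}$-weights lie in the sets $\dot\nu+\mathbf{X}^*(T)$. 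These sets are pairwise disjoint by Lemma \ref{prop:affine-regular}, and $\mathrm{W}$ permutes the summands simply transitively. Hence the sum is a simple $\mathrm{W}\ltimes D(T)$-module.
\end{itemize}
This treats all of $\mathcal{G}_\nu$ at once, with no torsion analysis. Decompletion is then deduced from simplicity, not the other way round.
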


The above can probably be deduced from the deep results in \cite[(1.2.2), (1.2.3)]{Lo18} or \cite[Theorem 1.5.1]{Gin18}. We offer a direct proof by realizing $\Wh$ inside some localization of $D(T)^{\mathrm{W}}$, based on a similar construction in \cite{Gin18} involving degenerate nil-DAHA. The following consequence is almost immediate.

\begin{theorem}[= Theorems \ref{prop:decompletion-reg} + \ref{prop:minimal-extension}]
	Suppose $\nu$ is $\widetilde{\mathrm{W}}$-regular. Let $M$ be an object of $\cate{Adm}_{[\nu]}$ equipped with a good filtration. Then
	\[ \widehat{M} = 0 \iff \gr(M) = 0 \iff M = 0. \]
	
	Moreover, let $M^\natural$ be the $D(G)$-module corresponding to $M$. Denote by $j$ the open embedding $Bw_0 B \hookrightarrow G$. Then $j^* M^\natural$ is a flat connection on $Bw_0 B$, and there is a canonical isomorphism of $D(G)$-modules
	\[ M^\natural \rightiso j_{!*} \left( j^* M^\natural \right). \]
\end{theorem}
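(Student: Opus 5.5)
The plan is to use the earlier results of the introduction: Theorem \ref{prop:torsion-cor-preview} (torsion subquotients of admissible modules have zero completion), the simplicity of $\mathcal{G}_\mu$ for $\widetilde{\mathrm{W}}$-regular $\mu$, and the nonvanishing $\widehat{\mathcal{G}}_\mu \neq 0$ (Proposition \ref{prop:Gnu-nonzero}). The equivalence $\widehat{M} = 0 \iff \gr(M) = 0$ is standard from the Rees-algebra formalism: for a good filtration, $\widehat{M}$ is the completion, and $\gr \widehat{M} = \gr M$. The implication $M = 0 \Rightarrow \widehat{M} = 0$ is trivial. The real content is $\widehat{M} = 0 \Rightarrow M = 0$ for $M \in \cate{Adm}_{[\nu]}$.

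For this I would argue by dévissage. An admissible $M$ is finitely generated and $\mathcal{Z}(\mathfrak{g})$-locally finite. Since $\Wh$ is noetherian, it has a finite filtration whose subquotients are quotients of various $\mathcal{G}_\mu$, with $\mu$ in the $\mathrm{W}$-orbits inside $[\nu]$; each such $\mu$ is again $\widetilde{\mathrm{W}}$-regular. By simplicity of $\mathcal{G}_\mu$, each nonzero subquotient is isomorphic to some $\mathcal{G}_\mu$, so $M$ has finite length with composition factors $\mathcal{G}_\mu$.

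Completion is exact on finitely generated modules with good filtrations: this is Artin--Rees for the Zariskian ring $\widehat{\Wh}$, or the flatness of $\widehat{\Wh}$ over $\Wh$ via Rees algebras. Hence $\widehat{M} = 0$ forces $\widehat{\mathcal{G}_\mu} = 0$ for every composition factor. Proposition \ref{prop:Gnu-nonzero} says this is impossible, so $M = 0$. The main obstacle is justifying exactness of completion for these unbounded filtrations. I would use the Section \ref{sec:filtrations} summary: completion corresponds to $\otimes_{\Wh}\widehat{\Wh}$, and $\widehat{\Wh}$ is flat.

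For the geometric statement, first note that $j^*M^\natural$ is a flat connection. Indeed, $M^\natural$ is an iterated extension of the modules $\mathcal{G}^\natural_\mu$, which are smooth on $Bw_0B$ by Proposition \ref{prop:connection-big-cell}. Holonomic modules smooth on an open set restrict to connections, and extensions of connections are connections. Then $M^\natural \simeq j_{!*}j^*M^\natural$ is equivalent to $M^\natural$ having no nonzero submodule or quotient supported on $G \smallsetminus Bw_0B$. By Lemma \ref{prop:support-torsion}, these correspond to $\mathcal{S}$-torsion submodules or quotients $R$ of $M$. Such $R$ is a subquotient of the admissible $M$, so Theorem \ref{prop:torsion-cor-preview} gives $\widehat{R} = 0$. $R$ is itself admissible and lies in $\cate{Adm}_{[\nu]}$, since blocks are closed under subquotients, so the first part gives $R = 0$.

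The canonical map comes from the standard characterization of $j_{!*}$: the image of $j_!j^* \to j_*j^*$. Absence of submodules and quotients supported on the complement identifies $M^\natural$ with that image via the adjunction maps $j_!j^*M^\natural \to M^\natural \to j_*j^*M^\natural$. Holonomicity of $M^\natural$, which follows from holonomicity of the $\mathcal{G}^\natural_\mu$, ensures that $j_!$ is defined.
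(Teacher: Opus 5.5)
Your proposal is correct and follows the paper's own argument. For the first part you use the same ingredients: a dévissage of $M$ into nonzero quotients of $\mathcal{G}_\mu$ with $\mu$ $\widetilde{\mathrm{W}}$-regular, simplicity (Theorem \ref{prop:simplicity}), $\widehat{\mathcal{G}}_\mu \neq 0$ (Proposition \ref{prop:Gnu-nonzero}) and exactness of completion (Proposition \ref{prop:completion-tensor}). For the minimal extension you likewise combine Lemma \ref{prop:support-torsion}, Corollary \ref{prop:torsion-cor} and closure of the block under subquotients. One quibble: exactness of completion comes from the Artin--Rees-type property of $\Wh$ itself (Noetherianity of $\Wh_\hbar$, so that induced filtrations on submodules are good), not from the Zariskian property of $\widehat{\Wh}$; your fallback citation of the Section \ref{sec:filtrations} results is the right justification.
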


Indeed, the first part allows one to ``decomplete'' Theorem \ref{prop:torsion-cor-preview}. A natural question is whether such a decompletion works for admissible $\Wh$-modules $M$ in general. Note that this does not contradict the fact that $\mathrm{F}_\bullet \Wh$ is non-Zariskian.

\subsection{Outline of this article}
This article is organized as follows.

In Section \ref{sec:filtrations} we briefly review the $\Z$-filtered non-commutative algebras, modules, their completions and Ore localizations for the reader's convenience, proving a few auxiliary results for later use. Section \ref{sec:bi-Whittaker} is a recap of the quantum Toda lattice $\Wh$ associated with a connected reductive group $G$, endowed with the filtration induced from the Kazhdan filtration. There are also discussions about their behavior under isogenies, the embedding of $\CC[G]^{N_{\Le} \times N_{\Ri}}$ and its induced filtrations.

Section \ref{sec:universal-centralizer} summarizes two constructions of the universal centralizer $\mathfrak{Z}$ associated with $G$, together with the canonical morphism $\kappa: \mathfrak{Z} \to \mathfrak{c}$. We apply the results of \cite{Ko79, Jin22} to study how the fibers of $\kappa$ intersect the big Bruhat cell $\mathfrak{B}_{w_0}$ in $\mathfrak{Z}$.

Section \ref{sec:W-properties} summarizes certain key results from \cite{Gin18} about the structure of $\Wh$ as a filtered algebra. In Section \ref{sec:W-modules} we proceed to introduce the category $\cate{Adm}$ of admissible $\Wh$-modules, and the Harish-Chandra $\Wh$-modules $\mathcal{G}_\nu$ for all $\nu \in \mathfrak{c}$. Furthermore, the strategy in \cite{LS99, BNS24} is recast to prove that admissible $\Wh$-modules are $\mathcal{S}$-torsion-free up to completion, and a decomposition of $\cate{Adm}$ according to infinitesimal characters is given. The relation to minimal extensions of $\mathfrak{n}_{\LeRi}^{\bpsi}$-monodromic $D(G)$-modules is also discussed there, modulo (de)completion.

In Section \ref{sec:homological} we show that $\widehat{\Wh}$ is an Auslander-regular ring, and determine its homological dimension using results from the previous section. We also review the ring-theoretic notion of holonomic $\widehat{\Wh}$-modules, interpret it in terms of singular supports, and deduce that $\Ext_{\Wh}^n(\cdot, \Wh)$ preserves admissibility up to completions, where $n$ denotes the reductive rank of $G$.

Finally, in Section \ref{sec:regular-inf-char} we follow the ideas from \cite{Gin18} to realize $\Wh$ inside a certain Ore localization of $D(T)^{\mathrm{W}}$, where $T \subset G$ is a maximal torus with Weyl group $\mathrm{W}$. This allows us to show that, for $\widetilde{\mathrm{W}}$-regular infinitesimal characters $[\nu]$, the Harish-Chandra $\Wh$-modules $\mathcal{G}_\nu$ are simple. Consequently, the prior results from Section \ref{sec:W-modules} can be decompleted for these blocks $\cate{Adm}_{[\nu]}$ of $\cate{Adm}$.

\subsection{Conventions}
All filtrations are indexed by $\Z$, ascending and exhaustive; the associated graded objects are denoted as $\gr(\cdots)$. A ring is said to be Noetherian if it is both left and right Noetherian. Linear duals are denoted by $V \mapsto V^*$ and the non-decorated tensor products $\otimes$ are taken over $\CC$.

Given a ring $A$, denote by $A\dcate{Mod}$ (resp.\ $\cated{Mod}A$) the category of left (resp.\ right) $A$-modules, and by $A\dcate{Mod}_{\mathrm{fg}}$ (resp.\ $\cated{Mod}A_{\mathrm{fg}}$) the subcategory of finitely generated modules. Unless otherwise specified, all modules are left modules in the sequel. The opposite category of $\mathcal{C}$ is denoted by $\mathcal{C}^{\mathrm{opp}}$.

Suppose that a group $\Gamma$ acts on an algebra $A$ via automorphisms. We denote by $\Gamma \ltimes A$ the smash product of the group bialgebra of $\Gamma$ and $A$.

For a commutative $\CC$-algebra $A$ acting on a $\CC$-vector space $U$ and a character $\nu: A \to \CC$, we say $u \in U$ is a generalized $A$-eigenvector with eigenvalue $\nu$ if some power of $\Ker(\nu)$ annihilates $u$. The symmetric algebra of $U$ is denoted as $\Sym(U)$.

The schemes under consideration are reduced $\CC$-schemes of finite type unless otherwise specified, and such a scheme $X$ will often be identified with the space $X(\CC)$ of its $\CC$-points. Denote the $\CC$-algebra of regular functions on $X$ as $\CC[X] = \Gamma(X, \mathscr{O}_X)$, and write $\CC(X)$ for the function field when $X$ is integral. For smooth $X$, denote the $\CC$-algebra of algebraic differential operators on $X$ as $D(X) = \Gamma(X, \mathscr{D}_X)$.

The linear algebraic groups are assumed to be smooth $\CC$-groups. Denote the center of such a group $G$ as $Z_G$. The enveloping algebra of $\mathfrak{g} := \Lie(G)$ is denoted by $\mathcal{U}(\mathfrak{g})$. The adjoint (resp.\ co-adjoint) action of $G$ on $\mathfrak{g}$ (resp.\ $\mathfrak{g}^*$) is denoted by $\Ad$ (resp.\ $\Ad^*$). When $G$ is connected reductive, denote by $G_{\mathrm{SC}}$ the simply connected cover of the derived subgroup $G_{\mathrm{der}} \subset G$.

Denote the additive (resp.\ multiplicative) group scheme as $\Ga$ (resp.\ $\Gm$). For a torus $T$, set $\mathbf{X}^*(T) = \Hom(T, \Gm)$ and $\mathbf{X}_*(T) = \Hom(\Gm, T)$, which embeds into $\mathfrak{t}^*$ and $\mathfrak{t}$ respectively. When $T$ is a maximal torus of a connected reductive group $G$ with Weyl group $\mathrm{W} = \mathrm{W}(G, T)$, we will consider the linear $\mathrm{W}$-action on $\mathfrak{t}$ and $\mathfrak{t}^*$ instead of the dot-action.

For a connected reductive group $G$, its reductive rank is denoted by $\mathrm{rk}(G)$. The opposite of a Borel pair $(B, T)$ of $G$ is denoted by $(B^-, T)$. If $N = \mathcal{R}_{\mathrm{u}}(B)$ is the unipotent radical of $B$, then we write $N^- = \mathcal{R}_{\mathrm{u}}(B^-)$.

Unless otherwise specified, the group actions on schemes are right actions. When $G$ acts on a smooth scheme $X$, the corresponding moment map is denoted by $\bm{\mu}: \mathrm{T}^* X \to \mathfrak{g}^*$. Categorical quotients are denoted by $X \sslash G$ whenever they exist.

\subsection{Acknowledgements}
The author is deeply grateful to Victor Ginzburg, Xin Jin and Jun Yu for answering his naive questions. This research is supported by NSFC, Grant No.\ 11922101 and 12321001.

\section{Filtered rings and modules}\label{sec:filtrations}
Fix a commutative ring $\Bbbk$. The filtrations will be written as $\mathrm{F}_\bullet A$ (for $\Bbbk$-algebras) or $\mathrm{F}_\bullet M$ (for modules over filtered algebras). For brevity, we consider only left modules; analogous results hold for right modules.

\subsection{Filtrations and completions}\label{subsec:fil-completions}
Let $A$ be a filtered $\Bbbk$-algebra; we assume $\Bbbk$ lands in $\mathrm{F}_0 A$. A \emph{filtered $A$-module} is an $A$-module $M$ equipped with filtration $\mathrm{F}_\bullet M$, such that $\mathrm{F}_n M$ is an additive subgroup of $M$ and $\mathrm{F}_m A \cdot \mathrm{F}_n M \subset \mathrm{F}_{m+n} M$, for all $m, n \in \Z$.

A homomorphism $\varphi: A \to A'$ between filtered rings is said to be filtered if $\varphi(\mathrm{F}_n A) \subset \mathrm{F}_n A'$ for all $n \in \Z$. Similarly for homomorphisms between filtered $A$-modules.

We say $A$ (resp.\ $M$) is \emph{positively filtered} if $\mathrm{F}_{-1} A = 0$ (resp.\ $\mathrm{F}_{-1} M = 0$). We say the filtration on $A$ (resp.\ $M$) is \emph{separating} if $\bigcap_n \mathrm{F}_n A = 0$ (resp.\ $\bigcap_n \mathrm{F}_n M = 0$).

Let $A$ be a filtered $\Bbbk$-algebra, and endow it with the topology induced by the filtration. The completion of $A$ can be identified with
\begin{equation*}
	\widehat{A} = \varprojlim_n A/\mathrm{F}_n A.
\end{equation*}
The $\Bbbk$-algebra $\widehat{A}$ is still filtered by setting $\mathrm{F}_m \widehat{A} = \varprojlim_{n \leq m} \mathrm{F}_m A / \mathrm{F}_n A$; this filtration is separating, and gives rise to the topology on $\widehat{A}$. Similarly for the completion $\widehat{M}$ for a filtered $A$-module $M$, which is then a filtered $\widehat{A}$-module. There are natural maps $A \to \widehat{A}$ and $M \to \widehat{M}$.

To recall the Rees construction, we introduce a formal variable $\hbar$. Given a filtered $\Bbbk$-algebra $A$, define the associated \emph{Rees algebra} as the $\Z$-graded $\Bbbk$-algebra
\begin{equation*}
	A_{\hbar} := \bigoplus_{n \in \Z} (\mathrm{F}_n A) \hbar^n \subset A[\hbar].
\end{equation*}
It becomes a graded $\Bbbk[\hbar]$-algebra by placing $\hbar$ in degree $1$. The associated $\Z$-graded $\Bbbk$-algebra of $A$ is
\begin{equation*}
	\gr A := \bigoplus_{n \in \Z} \mathrm{F}_n A / \mathrm{F}_{n-1} A = A_{\hbar}|_{\hbar = 0}.
\end{equation*}
Here we put $A_{\hbar}|_{\hbar = c} := A_{\hbar}/(\hbar - c)A$, for all $c \in \Bbbk$.

Similarly, for a filtered $A$-module $M$, the Rees construction yields the graded $A_{\hbar}$-module $M_{\hbar}$ and the associated $\Z$-graded $\gr A$-module $\gr M = M_{\hbar}|_{\hbar = 0}$. Here we put $M_{\hbar}|_{\hbar = c} := M_{\hbar} / (\hbar - c)M$, for all $c \in \Bbbk$.

\begin{lemma}\label{prop:Ah-Noetherian}
	Assume $A_{\hbar}$ is a Noetherian ring. Then:
	\begin{enumerate}[(i)]
		\item $\widehat{A}_{\hbar}$ is Noetherian,
		\item $\widehat{A}$ and $\gr A$ are both Noetherian,
		\item the homomorphism $A \to \widehat{A}$ induces $\gr A \rightiso \gr \widehat{A}$.
	\end{enumerate}
\end{lemma}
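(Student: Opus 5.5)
The plan is to work throughout with Rees algebras, using the identifications
\[ A_\hbar/\hbar A_\hbar \simeq \gr A, \qquad A_\hbar/(\hbar-1)A_\hbar \simeq A, \]
and to realize $\widehat{A}_\hbar$ as an $\hbar$-adic completion.

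\textbf{Step 1: $\widehat{A}_\hbar$ as an $\hbar$-adic completion.} First observe that $\hbar^k A_\hbar$ has degree-$m$ part $\mathrm{F}_{m-k}A\,\hbar^m$. Hence
\[ (A_\hbar/\hbar^k A_\hbar)_m \simeq \mathrm{F}_m A/\mathrm{F}_{m-k}A. \]
I will therefore compare the graded $\hbar$-adic completion
\[ \varprojlim_k{}^{\mathrm{gr}} \, A_\hbar/\hbar^k A_\hbar, \]
taking inverse limits degreewise, with the Rees algebra of $\widehat{A}$. Degreewise, $\varprojlim_k \mathrm{F}_m A/\mathrm{F}_{m-k}A = \mathrm{F}_m\widehat{A}$ by the definition of the filtration on $\widehat{A}$. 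So $\widehat{A}_\hbar$ is the graded $\hbar$-adic completion of $A_\hbar$.

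\textbf{Step 2: Noetherianity of $\widehat{A}_\hbar$, i.e.\ (i).} Since $A_\hbar$ is Noetherian and $\hbar$ is a central regular element, I expect the standard argument to carry over: the $\hbar$-adic completion of a Noetherian ring with respect to a central element is Noetherian. Here I must be careful that the graded completion is not the full completion. I will argue with graded left ideals. For a graded left ideal $I$ of $\widehat{A}_\hbar$, consider the associated graded ideal with respect to the $\hbar$-adic filtration inside
\[ \gr_\hbar \widehat{A}_\hbar \simeq (A_\hbar/\hbar A_\hbar)[\bar\hbar] = (\gr A)[\bar\hbar]. \]
This ring is Noetherian, being a quotient of $A_\hbar$ adjoined a central variable. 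Lift generators of that graded ideal. Then use completeness in each degree: in a fixed degree $m$, the $\hbar$-adic filtration is the complete filtration $\mathrm{F}_{m-k}\widehat{A}$. The lifts generate $I$ by the usual successive approximation argument. This gives the result for graded ideals. For arbitrary ideals, I would use the fact that a $\Z$-graded ring in which every graded left/right ideal is finitely generated is Noetherian; this holds for $\Z$-graded rings by a standard result of Năstăsescu--Van Oystaeyen. This step is the main obstacle, since the convergence has to be handled degree by degree. I will note that no infinite sums across different degrees arise.

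\textbf{Step 3: (ii).} Directly, $\gr A = A_\hbar/\hbar A_\hbar$ is a quotient of a Noetherian ring, so it is Noetherian. Likewise $\widehat{A} = \widehat{A}_\hbar/(\hbar-1)\widehat{A}_\hbar$ is a quotient of the Noetherian ring $\widehat{A}_\hbar$ by (i). Here I use $M_\hbar|_{\hbar=1}\simeq M$ for any filtered object with an exhaustive filtration. The filtration on $\widehat{A}$ is exhaustive because elements of $\varprojlim_n A/\mathrm{F}_nA$ have bounded degree: the completion is formed in the topology in which $\mathrm{F}_n$ shrinks as $n \to -\infty$, so $\widehat{A}=\bigcup_m \mathrm{F}_m\widehat A$.

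\textbf{Step 4: (iii).} Finally, (iii) is immediate degreewise. The map
\[ \mathrm{F}_mA/\mathrm{F}_{m-1}A \to \mathrm{F}_m\widehat{A}/\mathrm{F}_{m-1}\widehat{A} \]
is an isomorphism, because $\mathrm{F}_m\widehat A \to \mathrm{F}_mA/\mathrm{F}_{m-1}A$ is surjective with kernel $\mathrm{F}_{m-1}\widehat A$, as seen from the inverse-limit description. Equivalently, $\widehat{A}_\hbar/\hbar\widehat{A}_\hbar \simeq A_\hbar/\hbar A_\hbar$ by Step 1. Multiplicativity is clear since the map is induced by the ring homomorphism $A\to\widehat A$.
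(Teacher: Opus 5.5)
Your proof is correct. Parts (ii) and (iii) follow the paper: quotients of Noetherian Rees algebras, and a degreewise reading of the inverse-limit description of $\mathrm{F}_m\widehat{A}$.

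The difference is in (i). The paper cites ABO89 to identify $\widehat{A}_\hbar$ with the graded $\hbar$-adic completion of $A_\hbar$. It then invokes McConnell's theorem that the full $\hbar$-adic completion $(A_\hbar)^\wedge$ is Noetherian, since $(\hbar)$ is polycentral. Finally it descends to the graded completion inside $(A_\hbar)^\wedge$ by truncating coefficients.

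You instead run the standard lifting argument for graded ideals against $\gr_\hbar\widehat{A}_\hbar\simeq(\gr A)[\bar\hbar]$, using that each homogeneous piece is complete and separated. You then pass to all one-sided ideals via the N\u{a}st\u{a}sescu--Van Oystaeyen theorem for $\Z$-graded rings.

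Your route is more self-contained and intrinsic to $\widehat{A}$. Step 2 only uses that $\widehat{A}$ is complete and separated with $\gr\widehat{A}\simeq\gr A$ Noetherian. It is essentially a proof of the ABO89 criterion quoted in Remark~\ref{rem:Zariskian}, so Step 1 is dispensable. Note that Step 2 relies on (iii), so Step 4 should come first.

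Your route is also explicit about graded versus ungraded ideals, and this matters: the paper's truncation step is only valid for homogeneous $x, y_i$. Take $A=\CC$ with $\mathrm{F}_nA=\CC$ for $n\geq 0$ and $0$ otherwise. Then $\widehat{A}_\hbar=\CC[\hbar]$ while $(A_\hbar)^\wedge=\CC[[\hbar]]$, and the proper ideal $(1-\hbar)$ of $\CC[\hbar]$ extends to the unit ideal. So the paper's descent must also be restricted to graded ideals and completed by the same graded-to-ungraded theorem you use.

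What the paper's route buys is brevity, since it outsources the convergence bookkeeping to McConnell.
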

\begin{proof}
	Begin with (i). By \cite[2.8 Proposition]{ABO89}, we have
	\[ \widehat{A}_{\hbar} = (A_{\hbar})^{\wedge, \gr} := \textstyle\varprojlim_n^{\gr} \left( A_{\hbar}/\hbar^n A_{\hbar} \right), \]
	where for an inverse system of $\Z$-graded objects, $\varprojlim^{\gr}$ denotes the direct sum of the $\varprojlim$ of each graded piece. We refer to \cite[Section 2.2]{NO04} for a systematic treatment of this construction.
	
	Next, by \cite[3.4 Theorem]{McC79}, the ungraded $\hbar$-adic completion $(A_{\hbar})^\wedge$ is Noetherian. Indeed, $\hbar$ is central in $A_{\hbar}$, thus the ideal $(\hbar)$ it generates is ``polycentral'' in the sense of \cite[2.2 Definition]{McC79}. Filter each $\mathrm{F}_n A$ by $\cdots \subset \mathrm{F}_{n-1} A \subset \mathrm{F}_n A$. On the level of $\Bbbk$-modules,
	\[ (A_{\hbar})^{\wedge, \gr} = \bigoplus_{n \in \Z} (\mathrm{F}_n A)^\wedge \subset (A_{\hbar})^{\wedge} \subset \prod_{n \in \Z} (\mathrm{F}_n A)^{\wedge}. \]
	To show the ring $(A_{\hbar})^{\wedge, \gr}$ is left (resp.\ right) Noetherian, consider a chain of left (resp.\ right) ideals
	\[ I_1 \subset I_2 \subset \cdots \]
	in $(A_{\hbar})^{\wedge, \gr}$. Let $\tilde{I}_i$ be the left (resp.\ right) ideal of $(A_{\hbar})^{\wedge}$ generated by $I_i$, for each $i \geq 1$. Then there exists $p \geq 1$ such that $q > p \implies \tilde{I}_q = \tilde{I}_p$. We claim that $q > p \implies I_p = I_q$ as well.
	
	Indeed, it suffices to consider left ideals. Let $q > p$ and $x \in I_q$. Since $\tilde{I}_q = \tilde{I}_p$, in $(A_{\hbar})^{\wedge}$ one has an expression 
	\[ x = t_1 y_1 + \cdots + t_k y_k, \quad y_i \in I_p, \quad t_i \in (A_{\hbar})^{\wedge}. \]
	As $x, y_1, \ldots, y_k$ have only finitely many nonzero components when viewed inside $\prod_{n \in \Z} (\mathrm{F}_n A)^{\wedge}$, by dropping redundant components we can adjust $t_1, \ldots, t_k$ to ensure $t_i \in (A_{\hbar})^{\wedge, \gr}$ for each $i$. Hence $x \in I_p$. The claim follows.
	
	Therefore, the ring $\widehat{A}_{\hbar} \simeq (A_{\hbar})^{\wedge, \gr}$ is left (resp.\ right) Noetherian.
	
	For (ii), note that $\widehat{A} \simeq \widehat{A}_{\hbar}|_{\hbar = 1}$ is Noetherian by (i). Similarly, $\gr A \simeq A_{\hbar}|_{\hbar = 0}$ is Noetherian.
	
	Consider (iii). For each $m \in \Z$ we have
	\begin{align*}
		\mathrm{F}_m \widehat{A} & = \varprojlim_{n \leq m} \mathrm{F}_m A / \mathrm{F}_n A \\
		& = \left\{ (\bar{x}_n)_n \in \varprojlim_{n \in \Z} A/\mathrm{F}_n A \middle| n \geq m \implies \bar{x}_n = 0 \right\},
	\end{align*}
	and $\mathrm{F}_m A \to \mathrm{F}_m \widehat{A}$ is the diagonal map. Clearly $\mathrm{F}_m A /\mathrm{F}_{m-1} A \hookrightarrow \mathrm{F}_m \widehat{A} / \mathrm{F}_{m-1} \widehat{A}$. As for surjectivity, given $x = (\bar{x}_n)_n \in \mathrm{F}_m \widehat{A}$, take any preimage $y \in A$ of $\bar{x}_{m-1}$, then $y \in \mathrm{F}_m A$ and its diagonal image belongs to $x + \mathrm{F}_{m-1} \widehat{A}$, as desired.
\end{proof}

\begin{lemma}\label{prop:Mh-Noetherian}
	Assume that the filtration on $A$ is separating, and $A_{\hbar}$ is a Noetherian ring. Let $M$ be a filtered $A$-module such that $M_{\hbar}$ is finitely generated over $A_{\hbar}$, so that $M$ is finitely generated over $A$. Then:
	\begin{enumerate}[(i)]
		\item $\widehat{M}_{\hbar}$ is finitely generated over $\widehat{A}_{\hbar}$,
		\item $\widehat{M}$ is finitely generated over $\widehat{A}$, and $\gr M$ is finitely generated over $\gr A$,
		\item the homomorphism $M \to \widehat{M}$ induces $\gr M \rightiso \gr \widehat{M}$, compatibly with $\gr A \rightiso \gr \widehat{A}$.
	\end{enumerate}
\end{lemma}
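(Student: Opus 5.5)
The plan follows Lemma \ref{prop:Ah-Noetherian} closely, working throughout on the Rees side and then specializing $\hbar$. The central input is the identification of $\widehat{M}_{\hbar}$ with the graded $\hbar$-adic completion of $M_{\hbar}$:
\[ \widehat{M}_{\hbar} \simeq (M_{\hbar})^{\wedge, \gr} := \textstyle\varprojlim^{\gr}_n M_{\hbar}/\hbar^n M_{\hbar}. \]
This is the module version of \cite[2.8 Proposition]{ABO89} and can be checked degree by degree. In degree $m$ the right-hand side is $\varprojlim_n \mathrm{F}_m M/\mathrm{F}_{m-n} M$, which is exactly $\mathrm{F}_m \widehat{M}$ as defined in \S\ref{subsec:fil-completions}.

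\textbf{Step (i).} Choose homogeneous generators $x_1 \hbar^{d_1}, \ldots, x_k \hbar^{d_k}$ of $M_{\hbar}$ over $A_{\hbar}$; this gives a graded surjection $\bigoplus_i A_{\hbar}(-d_i) \twoheadrightarrow M_{\hbar}$. I will show that it stays surjective after applying $(\cdot)^{\wedge, \gr}$, so that $\bigoplus_i \widehat{A}_{\hbar}(-d_i) \to \widehat{M}_{\hbar}$ is onto. Fix a degree $m$ and an element $(\bar y_n)_n \in \varprojlim_n \mathrm{F}_m M/\mathrm{F}_{m-n}M$, and lift it successively.
\begin{itemize}
\item Finite generation of $M_{\hbar}$ gives $\mathrm{F}_p M = \sum_i \mathrm{F}_{p-d_i} A \cdot x_i$ for every $p$.
\item So each difference $y_{n+1} - y_n \in \mathrm{F}_{m-n} M$ can be written as $\sum_i a_i^{(n)} x_i$ with $a_i^{(n)} \in \mathrm{F}_{m-n-d_i} A$.
\item The partial sums $\sum_n a_i^{(n)}$ then converge in $\mathrm{F}_{m-d_i}\widehat{A}$.
\end{itemize}
The surjectivity is an elementary Mittag-Leffler / Artin--Rees type argument in this direct form; no deeper input is needed. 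This lifting is the step I expect to be the main obstacle, since it is the only place where more than bookkeeping is involved.

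\textbf{Step (ii).} Reduce modulo $\hbar - 1$ and modulo $\hbar$. Since $\widehat{M} \simeq \widehat{M}_{\hbar}|_{\hbar=1}$ and $\gr M = M_{\hbar}|_{\hbar=0}$, finite generation passes to both quotients.

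\textbf{Step (iii).} Copy the proof of Lemma \ref{prop:Ah-Noetherian}(iii) verbatim, with $M$ in place of $A$. Describe $\mathrm{F}_m\widehat{M}$ as compatible sequences vanishing at indices $n \geq m$. The map $\mathrm{F}_m M/\mathrm{F}_{m-1}M \to \mathrm{F}_m\widehat{M}/\mathrm{F}_{m-1}\widehat{M}$ is injective, because $\mathrm{F}_m M \cap \mathrm{F}_{m-1}\widehat{M} = \mathrm{F}_{m-1}M$ by the definition of the inverse limit. It is surjective by lifting the component $\bar{x}_{m-1}$. Compatibility with $\gr A \rightiso \gr\widehat{A}$ holds because the maps $A \to \widehat{A}$ and $M \to \widehat{M}$ are compatible with the actions.

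The separating hypothesis on $A$ is not essential to any of these steps; I expect it is included only so that $M$ embeds in a reasonable way, and the argument above does not rely on it.
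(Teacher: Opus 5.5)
Your proposal is correct. Parts (ii) and (iii) are what the paper does: specialize at $\hbar=1$ and $\hbar=0$, and repeat the argument of Lemma \ref{prop:Ah-Noetherian}(iii).

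The difference lies in (i). The paper does not lift elements by hand. It identifies $\widehat{M}$ with the microlocalization $Q^\mu_S(M)$ for $S=\{1\}$ and quotes \cite[3.19 Theorem]{ABO89}, which gives the base-change isomorphism $\widehat{M}_{\hbar}\simeq\widehat{A}_{\hbar}\otimes_{A_{\hbar}}M_{\hbar}$; finite generation then follows at once.

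Your lifting argument instead proves directly that $\mathrm{F}_m\widehat{M}=\sum_i \mathrm{F}_{m-d_i}\widehat{A}\cdot\hat{x}_i$ for every $m$. In other words, the completed filtration is good, with the same generators and shifts. It uses only goodness of $\mathrm{F}_\bullet M$ and the filtered $\widehat{A}$-module structure on $\widehat{M}$. You should state the latter explicitly: since $\hat a_i-\sum_{n<N}a_i^{(n)}\in\mathrm{F}_{m-N-d_i}\widehat{A}$, the element $\sum_i\hat a_i\hat x_i$ reduces to $y_N$ modulo $\mathrm{F}_{m-N}M$. This route is more elementary and self-contained, and it uses neither the Noetherian nor the separating hypothesis. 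In the paper's proof, both hypotheses enter only through the citation.

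What the paper's route buys is the full base-change isomorphism, i.e.\ injectivity of $\widehat{A}_{\hbar}\otimes_{A_{\hbar}}M_{\hbar}\to\widehat{M}_{\hbar}$ as well as surjectivity. That direction genuinely needs an Artin--Rees-type input, such as good filtrations inducing good filtrations on submodules. Your argument gives only surjectivity, which is all the lemma asks for. The paper later obtains the tensor identity at $\hbar=1$ separately in Proposition \ref{prop:completion-tensor}.

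Two small corrections.
\begin{enumerate}
\item Your guess that separatedness of $A$ is there ``so that $M$ embeds'' is off. Separatedness of $A$ does not make $M\to\widehat{M}$ injective: $\Wh$ is separated, yet by Proposition \ref{prop:non-Zariskian} it has nonzero finitely generated modules with $\widehat{M}=0$.
\item For the same reason, in (iii) the set ``$\mathrm{F}_mM\cap\mathrm{F}_{m-1}\widehat{M}$'' should be read as the preimage of $\mathrm{F}_{m-1}\widehat{M}$ in $\mathrm{F}_mM$, which is $\mathrm{F}_{m-1}M$.
\end{enumerate}
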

\begin{proof}
	By \cite[3.11 Remarks]{ABO89}, the construction of $\widehat{M}$ (resp.\ $\widehat{M}_{\hbar}$) corresponds to the special case $S = \{1\}$ of the algebraic microlocalization $Q^\mu_S(M)$ (resp.\ its Rees counterpart) introduced in \cite[Section 3]{ABO89}. Hence \cite[3.19 Theorem]{ABO89} implies
	\[ \widehat{M}_{\hbar} \simeq \widehat{A}_{\hbar} \dotimes{A_{\hbar}} M_{\hbar} \]
	as $\widehat{A}_{\hbar}$-modules, and (i) follows.
	
	Taking quotients by the central elements $\hbar - 1$ and $\hbar$, respectively, (ii) follows. Finally, the argument for (iii) is analogous to that for Lemma \ref{prop:Ah-Noetherian} (iii); alternatively, one can also apply \cite[3.18 Corollaries (1)]{ABO89} with $S = \{1\}$.
\end{proof}

\subsection{Good filtrations}
Let $A$ be a filtered $\Bbbk$-algebra.

\begin{definition}
	Let $M$ be a filtered $A$-module. If there exists a finite family $x_1, \ldots, x_k \in M$ and $n_1, \ldots, n_k \in \Z$ such that $x_i \in \mathrm{F}_{n_i} M$ for all $1 \leq i \leq k$ and
	\[ \mathrm{F}_p M = \sum_{i=1}^k \mathrm{F}_{p - n_i} A \cdot x_i \]
	for all $p \in \Z$, then the filtration $\mathrm{F}_\bullet M$ is said to be \emph{good}. Two filtrations $\mathrm{F}_\bullet M$ and $\mathrm{F}'_\bullet M$ on $M$ are said to be equivalent if there exists $w \in \Z_{\geq 0}$ such that
	\[ \mathrm{F}_{n-w} M \subset \mathrm{F}'_n M \subset \mathrm{F}_{n+w} M \]
	for all $n \in \Z$.
\end{definition}

Equivalent filtrations induce the same topology on $M$, hence the associated completions are canonically isomorphic as $\widehat{A}$-modules.

\begin{lemma}[\protect{\cite[2.2 Lemma]{ABO89}}]
	\label{prop:good-Rees}
	The filtration $\mathrm{F}_\bullet M$ on $M$ is good if and only if the corresponding $A_{\hbar}$-module $M_{\hbar}$ is finitely generated.
\end{lemma}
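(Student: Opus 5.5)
The plan is to prove both directions directly from the definitions, treating $M_{\hbar}=\bigoplus_{n\in\Z}(\mathrm{F}_n M)\hbar^n$ as a graded $A_{\hbar}$-module. The key observation is that a homogeneous element of degree $n$ in $M_{\hbar}$ is exactly $x\hbar^n$ with $x\in\mathrm{F}_n M$. Moreover, multiplying such an element by a homogeneous element $a\hbar^m$ of $A_{\hbar}$ (with $a\in\mathrm{F}_m A$) gives $ax\hbar^{m+n}$.

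For ``good $\Rightarrow$ finitely generated'', I take $x_1,\dots,x_k$ and $n_1,\dots,n_k$ as in the definition of a good filtration and claim that the homogeneous elements $x_i\hbar^{n_i}$ generate $M_{\hbar}$. Since $M_{\hbar}$ is spanned by its homogeneous components, it suffices to treat some $y\hbar^p$ with $y\in\mathrm{F}_p M$. Goodness gives $y=\sum_i a_i x_i$ with $a_i\in\mathrm{F}_{p-n_i}A$. Hence
\[ y\hbar^p=\sum_i (a_i\hbar^{p-n_i})(x_i\hbar^{n_i}), \]
and each coefficient $a_i\hbar^{p-n_i}$ lies in $A_{\hbar}$, which proves the claim.

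For the converse, suppose $M_{\hbar}$ is generated by finitely many elements. I first replace these by their homogeneous components, which are still finite in number and still generate. So the generators may be taken to be $x_i\hbar^{n_i}$ with $x_i\in\mathrm{F}_{n_i}M$. Now let $y\in\mathrm{F}_pM$. Write $y\hbar^p=\sum_i t_i\,x_i\hbar^{n_i}$ with $t_i\in A_{\hbar}$, and compare degree-$p$ components. This lets me replace each $t_i$ by its component of degree $p-n_i$, which has the form $a_i\hbar^{p-n_i}$ with $a_i\in\mathrm{F}_{p-n_i}A$. It follows that $y=\sum_i a_ix_i$, giving $\mathrm{F}_pM\subset\sum_i\mathrm{F}_{p-n_i}A\,x_i$. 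The reverse inclusion follows from $\mathrm{F}_mA\cdot\mathrm{F}_nM\subset\mathrm{F}_{m+n}M$, so the filtration is good.

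The only step that needs care is the grading bookkeeping. I must check that $M_{\hbar}$ really is a graded module, i.e.\ that the sum over $n$ is direct inside $M[\hbar]$ (it is, because distinct powers of $\hbar$ are independent). I must also check that taking the degree-$p$ component of an identity in $M_{\hbar}$ is legitimate. This step involves no finiteness issue, even though the filtrations are unbounded in both directions, because each element of $M_{\hbar}$ has only finitely many nonzero components. Nothing else is an obstacle; the lemma is essentially formal.
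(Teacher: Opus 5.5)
Your argument is correct. It is the standard direct verification behind the cited \cite[2.2 Lemma]{ABO89}: use homogeneous generators $x_i\hbar^{n_i}$ and compare degree-$p$ components. The paper gives no proof beyond that citation. One cosmetic point: since the filtrations are unbounded below, $M_{\hbar}$ lives in $M[\hbar,\hbar^{-1}]$ rather than $M[\hbar]$ (the paper's notation has the same slip), which changes nothing in your argument.
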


\begin{lemma}
	In the setting of Lemma \ref{prop:Mh-Noetherian}, if the filtration on $M$ is good, then so is the induced filtration on $\widehat{M}$.
\end{lemma}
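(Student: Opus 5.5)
By Lemma \ref{prop:good-Rees}, it suffices to show that the Rees module $(\widehat{M})_{\hbar}$ attached to the induced filtration on $\widehat{M}$ is finitely generated over $\widehat{A}_{\hbar}$. Lemma \ref{prop:Mh-Noetherian}(i) says $\widehat{M}_{\hbar}$ is finitely generated over $\widehat{A}_{\hbar}$. The plan is therefore to identify the Rees module of $\widehat{M}$, with its induced filtration $\mathrm{F}_m \widehat{M} = \varprojlim_{n \leq m} \mathrm{F}_m M/\mathrm{F}_n M$, with $\widehat{M}_{\hbar}$ as in the proof of Lemma \ref{prop:Mh-Noetherian}. The latter is the graded $\hbar$-adic completion $(M_{\hbar})^{\wedge,\gr}$, compatibly with the corresponding identification $\widehat{A}_{\hbar} \simeq (A_{\hbar})^{\wedge,\gr}$ from \cite[2.8 Proposition]{ABO89}.

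I would check this identification degree by degree. The degree-$m$ component of $(M_{\hbar})^{\wedge,\gr}$ is
\[ \varprojlim_k \mathrm{F}_m M / \mathrm{F}_{m-k} M = \mathrm{F}_m \widehat{M}, \]
and the degree-$m$ component of the Rees module of $\widehat{M}$ is $(\mathrm{F}_m \widehat{M})\hbar^m$. Multiplication by $\hbar$ on both sides is the inclusion $\mathrm{F}_m \widehat{M} \subset \mathrm{F}_{m+1}\widehat{M}$, and the $\widehat{A}_{\hbar}$-actions agree because both come from the continuous extension of $\mathrm{F}_a A \times \mathrm{F}_m M \to \mathrm{F}_{a+m} M$.

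An alternative that avoids the Rees identification is to work directly with generators. Choose generators $x_i \in \mathrm{F}_{n_i} M$ witnessing goodness, so that $\mathrm{F}_p M = \sum_i \mathrm{F}_{p-n_i} A\, x_i$. Reduction modulo $\mathrm{F}_{p-k} M$ gives surjections
\[ \bigoplus_i \mathrm{F}_{p-n_i} A / \mathrm{F}_{p-n_i-k} A \to \mathrm{F}_p M/\mathrm{F}_{p-k} M. \]
The aim is then to show that the inverse limit stays surjective, which would give $\mathrm{F}_p \widehat{M} = \sum_i \mathrm{F}_{p-n_i}\widehat{A}\, x_i$.

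The main obstacle in this alternative route is exactly the exactness of $\varprojlim$. The kernels of these surjections need not form a Mittag-Leffler system a priori. The way around it is to observe that the kernels are the degree-$p$ parts of the $\hbar$-adic truncations of the kernel $K$ of the surjection $\bigoplus_i A_{\hbar}(-n_i) \to M_{\hbar}$. The module $K$ is finitely generated because $A_{\hbar}$ is Noetherian. The Artin--Rees property of the polycentral ideal $(\hbar)$ \cite{McC79}, equivalently the flatness of $\widehat{A}_{\hbar}$ over $A_{\hbar}$ behind \cite[3.19 Theorem]{ABO89}, then makes graded completion exact on $0 \to K \to \bigoplus_i A_{\hbar}(-n_i) \to M_{\hbar} \to 0$. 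This yields the surjection $\bigoplus_i \widehat{A}_{\hbar}(-n_i) \to \widehat{M}_{\hbar}$, and that is precisely the statement that the induced filtration on $\widehat{M}$ is good, generated by the images of the $x_i$ in degrees $n_i$.
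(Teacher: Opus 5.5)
Your first route is the paper's proof: $\widehat{M}_{\hbar}$ is by definition the Rees module of $\widehat{M}$ with its induced filtration (your degreewise comparison with the graded completion is a correct consistency check), so Lemma \ref{prop:Mh-Noetherian} (i) and Lemma \ref{prop:good-Rees} give the claim at once. In your alternative route the Mittag-Leffler worry is unfounded, since the level-$k$ kernels are the images of $K$ in the $\hbar^k$-truncations and so have surjective transition maps; hence $\mathrm{F}_p \widehat{M} = \sum_i \mathrm{F}_{p-n_i}\widehat{A}\, x_i$ follows directly from goodness, without Artin--Rees or any Noetherian hypothesis.
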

\begin{proof}
	Combine Lemma \ref{prop:Mh-Noetherian} (i) together with Lemma \ref{prop:good-Rees} applied to the $\widehat{A}$-module $\widehat{M}$.
\end{proof}

Below are some standard facts about filtered $A$-modules $M$ that we will need.
\begin{itemize}
	\item Any two good filtrations on $M$ are equivalent; see \cite[1.6 Proposition]{ABO89}.
	\item Let $M$ be an $A$-module with generators $x_1, \ldots, x_n$, then we can define the filtration
	\begin{equation}\label{eqn:good-filtration-fg}
		\mathrm{F}_p M := \sum_{i=1}^n (\mathrm{F}_p A)x_i
	\end{equation}
	on $M$. This is a good filtration; see \cite[Section 1.4]{ABO89}.
	\item Good filtrations induce good filtrations on quotient modules. When $A_{\hbar}$ is Noetherian, good filtrations also induce good filtrations on submodules; see \cite[2.6 Proposition]{ABO89}.
\end{itemize}

Assume $A_{\hbar}$ is Noetherian, then so are $A \simeq A_{\hbar}|_{\hbar = 1}$, $\widehat{A} \simeq \widehat{A}_{\hbar}|_{\hbar = 1}$ and $\gr(A) \simeq \gr(\widehat{A})$ by Lemma \ref{prop:Ah-Noetherian} (iii). The category $A\dcate{Mod}_{\mathrm{fg}}$ is thus Abelian, and every object therein carries a good filtration, unique up to equivalence; moreover, these filtrations are compatible with short exact sequences. The same holds for $\widehat{A}\dcate{Mod}_{\mathrm{fg}}$.

In particular, the completion of finitely generated $A$-modules makes sense. The following result is well-known in the commutative case, and the standard arguments are reproduced below.

\begin{proposition}\label{prop:completion-tensor}
	Let $A$ be a filtered $\Bbbk$-algebra such that $A_{\hbar}$ is Noetherian.
	\begin{enumerate}[(i)]
		\item Taking completion yields an exact functor $A\dcate{Mod}_{\mathrm{fg}} \to \widehat{A}\dcate{Mod}$.
		\item For every finitely generated $A$-module $M$, the canonical homomorphism $\widehat{A} \dotimes{A} M \to \widehat{M}$ is an isomorphism.
		\item the canonical homomorphism $A \to \widehat{A}$ is flat (on either side).
	\end{enumerate}
\end{proposition}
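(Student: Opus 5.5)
The plan is to prove (ii) first and then read off (i) and (iii) from it, following the commutative argument (Artin--Rees replaced by the Rees algebra).

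\emph{Step 1: the comparison for free modules.} For a finite free module $A^k$, give it the direct-sum filtration with shifts. Its completion is $\widehat{A}^k$, so the map $\widehat{A} \otimes_A A^k \to \widehat{A^k}$ is an isomorphism.

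\emph{Step 2: presentations and (ii).} Let $M$ be finitely generated. Choose a presentation $A^l \to A^k \to M \to 0$. Give $M$ the quotient filtration of a shifted free filtration on $A^k$; it is good. Give $K = \Ker(A^k \to M)$ the induced filtration, which is good because $A_\hbar$ is Noetherian. Then choose $A^l \twoheadrightarrow K$ compatibly with good filtrations.

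The key claim is that completion preserves the exactness of $0 \to K \to A^k \to M \to 0$ when all three carry these filtrations. Here one uses that the filtration on $K$ is \emph{induced} from $A^k$, so that $K/(K \cap \mathrm{F}_n A^k) \hookrightarrow A^k/\mathrm{F}_n A^k \twoheadrightarrow M/\mathrm{F}_n M$ is exact for every $n$. The transition maps of the inverse system of the first terms are surjective, so Mittag-Leffler applies, and $\varprojlim$ of this system of short exact sequences stays exact.

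Since good filtrations are unique up to equivalence, the induced filtration on $K$ is equivalent to the good filtration coming from $A^l$. So $\widehat{K}$ is the usual completion, and $\widehat{A^l} \to \widehat{K}$ is surjective by the same Mittag-Leffler argument applied to $A^l \twoheadrightarrow K$. This gives exactness of $\widehat{A^l} \to \widehat{A^k} \to \widehat{M} \to 0$. Comparing it with the right-exact sequence $\widehat{A}\otimes_A A^l \to \widehat{A}\otimes_A A^k \to \widehat{A}\otimes_A M \to 0$ via Step 1 and the five lemma yields (ii).

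\emph{Step 3: (i).} Take a short exact sequence $0 \to M' \to M \to M'' \to 0$ in $A\dcate{Mod}_{\mathrm{fg}}$. Filter $M$ by a good filtration, and give $M'$ the induced filtration and $M''$ the quotient filtration. Both are good (the former by Noetherianity of $A_\hbar$), so they compute the completions up to canonical isomorphism. The Mittag-Leffler argument of Step 2, which only used that the sub carries the induced filtration, then gives exactness of $0 \to \widehat{M'} \to \widehat{M} \to \widehat{M''} \to 0$. Functoriality follows because filtered morphisms between good filtrations induce continuous maps, any $A$-linear map between finitely generated modules being continuous for good filtrations.

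\emph{Step 4: (iii).} By (i) and (ii), the functor $\widehat{A}\otimes_A(\cdot)$ is exact on finitely generated left modules. Since $A$ is left Noetherian, it suffices to check that $\widehat{A}\otimes_A I \to \widehat{A}$ is injective for each finitely generated left ideal $I$, which is a special case. This gives flatness of $\widehat{A}$ as a right $A$-module. The argument for right modules is symmetric, using right Noetherianity of $A_\hbar$, and gives flatness of $\widehat{A}$ as a left $A$-module.

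The main obstacle is the Artin--Rees-type comparison in Step 2: identifying the induced filtration on a submodule with a good one, so that completing the submodule "inside" $A^k$ agrees with completing it intrinsically. This is precisely where Noetherianity of $A_\hbar$ enters, through \cite[2.6 Proposition]{ABO89}. Once it is available, the rest is Mittag-Leffler bookkeeping.
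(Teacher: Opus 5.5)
Your proposal is correct and is essentially the paper's proof: exactness of completion comes from the Mittag-Leffler argument, with induced and quotient filtrations on sub- and quotient modules (good because $A_{\hbar}$ is Noetherian), and (ii) comes from a finite presentation and the five lemma; you only fold (i) into (ii) and change the order. For (iii) you use the finitely generated ideal criterion for flatness, whereas the paper shows $\Tor_1^A(\widehat{A}, M) = 0$ for finitely generated $M$ and passes to arbitrary $M$ by filtered colimits; both routes are standard.
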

\begin{proof}
	In what follows, $\widehat{A}$ is viewed as a right $A$-module.
	
	For (i), consider a short exact sequence $0 \to M' \to M \to M'' \to 0$ in $A\dcate{Mod}_{\mathrm{fg}}$. Pick any good filtration on $M$ and endow $M'$, $M''$ with the induced filtrations, so that
	\[ 0 \to M'/\mathrm{F}_n M' \to M/\mathrm{F}_n M \to M'' / \mathrm{F}_n M'' \to 0 \]
	is exact for all $n$. The Mittag-Leffler condition holds trivially for $(M'/\mathrm{F}_n M')_n$, hence $0 \to \widehat{M}' \to \widehat{M} \to \widehat{M}'' \to 0$ is exact.
	
	For (ii), the natural map $M \to \widehat{M}$ is $A$-linear, thus induces $\widehat{A} \dotimes{A} M \to \widehat{M}$. The remaining arguments are routine: take a presentation $A^{\oplus p} \to A^{\oplus q} \to M \to 0$ where $p, q \in \Z_{\geq 0}$, and argue by using the commutative diagram with exact rows
	\[\begin{tikzcd}
		\widehat{A} \dotimes{A} (A^{\oplus p}) \arrow[r] \arrow[d] & \widehat{A} \dotimes{A} (A^{\oplus q}) \arrow[d] \arrow[r] & \widehat{A} \dotimes{A} M \arrow[r] \arrow[d] & 0 \\
		(A^{\oplus p})^\wedge \arrow[r] & (A^{\oplus q})^\wedge \arrow[r] & \widehat{M} \arrow[r] & 0.
	\end{tikzcd}\]
	
	As to (iii), by (i) we have $\Tor_1^A(\widehat{A}, M) = 0$ for all finitely generated $M$, thus for all $M$ via filtered colimits. Hence $\widehat{A}$ is flat as a right $A$-module. The left case follows by considering opposite rings.
\end{proof}

\subsection{Singular supports}\label{subsec:SS}
Let $A$ be a filtered $\Bbbk$-algebra such that $A_{\hbar}$ is Noetherian.

\begin{lemma}\label{prop:null-completion}
	Let $M$ be a finitely generated $A$-module, and let $\mathrm{F}_\bullet M$ be any good filtration, then
	\[ \widehat{M} = 0 \iff \forall n,\; \mathrm{F}_n M = M \iff \gr M = 0. \]
\end{lemma}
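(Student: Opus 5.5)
The plan is to prove the two equivalences separately: (a) $\widehat{M} = 0 \iff \mathrm{F}_n M = M$ for all $n$, and (b) $\mathrm{F}_n M = M$ for all $n \iff \gr M = 0$. Throughout I use that $\widehat{M} = \varprojlim_n M/\mathrm{F}_n M$ and that, by Lemma \ref{prop:Mh-Noetherian} (iii), $\gr M \simeq \gr \widehat{M}$. Lemma \ref{prop:Mh-Noetherian} assumes the filtration on $A$ is separating. If we do not want that hypothesis, the direct arguments below should suffice, because they only use goodness of $\mathrm{F}_\bullet M$.

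For (a), suppose first that $\mathrm{F}_n M = M$ for all $n$. Then every quotient $M/\mathrm{F}_n M$ is zero, hence so is the inverse limit. Conversely, suppose $\widehat{M} = 0$. Take $x \in M$ and $n \in \Z$. The element $x$ defines a compatible family $(x \bmod \mathrm{F}_m M)_m$, i.e.\ an element of $\widehat{M}$, which must vanish. Hence $x \in \mathrm{F}_m M$ for every $m$, in particular $x \in \mathrm{F}_n M$. So $\mathrm{F}_n M = M$. This direction is immediate and needs no hypotheses beyond the definition of completion.

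For (b), if $\mathrm{F}_n M = M$ for all $n$, then each $\mathrm{F}_n M/\mathrm{F}_{n-1}M = 0$, so $\gr M = 0$. The nontrivial direction is $\gr M = 0 \Rightarrow \mathrm{F}_n M = M$, and it is where goodness is essential. If $\gr M = 0$, then $\mathrm{F}_{n-1}M = \mathrm{F}_n M$ for all $n$, so the filtration is constant, equal to $\bigcap_n \mathrm{F}_n M$. By exhaustiveness $\bigcup_n \mathrm{F}_n M = M$. A constant filtration with union $M$ gives $\mathrm{F}_n M = M$ for every $n$. Goodness is not even needed for this step, since exhaustiveness is part of the convention that all filtrations are exhaustive.

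I expect essentially no obstacle; the lemma amounts to unwinding definitions. The only point of care is logical bookkeeping for unbounded filtrations, where $\mathrm{F}_n M = M$ for all $n$ can genuinely occur with $M \neq 0$. That is exactly the non-Zariskian phenomenon mentioned in the introduction, so one must not try to conclude $M = 0$. I would also remark, to justify the phrase ``any good filtration'', that the condition $\mathrm{F}_n M = M$ for all $n$ is invariant under equivalence of filtrations. Indeed, if $\mathrm{F}'_n M \supset \mathrm{F}_{n-w} M = M$, then $\mathrm{F}'_n M = M$. Since any two good filtrations are equivalent by \cite[1.6 Proposition]{ABO89}, the statement does not depend on the chosen good filtration.
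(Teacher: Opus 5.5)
Your proposal is correct and follows the paper's argument: unwind $\widehat{M} = \varprojlim_n M/\mathrm{F}_n M$ for the first equivalence, and use exhaustiveness for the second. Where the paper argues via surjectivity of the transition maps, you use the image of $x$ under $M \to \widehat{M}$, which is an equally valid variant. Your appeal to Lemma \ref{prop:Mh-Noetherian} is never used, and your remark that goodness is ``essential'' in (b) is contradicted by your own correct argument, which needs only exhaustiveness.
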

\begin{proof}
	The first equivalence follows from the fact that $\varprojlim_n M/\mathrm{F}_n M = 0$ implies $M = \mathrm{F}_n M$ for all $n \in \Z$, since the transition maps are surjective. For the second equivalence, recall that our filtrations are assumed to be exhaustive.
\end{proof}

\begin{remark}\label{rem:Zariskian}
	The filtration $\mathrm{F}_\bullet A$ is said to be \emph{faithful} if $\gr(M) = 0 \iff M = 0$ for all finitely generated $A$-modules $M$ equipped with good filtration. If $\mathrm{F}_\bullet A$ is complete and $\gr(A)$ is Noetherian, then \cite[1.9 Example]{ABO89} says that $\mathrm{F}_\bullet A$ is faithful; in fact, by \textit{loc.\ cit.}, $\mathrm{F}_\bullet A$ is then \emph{Zariskian}. The filtered rings encountered in this work do not meet these requirements, see Proposition \ref{prop:non-Zariskian}.
\end{remark}

Assume henceforth that $\gr(A)$ is commutative. For a finitely generated $A$-module $M$, endow $M$ with any good filtration and set
\begin{equation*}
	\begin{aligned}
		J(M) & := \sqrt{\mathrm{ann}(\gr M)}, \\
		\mathrm{SS}(M) & := \Supp(\gr M) = V(J(M)).
	\end{aligned}
\end{equation*}
We call $\mathrm{SS}(M) \subset \Spec(\gr A)$ the \emph{singular support} of $M$. Lemma \ref{prop:good-Rees} ensures that $\gr M$ is finitely generated. In fact, $J(M)$ is independent of the choice of good filtration, thus so is $\mathrm{SS}(M)$. For a short exact sequence $0 \to M' \to M \to M'' \to 0$ in $A\dcate{Mod}_{\mathrm{fg}}$, the associated graded modules still fit into a short exact sequence, thus
\[ \mathrm{SS}(M) = \mathrm{SS}(M') \cup \mathrm{SS}(M''). \]
See for example \cite[Section 1]{Ga81} in the generality we need. There is a finer notion of characteristic cycles for $M$, but it is not needed in this work.

The commutator map in $A$ descends to $\gr_p(A) \times \gr_q(A) \to \gr_{p+q-1}(A)$ for all $p, q \in \Z$, hence extends to a $\Bbbk$-bilinear pairing $\{\cdot, \cdot\}: \gr(A) \dotimes{\Bbbk} \gr(A) \to \gr(A)$ making $\gr(A)$ into a Poisson $\Bbbk$-algebra. The following result is well-known.

\begin{theorem}[\protect{O.\ Gabber \cite[Theorem 1]{Ga81}}]
	\label{prop:integrability}
	Assume $A_{\hbar}$ is Noetherian, $\gr(A)$ is commutative, and $\Bbbk \supset \Q$. For every finitely generated $A$-module $M$, the ideal $J(M)$ is involutive in the sense that $\{J(M), J(M)\} \subset J(M)$.
	
	In particular, if $\Bbbk$ is a field of characteristic zero, $\Spec(\gr(A))$ is a smooth symplectic $\Bbbk$-variety and $\{\cdot, \cdot\}$ equals the corresponding Poisson bracket, then $V(J(M))$ is co-isotropic.
\end{theorem}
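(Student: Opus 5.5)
The first assertion should be a direct application of Gabber's theorem \cite[Theorem 1]{Ga81}. The only work is checking that our possibly unbounded $\Z$-filtrations satisfy its hypotheses. The second assertion is then a standard geometric translation of involutivity.

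\textbf{Reduction to a complete filtered ring.} The natural move is to replace $A$ and $M$ by their completions. By Lemma \ref{prop:Ah-Noetherian} (ii)--(iii), $\widehat{A}$ is Noetherian and $\gr A \rightiso \gr \widehat{A}$ as Poisson algebras. The Poisson structure matches because the commutator pairing on graded pieces is computed in $\mathrm{F}_\bullet A$ and the diagonal map $A \to \widehat{A}$ is a filtered ring homomorphism. Since $A_\hbar$ is Noetherian, $\gr A = A_\hbar|_{\hbar=0}$ is commutative Noetherian. For a finitely generated $M$ with a good filtration, Lemma \ref{prop:Mh-Noetherian} (iii) gives $\gr M \rightiso \gr \widehat{M}$, compatibly with $\gr A \rightiso \gr\widehat{A}$. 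Hence $J(M) = J(\widehat{M})$ inside $\gr A = \gr \widehat{A}$. Moreover, the induced filtration on $\widehat{M}$ is good, by the lemma following Lemma \ref{prop:good-Rees}. So it suffices to treat the complete, separated filtered ring $\widehat{A}$ together with the finitely generated module $\widehat{M}$.

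\textbf{Applying Gabber.} In this complete situation the filtration is Zariskian (Remark \ref{rem:Zariskian}, via \cite[1.9 Example]{ABO89}). The ring $\gr \widehat{A}$ is commutative Noetherian and contains $\Q$, and $\widehat{M}$ carries a good filtration. These are exactly the hypotheses under which Gabber's theorem yields $\{J, J\} \subset J$ for $J = \sqrt{\mathrm{ann}(\gr \widehat{M})}$. The main point to verify is that Gabber's proof allows $\Z$-filtrations unbounded below. His argument runs through the Rees ring and $\hbar$-adic completion, so completeness plus Noetherianity of $\widehat{A}_\hbar$ (Lemma \ref{prop:Ah-Noetherian} (i)) should suffice; I regard this bookkeeping as the only real obstacle. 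If his statement insists on a separated filtration, the passage to $\widehat{A}$ above supplies that.

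\textbf{Co-isotropy.} Now let $\Bbbk$ be a field of characteristic zero, $X = \Spec(\gr A)$ smooth symplectic, and $Z = V(J)$ with $J$ radical and involutive. Take a smooth point $z$ of $Z$, lying on a single irreducible component $Z_i$; such points are dense in each component. Near $z$, the conormal space $(T_z Z)^\perp \subset T_z^* X$ is spanned by the differentials $d f$ for $f \in J$, because $J$ is radical. The symplectic form identifies $(T_zZ)^{\perp\omega}$ with the span of the Hamiltonian vectors $H_f(z)$ for $f \in J$. For $f, g \in J$ we have $H_f(g) = \{f,g\} \in J$, which vanishes on $Z$. Hence $H_f$ is tangent to $Z$ at smooth points, so $(T_zZ)^{\perp\omega} \subset T_z Z$. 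Since this holds on a dense open subset of each component, $Z$ is co-isotropic.
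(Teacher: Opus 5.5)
Your proposal is correct, and its core matches the paper's, which offers no argument beyond citing Gabber's Theorem 1. The paper notes only that Gabber already works with arbitrary $\Z$-filtrations (not just positive ones) and that the case $\gr M = 0$ is trivial. It treats co-isotropy as the standard consequence, and your tangent-space argument at smooth points of $V(J)$ supplies it correctly.

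The genuine difference is your preliminary passage to $\widehat{A}$ and $\widehat{M}$. Given the generality of Gabber's statement this is unnecessary, though it has the merit of making the argument independent of whether Gabber needs separated or complete filtrations.

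If you keep that step, fix one hypothesis mismatch. Lemma \ref{prop:Mh-Noetherian} and the unlabeled goodness lemma after Lemma \ref{prop:good-Rees} both assume $\mathrm{F}_\bullet A$ is separating, which the theorem does not.
\begin{itemize}
\item The isomorphism $\gr M \rightiso \gr \widehat{M}$ needs no separation: the argument of Lemma \ref{prop:Ah-Noetherian} (iii) is purely formal.
\item For goodness of the induced filtration on $\widehat{M}$, first pass to a separated situation. Set $I_\infty := \bigcap_n \mathrm{F}_n A$, a two-sided ideal. When $\mathrm{F}_\bullet M$ is good, $I_\infty M \subset \bigcap_n \mathrm{F}_n M$, so $M/\bigcap_n \mathrm{F}_n M$ is a module over the separated filtered ring $A/I_\infty$.
\item The Rees ring of $A/I_\infty$ is a quotient of $A_\hbar$, hence Noetherian. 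Neither $\gr A$, nor $\gr M$, nor the completions change under this replacement.
\end{itemize}
After that, your reduction goes through as written.
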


We emphasize that Theorem \ref{prop:integrability} is proved in \textit{loc.\ cit.} for $\Z$-filtered rings and modules, not just for positively filtered ones. Also, it might happen that $M \neq 0$ whilst $\gr(M) = 0$, in which case $J(M) = \gr(A)$ is trivially involutive.

Finally, $\widehat{A}$ satisfies the same conditions as $A$ and $\gr(A) \simeq \gr(\widehat{A})$ by Lemma \ref{prop:Ah-Noetherian}.

\begin{proposition}\label{prop:SS-completion}
	Assume that the filtration on $A$ is separating. For all finitely generated $A$-module $M$, its completion $\widehat{M}$ is finitely generated over $\widehat{A}$ and $\mathrm{SS}(M) = \mathrm{SS}(\widehat{M})$.
\end{proposition}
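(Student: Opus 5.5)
We will deduce the statement from Lemma \ref{prop:Mh-Noetherian} together with the fact that singular supports only depend on the associated graded module.

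First, since $M$ is finitely generated over $A$, we equip it with a good filtration, for instance the one in \eqref{eqn:good-filtration-fg} attached to a finite set of generators. By Lemma \ref{prop:good-Rees}, $M_{\hbar}$ is then finitely generated over $A_{\hbar}$. The filtration on $A$ is separating and $A_{\hbar}$ is Noetherian, so Lemma \ref{prop:Mh-Noetherian} applies. Part (ii) of that lemma gives that $\widehat{M}$ is finitely generated over $\widehat{A}$. Part (i) gives that $\widehat{M}_{\hbar}$ is finitely generated over $\widehat{A}_{\hbar}$, so the induced filtration on $\widehat{M}$ is good by Lemma \ref{prop:good-Rees}; this is also the content of the lemma following Lemma \ref{prop:good-Rees}. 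As noted before the statement, $\widehat{A}$ satisfies the same hypotheses as $A$: $\widehat{A}_{\hbar}$ is Noetherian by Lemma \ref{prop:Ah-Noetherian}(i), and $\gr \widehat{A} \simeq \gr A$ is commutative. Hence $\mathrm{SS}(\widehat{M})$ is defined, and we may compute it with this induced good filtration, since $J(\cdot)$ does not depend on the choice of good filtration.

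Next, by Lemma \ref{prop:Mh-Noetherian}(iii), the map $M \to \widehat{M}$ induces an isomorphism $\gr M \rightiso \gr \widehat{M}$. This isomorphism is semilinear with respect to the ring isomorphism $\gr A \rightiso \gr \widehat{A}$ of Lemma \ref{prop:Ah-Noetherian}(iii). Under this identification of rings, $\mathrm{ann}_{\gr A}(\gr M)$ therefore corresponds to $\mathrm{ann}_{\gr \widehat{A}}(\gr \widehat{M})$. Taking radicals, $J(M)$ corresponds to $J(\widehat{M})$, and so
\[ \mathrm{SS}(M) = V(J(M)) = V(J(\widehat{M})) = \mathrm{SS}(\widehat{M}) \]
inside $\Spec(\gr A) = \Spec(\gr \widehat{A})$.

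The main point to verify is that the filtration on $\widehat{M}$ used in Lemma \ref{prop:Mh-Noetherian}(iii) is indeed a good filtration. Only then does it compute $\mathrm{SS}(\widehat{M})$ as the paper defines it, and this is exactly what Lemma \ref{prop:Mh-Noetherian}(i) together with Lemma \ref{prop:good-Rees} provide. The remaining steps are formal.
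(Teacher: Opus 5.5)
Your proof is correct and follows the paper's approach: the paper simply says the result is immediate from Lemma \ref{prop:Mh-Noetherian}~(iii). You fill in the routine details the paper leaves implicit, namely that the induced filtration on $\widehat{M}$ is good (via Lemma \ref{prop:Mh-Noetherian}~(i) and Lemma \ref{prop:good-Rees}), and that annihilators, hence $J(\cdot)$, transfer along $\gr M \rightiso \gr \widehat{M}$ compatibly with $\gr A \rightiso \gr \widehat{A}$.
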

\begin{proof}
	Immediate from Lemma \ref{prop:Mh-Noetherian} (iii).
\end{proof}

\subsection{Kazhdan filtrations}\label{subsec:Kazhdan-filtration}
We collect some definitions from \cite[Section 3.2]{Gin09} and \cite[Section 3.2]{Gin18} below. Let $E$ be a $\Bbbk$-module equipped with
\begin{itemize}
	\item a $\Z$-grading $E = \bigoplus_{d \in \Z} E(d)$,
	\item a filtration $\cdots \subset E_{\leq j} \subset E_{\leq j+1} \subset \cdots$, where $j \in \Z$, such that each $E_{\leq j}$ is a graded submodule of $E$.
\end{itemize}
In this situation, we say the filtration $E_{\leq \bullet}$ is compatible with the grading on $E$.

\begin{definition}
	\label{def:Kazhdan-filtration}
	Given the data above, we write
	\[ E_{\leq r} = E_{\leq \lfloor r \rfloor} \]
	for all $r \in \R$. The associated \emph{Kazhdan filtration} on $E$ is the $\Z$-filtration given by $\mathrm{F}_n E = \bigoplus_{d \in \Z} (\mathrm{F}_n E)(d)$, where
	\[ (\mathrm{F}_n E)(d) := E_{\leq \frac{n-d}{2}}(d). \]
\end{definition}

This furnishes an endofunctor of the category of filtered $\Bbbk$-modules with compatible $\Z$-gradings. The Kazhdan filtration $\mathrm{F}_\bullet E$ is not necessarily positive for positive $E_{\leq \bullet}$.

Denote by $E^{\leq}_{\hbar}$ (resp.\ $E^{\mathrm{F}}_{\hbar}$) the Rees module associated with $E_{\leq \bullet}$ (resp.\ $\mathrm{F}_\bullet E$). Similarly for the graded module $\gr^{\leq} E$ (resp.\ $\gr^{\mathrm{F}} E$).

If $A$ is a filtered $\Bbbk$-algebra with compatible grading, then $\mathrm{F}_\bullet A$ also carries a compatible grading. The same is true for filtered $A$-modules with compatible gradings.

\begin{proposition}\label{prop:regrading}
	Let $A$ be a filtered $\Bbbk$-algebra with compatible grading. There are canonical isomorphisms
	\[ A^{\leq}_{\hbar} \simeq A^{\mathrm{F}}_{\hbar}, \quad \gr^{\leq} A \simeq \gr^{\mathrm{F}} A \]
	of non-graded $\Bbbk$-algebras and $\Bbbk[\hbar]$-algebras, respectively. Similarly, for a filtered $A$-module $M$ with compatible grading, there are canonical isomorphisms
	\[ M^{\leq}_{\hbar} \simeq M^{\mathrm{F}}_{\hbar}, \quad \gr^{\leq} M \simeq \gr^{\mathrm{F}} M \]
	of non-graded modules, compatibly with the earlier isomorphisms.
	
	Furthermore, if $\gr^{\leq} A$ is commutative, then the isomorphism $\gr^{\leq} A \simeq \gr^{\mathrm{F}} A$ preserves Poisson structures.
\end{proposition}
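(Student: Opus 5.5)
The plan is to build the isomorphism explicitly by regrading in each homogeneous degree. Write $A = \bigoplus_d A(d)$. Then the Rees algebra for $A_{\leq \bullet}$ is
\[ A^{\leq}_{\hbar} = \bigoplus_{j \in \Z} \bigoplus_{d} A_{\leq j}(d)\, \hbar^j . \]
On the Kazhdan side, $(\mathrm{F}_n A)(d) = A_{\leq \lfloor (n-d)/2 \rfloor}(d)$. For fixed $d$ and $j$, the integers $n$ with $\lfloor (n-d)/2 \rfloor = j$ are exactly $n = 2j+d$ and $n = 2j+d+1$. So the Kazhdan Rees module contains each $A_{\leq j}(d)$ twice, at $\hbar^{2j+d}$ and at $\hbar^{2j+d+1}$, and the second copy is $\hbar$ times the first. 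This suggests working with a variable $t = \hbar^{1/2}$ (or equivalently a square-root-free bookkeeping), and I first have to fix the precise normalization. I will follow the convention of \cite[Section 3.2]{Gin18}. There one compares $A^{\leq}_\hbar$, with $\hbar$ placed in degree $1$, against $A^{\mathrm{F}}_{\hbar}$ in which the Kazhdan variable corresponds to $\hbar^{2}$-scaling. Concretely, I define
\[ \Phi : A^{\leq}_{\hbar} \to A^{\mathrm{F}}_{\hbar}, \qquad a\,\hbar^j \mapsto a\,\hbar^{2j+d} \quad (a \in A_{\leq j}(d)). \]
I then check three things. First, $\Phi$ is well defined and lands in $\mathrm{F}_{2j+d}A$. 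Second, it is multiplicative: for $a \in A_{\leq j}(d)$ and $b \in A_{\leq k}(e)$ we have $ab \in A_{\leq j+k}(d+e)$ and the exponents add. Third, it is additive and bijective onto the even-parity part in each weight.

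Bijectivity is where the parity issue sits, and I expect it to be the main obstacle. The clean way around it is to view both sides after specializing: compare $\gr^{\leq}A = \bigoplus_{j,d} A_{\leq j}(d)/A_{\leq j-1}(d)$ with $\gr^{\mathrm{F}}_n A = \bigoplus_d (\mathrm{F}_nA)(d)/(\mathrm{F}_{n-1}A)(d)$. The latter quotient is nonzero only when $\lfloor (n-d)/2\rfloor$ jumps, i.e.\ when $n \equiv d \pmod 2$ with $j = (n-d)/2$, and then it equals $A_{\leq j}(d)/A_{\leq j-1}(d)$. So degree by degree one gets a canonical identification
\[ \gr^{\leq}A \simeq \gr^{\mathrm{F}}A, \qquad \gr_j^{\leq}A(d) \leftrightarrow \gr^{\mathrm{F}}_{2j+d}A(d), \]
and it is multiplicative by the exponent additivity above. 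For the Rees algebras I would then state the isomorphism with the appropriate rescaling of $\hbar$ (meaning $\Bbbk[\hbar]$ acting through $\hbar \mapsto \hbar^2$ on one side, as in loc.\ cit.), or, if the intended statement is a non-graded isomorphism after $\hbar$-completion, invoke the above map. Whichever form is intended, I will make the normalization explicit before asserting it.

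The module statements follow by the identical construction. A compatible grading on $M$ with $A(d)M(e) \subset M(d+e)$ gives $M_{\leq k}(e) \leftrightarrow \mathrm{F}_{2k+e}M(e)$, and this intertwines the action by the same exponent additivity.

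For the Poisson claim, the bracket on $\gr^{\leq}A$ comes from commutators $\gr^{\leq}_j \times \gr^{\leq}_k \to \gr^{\leq}_{j+k-1}$. On the Kazhdan side the bracket has degree $-1$ in $n$. But the commutator of elements in $\mathrm{F}_{2j+d}$ and $\mathrm{F}_{2k+e}$ has weight $d+e$ and lies in $A_{\leq j+k-1}(d+e) = \mathrm{F}_{2(j+k-1)+d+e}$, which is degree $n+m-2$. So the two brackets agree up to the conventional factor arising from this degree shift, and I will verify that the paper's convention (bracket read off in degree $n+m-2$, or a factor normalized away) makes them coincide exactly.
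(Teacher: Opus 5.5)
Your map $\Phi$ is the same ``regrading'' that the paper's one-line proof appeals to, and your identification $\gr^{\le}_j A(d) \cong \gr^{\mathrm{F}}_{2j+d} A(d)$, with its module analogue, is correct and complete.

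The gap is that you never settle the Rees-algebra and Poisson assertions, and the escape routes you propose do not work. Your own computation shows that $\Phi$ is an injective algebra map with $\Phi(\hbar c) = \hbar^2 \Phi(c)$ and
\[ A^{\mathrm{F}}_{\hbar} = \Phi(A^{\le}_{\hbar}) \oplus \hbar\, \Phi(A^{\le}_{\hbar}). \]
The second summand consists of the pieces $(\mathrm{F}_n A)(d)\hbar^n$ with $n \not\equiv d \pmod 2$; for instance, $\hbar$ itself is not in the image. Letting $\Bbbk[\hbar]$ act through $\hbar \mapsto \hbar^2$ does not enlarge the image. Neither does $\hbar$-adic completion, since entire graded components are missed.

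In fact no normalization can save the literal claim. Take the Weyl algebra $A = \Bbbk\langle x, \partial\rangle$ with the order filtration and the grading by $\mathrm{ad}(x\partial)$. The Kazhdan filtration is then the Bernstein filtration by total degree. $A^{\le}_{\hbar}$ is generated by $x, \partial\hbar, \hbar$ with $[\partial\hbar, x] = \hbar$. By contrast, $A^{\mathrm{F}}_{\hbar}$ is generated by $x\hbar, \partial\hbar, \hbar$ with $[\partial\hbar, x\hbar] = \hbar^2$. Their largest commutative quotients are $\Bbbk[x,\xi]$ (reduced) and $\Bbbk[X,\Xi,\hbar]/(\hbar^2)$ (not reduced), so the two rings are not isomorphic at all.

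What $\Phi$ does prove, and what you should state, is the isomorphism of $\Bbbk[\hbar]$-algebras
\[ \Bbbk[\hbar] \otimes_{\Bbbk[t]} A^{\le}_{t} \xrightarrow{\sim} A^{\mathrm{F}}_{\hbar}, \qquad t \mapsto \hbar^2, \]
where $A^{\le}_t$ is $A^{\le}_\hbar$ with its variable renamed $t$, and likewise for modules. Reducing modulo $\hbar$ recovers your isomorphism $\gr^{\le}A \simeq \gr^{\mathrm{F}}A$. Alternatively, if all weights $d$ are even (as for the $\mathrm{ad}(\mathsf{h})$-grading on $D(G)$), then $\mathrm{F}_{2m+1}A = \mathrm{F}_{2m}A$. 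After reindexing by $m$, the map $a\hbar^j \mapsto a\hbar^{j+d/2}$ is then a genuine isomorphism of $\Bbbk[\hbar]$-algebras.

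The Poisson claim has the same defect. It is a shift in degree, not a scalar ``conventional factor'' that can be normalized away. As you observed, $[\mathrm{F}_pA, \mathrm{F}_qA] \subset \mathrm{F}_{p+q-2}A$. Hence the degree $-1$ bracket $\gr_p \times \gr_q \to \gr_{p+q-1}$ used in the paper vanishes identically on $\gr^{\mathrm{F}}A$, and no isomorphism can carry a nonzero bracket on $\gr^{\le}A$ to it.

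The correct statement follows in one line from $\Phi(\hbar c) = \hbar^2\Phi(c)$. For lifts $\tilde a, \tilde b \in A^{\le}_{\hbar}$ one has
\[ [\Phi\tilde a, \Phi\tilde b] = \Phi\bigl([\tilde a, \tilde b]\bigr) = \hbar^2\, \Phi\bigl(\hbar^{-1}[\tilde a, \tilde b]\bigr). \]
So the bracket $\hbar^{-1}[\cdot,\cdot] \bmod \hbar$ on $\gr^{\le}A$ corresponds to the degree $-2$ bracket $\hbar^{-2}[\cdot,\cdot] \bmod \hbar$ on $\gr^{\mathrm{F}}A$. In the even-weight case, it corresponds to the degree $-1$ bracket after halving the index.

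The paper's sketch (``the Poisson bracket arises from commutators in the Rees algebra'') passes over both points as well. Your write-up should commit to one of these corrected formulations rather than hedging between normalizations.
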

\begin{proof}
	The isomorphisms follow by a straightforward ``regrading'' via Definition \ref{def:Kazhdan-filtration}. See the discussions before \cite[Corollary 3.2.4]{Gin09} or \cite[Section 3.3]{Gin18}.
	
	Next, assume $\gr^{\leq} A$ is commutative, then so is $\gr^{\mathrm{F}} A$. Since the Poisson bracket arises from commutators in Rees algebra, we infer after a regrading that $\gr^{\leq} A \simeq \gr^{\mathrm{F}} A$ preserves (non-graded) Poisson structures.
\end{proof}

\subsection{Ore sets}\label{subsec:Ore-sets}
We will need the formalism of Ore sets and localizations; see \cite[Chapter 2, Section 1.6]{McR01}. Let $A$ be a ring. A submonoid of $(A, \cdot)$ is also said to be a \emph{multiplicative subset} of $A$.

\begin{definition}
	We say that a multiplicative subset $\mathcal{S}$ of $A$ satisfies the \emph{right (resp.\ left) Ore condition} if for all $a \in A$ and $s \in \mathcal{S}$, there exist $a' \in A$ and $s' \in \mathcal{S}$ such that
	\[ as' = sa' \quad \text{(resp.\ $s'a = a's$)}. \]
	When both the left and right Ore conditions are met, we say $\mathcal{S}$ is an \emph{Ore set} in $A$.
\end{definition}

\begin{remark}\label{rem:Ore-zero-divisor}
	For all Ore sets encountered in this article, their elements are never zero-divisors on either side (i.e.\ $as = 0 \iff a = 0 \iff sa = 0$ for all $a \in A$ and $s \in \mathcal{S}$). We make this assumption in the sequel.
\end{remark}

Given a multiplicative subset $\mathcal{S}$ of $A$ satisfying the right (resp.\ left) Ore condition, there exists a ring $\mathcal{S}^{-1} A$ together with a homomorphism $A \to \mathcal{S}^{-1} A$ such that
\begin{itemize}
	\item $\mathcal{S}$ is mapped to $(\mathcal{S}^{-1} A)^{\times}$,
	\item every ring homomorphism $f: A \to B$ such that $f(\mathcal{S}) \subset B^{\times}$ factors uniquely through $A \to \mathcal{S}^{-1} A$.  
\end{itemize}
We call $A \to \mathcal{S}^{-1} A$ the \emph{Ore localization} of $A$ with respect to $\mathcal{S}$. It is characterized by the universal property above, and can be constructed as a set of formal right (resp.\ left) fractions $as^{-1}$ (resp.\ $s^{-1} a$) where $a \in A$ and $s \in S$, modulo the familiar equivalence relations.

In particular, if $\mathcal{S}$ is an Ore set, the localizations constructed as right and left fractions are canonically isomorphic. In view of Remark \ref{rem:Ore-zero-divisor}, the homomorphism $A \to \mathcal{S}^{-1} A$ is injective.

\begin{proposition}\label{prop:Ore-Rees}
	Consider a filtered $\Bbbk$-algebra $A$. Let $\mathcal{R}$ be multiplicative subset of $A_{\hbar}$ consisting of nonzero homogeneous elements. Fix $c \in \Bbbk^{\times}$ and denote by $\mathcal{R}_c \subset A_{\hbar}|_{\hbar=c}$ the image of $\mathcal{R}$. Suppose that $\mathcal{R}_c$ contains no zero-divisors on either side, then the same holds for $\mathcal{R}$.
	
	In addition, suppose that $\mathcal{R}$ satisfies the right (resp.\ left) Ore condition. Then:
	\begin{enumerate}[(i)]
		\item $\mathcal{R}_c$ also satisfies the right (resp.\ left) Ore property. In particular, $\mathcal{R}_c$ is an Ore set if $\mathcal{R}$ is;
		\item $\mathcal{R}^{-1} A_{\hbar}$ is still a graded $\Bbbk[\hbar]$-algebra and gives rise to a filtration on its specialization at $\hbar = c$, which can be identified with $\mathcal{R}_c^{-1} A$.
	\end{enumerate}
\end{proposition}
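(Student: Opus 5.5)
The plan is to work directly with the images of homogeneous elements under the specialization map $\pi_c: A_{\hbar} \to A_{\hbar}|_{\hbar=c} \simeq A$. Since $c \in \Bbbk^{\times}$, the map $\sum_n a_n \hbar^n \mapsto \sum_n c^n a_n$ identifies $A_{\hbar}|_{\hbar = c}$ with $A$. Crucially, $\pi_c$ is injective on each homogeneous component $(\mathrm{F}_n A)\hbar^n$. Also, a homogeneous $a\hbar^n$ is zero iff $a = 0$, and products of homogeneous elements are homogeneous.

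For the zero-divisor claim, let $r = s\hbar^m \in \mathcal{R}$ with $s \in \mathrm{F}_m A$. Suppose $x \in A_{\hbar}$ satisfies $xr = 0$, and write $x = \sum_n x_n \hbar^n$. Then $xr = \sum_n x_n s \hbar^{n+m}$, so each $x_n s = 0$ in $A$. But $\pi_c(r) = c^m s$ and $c^m$ is a unit, so $s$ is not a zero-divisor in $A$. Hence $x_n = 0$ for all $n$, i.e.\ $x = 0$. The left-hand case is symmetric.

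For (i), take $a \in A$ and $s_c = \pi_c(r) \in \mathcal{R}_c$. Choose $n$ with $a \in \mathrm{F}_n A$, which is possible because the filtration is exhaustive. Apply the right Ore condition in $A_{\hbar}$ to $a\hbar^n$ and $r$: this gives $r' \in \mathcal{R}$ and $a' \in A_{\hbar}$ with $a\hbar^n r' = r a'$. Applying $\pi_c$ yields $c^n a\, \pi_c(r') = s_c\, \pi_c(a')$. Dividing by the unit $c^n$ gives the right Ore condition for $\mathcal{R}_c$. The left case is the same.

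For (ii), I first observe that since $\mathcal{R}$ consists of homogeneous non-zero-divisors, $\mathcal{R}^{-1}A_{\hbar}$ inherits a $\Z$-grading: assign $a r^{-1}$ the degree $\deg a - \deg r$ for homogeneous $a$. I would check well-definedness via the usual equivalence of fractions, choosing the common denominators provided by the Ore condition. These denominators are homogeneous, so homogeneity is preserved. The element $\hbar$ stays central and is not a zero-divisor; the latter holds because $\hbar$ is regular in $A_{\hbar} \subset A[\hbar]$ and localization at non-zero-divisors is flat.

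I then define $\mathrm{F}_n$ on $B := (\mathcal{R}^{-1}A_{\hbar})/(\hbar - c)$ as the image of the degree-$n$ component. Since the grading has $\hbar$ in degree $1$, multiplication by $\hbar$ sends degree $n$ to degree $n+1$. Hence these images form an ascending filtration, and exhaustiveness follows because every element is a sum of homogeneous ones.

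It remains to identify $B$ with $\mathcal{R}_c^{-1}A$. Localization commutes with the central quotient, which gives $B \simeq \pi_c(\mathcal{R})^{-1}(A_{\hbar}/(\hbar-c)) = \mathcal{R}_c^{-1}A$. I would verify this by universal properties in both directions. First, $A \to B$ inverts $\mathcal{R}_c$, so it factors through $\mathcal{R}_c^{-1}A$. Conversely, $A_{\hbar} \to A \to \mathcal{R}_c^{-1}A$ inverts $\mathcal{R}$, so it factors through $\mathcal{R}^{-1}A_{\hbar}$ and kills $\hbar - c$. The two composites are the identity by uniqueness.

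The main obstacle is the compatibility of the grading with the Ore equivalence relation in (ii). The point to check is that when $ar_1^{-1} = br_2^{-1}$, witnessed by some $u, v$ with $au = bv$ and $r_1u = r_2v \in \mathcal{R}$, one can take $u, v$ homogeneous. This works by projecting onto homogeneous components, since $r_1, r_2$ are homogeneous and the relevant product is a homogeneous element of $\mathcal{R}$.
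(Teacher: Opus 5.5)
Your proof is correct and follows essentially the same route as the paper. The zero-divisor claim and (i) are handled through homogeneous components in $A_{\hbar}$; for (ii), the grading on $\mathcal{R}^{-1}A_{\hbar}$, the fact that $\hbar$ is not a zero-divisor there, and the universal property of Ore localization do the work. The one variation is that you identify $(\mathcal{R}^{-1}A_{\hbar})/(\hbar - c)$ with $\mathcal{R}_c^{-1}A$ by building maps in both directions from universal properties, which is a clean way to carry out the kernel computation the paper leaves as routine.
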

\begin{proof}
	For the first part, suppose that $r \in \mathcal{R}$ and $rx = 0$ (or $xr = 0$) for some $x \in A_\hbar$. For all $n \in \Z$, the degree $n$ component $x_n$ of $x$ satisfies $rx_n = 0$ (or $x_n r = 0$). Taking images in $A_{\hbar}|_{\hbar = c}$ gives $x_n \in (\hbar - c)$, but it is easily seen that $(\hbar - c)$ contains no homogeneous elements except zero since $c \in \Bbbk^{\times}$, hence $x_n = 0$ for all $n$.
	
	For the second part, we only discuss the case when $\mathcal{R}$ meets the right Ore condition. By an argument of rescaling, we assume $c = 1$ without loss of generality.
	
	For (i), the multiplicativity and right Ore condition for $\mathcal{R}_c$ can be checked within $A_{\hbar}$.
	
	For (ii), the elements of $\mathcal{R}^{-1} A_{\hbar}$ are expressed as formal fractions $xr^{-1}$. Since every $r \in \mathcal{R}$ is homogeneous, the required grading is evident. Moreover, if $\hbar \cdot xr^{-1} = 0$ then $\hbar xr' = 0$ for some $r' \in \mathcal{R}$, thus $xr' = 0$ and $xr^{-1} = 0$ in $\mathcal{R}^{-1} A_{\hbar}$. This implies $\mathcal{R}^{-1} A_{\hbar}$ is the Rees algebra of its specialization at $\hbar = 1$.
	
	The universal property for $A_{\hbar} \to \mathcal{R}^{-1} A_{\hbar}$ yields a homomorphism $\mathcal{R}^{-1} A_{\hbar} \to \mathcal{R}_c^{-1} A$, which is clearly surjective. It is routine to show that its kernel equals $(\hbar - c)$, whence the required isomorphism.
\end{proof}

\section{Bi-Whittaker reduction}\label{sec:bi-Whittaker}
\subsection{Quantum Toda lattices}\label{subsec:W}
Let $G$ be a connected reductive $\CC$-group. We fix a maximal unipotent subgroup $N$ of $G$. Fix a non-degenerate character $\bpsi: \mathfrak{n} \to \Ga$, i.e.\ $\bpsi(E) \neq 0$ for each simple root vector $E$. Specifically, we can and do take
\begin{itemize}
	\item a $G$-invariant symmetric non-degenerate pairing $(\cdot, \cdot)$ on $\mathfrak{g}$, thus $\mathfrak{g} \simeq \mathfrak{g}^*$;
	\item a Borel pair $(B, T)$ of $G$ such that $N := \mathcal{R}_{\mathrm{u}}(B)$;
	\item a principal $\mathfrak{sl}(2)$-triple $(\mathsf{e}, \mathsf{h}, \mathsf{f})$ in $\mathfrak{g}$ such that $\mathsf{e}$ (resp.\ $\mathsf{h}$, $\mathsf{f}$) lands in $\mathfrak{n}^-$ (resp.\ $\mathfrak{t}$, $\mathfrak{n}$), and
	\[ \bpsi(\xi) = (\mathsf{e}, \xi), \quad \xi \in \mathfrak{n}. \]
\end{itemize}

Let $\mathrm{W} := \mathrm{W}(G, T)$ denote the Weyl group.

The $\CC$-algebra $D(G)$ is positively filtered by the order filtration $D(G)_{\leq \bullet}$. Equip $\mathcal{U}\mathfrak{g}$ with the PBW filtration, so that the homomorphism $\mathcal{U}\mathfrak{g} \to D(G)$ is filtered. Denote by $\mathcal{Z}(\mathfrak{g})$ the center of $\mathcal{U}\mathfrak{g}$.

Consider the right $G \times G$-action on $G$ given by
\[ g \cdot (x, y) = x^{-1} g y, \]
and its infinitesimal versions. Hereafter, the subscripts $\Le$ and $\Ri$ will be affixed to $G$ or associated objects to indicate the ``left'' and ``right'' copies, respectively. For example, $G_{\Le} \times G_{\Ri}$ acts on $G$, and its composition with $\mathrm{diag}: G \to G_{\Le} \times G_{\Ri}$ is just the adjoint action.

The action above induces a homomorphism of algebras
\[ \tau: \mathcal{U} \mathfrak{g}_{\Le} \otimes \mathcal{U} \mathfrak{g}_{\Ri} \to D(G). \]
The endomorphism $\left[ \tau(\mathrm{diag}(\mathsf{h})), \cdot \right]$ of $D(G)$ is locally finite, hence gives rise to a $\Z$-grading, readily seen to be compatible with $D(G)_{\leq \bullet}$. The resulting Kazhdan filtration on $D(G)$ is denoted as $\mathrm{F}_\bullet D(G)$; it is not positively filtered in general.

Next, denote by $\mathfrak{n}_{\Le}^{\bpsi}$ and $\mathfrak{n}_{\Ri}^{\bpsi}$ the images of the maps
\[ \xi \mapsto (\xi - \bpsi(\xi)) \otimes 1, \quad \xi \mapsto 1 \otimes (\xi - \bpsi(\xi)), \]
respectively, where $\xi \in \mathfrak{n}$. They are commuting Lie subalgebras of $\mathcal{U}\mathfrak{g}_{\Le} \otimes \mathcal{U}\mathfrak{g}_{\Ri}$, and we adopt the shorthand
\[ \mathfrak{n}_{\LeRi}^{\bpsi} := \mathfrak{n}_{\Le}^{\bpsi} + \mathfrak{n}_{\Ri}^{\bpsi}. \]
The natural $N_{\Le} \times N_{\Ri}$-action on $D(G)$ preserves $\tau(\mathfrak{n}_{\LeRi}^{\bpsi})$. For brevity, we omit $\tau$ from the notation.

\begin{definition}
	With the notations above, set
	\[ \Wh := D(G) \myfatslash \mathfrak{n}_{\LeRi}^{\bpsi} := \left( D(G)/D(G)\mathfrak{n}_{\LeRi}^{\bpsi} \right)^{N_{\Le} \times N_{\Ri}}. \]
	This subquotient inherits the structure of algebra from $D(G)$, also known as the \emph{quantum Toda lattice} associated with $G$. We endow $\Wh$ with the filtration induced from $\mathrm{F}_\bullet D(G)$, denoted as $\mathrm{F}_\bullet \Wh$.
\end{definition}

Grade $\mathcal{U}\mathfrak{g}$ by the weights of $[\mathsf{h}, \cdot]$, and consider the resulting Kazhdan filtration. The natural homomorphism $\mathcal{Z}(\mathfrak{g}) \to D(G)$ induces an injection $\mathcal{Z}(\mathfrak{g}) \hookrightarrow \Wh$ of filtered algebras \cite[Section 1.2]{Gin18}.

In \textit{loc.\ cit.}, the group $G$ is assumed to be semisimple and simply connected. The construction of $\Wh$ can be understood as the \emph{bi-Whittaker} case of quantum Hamiltonian reduction, recalled below for later use.

\begin{definition}[\protect{\cite[Section 3.1]{Gin18}}]
	\label{def:quantum-Hamiltonian-reduction}
	Let $\mathfrak{h}$ be a reductive Lie algebra over $\CC$, and let $A$ be a finitely generated left Noetherian $\CC$-algebra endowed with an algebra homomorphism $\mathcal{U}\mathfrak{h} \to A$, such that the commutator $[x, \cdot]$ is a locally finite endomorphism on $A$ for all $x \in \mathfrak{h}$.
	
	Let $\mathfrak{k}$ be a nilpotent Lie subalgebra of $\mathcal{U}\mathfrak{h}$. Then $A/A\mathfrak{k}$ is an $A$-module on which $\mathfrak{k}$ acts locally nilpotently, and we define 
	\[ A \myfatslash \mathfrak{k} := (A/A\mathfrak{k})^{[\mathfrak{k}, \cdot]} \]
	where the superscript denotes the subspace annihilated by all $[x, \cdot]$ with $x \in \mathfrak{k}$.
\end{definition}

There is an analogous version with left and right swapped.

The following result extracted from \textit{loc.\ cit.} is also derived from this general framework.

\begin{proposition}\label{prop:W-hbar-Noetherian}
	The Rees $\CC[\hbar]$-algebra $\Wh_\hbar$ attached to $\mathrm{F}_\bullet \Wh$ (see Subsection \ref{subsec:fil-completions}) is Noetherian.
\end{proposition}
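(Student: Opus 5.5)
We prove that $\Wh_\hbar$ is Noetherian by describing it as a quantum Hamiltonian reduction at the Rees level and identifying its associated graded algebra with a finitely generated commutative algebra, namely $\CC[\mathfrak{Z}]$. A graded Noetherian argument then lifts Noetherianity from $\gr$ to $\Wh_\hbar$.

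\textbf{Step 1: Rees-level reduction.} By Proposition \ref{prop:regrading}, the Rees algebra $D(G)^{\mathrm{F}}_\hbar$ of the Kazhdan filtration is isomorphic, as an ungraded $\CC[\hbar]$-algebra, to the Rees algebra $D(G)^{\leq}_\hbar$ of the order filtration. The latter is the algebra of asymptotic differential operators, which is Noetherian. Under Kazhdan regrading, $\xi - \bpsi(\xi)$ becomes homogeneous, since $\mathsf{f}$-type root vectors of weight $-2$ pair with $\mathsf{e}$. Hence the left ideal generated by $\hbar$-rescaled Whittaker conditions is graded. We then show that
\[ \Wh_\hbar \simeq \bigl( D(G)^{\mathrm{F}}_\hbar / D(G)^{\mathrm{F}}_\hbar \mathfrak{n}^{\bpsi}_{\LeRi,\hbar} \bigr)^{N_{\Le} \times N_{\Ri}}, \]
where the induced filtration on the reduction matches the reduction of the filtration. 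This requires that taking $N_\Le\times N_\Ri$-invariants commutes with passing to filtered pieces. The invariants functor is exact on locally unipotent modules only at the level of $\mathfrak{n}$-cohomology, so this step needs the vanishing of higher cohomology.

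\textbf{Step 2: Associated graded.} We show $\gr^{\mathrm{F}}\Wh \simeq \CC[\mathfrak{Z}]$. Concretely, $\gr$ of the reduction is the classical Hamiltonian reduction $\CC[\bm{\mu}^{-1}(\mathsf{e}+\mathfrak{b}^{\perp}\text{-type slice})]^{N\times N}$, which is the bi-Whittaker reduction of $\mathrm{T}^*G$ and equals the universal centralizer. The key input is that $N_\Le\times N_\Ri$ acts freely on the level set $\bm{\mu}^{-1}(\bpsi,\bpsi)$, with quotient $\mathfrak{Z}$ via the Kostant slice. The moment map is flat there, so the Koszul complex of the Whittaker conditions is a resolution. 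This gives the vanishing of higher $\mathfrak{n}$-cohomology on the graded level, and a spectral sequence argument then shows that $\gr$ commutes with reduction. This is the main obstacle; I would follow \cite[Section 3]{Gin18}, using the contracting $\Gm$-action of Kazhdan type to control the spectral sequence despite the filtration being unbounded in both directions.

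\textbf{Step 3: Conclusion.} Since $\mathfrak{Z}$ is affine of finite type, $\gr \Wh=\Wh_\hbar/\hbar\Wh_\hbar$ is a finitely generated commutative algebra, hence Noetherian. The Kazhdan $\Gm$-action on $\mathfrak{Z}$ has positive weights on a generating set, since $\CC[\mathfrak{Z}]$ is nonnegatively graded up to regrading; this is the same regrading trick as in Proposition \ref{prop:regrading} applied to $\Wh_\hbar$ itself. So $\Wh_\hbar$ is identified with the Rees algebra of a filtration on $\Wh$ that is compatible with a grading and is positive. For a positively graded algebra over $\CC[\hbar]$ with $\hbar$ in positive degree and Noetherian quotient by $\hbar$, the standard graded Nakayama argument lifts generators of graded one-sided ideals. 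Hence $\Wh^{\leq}_\hbar$ is Noetherian, and therefore so is $\Wh_\hbar\simeq\Wh^{\leq}_\hbar$.

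Finally, for nonsimply connected $G$, we reduce to $G_{\mathrm{SC}}\times Z_G^\circ$ via isogeny. The algebra $\Wh$ for $G$ is obtained as invariants under a finite group, or as a finite extension, of that for the cover. Noetherianity passes along such finite extensions by the Hilbert–Noether invariants argument, since the group is finite and we are in characteristic $0$.
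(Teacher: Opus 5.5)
\textbf{There is a genuine gap in Step 3, which is the step that actually draws the conclusion.}

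Your claim that the Kazhdan $\Gm$-action on $\mathfrak{Z}$ has positive weights on a generating set is false when $G$ is not a torus. Take $d$ as in Lemma~\ref{prop:d}. By Lemma~\ref{prop:bi-invariant-grading}, the symbols of the powers $d^k$ are nonzero elements of $\gr \Wh \simeq \CC[\mathfrak{Z}]$ of degree $-k\lrangle{\lambda, 4\check{\rho}} \to -\infty$. Meanwhile $\mathcal{Z}(\mathfrak{g})$ sits in positive degrees, so $\gr\Wh$ is unbounded in both directions.

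Nor can you ``regrade'' as in Proposition~\ref{prop:regrading}. That trick needs an auxiliary $\Z$-grading on $\Wh$ whose Kazhdan filtration is $\mathrm{F}_\bullet \Wh$. The natural candidate, the $\mathrm{ad}(\mathsf{h})$-grading of $D(G)$, does not descend to $\Wh$, because $\xi - \bpsi(\xi)$ is not $\mathrm{ad}(\mathsf{h})$-homogeneous. That is exactly why one passes to the Kazhdan filtration in the first place. So the positive filtration on $\Wh$ you invoke is never actually constructed.

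Without a lower bound on degrees, the graded Nakayama argument fails. Let $I$ be an $\hbar$-saturated graded left ideal and $J$ the ideal generated by lifts of generators of its image in $\gr\Wh$. The argument only yields $I = J + \hbar^k I$ for all $k$, and this forces $I = J$ only if degrees are bounded below or the filtration is complete. Completeness also fails here: $d \in \mathrm{F}_{-1}\Wh$ but $1-d \notin \Wh^{\times}$, as shown in the proof of Proposition~\ref{prop:non-Zariskian}. So Noetherianity of $\gr\Wh$ cannot be lifted to $\Wh_\hbar$ by your mechanism.

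\textbf{The missing idea} is to transfer Noetherianity from $D(G)_\hbar$ through the reduction, rather than lift it from $\gr\Wh$. This is what the paper does, in two steps.
\begin{enumerate}
\item Your Step 1 is \cite[Lemma 3.3.2]{Gin18}. It gives $\Wh_\hbar \simeq D(G)_\hbar \myfatslash \mathfrak{n}^{\bpsi}_{\LeRi}$, where $D(G)_\hbar \simeq D(G)^{\leq}_\hbar$ is Noetherian, as you note.
\item \cite[Proposition 3.1.4 (iii)]{Gin18} says that a quantum Hamiltonian reduction $A \myfatslash \mathfrak{k}$ of a Noetherian algebra $A$ is Noetherian. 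The reason is the Skryabin equivalence. A left ideal $I$ of $A \myfatslash \mathfrak{k} = (A/A\mathfrak{k})^{\mathfrak{k}}$ is recovered from the $A$-submodule $AI \subset A/A\mathfrak{k}$ as $(AI)^{\mathfrak{k}} = I$. Hence ascending chains of left ideals are controlled by ascending chains of submodules of the Noetherian $A$-module $A/A\mathfrak{k}$, and symmetrically on the right.
\end{enumerate}
With this in place, your Step 2 is needed only insofar as it underlies Step 1. The isogeny reduction is also unnecessary, since these results belong to the general framework of quantum Hamiltonian reduction.
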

\begin{proof}
	The $\CC$-algebra $D(G)$ is finitely generated and Noetherian \cite[Proposition 1.4.6]{HTT08}. Denote by $D(G)_{\hbar}$ the Rees algebra associated with the Kazhdan filtration.

	By \cite[Lemma 3.3.2]{Gin18}, $\Wh_{\hbar}$ is isomorphic to $D(G)_\hbar \myfatslash \mathfrak{n}_{\LeRi}^{\bpsi}$. By the same result, $D(G)_\hbar$ is Noetherian, hence from \cite[Proposition 3.1.4 (iii)]{Gin18} we conclude that $D(G)_\hbar \myfatslash \mathfrak{n}_{\LeRi}^{\bpsi}$ is Noetherian.
\end{proof}

Consequently, $\Wh$, $\gr\Wh$ and the completion $\widehat{\Wh}$ are all Noetherian by Lemma \ref{prop:Ah-Noetherian}.

\subsection{Isogenies}\label{subsec:isogenies}
Retain the conventions of Subsection \ref{subsec:W} and let $A_G$ be the connected center of $G$. There is a canonical short exact sequence
\begin{equation}\label{eqn:G-SES}
	1 \to Z \to G' \xrightarrow{\pi} G \to 1, \quad G' := G_{\mathrm{SC}} \times A_G,
\end{equation}
where $Z$ is a finite central subgroup of $G'$. Equip $G'$ with the data $B'=T'N$ where $T' = \pi^{-1}(T)$, $(\mathsf{e}, \mathsf{h}, \mathsf{f})$, etc., so that $\pi$ preserves all these structures.

To $G$ and $G'$ are attached the quantum Toda lattices $\Wh_G$ and $\Wh_{G'}$, respectively. We have a canonical isomorphism
\begin{equation}\label{eqn:W-isogeny-prep}
	\Wh_{G'} \simeq \Wh_{G_{\mathrm{SC}}} \otimes D(A_G),
\end{equation}
compatibly with the homomorphisms from $\mathcal{Z}(\mathfrak{g}') \simeq \mathcal{Z}(\mathfrak{g}_{\mathrm{SC}}) \otimes \Sym \mathfrak{a}_G$; the right hand side of \eqref{eqn:W-isogeny-prep} is the tensor product of filtered algebras, where $D(A_G)$ carries the order filtration.

There is a natural $Z$-action on $D(G')$, namely the one induced by translation. The action passes to $\Wh_{G'}$ and preserves $\mathrm{F}_\bullet \Wh_{G'}$.

Since $\pi$ is finite étale, it induces a homomorphism of filtered algebras
\begin{equation}\label{eqn:W-isogeny}
	\Wh_G \simeq \Wh_{G'}^{Z\text{-inv}} \subset \Wh_{G'}.
\end{equation}

We record another fact related to translation for later use.

\begin{proposition}\label{prop:change-of-data}
	Given $G$, a non-degenerate invariant form $(\cdot, \cdot)$ on $\mathfrak{g}$ and principal $\mathfrak{sl}(2)$-triples $(\mathsf{e}_i, \mathsf{h}_i, \mathsf{f}_i)$ giving rise to maximal unipotent subgroups $N_i$ and non-degenerate characters $\bpsi_i: \mathfrak{n}_i \to \Ga$ for $i = 1, 2$, there is an isomorphism between the corresponding filtered algebras $\Wh_1 \rightiso \Wh_2$, which is canonical modulo $Z_G$-action.
\end{proposition}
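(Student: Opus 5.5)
The plan is to produce the isomorphism by conjugating one set of data onto the other inside $G$, and then to check that conjugation respects every structure used to define $\Wh$ and $\mathrm{F}_\bullet \Wh$.

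\textbf{Step 1: conjugacy of principal $\mathfrak{sl}(2)$-triples.} By Kostant's theorem, principal $\mathfrak{sl}(2)$-triples are conjugate under $G_{\mathrm{ad}} = G/Z_G$. Since $G \to G_{\mathrm{ad}}$ is surjective on $\CC$-points, there is $g \in G$ with $\Ad(g)(\mathsf{e}_1, \mathsf{h}_1, \mathsf{f}_1) = (\mathsf{e}_2, \mathsf{h}_2, \mathsf{f}_2)$.

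\textbf{Step 2: transport of $N$ and $\bpsi$.} I then check that this $g$ carries $N_1$ onto $N_2$ and $\bpsi_1$ onto $\bpsi_2$.
\begin{itemize}
	\item In the setup, $\mathfrak{n}_i$ is the sum of the positive eigenspaces of $\mathrm{ad}(\mathsf{h}_i)$ in the normalization where $\mathsf{f}_i \in \mathfrak{n}_i$. Hence $\Ad(g)\mathfrak{n}_1 = \mathfrak{n}_2$, and therefore $gN_1g^{-1} = N_2$.
	\item The form $(\cdot,\cdot)$ is $\Ad$-invariant, so for $\xi \in \mathfrak{n}_1$ we get $\bpsi_2(\Ad(g)\xi) = (\mathsf{e}_2, \Ad(g)\xi) = (\mathsf{e}_1, \xi) = \bpsi_1(\xi)$.
\end{itemize}

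\textbf{Step 3: the automorphism of $D(G)$.} Let $c_g : x \mapsto gxg^{-1}$ on $G$. It induces an automorphism $c_g^*$ of $D(G)$, and I verify three compatibilities.
\begin{itemize}
	\item $c_g$ intertwines the $G_\Le \times G_\Ri$-action with its twist by $\Ad(g) \times \Ad(g)$. So $c_g^* \circ \tau = \tau \circ (\Ad(g) \otimes \Ad(g))$, up to the sign and direction conventions, which I would check.
	\item Consequently $c_g^*$ sends $\tau(\mathfrak{n}^{\bpsi_1}_{\LeRi})$ to $\tau(\mathfrak{n}^{\bpsi_2}_{\LeRi})$, and it intertwines the $N_{1,\Le}\times N_{1,\Ri}$-action with the $N_{2,\Le}\times N_{2,\Ri}$-action.
	\item $c_g^*$ preserves the order filtration. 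It also sends the $[\tau(\mathrm{diag}\,\mathsf{h}_1),\cdot]$-grading to the $[\tau(\mathrm{diag}\,\mathsf{h}_2),\cdot]$-grading, so by Definition \ref{def:Kazhdan-filtration} it is an isomorphism of Kazhdan-filtered algebras.
\end{itemize}
Passing to the quotient by the left ideal and then taking invariants gives a filtered algebra isomorphism $\Wh_1 \rightiso \Wh_2$. The inverse comes from $g^{-1}$, so this map is an isomorphism.

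\textbf{Step 4: canonicity modulo $Z_G$.} This is the main obstacle. The element $g$ is unique only up to $Z_G \cdot C$, where $C$ is the stabilizer of the triple $(\mathsf{e}_1,\mathsf{h}_1,\mathsf{f}_1)$ in $G$. For a principal triple, the centralizer in $G_{\mathrm{ad}}$ is trivial: centralizing $\mathsf{h}_1$ forces membership in $T$, and centralizing $\mathsf{e}_1$ then forces membership in the center. So $C \subset Z_G$, and the ambiguity is exactly an element $z \in Z_G$.

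For $z \in Z_G$, the conjugation $c_z$ is trivial on $G$. The remaining ambiguity is the action of $Z_G$ on $\Wh_2$ by translations, which is the ``$Z_G$-action'' in the statement, in the same sense as the $Z$-action in Subsection \ref{subsec:isogenies}. I would make sure the isomorphism is phrased so that this is the only residual freedom, if necessary by allowing left or right translation twists instead of pure conjugation. The careful point to verify is that the resulting map depends on $g$ only through its image in $G_{\mathrm{ad}}$, modulo this translation action.
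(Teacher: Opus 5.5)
Your proposal is correct and is the paper's argument: choose $g \in G$ with $\Ad(g)(\mathsf{e}_1, \mathsf{h}_1, \mathsf{f}_1) = (\mathsf{e}_2, \mathsf{h}_2, \mathsf{f}_2)$, which is unique up to $Z_G$ and transports $(N_1, \bpsi_1)$ to $(N_2, \bpsi_2)$. Then conjugation $x \mapsto gxg^{-1}$ (the paper's ``left and right multiplication by $g$'') induces the filtered isomorphism $\Wh_1 \rightiso \Wh_2$.

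Two small remarks. First, the hedging in your Step 4 is unnecessary: $c_{gz} = c_g$ for $z \in Z_G$, so the conjugation isomorphism does not depend on the choice of $g$, which is at least as strong as the claim. Second, with $\mathsf{f}_i \in \mathfrak{n}_i$, the algebra $\mathfrak{n}_i$ is the sum of the \emph{negative} $\mathrm{ad}(\mathsf{h}_i)$-eigenspaces; this sign slip is harmless.
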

\begin{proof}
	 There exists $g \in G$ such that $\Ad(g)(\mathsf{e}_1, \mathsf{h}_1, \mathsf{f}_1) = (\mathsf{e}_2, \mathsf{h}_2, \mathsf{f}_2)$; in particular $\Ad(g)N_1 = N_2$ and $\bpsi_2 \circ \Ad(g) = \bpsi_1$. Such a $g$ is unique up to $Z_G$. Left and right multiplication by $g$ induces the desired isomorphism.
\end{proof}

The datum $(\cdot, \cdot)$ above is immaterial, since it is unique up to $\CC^{\times}$ on each simple factor of $\mathfrak{g}$.

\subsection{Bi-invariant functions}
Denote by $w_0$ the longest element of $\mathrm{W}$, and let $\rho \in \mathbf{X}^*(T)$ (resp.\ $\check{\rho} \in \mathbf{X}_*(T)$) be the half-sum of $B$-positive roots (resp.\ coroots). Denote by $\mathbf{X}^*(T)^+$ the subset of $B$-dominant elements in $\mathbf{X}^*(T)$. Let $V_\lambda$ be a simple $G$-module of $B$-highest weight $\lambda \in \mathbf{X}^*(T)^+$, then $V_\lambda^* \simeq V_{-w_0 \lambda}$.

In view of the algebraic Peter--Weyl theorem, taking matrix coefficients yields
\begin{equation}\label{eqn:bi-invariant-sum}
	\CC[G]^{N_{\Le} \times N_{\Ri}} \simeq \bigoplus_{\lambda \in \mathbf{X}^*(T)^+} \left( V_{-w_0 \lambda}^N \otimes V_\lambda^N \right).
\end{equation}

\begin{lemma}
	The subalgebra $\CC[G]^{N_{\Le} \times N_{\Ri}}$ of $D(G)$ injects into $\Wh = D(G) \myfatslash \mathfrak{n}_{\LeRi}^{\bpsi}$.
\end{lemma}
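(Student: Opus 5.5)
We must show that the composite $\CC[G]^{N_{\Le}\times N_{\Ri}} \hookrightarrow D(G) \to D(G)/D(G)\mathfrak{n}_{\LeRi}^{\bpsi}$ is injective and lands in the $N_{\Le}\times N_{\Ri}$-invariants. The second point is immediate: a bi-$N$-invariant function $f$ satisfies $[\xi, f] = \xi(f) = 0$ for $\xi \in \mathfrak{n}_{\Le}$ and $\mathfrak{n}_{\Ri}$, and the scalar shift by $\bpsi(\xi)$ does not affect commutators. Hence the image of $f$ is annihilated by all $[\xi - \bpsi(\xi), \cdot]$, so it lies in $\Wh$. Moreover $f \mapsto f \bmod D(G)\mathfrak{n}_{\LeRi}^{\bpsi}$ is multiplicative on this commutative subalgebra, because $f' \cdot D(G)\mathfrak{n}^{\bpsi}_{\LeRi} \subset D(G)\mathfrak{n}^{\bpsi}_{\LeRi}$.

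For injectivity, I would realize $D(G)/D(G)\mathfrak{n}_{\LeRi}^{\bpsi}$ as a module on which $D(G)\mathfrak{n}^{\bpsi}_{\LeRi}$ acts trivially, and check that functions act faithfully there. The plan is to use the big cell. Multiplication gives an open immersion $N^- \times T \times N \to G$ onto $N^- T N$; up to relabeling by $w_0$, this is $Bw_0B$ translated. The left and right $N$-actions on $G$ are then free on a dense open subset $U$ isomorphic to $N \times T \times N$, where the two copies of $N$ act by translation on the outer factors. Over $U$ we have $D(U) \simeq D(N)\otimes D(T)\otimes D(N)$, with $\mathfrak{n}_{\Le}$ and $\mathfrak{n}_{\Ri}$ acting through the invariant vector fields on the respective $N$ factors. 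Hence
\[ D(U)/D(U)\mathfrak{n}^{\bpsi}_{\LeRi} \simeq \left(D(N)/D(N)\mathfrak{n}^{\bpsi}\right)\otimes D(T)\otimes \left(D(N)/D(N)\mathfrak{n}^{\bpsi}\right). \]
Each outer factor is free over $\CC[N]$, spanned by the class of $1$ (it is the $D$-module $\mathscr{O}_N e^{\bpsi}$), so the whole quotient is a free $\CC[U]$-module containing $\CC[U]\otimes_{\CC[T]} D(T)$.

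Now suppose $f \in \CC[G]^{N\times N}$ maps to zero in $D(G)/D(G)\mathfrak{n}^{\bpsi}_{\LeRi}$. Localizing to $U$, which is flat since $D(U) = \CC[U]\otimes_{\CC[G]}D(G)$ and the restriction is compatible with the quotient, $f$ becomes zero in $D(U)/D(U)\mathfrak{n}^{\bpsi}_{\LeRi}$. By the freeness just described, $f|_U = 0$, and since $G$ is irreducible, $f = 0$.

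The main obstacle I expect is justifying that $D(U)/D(U)\mathfrak{n}^{\bpsi}_{\LeRi}$ is torsion-free over $\CC[U]$, i.e.\ that $f\cdot 1$ is nonzero there. This needs the product decomposition of $U$ to respect both $N$-actions simultaneously. For the right copy, I would choose $U = N w_0 T N$, on which $N_{\Le}\times N_{\Ri}$ acts freely by $x^{-1}gy$, and identify the vector fields accordingly. As a simpler alternative, I would use the PBW/order-filtration argument: $\gr$ of the quotient surjects onto $\CC[T^*G]/(\text{symbols of }\mathfrak{n}^{\bpsi}_{\LeRi})$, and functions in order $0$ survive because the symbols $\bm{\mu}_{\Le}|_{\mathfrak n}, \bm{\mu}_{\Ri}|_{\mathfrak n}$ form a regular sequence, as the moment map is flat over $U$.
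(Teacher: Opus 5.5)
Your argument is correct and is essentially the paper's: the paper proves $\CC[G]\cap D(G)\mathfrak{n}_{\LeRi}^{\bpsi}=\{0\}$ by restricting to $Bw_0B$, where $N_{\Le}\times N_{\Ri}$ acts freely, and using principal symbols (your ``alternative''), whereas your identification $D(U)/D(U)\mathfrak{n}_{\LeRi}^{\bpsi}\simeq\CC[N]\otimes D(T)\otimes\CC[N]$ for $U=N\dot{w}_0TN$ makes the same point explicitly (no flatness is needed, only the restriction map $D(G)\to D(U)$). One small slip: multiplicativity rests on $D(G)\mathfrak{n}_{\LeRi}^{\bpsi}\cdot f'\subset D(G)\mathfrak{n}_{\LeRi}^{\bpsi}$, which holds because $[\mathfrak{n}_{\LeRi}^{\bpsi},f']=0$, not on the trivially true inclusion with $f'$ on the left; in any case multiplicativity is already built into the algebra structure of $\Wh$.
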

\begin{proof}
	In fact, $\CC[G] \cap D(G) \mathfrak{n}_{\LeRi}^{\bpsi} = \{0\}$ inside $D(G)$. This is clear by consideration of principal symbols of differential operators restricted to $Bw_0 B$.
\end{proof}

The adjoint $\mathsf{h}$-action on $D(G)$ also leaves $\CC[G]^{N_{\Le} \times N_{\Ri}}$ invariant. The weights can thus be read from the isomorphism above.

\begin{lemma}\label{prop:bi-invariant-grading}
	For each nonzero $v^* \otimes v$ in the summand indexed by $\lambda \in \mathbf{X}^*(T)^+$, its weight under the adjoint $\mathsf{h}$-action is $-\lrangle{\lambda, 4\check{\rho}} \in \Z_{\leq 0}$.
	
	As a consequence, for all $n \in \Z$, the image of $v^* \otimes v$ in $\CC[G]^{N_{\Le} \times N_{\Ri}}$ belongs to $\mathrm{F}_n \Wh$ if and only if $n \geq -\lrangle{\lambda, 4\check{\rho}}$.
\end{lemma}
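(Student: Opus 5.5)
We first determine the weight of $v^* \otimes v$ under the adjoint action of $\mathsf{h}$ on $D(G)$, i.e.\ under $[\tau(\mathrm{diag}(\mathsf{h})), \cdot]$, and then read off the Kazhdan filtration level from Definition \ref{def:Kazhdan-filtration}.

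\textbf{The weight.} For $f \in \CC[G]$, the commutator $[\tau(\mathrm{diag}(\xi)), f]$ is the derivative of $f$ along the adjoint action, i.e.\ along $g \mapsto x^{-1} g x$ infinitesimally. Under \eqref{eqn:bi-invariant-sum}, the summand for $\lambda$ is spanned by matrix coefficients $g \mapsto \lrangle{v^*, g v}$ with $v \in V_\lambda^N$ and $v^* \in (V_\lambda^*)^N$. Here $v$ is a highest weight vector of weight $\lambda$. Since $V_\lambda^* \simeq V_{-w_0\lambda}$, the $N$-invariant $v^*$ is the highest weight vector there, so it has weight $-w_0\lambda$ as a functional; in other words it pairs nontrivially only with the lowest weight space of $V_\lambda$, of weight $w_0\lambda$.

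Conjugation acts on the matrix coefficient by $\lrangle{v^*, x^{-1} g x v}$. Differentiating along $x = \exp(t\mathsf{h})$, the function acquires $\mathsf{h}$-weight $\pm(\lrangle{\lambda, \mathsf{h}} - \lrangle{w_0\lambda, \mathsf{h}})$, with the sign dictated by the right-action convention. Since $\mathsf{h} = 2\check{\rho}$ up to the sign fixed by $\mathsf{f} \in \mathfrak{n}$, $\mathsf{e} \in \mathfrak{n}^-$, and $w_0\check{\rho} = -\check{\rho}$, this gives $\pm\lrangle{\lambda, 4\check{\rho}}$. The main care needed is the sign bookkeeping. Because $\mathsf{e} \in \mathfrak{n}^-$ while $[\mathsf{h}, \mathsf{e}] = 2\mathsf{e}$, the positive roots pair negatively with $\mathsf{h}$, so $\mathsf{h} = -2\check{\rho}$. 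Combined with the convention $g\cdot(x,y) = x^{-1}gy$, this should yield $-\lrangle{\lambda, 4\check{\rho}} \le 0$, and I would verify the sign on $\mathrm{SL}(2)$ with $\lambda = \rho$.

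\textbf{The filtration level.} The element $f = v^* \otimes v$ is a function, so it has order exactly $0$ in $D(G)_{\le\bullet}$ when $f \ne 0$, and it is homogeneous of weight $d = -\lrangle{\lambda, 4\check{\rho}}$. By Definition \ref{def:Kazhdan-filtration},
\[ f \in \mathrm{F}_n D(G) \iff 0 \le \lfloor (n-d)/2 \rfloor \iff n \ge d. \]
It remains to pass from $D(G)$ to the quotient filtration on $\Wh$. The induced filtration is the image filtration, so one direction is immediate: $n \ge d$ implies the image lies in $\mathrm{F}_n\Wh$. The converse needs the most care, because some representative $f + u$ with $u \in D(G)\mathfrak{n}_{\LeRi}^{\bpsi}$ might lie in a smaller piece of the filtration.

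For this converse I would pass to the associated graded. The element $\xi - \bpsi(\xi)$ has principal symbol $\xi - \bpsi(\xi)$ in the Kazhdan grading, since $\bpsi(\xi)$ is nonzero only on simple root vectors, whose $\mathsf{h}$-weight is exactly $-2$; this makes the constraints homogeneous. Then $\gr D(G)\mathfrak{n}_{\LeRi}^{\bpsi}$ is the ideal of the moment-level set $\bm{\mu}^{-1}(\mathsf{e}+\mathfrak{n}^\perp)$, using Ginzburg's results \cite[Lemma 3.3.2]{Gin18} that reduction commutes with Rees and $\gr$. A nonzero function $f$ does not vanish on that level set, because it projects onto $G$ surjectively (it is a torsor over $G$); this is the same principal-symbol reasoning as in the injectivity lemma above. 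Hence the symbol of $f$ survives in $\gr_d \Wh$, so $f \notin \mathrm{F}_{d-1}\Wh$. The nonpositivity $d \in \Z_{\le 0}$ follows from the dominance of $\lambda$.
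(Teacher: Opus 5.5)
Your weight computation is the paper's argument. On the $\lambda$-summand the adjoint $\mathsf{h}$-action is that of $\mathsf{h}\otimes 1 + 1\otimes \mathsf{h}$ on $V_{-w_0\lambda}^N \otimes V_\lambda^N$. One then uses $w_0\mathsf{h} = -\mathsf{h}$ and $\mathsf{h} = -2\check{\rho}$, which is forced by $\mathsf{e}\in\mathfrak{n}^-$. You need not hedge on the sign. Take $\tau$ to be the derivative of the right action $g\cdot(x,y) = x^{-1}gy$. Then $\frac{\dd}{\dd t}\lrangle{v^*, e^{-t\mathsf{h}} g e^{t\mathsf{h}} v}\big|_{t=0} = \lrangle{\lambda - w_0\lambda, \mathsf{h}} \lrangle{v^*, gv}$, and $\lrangle{\lambda - w_0\lambda, -2\check{\rho}} = -\lrangle{\lambda, 4\check{\rho}}$.

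For the second part, the paper only says it is clear from the definition of Kazhdan filtrations, which literally covers only $\mathrm{F}_\bullet D(G)$. You are right that $\mathrm{F}_\bullet \Wh$ is an induced quotient filtration. So the ``only if'' direction amounts to $f \notin \mathrm{F}_{d-1} D(G) + D(G)\mathfrak{n}_{\LeRi}^{\bpsi}$. Passing to $\gr^{\mathrm{F}}$ via \cite[Lemma 3.3.2]{Gin18} (the isomorphism the paper later calls $\delta$) is a sound way to prove this, and here your argument is more complete than the paper's.

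However, one step is false as written. The level set $\Lambda := \bm{\mu}_{\Le}^{-1}(\bpsi + \mathfrak{n}^\perp) \cap \bm{\mu}_{\Ri}^{-1}(\bpsi + \mathfrak{n}^\perp)$ neither surjects onto $G$ nor is a torsor over $G$. Suppose $t \in T$ and $\alpha(t) \neq 1$ for some simple root $\alpha$. Then $\Ad^*(t)$ rescales the value of a functional on $E_\alpha$ by $\alpha(t)^{\pm 1}$. Hence no $x \in \bpsi + \mathfrak{n}^\perp$ satisfies $\Ad^*(t)x \in \bpsi + \mathfrak{n}^\perp$, and the fiber over $t$ is empty.

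What is true is the following. The fiber over $g$ is an intersection of two affine subspaces of $\mathfrak{g}^*$, whose linear parts are the annihilators of $\mathfrak{n}$ and of its $g$-conjugate. For $g \in Bw_0B$ these two subalgebras meet trivially, as in the proof of Proposition \ref{prop:connection-big-cell}. So the linear parts span $\mathfrak{g}^*$ and the fiber is nonempty. Hence the image of $\Lambda$ contains the dense open cell $Bw_0 B$ (compare Lemma \ref{prop:big-cell}). Therefore a nonzero $f \in \CC[G]$ does not vanish identically on $\Lambda$.

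That is all you need. It shows $f$ is not in the ideal generated by the symbols $\xi - \bpsi(\xi)$. So the class of $f$ in $\gr^{\mathrm{F}}_d$ of $D(G)/D(G)\mathfrak{n}_{\LeRi}^{\bpsi}$ is nonzero, which gives $f \notin \mathrm{F}_{d-1} D(G) + D(G)\mathfrak{n}_{\LeRi}^{\bpsi}$. Your stronger assertion, that this ideal is the full ideal of $\Lambda$, is not needed. With this repair the proof goes through.
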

\begin{proof}
	For the first part, the $\mathsf{h}$-action is just that of $\mathsf{h} \otimes 1 + 1 \otimes \mathsf{h}$. It remains to note that $w_0 \mathsf{h} = -\mathsf{h}$, and $\mathsf{h}$ is the derivative of $-2\check{\rho}: \Gm \to T$ (see eg.\ \cite[(2.3)]{Gr97}). The negative sign is present because the $\mathsf{e}$ in the $\mathfrak{sl}(2)$-triple lands in $\mathfrak{n}^-$ .
	
	The second part is then clear by definition of Kazhdan filtrations.
\end{proof}

\begin{definition}
	Let $\mathcal{I}^\natural \subset \CC[G]$ be the defining ideal of $G \smallsetminus Bw_0 B$, and put
	\[ \mathcal{I} := \mathcal{I}^\natural \cap \CC[G]^{N_{\Le} \times N_{\Ri}}. \]
\end{definition}

\begin{lemma}\label{prop:d}
	Take $\lambda \in \mathbf{X}^*(T)$ such that $\lrangle{\lambda, \check{\alpha}} > 0$ for all $B$-positive roots $\alpha$. Let $d$ be any generator of the direct summand of $\CC[G]^{N_{\Le} \times N_{\Ri}}$ indexed by $\lambda$. Then $d \in \mathcal{I}^\natural$, and $\sqrt{(d)} = \mathcal{I}^{\natural}$.
\end{lemma}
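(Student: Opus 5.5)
The plan is to identify the summand indexed by $\lambda$ concretely as a single matrix coefficient, and then to compute its zero locus using the Bruhat decomposition.

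By \eqref{eqn:bi-invariant-sum}, the summand indexed by $\lambda$ is $V_{-w_0\lambda}^N \otimes V_\lambda^N$, which is one-dimensional. It is spanned by the function $d(g) = \langle v^*, g\cdot v\rangle$ up to the left/right conventions. Here $v \in V_\lambda^N$ is a highest weight vector and $v^* \in (V_\lambda^*)^N$ is a highest weight vector of $V_\lambda^* \simeq V_{-w_0\lambda}$. As a functional on $V_\lambda$, $v^*$ pairs nontrivially only with the lowest weight space $V_\lambda(w_0\lambda)$, and kills all other weight spaces. Since all generators differ by scalars, it suffices to treat this one $d$.

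The argument has three steps.

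\textbf{Step 1: $d$ is bi-invariant of the expected type.} Using the Bruhat decomposition $G = \bigsqcup_{w} B\dot w B$, write $g = b_1 \dot w b_2$. Then $d(g) = \lambda(t_1)\lambda(t_2)\,\langle v^*, \dot w v\rangle$ up to the appropriate convention, where $t_i$ is the torus part of $b_i$. This holds because $B$ acts on $v$ through $\lambda$, and on $v^*$ through a character, with $N$ acting trivially.

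\textbf{Step 2: compute $\langle v^*, \dot w v\rangle$.} The vector $\dot w v$ lies in the extremal weight space of weight $w\lambda$. Since $v^*$ pairs nontrivially only with weight $w_0\lambda$, the pairing is nonzero if and only if $w\lambda = w_0\lambda$. Because $\lambda$ is regular dominant, its $\mathrm{W}$-stabilizer is trivial, so this happens if and only if $w = w_0$. Hence $d$ vanishes identically on $G \smallsetminus Bw_0B$, and is nowhere zero on $Bw_0B$. This gives $d \in \mathcal{I}^\natural$.

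\textbf{Step 3: radical statement.} From Step 2, $V(d) = G \smallsetminus Bw_0B$ set-theoretically. Since $\CC[G]$ is a finitely generated reduced algebra, Hilbert's Nullstellensatz gives $\sqrt{(d)} = I(V(d)) = \mathcal{I}^\natural$, with $\mathcal{I}^\natural$ taken as the (radical) defining ideal of the reduced closed subset.

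The main obstacle is bookkeeping rather than substance. One has to match the conventions of the right $G_\Le \times G_\Ri$-action, namely $g\cdot(x,y) = x^{-1}gy$, against the Peter–Weyl isomorphism. This is needed to be sure that the $\lambda$-summand really consists of the pairing of a highest weight vector with a lowest-weight-detecting covector, rather than, for example, two highest weight vectors; the latter would cut out the complement of the big cell $B^-B$ instead. I would handle this by checking directly on the torus which weight spaces pair nontrivially. Any mismatch only replaces $Bw_0B$ by a translate through $\dot w_0$, and that is absorbed by the chosen convention.
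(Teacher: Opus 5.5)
Your proof is correct and complete. The $\lambda$-summand is spanned by $g \mapsto \lrangle{v^*, gv}$, where $v^*$ detects the lowest weight space. Its value at $n_1 t_1 \dot w t_2 n_2$ is $(w_0\lambda)(t_1)\,\lambda(t_2)\,\lrangle{v^*, \dot w v}$. Regularity of $\lambda$ forces $w = w_0$ for this to be nonzero, and the Nullstellensatz gives $\sqrt{(d)} = \mathcal{I}^\natural$.

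The paper gives no argument of its own here. It calls the fact well known and refers to \cite{EMB20}, so your matrix-coefficient computation via the Bruhat decomposition supplies the standard self-contained proof behind that citation.

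One minor correction: the left torus factor is $(w_0\lambda)(t_1)$, not $\lambda(t_1)$, because $v^*$ has weight $-w_0\lambda$. This is harmless, since only its nonvanishing matters.

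Your convention worry is moot. The action $g\cdot(x,y) = x^{-1}gy$ means invariance is $f(n_1 g n_2) = f(g)$. For $f(g) = \lrangle{v^*, gv}$ this forces exactly $v^* \in (V_\lambda^*)^N$ and $v \in V_\lambda^N$, matching \eqref{eqn:bi-invariant-sum}, so there is no mismatch to absorb. In any case, a different Peter--Weyl convention would only replace $\lambda$ by $-w_0\lambda$, which is again regular dominant.
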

\begin{proof}
	This is well known. A proof can be found in \cite{EMB20}.
\end{proof}

Consequently, $\sqrt{(d)} = \mathcal{I}$ also holds in $\CC[G]^{N_{\Le} \times N_{\Ri}}$.

As a by-product of the foregoing results, we now show that $\Wh$ is not a Zariskian filtered ring by leveraging ideas from harmonic analysis on real reductive groups.

\begin{proposition}\label{prop:non-Zariskian}
	If $G$ is not a torus, then the filtration $\mathrm{F}_\bullet \Wh$ on the ring $\Wh$ is neither faithful nor Zariskian (Remark \ref{rem:Zariskian}).
\end{proposition}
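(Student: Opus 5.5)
The idea is to produce a nonzero finitely generated $\Wh$-module $M$ with a good filtration such that $\gr M = 0$, using the bi-invariant function $d$ of Lemma \ref{prop:d}, whose Kazhdan degree is strictly negative. Zariskian filtrations are faithful (Remark \ref{rem:Zariskian}), so it is enough to show that $\mathrm{F}_\bullet \Wh$ is not faithful.

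\emph{Step 1 (negative degree).} Fix a regular dominant $\lambda$ and the corresponding $d \in \CC[G]^{N_{\Le} \times N_{\Ri}} \subset \Wh$. Since $G$ is not a torus, $\check{\rho} \neq 0$, so $m := \lrangle{\lambda, 4\check{\rho}} > 0$. By Lemma \ref{prop:bi-invariant-grading}, $d \in \mathrm{F}_{-m}\Wh$ with $-m \leq -1$.

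\emph{Step 2 (the mechanism).} Suppose $M$ is a $\Wh$-module and $0 \neq x \in M$ satisfies $dx = cx$ for some $c \in \CC^\times$. Rescaling $d$, we may assume $c = 1$. Put $M_0 := \Wh x$ with the good filtration $\mathrm{F}_p M_0 = (\mathrm{F}_p \Wh)x$ from \eqref{eqn:good-filtration-fg}. Then $x = d^k x \in \mathrm{F}_{-km} M_0$ for every $k \geq 0$, so $x \in \bigcap_p \mathrm{F}_p M_0$. Hence $\mathrm{F}_p M_0 = M_0$ for all $p$, and $\gr M_0 = 0$ while $M_0 \neq 0$. This is the analogue of a Whittaker function that is an eigenfunction of a bi-invariant "matrix coefficient". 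Equivalently, the claim reduces to showing that $\Wh(d-1) \neq \Wh$.

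\emph{Step 3 (construct the eigenvector).} I would construct it geometrically and transport it through the Skryabin equivalence \eqref{eqn:monodromic-equivalence}. On the big cell we have $Bw_0B \simeq N \times T w_0 \times N$. Take the $\mathscr{D}$-module
\[ \mathcal{M}_U := \mathscr{O}_N e^{\bpsi} \boxtimes \delta_Y \boxtimes \mathscr{O}_N e^{\bpsi}, \]
where:
\begin{itemize}
\item $\mathscr{O}_N e^{\bpsi}$ is the exponential $\mathscr{D}_N$-module on which $\xi - \bpsi(\xi)$ acts by zero (suitably adjusted to the left and right action conventions);
\item $\delta_Y$ is a nonzero $\mathscr{D}_T$-module supported on a coset hypersurface $Y = \{t : d(tw_0) = 1\}$, on which $d|_{Tw_0}$ is a nonconstant character times a constant, so $Y \neq \emptyset$.
\end{itemize}
Push forward along $j: Bw_0B \hookrightarrow G$ to obtain a $D(G)$-module $j_*\mathcal{M}_U$ on which $\mathfrak{n}_{\LeRi}^{\bpsi}$ acts locally nilpotently. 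Its image under Skryabin is a $\Wh$-module $M$ whose Whittaker vectors contain the section $x = e^{\bpsi} \boxtimes \delta \boxtimes e^{\bpsi}$. Since $d$ is bi-$N$-invariant, $d$ acts on $x$ through its restriction to $Tw_0$, and $d \cdot \delta_Y = \delta_Y$ by the choice of $Y$. Thus $dx = x$ with $x \neq 0$. Finite generation of $M$ is not needed, because Step 2 only uses the cyclic submodule $\Wh x$.

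\emph{Main obstacle.} The delicate point is Step 3: making the identification of bi-Whittaker vectors on $Bw_0B$ with sections over $T$ precise, with the correct signs and twists for left versus right actions, and checking that $x$ really is $\mathfrak{n}_{\LeRi}^{\bpsi}$-invariant, so that it gives an element of the $\Wh$-module and $\Wh$ acts on it compatibly. If this bookkeeping becomes heavy, I would fall back on the purely algebraic fact that $\Wh[d^{-1}]$ embeds into a (twisted) $D(T)$. This fact is anticipated in the Introduction's summary of Section \ref{sec:regular-inf-char} and appears in \cite{Gin18}. With it, $D(T)/D(T)(d|_T - 1) \neq 0$ already provides the required eigenvector.
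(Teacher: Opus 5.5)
Your proposal is correct. Its core idea is the same as the paper's: $d$ lies in negative Kazhdan degree, and one exhibits a nonzero bi-Whittaker object on which $d$ acts by $1$, concentrated along $N\dot{w}_0N$-orbits in the big cell. Your $x=e^{\bpsi}\boxtimes\delta\boxtimes e^{\bpsi}$ is the algebraic counterpart of the paper's distribution $\Lambda$.

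The two routes differ in two places.

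\emph{The reduction.} The paper uses \cite[2.7 Proposition]{ABO89} to reduce to showing that $\mathrm{F}_{-1}\Wh$ is not contained in the Jacobson radical of $\mathrm{F}_0\Wh$, hence that $1-d\notin\Wh^{\times}$. You instead produce the cyclic module $\Wh x$ directly, with $\mathrm{F}_p(\Wh x)=\Wh x$ for all $p$. This is literally a counterexample to faithfulness, and needs only the standard implication ``Zariskian $\Rightarrow$ faithful'', which the paper also uses.

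\emph{The eigenvector.} The paper builds it analytically. It moves the data to a real form via Proposition~\ref{prop:change-of-data}, arranges for $\bpsi$ to integrate to a unitary character $\Psi$, and takes the $\Psi$-twisted integral over the closed orbit $N(\R)\dot{w}_0N(\R)$ (closed by Rosenlicht) as a tempered distribution. You instead build an algebraic $\mathscr{D}$-module on $Bw_0B\simeq N\times T\dot{w}_0\times N$ and take its $\mathfrak{n}_{\LeRi}^{\bpsi}$-invariants. Your version is purely algebraic. The sign bookkeeping you flag is harmless: $\bpsi$ kills $[\mathfrak{n},\mathfrak{n}]$, so it integrates to an additive character $N\to\Ga$, whose exponential gives the Whittaker section on each $N$-factor. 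A point-delta at $t=1$, after normalizing $d(\dot{w}_0)=1$, would serve as well as $\delta_Y$. The paper's version avoids constructing any $\mathscr{D}$-module on the cell and ties the statement to the Bessel-distribution motivation.

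Your fallback is unnecessary and misattributed. Section~\ref{sec:regular-inf-char} embeds $\Wh$ into $\mathcal{U}^{-1}D(T)^{\mathrm{W}}$ with $\mathcal{U}\subset\Sym(\mathfrak{t})^{\mathrm{W}}$. There $d$ has negative degree, so it is not a function on $T$. What you want is restriction to the big cell, $\Wh\to D(Bw_0B)\myfatslash\mathfrak{n}_{\LeRi}^{\bpsi}\simeq D(T)$ with $d\mapsto d|_{T\dot{w}_0}$, and that is exactly what your Step 3 already implements.
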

\begin{proof}
	In view of Proposition \ref{prop:change-of-data}, we may assume the data $G \supset B = TN$ and $(\mathsf{e}, \mathsf{h}, \mathsf{f})$ in Subsection \ref{subsec:W} are all defined over $\R$, whereas $(\cdot, \cdot)$ is $\sqrt{-1}$ times an invariant form defined over $\R$. In particular, $\bpsi$ is imaginary-valued on $\mathfrak{n}(\R)$.
	
	Since $\Wh_{\hbar}$ is known to be left Noetherian (Proposition \ref{prop:W-hbar-Noetherian}), by the last part of \cite[2.7 Proposition]{ABO89} it suffices to show $\mathrm{F}_\bullet \Wh$ is not faithful. By (1) of \textit{loc.\ cit.}, it suffices to show $\mathrm{F}_{-1} \Wh$ is not included in the Jacobson radical of $\mathrm{F}_0 \Wh$.
	
	Take $d$ as in Lemma \ref{prop:d}, then $d \in \mathrm{F}_{-1} \Wh$ by Lemma \ref{prop:bi-invariant-grading} since $G \neq T$; by rescaling, we may assume $d(\dot{w}_0) = 1$ where $\dot{w}_0$ is a representative of $w_0$. We claim that $1 - d \notin \Wh^{\times}$; in turn, this will imply that $d$ is not in the Jacobson radical of $\mathrm{F}_0 \Wh$.
	
	Denote by $\Schw^*(G(\R))$ the continuous dual of the Fréchet space $\Schw(G(\R))$ of Schwartz measures on the complex group $G(\R)$, i.e.\ the space of tempered distributions on $G(\R)$; the elements of $\Schw(G(\R))$ are viewed as functions by fixing a Haar measure. Note that $D(G)$ is defined over $\R$ and acts on $\Schw^*(G(\R))$, hence $\mathcal{U}\mathfrak{g}_{\Le} \otimes \mathcal{U}\mathfrak{g}_{\Ri}$ acts on $\Schw^*(G(\R))$ as well. Define the subspace
	\[ \Schw^*(G(\R))^{\mathfrak{n}_{\LeRi}^{\bpsi}} := \left\{ \Lambda \in \Schw^*(G(\R)) : \mathfrak{n}_{\LeRi}^{\bpsi} \Lambda = 0  \right\}. \]
	
	There is an induced $\Wh$-action on $\Schw^*(G(\R))^{\mathfrak{n}_{\LeRi}^{\bpsi}}$. In order to prove the claim, it remains to exhibit $\Lambda \in \Schw^*(G(\R))^{\mathfrak{n}_{\LeRi}^{\bpsi}} \smallsetminus \{0\}$ such that $d\Lambda = \Lambda$.
	
	Fix a Haar measure on $N(\R)$. By our assumptions, $\bpsi$ integrates to a unitary character $\Psi$ of $N(\R)$. Define
	\[ \Lambda: \Schw(G(\R)) \to \CC, \quad \Lambda(f) = \int_{N(\R) \times N(\R)} \Psi(n_1) f\left( n_1^{-1} \dot{w}_0 n_2 \right) \Psi(n_2)^{-1} \dd n_1 \dd n_2. \]
	A result of Rosenlicht implies that all $N_{\Le}(\R) \times N_{\Ri}(\R)$-orbits on $G(\R)$ are closed, hence $\Lambda \in \Schw^*(G(\R))$. Clearly $\mathfrak{n}_{\LeRi}^{\bpsi} \Lambda = 0$. On the other hand, $d\Lambda = \Lambda$ since $d(n_1 \dot{w}_0 n_2) = d(\dot{w}_0) = 1$ for all $n_1, n_2 \in N(\R)$. This completes the proof.
\end{proof}

\section{Universal centralizer}\label{sec:universal-centralizer}
\subsection{Review of constructions}
Let $G$ be a connected reductive $\CC$-group. The universal centralizer of $G$ is a smooth $\CC$-variety $\mathfrak{Z} = \mathfrak{Z}_G$ equipped with a symplectic structure and a characteristic morphism $\kappa$ to $\mathfrak{c} := \mathfrak{g}^* \sslash_{\Ad^*} G$. To fix notations, we summarize the constructions below.

The first construction goes as follows. Set
\[ \mathscr{Z} := \left\{ (x, g) \in \mathfrak{g}^* \times G : \Ad^*(g)(x) = x \right\} \]
and let $G$ act diagonally on $\mathfrak{Z}$.

Fix $(\cdot, \cdot)$ as in Subsection \ref{subsec:W} to identify $\mathfrak{g}$ and $\mathfrak{g}^*$. Denote by $\mathfrak{g}_{\mathrm{reg}} \subset \mathfrak{g}$ the regular (not necessarily semisimple) locus, and denote its image in $\mathfrak{g}^*$ by $\mathfrak{g}^*_{\mathrm{reg}}$, which is independent of $(\cdot, \cdot)$. Define
\[ \mathscr{Z}_{\mathrm{reg}} := \left\{ (x, g) \in \mathfrak{Z}: x \in \mathfrak{g}^*_{\mathrm{reg}} \right\}. \]
This is a commutative $\mathfrak{g}^*_{\mathrm{reg}}$-group scheme.

\begin{definition-proposition}[\protect{\cite[Proposition 3.3.9]{Ri17}}]
	\label{def:universal-centralizer}
	There exists a unique smooth affine commutative $\mathfrak{c}$-group scheme $\mathfrak{Z}$ whose pull-back via $\mathfrak{g}^*_{\mathrm{reg}} \to \mathfrak{c}$ is $\mathscr{Z}_{\mathrm{reg}}$.
	
	We call $\mathfrak{Z}$ the \emph{universal centralizer} of $G$, and $\kappa: \mathfrak{Z} \to \mathfrak{c}$ its \emph{characteristic morphism}.
\end{definition-proposition}

In order to better understand $\mathfrak{Z}$, we fix a Borel pair $B = TN$ and an adapted principal $\mathfrak{sl}(2)$-triple $(\mathsf{e}, \mathsf{h}, \mathsf{f})$ as in Subsection \ref{subsec:W}. Let $\mathfrak{g}_{\mathsf{f}}$ be the centralizer subalgebra of $\mathsf{f}$, and define
\[ \mathscr{S} := \mathsf{e} + \mathfrak{g}_{\mathsf{f}}. \]
For convenience, we will often identify $\mathscr{S}$ with its image under $\mathfrak{g} \simeq \mathfrak{g}^*$.

We will use results about Kostant slices freely. First off, $\mathscr{S} \subset \mathsf{e} + \mathfrak{b} \subset \mathfrak{g}_{\mathrm{reg}}$; moreover $\mathfrak{g}^* \to \mathfrak{c}$ restricts to an isomorphism $\mathscr{S} \rightiso \mathfrak{c}$.

Next, define
\[ \mathscr{Z}_{\mathscr{S}} := \left\{ (x, g) \in \mathscr{Z}: x \in \mathscr{S} \right\} \subset \mathscr{Z}_{\mathrm{reg}}. \]
This is a group scheme over $\mathscr{S} \simeq \mathfrak{c}$.

\begin{remark}\label{rem:Z-dim}
	The isomorphism above leads easily to the standard fact that $\dim \mathfrak{Z} = 2\dim T$.
\end{remark}

\begin{proposition}[\protect{\cite[Remark 3.3.10]{Ri17}}]
	\label{prop:universal-centralizer-slice}
	There is a natural isomorphism of $\mathfrak{c}$-group schemes $\mathscr{Z}_{\mathscr{S}} \simeq \mathfrak{Z}$.
\end{proposition}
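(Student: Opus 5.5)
The plan is to show that the restriction of the structure map $\mathfrak{Z} \to \mathfrak{c}$ to $\mathscr{Z}_{\mathscr{S}}$ exhibits $\mathscr{Z}_{\mathscr{S}}$ as a smooth affine commutative $\mathfrak{c}$-group scheme whose pull-back along $\mathfrak{g}^*_{\mathrm{reg}} \to \mathfrak{c}$ is $\mathscr{Z}_{\mathrm{reg}}$. The uniqueness in Definition--Proposition \ref{def:universal-centralizer} then gives $\mathscr{Z}_{\mathscr{S}} \simeq \mathfrak{Z}$. Throughout, $\mathscr{Z}_{\mathscr{S}}$ is regarded as a $\mathfrak{c}$-scheme via $\mathscr{S} \rightiso \mathfrak{c}$. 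It is affine, being closed in $\mathscr{S} \times G$, and it is a commutative group scheme, because centralizers of regular elements are commutative.

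The key input is Kostant's theorem that the action map
\[ a: G \times \mathscr{S} \to \mathfrak{g}^*_{\mathrm{reg}}, \quad (g, s) \mapsto \Ad^*(g)s, \]
is smooth and surjective, and that $\mathfrak{g}^*_{\mathrm{reg}} \to \mathfrak{c}$ is smooth with fibers single regular orbits.

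\textbf{Construction of the isomorphism.} Write $\chi: \mathfrak{g}^*_{\mathrm{reg}} \to \mathfrak{c}$ for the quotient map, and let $\sigma = (\mathscr{S} \rightiso \mathfrak{c})^{-1} \circ \chi$. Define
\[ P := \{ (x, g) \in \mathfrak{g}^*_{\mathrm{reg}} \times G : \Ad^*(g)\sigma(x) = x \}. \]
This is the transporter from the slice point to $x$. It is a right torsor over $\sigma^* \mathscr{Z}_{\mathscr{S}}$, and it is locally trivial in the smooth topology because $a$ is smooth and surjective. Conjugation $(x, z) \mapsto (x, g z g^{-1})$ with $(x, g) \in P$ then gives a morphism
\[ \chi^* \mathscr{Z}_{\mathscr{S}} = \sigma^*\mathscr{Z}_{\mathscr{S}} \to \mathscr{Z}_{\mathrm{reg}}. \]
Because the groups are commutative, this morphism does not depend on the choice of $g$. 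It therefore descends from $P$ and is defined over $\mathfrak{g}^*_{\mathrm{reg}}$, and it is an isomorphism, as one checks locally. It is compatible with the group structures.

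\textbf{Smoothness.} Smoothness of $\mathscr{Z}_{\mathscr{S}}$ over $\mathfrak{c}$ follows from smoothness of $\mathscr{Z}_{\mathrm{reg}}$ over $\mathfrak{g}^*_{\mathrm{reg}}$ by fpqc descent along the faithfully flat map $\chi$. Alternatively, it follows directly by pulling back along the section $\mathscr{S} \hookrightarrow \mathfrak{g}^*_{\mathrm{reg}}$.

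\textbf{Main obstacle.} The hard step is that smoothness of $\mathscr{Z}_{\mathrm{reg}} \to \mathfrak{g}^*_{\mathrm{reg}}$, equivalently flatness with constant fiber dimension $\dim T$, is itself the nontrivial part of \cite{Ri17}. I would take it from there, or derive it from the fact that $\dim \mathfrak{g}_x = \dim T$ for regular $x$ together with reducedness arguments.
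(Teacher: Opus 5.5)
Your argument is correct, but it takes a longer route than the statement needs. The paper gives no proof beyond the citation. Granting Definition--Proposition~\ref{def:universal-centralizer}, the result follows in one line: pull the isomorphism $\chi^*\mathfrak{Z}\simeq\mathscr{Z}_{\mathrm{reg}}$ back along the Kostant section $\epsilon\colon\mathscr{S}\hookrightarrow\mathfrak{g}^*_{\mathrm{reg}}$. Since $\chi\circ\epsilon$ is the isomorphism $\mathscr{S}\rightiso\mathfrak{c}$, this gives $\mathfrak{Z}\simeq\epsilon^*\chi^*\mathfrak{Z}\simeq\epsilon^*\mathscr{Z}_{\mathrm{reg}}=\mathscr{Z}_{\mathscr{S}}$ as group schemes. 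No torsors, descent or separate commutativity input are needed. Your final appeal to uniqueness already contains this step: any $J$ with $\chi^*J\simeq\mathscr{Z}_{\mathrm{reg}}$ satisfies $J\simeq\epsilon^*\chi^*J\simeq\mathscr{Z}_{\mathscr{S}}$.

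Your route instead checks that $\mathscr{Z}_{\mathscr{S}}$ itself has the characterizing property. You use the transporter $P\simeq G\times\mathscr{S}$ and descend the conjugation map using commutativity. This is essentially Ng\^o's construction of $J$, i.e.\ a reproof of the existence half of the Definition--Proposition.

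What your route buys is a self-contained existence argument and an explicit $G$-equivariant identification $\chi^*\mathscr{Z}_{\mathscr{S}}\simeq\mathscr{Z}_{\mathrm{reg}}$. That equivariance is what makes ``natural'' meaningful, with uniqueness understood $G$-equivariantly. The cost is the inputs you list. Your ``main obstacle'' disappears once you grant the Definition--Proposition, which you must do to invoke uniqueness: $\mathscr{Z}_{\mathrm{reg}}\simeq\chi^*\mathfrak{Z}$ is then automatically smooth and commutative over $\mathfrak{g}^*_{\mathrm{reg}}$.

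One ordering point. The descent step needs $\mathscr{Z}_{\mathscr{S}}$ to be commutative as a group scheme, not just on $\CC$-points. Establish this after smoothness, which you get independently by base change along the section. Then $\mathscr{Z}_{\mathscr{S}}\times_{\mathfrak{c}}\mathscr{Z}_{\mathscr{S}}$ is reduced, so pointwise commutativity suffices.
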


The description of $\mathscr{Z}_{\mathrm{reg}}$ as the pull-back via $\mathfrak{g}^*_{\mathrm{reg}} \to \mathfrak{c}$ implies
\begin{equation}\label{eqn:universal-centralizer-catquot}
	\mathfrak{Z} \simeq \mathscr{Z}_{\mathrm{reg}} \sslash G,
\end{equation}
so that $\kappa$ is induced by the first projection $\pi_1: \mathscr{Z}_{\mathrm{reg}} \to \mathfrak{g}^*$.

There is also a canonical symplectic structure on $\mathfrak{Z}$ given as follows. We have
\[ \mathrm{T}^* G = \left\{ (x', g, x) \in \mathfrak{g}^* \times G \times \mathfrak{g}^*: x' = \Ad^*(g)(x) \right\}. \]
Let $G_{\Le} \times G_{\Ri}$ act on the right of $G$, the moment map is then $\bm{\mu}_{\Le} \times \bm{\mu}_{\Ri}: \mathrm{T}^* G \to \mathfrak{g}^* \times \mathfrak{g}^*$ where
\[ \bm{\mu}_{\Le}(x', g, x) = x', \quad \bm{\mu}_{\Ri}(x', g, x) = x. \]
The adjoint action of $G$ has moment map given by $\bm{\mu}_{\Ad}(x', g, x) = x' - x \in \mathfrak{g}^*$, so $\bm{\mu}_{\Ad}^{-1}(0) = \mathscr{Z}$.

Denote by $\mathrm{T}^*_{\mathrm{reg}} G \subset \mathrm{T}^* G$ the open locus where $x$ (equivalently $x'$) lies in $\mathfrak{g}^*_{\mathrm{reg}}$. In view of \eqref{eqn:universal-centralizer-catquot}, now $\mathfrak{Z}$ can be interpreted as the Hamiltonian reduction
\begin{equation}\label{eqn:Z-symplectic}
	\mathfrak{Z} = \left( \bm{\mu}_{\Ad}^{-1}(0) \cap \mathrm{T}^*_{\mathrm{reg}} G \right) \sslash_{\Ad} G.
\end{equation}
Therefore, $\mathfrak{Z}$ receives the symplectic structure from $\mathrm{T}^* G$, independent of all choices. We refer to \cite[Section 3.3]{Ri17} or \cite[Section 2.1]{Gin18} for more details about this construction.

The second construction of $\mathfrak{Z}$ realizes it as a bi-Whittaker Hamiltonian reduction, whose general definition can be found in \cite[Section 2.2]{Gin18}. Below is a recap.

A key property of Kostant slices is the isomorphism
\begin{align*}
	N \times \mathscr{S} & \rightiso \mathsf{e} + \mathfrak{b} \\
	(n, y) & \mapsto \Ad(n)(y)
\end{align*}
of varieties. Recall that $\bpsi = (\mathsf{e}, \cdot)$. By identifying $\mathscr{S}$ with its image in $\mathfrak{g}^*$, we see $\Ad^*$ induces
\begin{equation*}
	N \times \mathscr{S} \rightiso \bpsi + \mathfrak{n}^\perp.
\end{equation*}
In particular, $\bpsi + \mathfrak{n}^\perp$ is $N$-invariant, and
\begin{equation}\label{eqn:Z-2}\begin{aligned}
	\mathrm{T}^* G \myfatslash (N_{\Le} \times N_{\Ri}, \bpsi \times \bpsi) & := \left( \bm{\mu}_{\Le}^{-1}(\bpsi + \mathfrak{n}^\perp) \cap \bm{\mu}_{\Ri}^{-1}(\bpsi + \mathfrak{n}^\perp) \right) \big/ N_{\Le} \times N_{\Ri} \\
	& \rightiso \bm{\mu}_{\Le}^{-1}(\mathscr{S}) \cap \bm{\mu}_{\Ri}^{-1}(\mathscr{S}) \quad \text{(by restriction)} \\
	& \simeq \left\{ (x, g) \in \mathscr{S} \times G : \Ad^*(g)(x) \in \mathscr{S} \right\} \quad \text{(description of $\bm{\mu}_{\Le}, \bm{\mu}_{\Ri}$)} \\
	& = \left\{ (x, g) \in \mathscr{S} \times G : \Ad^*(g)(x) = x \right\} \quad \text{(property of $\mathscr{S}$)} \\
	& = \mathscr{Z}_{\mathscr{S}} \simeq \mathfrak{Z} \quad \text{(by Proposition \ref{prop:universal-centralizer-slice})}.
\end{aligned}\end{equation}

\subsection{Bi-invariant functions and Bruhat cells}
Recall that $\bpsi + \mathfrak{n}^\perp$ is $N$-invariant under $\Ad^*$. In \eqref{eqn:Z-2} we considered
\[ \mathrm{T}^* G \myfatslash (N_{\Le} \times N_{\Ri}, \bpsi \times \bpsi) := \left( \bm{\mu}_{\Le}^{-1}(\bpsi + \mathfrak{n}^\perp) \cap \bm{\mu}_{\Ri}^{-1}(\bpsi + \mathfrak{n}^\perp) \right) \big/ N_{\Le} \times N_{\Ri}. \]
Each element of $\CC[G]^{N_{\Le} \times N_{\Ri}}$ pulls back to a regular function on the right-hand side. This yields a homomorphism of algebras
\begin{equation}\label{eqn:iota-B}
	\iota: \CC[G]^{N_{\Le} \times N_{\Ri}} \to \CC \left[ \mathrm{T}^* G \myfatslash (N_{\Le} \times N_{\Ri}, \bpsi \times \bpsi) \right] \rightiso \CC[\mathfrak{Z}].
\end{equation}
The first arrow is the restriction of $\CC[G] \to \CC[\mathrm{T}^* G]$, hence injective. Also, the first arrow depends only on $N$ and $\bpsi$, whereas the second depends on $(\mathsf{e}, \mathsf{h}, \mathsf{f})$ and $(\cdot, \cdot)$ as well.

Let $\pi_2: \mathfrak{Z} \simeq \mathscr{Z}_{\mathscr{S}} \to G$ be the second projection $(x, g) \mapsto g$. The resulting set-theoretic map $\mathfrak{Z} \to N \backslash G / N$ depends only on $N$ and $\bpsi$.

\begin{definition}
	For each $w \in \mathrm{W}(G, T)$, define the corresponding cell $\mathfrak{B}_w := \pi_2^{-1}(BwB)$. They depend on the choice of $(\mathsf{e}, \mathsf{h}, \mathsf{f})$.
\end{definition}

The cells $\mathfrak{B}_w$ stratify $\mathfrak{Z}$ into locally closed subsets since so do $BwB$ for $G$, and the \emph{big cell} $\mathfrak{B}_{w_0}$ is open. The following is a more precise statement.

\begin{lemma}\label{prop:B-d}
	Pick $d \in \CC[G]^{N_{\Le} \times N_{\Ri}} \smallsetminus \{0\}$ as in Lemma \ref{prop:d}, then $\mathfrak{B}_{w_0}$ is the principal open subset defined by $\iota(d)$.
\end{lemma}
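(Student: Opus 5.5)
We need to show that $\mathfrak{B}_{w_0} = \{z \in \mathfrak{Z} : \iota(d)(z) \neq 0\}$. The plan is to unwind $\iota$ through the chain of identifications \eqref{eqn:Z-2} and then apply Lemma \ref{prop:d}.

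First, for a point $z \in \mathfrak{Z}$, use Proposition \ref{prop:universal-centralizer-slice} to represent it as $z = (x, g) \in \mathscr{Z}_{\mathscr{S}}$. In \eqref{eqn:Z-2}, the point $(x, g)$ corresponds to the class of the element $(\Ad^*(g)(x), g, x) = (x, g, x)$ of $\mathrm{T}^* G$. This element lies in $\bm{\mu}_{\Le}^{-1}(\mathscr{S}) \cap \bm{\mu}_{\Ri}^{-1}(\mathscr{S})$, which is a section of the $N_{\Le} \times N_{\Ri}$-quotient.

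Next, recall that the first arrow in \eqref{eqn:iota-B} is pullback along the projection $\mathrm{T}^* G \to G$. A function $f \in \CC[G]^{N_{\Le} \times N_{\Ri}}$ pulled back to $\mathrm{T}^*G$ is constant on $N_{\Le} \times N_{\Ri}$-orbits, so it descends to the quotient. Evaluating at the representative $(x, g, x)$ gives
\[ \iota(f)(z) = f(g) = f(\pi_2(z)). \]
Thus $\iota(f) = f \circ \pi_2$ as functions on $\mathfrak{Z}$. To justify this, I only need that the restriction isomorphism in \eqref{eqn:Z-2} and the identification with $\mathscr{Z}_{\mathscr{S}}$ are compatible with the projection to $G$. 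This holds because each step of \eqref{eqn:Z-2} keeps the $G$-coordinate $g$ unchanged, with $N \times N$ acting on it only through the quotient, and $f$ is $N \times N$-invariant.

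Finally, apply Lemma \ref{prop:d}: $\sqrt{(d)} = \mathcal{I}^\natural$, so the zero locus of $d$ in $G$ is exactly $G \smallsetminus Bw_0B$. Hence $d(g) \neq 0$ if and only if $g \in Bw_0 B$. Combining this with $\iota(d) = d \circ \pi_2$ gives
\[ \{\iota(d) \neq 0\} = \pi_2^{-1}(Bw_0B) = \mathfrak{B}_{w_0}, \]
which is the principal open subset defined by $\iota(d)$.

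The main obstacle is the bookkeeping in the first two steps. One must check that $\pi_2$, defined via $\mathscr{Z}_{\mathscr{S}}$, agrees with the $G$-coordinate of the chosen representative in $\mathrm{T}^*G$ under the paper's conventions $\bm{\mu}_{\Le}(x',g,x) = x'$ and $\bm{\mu}_{\Ri}(x',g,x) = x$. Once that is checked, the rest is immediate.
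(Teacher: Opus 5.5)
Your proof is correct and is essentially the paper's argument: you represent a point of $\mathfrak{Z}$ by $(x,g)\in\mathscr{Z}_{\mathscr{S}}$, observe that $\iota(d)$ takes the value $d(g)$ there (so $\iota(d) = d\circ\pi_2$), and conclude from $V(d) = G\smallsetminus Bw_0B$, which is Lemma \ref{prop:d}. Your bookkeeping with the representative $(x,g,x)\in\mathrm{T}^*G$ only spells out what the paper's one-line proof leaves implicit.
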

\begin{proof}
	Given $(x, g) \in \mathscr{Z}_{\mathscr{S}}$, we have $g \in Bw_0 B$ if and only if $d(g) \neq 0$, if and only if the pull back of $d$ to $\mathrm{T}^* G$ is nonzero at $(x, g) \in \mathscr{Z}_{\mathscr{S}} \hookrightarrow \mathrm{T}^* G$.
\end{proof}

To describe $\mathfrak{B}_{w_0}$, we pick a representative $\tilde{w}_0$ of $w_0$, and identify
\[ \mathfrak{Z} \simeq \mathrm{T}^* G \myfatslash (N_{\Le} \times N_{\Ri}, \bpsi \times \bpsi). \]
We also identify $\mathfrak{g}$ with $\mathfrak{g}^*$ via $(\cdot, \cdot)$, so that $\bpsi + \mathfrak{n}^\perp = \mathsf{e} + \mathfrak{b}$.

\begin{lemma}\label{prop:big-cell}
	Up to $N_{\Le} \times N_{\Ri}$-action, the elements of $\mathfrak{B}_{w_0}$ are of the form
	\[ ( \underbracket{\mathsf{e} + t + \Ad(\tilde{w}_0^{-1} h)^{-1}(\mathsf{e})}_{\in \mathsf{e} + \mathfrak{b}}, \; \tilde{w}_0^{-1} h ), \quad h \in T, \; t \in \mathfrak{t}. \]
\end{lemma}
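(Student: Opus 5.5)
We work in the model $\mathfrak{Z} \simeq \mathrm{T}^* G \myfatslash (N_{\Le} \times N_{\Ri}, \bpsi \times \bpsi)$ from \eqref{eqn:Z-2}, with $\mathfrak{g} \simeq \mathfrak{g}^*$ via $(\cdot,\cdot)$. A point of $\mathrm{T}^*G$ is a triple $(x', g, x)$ with $x' = \Ad(g)(x)$. Lying over the level set means $x, x' \in \mathsf{e} + \mathfrak{b}$. Such a point represents an element of $\mathfrak{B}_{w_0}$ exactly when $g \in Bw_0B$; Lemma \ref{prop:B-d} guarantees that this condition is $N_{\Le}\times N_{\Ri}$-invariant.

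First we normalize $g$ using the $N_{\Le} \times N_{\Ri}$-action $g \mapsto n_1^{-1} g n_2$. The Bruhat decomposition gives $Bw_0B = N \tilde{w}_0 T N$, with uniqueness of the $N$-factors. So after acting we may take $g = \tilde{w}_0^{-1} h$ with $h \in T$; representatives of $w_0^{\pm1}$ differ by $T$, so this is harmless. The remaining stabilizer of this normal form is trivial. Indeed, $n_1^{-1}\tilde{w}_0^{-1}h n_2 = \tilde{w}_0^{-1}h$ forces $n_1 = n_2 = 1$, because $N \cap \tilde{w}_0^{-1} N \tilde{w}_0 = N \cap N^- = 1$. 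Hence we only need to describe the pairs $(x, g)$ with $g = \tilde{w}_0^{-1}h$ and both $x$ and $\Ad(g)x$ in $\mathsf{e} + \mathfrak{b}$. (Which of $x, x'$ is written first follows the paper's convention for the displayed tuple; the computation is symmetric.)

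Next we solve the two constraints. Write $x \in \mathsf{e} + \mathfrak{b}$ and consider the condition $\Ad(\tilde{w}_0^{-1}h)x \in \mathsf{e} + \mathfrak{b}$. This is equivalent to
\[ x \in \Ad(\tilde{w}_0^{-1}h)^{-1}(\mathsf{e} + \mathfrak{b}) = \Ad(h^{-1})\Ad(\tilde{w}_0)(\mathsf{e}) + \mathfrak{b}^-, \]
since $\Ad(\tilde{w}_0)\mathfrak{b} = \mathfrak{b}^-$ and $\mathfrak{b}^-$ is stable under $\Ad(T)$. Set $\mathsf{e}' := \Ad(\tilde{w}_0^{-1}h)^{-1}(\mathsf{e})$; it lies in $\mathfrak{n}$, because $\mathsf{e} \in \mathfrak{n}^-$ is a sum of negative simple root vectors and $\Ad(\tilde{w}_0)$ sends these to positive ones. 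Thus $x \in (\mathsf{e} + \mathfrak{b}) \cap (\mathsf{e}' + \mathfrak{b}^-)$. Decompose $\mathfrak{g} = \mathfrak{n}^- \oplus \mathfrak{t} \oplus \mathfrak{n}$. The first condition fixes the $\mathfrak{n}^-$-component of $x$ to be $\mathsf{e}$, and the second fixes its $\mathfrak{n}$-component to be $\mathsf{e}'$. The $\mathfrak{t}$-component $t$ is free. Therefore $x = \mathsf{e} + t + \Ad(\tilde{w}_0^{-1}h)^{-1}(\mathsf{e})$, which is the claimed form, and it visibly lies in $\mathsf{e} + \mathfrak{b}$.

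Conversely, every such pair $(x, \tilde{w}_0^{-1}h)$ satisfies both constraints, so it defines a point of $\mathfrak{B}_{w_0}$. This proves the lemma. As a consistency check, the parameters $(h,t) \in T \times \mathfrak{t}$ give a variety of dimension $2\dim T = \dim \mathfrak{Z}$ (Remark \ref{rem:Z-dim}), as expected for an open cell.

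The main obstacle is bookkeeping with conventions rather than any real difficulty. One must match the left/right moment maps ($x' = \Ad^*(g)x$) with the ordering in the displayed tuple, and check that $\Ad(\tilde{w}_0)$ exchanges $\mathfrak{n}^-$ and $\mathfrak{n}$ in the right direction. This depends on $\mathsf{e} \in \mathfrak{n}^-$ and $\mathsf{f} \in \mathfrak{n}$, as fixed in Subsection \ref{subsec:W}. We would verify these signs first, and if the tuple turns out reversed, adjust by applying $\Ad(g)$ to the displayed element.
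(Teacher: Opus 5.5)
Your proof is correct. It differs from the paper only in that the paper gives no computation at all: it defers to Example 2.4 of Jin's paper, remarking that $\mathsf{e}$ and $\mathsf{f}$ play swapped roles there.

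You derive the lemma directly in two steps. First, the uniqueness of the decomposition $Bw_0B = N\tilde{w}_0^{-1}TN$ puts $g$ in the normal form $\tilde{w}_0^{-1}h$. Second, you solve $x \in (\mathsf{e}+\mathfrak{b}) \cap \bigl(\Ad(\tilde{w}_0^{-1}h)^{-1}(\mathsf{e}) + \mathfrak{b}^-\bigr)$ componentwise in $\mathfrak{n}^- \oplus \mathfrak{t} \oplus \mathfrak{n}$.

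The citation buys brevity. Your route buys self-containedness and makes explicit the convention-matching that the paper leaves to the reader with its ``$\mathsf{e}$ and $\mathsf{f}$ swapped'' remark. Your stabilizer computation also proves slightly more than the lemma states: distinct $(h,t) \in T \times \mathfrak{t}$ give distinct points, so $\mathfrak{B}_{w_0}$ is parametrized by $T \times \mathfrak{t}$, consistent with $\dim \mathfrak{Z} = 2\dim T$.

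Your closing hedge about conventions is unnecessary. In \eqref{eqn:Z-2} the pair $(x,g)$ has $x = \bm{\mu}_{\Ri}$ and $\Ad^*(g)(x) = \bm{\mu}_{\Le}$, which is exactly your $x$. So the displayed tuple comes out as stated, with no adjustment.
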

\begin{proof}
	This is \cite[Example 2.4]{Jin22}, except that the roles of $\mathsf{e}$ and $\mathsf{f}$ are swapped there.
\end{proof}

For the case $G = T$ we have $ \mathfrak{B}_{w_0} = \mathfrak{Z} = T \times \mathfrak{t}^*$ with $\kappa$ being the second projection. For groups of the form $G = G_1 \times G_2$, we also have $\mathfrak{Z} = \mathfrak{Z}_1 \times \mathfrak{Z}_2$ compatibly with characteristic morphisms and Bruhat cells. We omit these trivialities.

\subsection{Isogenies}
The following material can also be found in \cite[Section 2]{BFM05}.

Let $G'$ be a connected reductive $\CC$-group, $Z$ a finite central subgroup of $G'$, and $G := G'/Z$. Let $\mathscr{Z}'$ (resp.\ $\mathscr{Z}$) denote the variety defined earlier for $G'$ (resp.\ $G$). Then $Z$ acts on $\mathscr{Z}'$ by $(x, g) \mapsto (x, zg)$, and there is a $Z$-torsor $\mathscr{Z}' \to \mathscr{Z}$ given by $(x, g) \mapsto (x, gZ)$.

Write $\mathfrak{Z} = \mathfrak{Z}_G$ and $\mathfrak{Z}' = \mathfrak{Z}_{G'}$. Observe that $\mathfrak{c} = \mathfrak{c}'$.

\begin{proposition}\label{prop:isogeny-Z}
	The morphism $\mathscr{Z}' \to \mathscr{Z}$ induces $\mathfrak{Z}' \to \mathfrak{Z}$ that is a $Z$-torsor preserving the characteristic morphisms to $\mathfrak{c}$ and symplectic structures.
\end{proposition}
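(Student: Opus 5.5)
We work throughout with the slice model $\mathscr{Z}'_{\mathscr{S}} \simeq \mathfrak{Z}'$ and $\mathscr{Z}_{\mathscr{S}} \simeq \mathfrak{Z}$ of Proposition \ref{prop:universal-centralizer-slice}. The point is that the data $(\mathsf{e}, \mathsf{h}, \mathsf{f})$, $(\cdot,\cdot)$, and hence the slice $\mathscr{S} \subset \mathfrak{g}' = \mathfrak{g}$, are shared by $G'$ and $G$. The morphism $\mathscr{Z}' \to \mathscr{Z}$, $(x, g) \mapsto (x, gZ)$, restricts to a morphism
\[ \mathscr{Z}'_{\mathscr{S}} = \{(x,g) \in \mathscr{S} \times G' : \Ad^*(g)x = x\} \to \mathscr{Z}_{\mathscr{S}} = \{(x,\bar g) \in \mathscr{S} \times G : \Ad^*(\bar g)x = x\}. \]
Central elements act trivially under $\Ad^*$, so the $Z$-action $(x,g)\mapsto (x,zg)$ preserves $\mathscr{Z}'_{\mathscr{S}}$.

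Moreover, $\mathscr{Z}'_{\mathscr{S}}$ is exactly the preimage of $\mathscr{Z}_{\mathscr{S}}$ under $\mathscr{S} \times G' \to \mathscr{S} \times G$. Indeed, if $\Ad^*(gZ)x = x$ then $\Ad^*(g)x = x$, because the adjoint action of $G'$ factors through $G$. The map $\mathscr{S} \times G' \to \mathscr{S} \times G$ is a $Z$-torsor, being the base change of the $Z$-torsor $G' \to G$. Base changing along the closed immersion $\mathscr{Z}_{\mathscr{S}} \hookrightarrow \mathscr{S} \times G$ shows that $\mathfrak{Z}' \to \mathfrak{Z}$ is a $Z$-torsor. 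Both maps are compatible with the first projection to $\mathscr{S} \simeq \mathfrak{c}$, so the characteristic morphisms $\kappa', \kappa$ are preserved. We should also check that this agrees with the morphism induced via the abstract description \eqref{eqn:universal-centralizer-catquot}. This holds because the restriction $\mathscr{Z}_{\mathrm{reg}} \to \mathscr{Z}_{\mathscr{S}}$ is $G$-equivariant and natural in the group.

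For the symplectic structures, the plan is to use the Hamiltonian reduction \eqref{eqn:Z-symplectic}. The isogeny $\pi: G' \to G$ induces $\mathrm{T}^*G' \to \mathrm{T}^*G$, $(x', g, x) \mapsto (x', \pi(g), x)$. This map is étale, since $\pi$ is étale and $\mathfrak{g}' = \mathfrak{g}$ with left-trivialized cotangent bundles. It pulls back the canonical symplectic form of $\mathrm{T}^*G$ to that of $\mathrm{T}^*G'$, because the Liouville $1$-form is natural under local diffeomorphisms. The map intertwines the moment maps $\bm{\mu}_{\Ad}$ and is equivariant for $G' \to G$ acting by $\Ad$. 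Hence it restricts to $\bm{\mu}_{\Ad}^{-1}(0)\cap\mathrm{T}^*_{\mathrm{reg}}G' \to \bm{\mu}_{\Ad}^{-1}(0)\cap\mathrm{T}^*_{\mathrm{reg}}G$, and the reduced forms correspond.

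To make this rigorous, avoid subtleties with categorical quotients by restricting to the slice. The composite $\mathscr{Z}_{\mathscr{S}} \hookrightarrow \bm{\mu}_{\Ad}^{-1}(0)\cap\mathrm{T}^*_{\mathrm{reg}}G \to \mathfrak{Z}$ is an isomorphism, so the reduced form on $\mathfrak{Z}$ is the pullback of $\omega_{\mathrm{T}^*G}$ to $\mathscr{Z}_{\mathscr{S}}$. The same holds for $G'$. The square formed by $\mathscr{Z}'_{\mathscr{S}} \to \mathscr{Z}_{\mathscr{S}}$ and the inclusions into $\mathrm{T}^*G' \to \mathrm{T}^*G$ commutes. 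Pulling back then gives that the form on $\mathfrak{Z}'$ is the pullback of the form on $\mathfrak{Z}$.

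The main obstacle is this last identification: one has to justify that the reduced form equals the restriction of the ambient form to the slice. For this we use the standard fact that, for Hamiltonian reduction, the pullback of $\omega$ to the level set equals the pullback of the reduced form along the quotient map. It then suffices to know that the slice is a section of that quotient, which is what \eqref{eqn:universal-centralizer-catquot} together with Proposition \ref{prop:universal-centralizer-slice} provides. Alternatively, one may cite \cite[Section 2]{BFM05}.
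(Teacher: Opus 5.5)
Your argument is correct, but it takes a different route from the paper's for the torsor part.

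\textbf{The paper's route.} The paper stays with the intrinsic description $\mathfrak{Z} \simeq \mathscr{Z}_{\mathrm{reg}} \sslash G$. It notes that $\mathscr{Z}'_{\mathrm{reg}} \to \mathscr{Z}_{\mathrm{reg}}$ is compatible with the conjugation actions, and takes categorical quotients to get $\mathfrak{Z}' \to \mathfrak{Z}$ over $\mathfrak{c}$. For the forms, it simply observes that the étale map $\mathrm{T}^*G' \to \mathrm{T}^*G$ preserves regular loci, symplectic forms and moment maps, so it respects the reduction \eqref{eqn:Z-symplectic}.

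\textbf{Your route.} You pass to the Kostant slice, where $\mathscr{Z}'_{\mathscr{S}}$ is the full preimage of $\mathscr{Z}_{\mathscr{S}}$ under $\mathscr{S} \times G' \to \mathscr{S} \times G$. The torsor property then becomes a formal base change of $G' \to G$; reducedness is automatic, since the preimage is étale over something smooth. This makes explicit what the quotient route leaves to the reader: that $Z$ acts freely on $\mathfrak{Z}'$ with quotient $\mathfrak{Z}$. Via quotients, that needs an extra input such as commutativity of regular centralizers. The cost of your route is the compatibility check between the slice identifications and the quotient maps, which you handle correctly (the quotient map is $G$-invariant rather than $G$-equivariant, but that is what you use). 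For the symplectic part the two arguments are essentially the same. You use the slice as a section of $q$, where one could equally use injectivity of $q'^*$ on forms.

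\textbf{A caveat on your ``standard fact'' $i^*\omega = q^*\omega_{\mathrm{red}}$.} Here $G$ does not act freely on $\mathscr{Z}_{\mathrm{reg}}$: the stabilizer of $(x,g)$ is the $n$-dimensional centralizer $G_x$. So $0$ is not a regular value of $\bm{\mu}_{\Ad}$, and Marsden--Weinstein does not apply verbatim. The identity still holds, for the following reasons.
\begin{itemize}
\item $\mathscr{Z}_{\mathrm{reg}}$ is smooth of dimension $\dim G + n$.
\item The $G$-orbit directions lie in the $\omega$-orthogonal of $\mathrm{T}\mathscr{Z}_{\mathrm{reg}}$, because $\bm{\mu}_{\Ad}$ vanishes on $\mathscr{Z}_{\mathrm{reg}}$.
\item Both spaces have dimension $\dim G - n$, so they coincide, and $\mathscr{Z}_{\mathrm{reg}}$ is coisotropic with characteristic foliation given by the orbits.
\item $q$ is smooth with single-orbit fibres, being a base change of $\mathfrak{g}^*_{\mathrm{reg}} \to \mathfrak{c}$.
\end{itemize}
Hence $i^*\omega$ is basic and descends. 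With this remark your argument is complete, and more detailed than the paper's.
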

\begin{proof}
	One readily checks that $\mathscr{Z}' \to \mathscr{Z}$ is compatible with diagonal actions of $G'$ and $G$; it also induces $\mathscr{Z}'_{\mathrm{reg}} \to \mathscr{Z}_{\mathrm{reg}}$. Taking categorical quotients, we get the $Z$-torsor $\mathfrak{Z}' \to \mathfrak{Z}$ that commutes with characteristic morphisms $\kappa': \mathfrak{Z}' \to \mathfrak{c}'$ and $\kappa: \mathfrak{Z} \to \mathfrak{c}$.
	
	As for symplectic structures, it suffices to gaze at \eqref{eqn:Z-symplectic}, noting that the finite étale $G' \to G$ induces $\mathrm{T}^*(G') \to \mathrm{T}^*(G)$ which preserves regular loci and symplectic forms. 
\end{proof}

Choosing $(\mathsf{e}, \mathsf{h}, \mathsf{f})$ and the Borel pairs $(B', T')$, $(B, T)$ compatibly with $G' \to G$, one readily sees $\mathscr{S} = \mathscr{S}'$. Furthermore, the construction of $\mathfrak{Z}$ via bi-Whittaker Hamiltonian reduction is also compatible with $G' \to G$.

\begin{lemma}\label{prop:isogeny-cell}
	With the choices above, there is a commutative diagram
	\[\begin{tikzcd}
		\CC[\mathfrak{Z}] \arrow[r] & \CC[\mathfrak{Z}'] \\
		\CC[G]^{N_{\Le} \times N_{\Ri}} \arrow[hookrightarrow, u, "{\iota}"] \arrow[r] & \CC[G']^{N_{\Le} \times N_{\Ri}} \arrow[hookrightarrow, u, "{\iota'}"'] 
	\end{tikzcd}\]
	where the horizontal arrows are pull backs along $\mathfrak{Z}' \to \mathfrak{Z}$ and $G' \to G$ respectively, and the vertical arrows are as in \eqref{eqn:iota-B}.
	
	Furthermore, $\mathfrak{B}'_w$ is a $Z$-torsor over $\mathfrak{B}_w$ for all $w \in \mathrm{W}(G', T') = \mathrm{W}(G, T)$.
\end{lemma}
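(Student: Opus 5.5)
We will reduce both assertions to the explicit description of $\mathfrak{Z}$ and $\mathfrak{Z}'$ as bi-Whittaker Hamiltonian reductions, as in \eqref{eqn:Z-2}. With the compatible choices of $(B',T')$, $(B,T)$, $(\mathsf{e},\mathsf{h},\mathsf{f})$ and $(\cdot,\cdot)$, we have $\mathscr{S}=\mathscr{S}'$, and the identification $\mathfrak{Z}'\simeq\mathscr{Z}'_{\mathscr{S}}$ reads
\[ \mathscr{Z}'_{\mathscr{S}} = \{(x,g')\in\mathscr{S}\times G' : \Ad^*(g')(x)=x\}. \]
Its image under $(x,g')\mapsto(x,\pi(g'))$ lies in $\mathscr{Z}_{\mathscr{S}}$, since $\Ad^*$ factors through $\pi$. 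We will first check that this map agrees with the $Z$-torsor $\mathfrak{Z}'\to\mathfrak{Z}$ of Proposition \ref{prop:isogeny-Z} under the isomorphisms of Proposition \ref{prop:universal-centralizer-slice}. The reason is that both maps are induced by $\mathscr{Z}'_{\mathrm{reg}}\to\mathscr{Z}_{\mathrm{reg}}$, and $\mathscr{Z}_{\mathscr{S}}\hookrightarrow\mathscr{Z}_{\mathrm{reg}}\to\mathscr{Z}_{\mathrm{reg}}\sslash G$ is the slice isomorphism. Moreover, $\mathscr{Z}'_{\mathscr{S}}\to\mathscr{Z}_{\mathscr{S}}$ is visibly a $Z$-torsor: $Z$ acts freely on the second coordinate, $\pi$ is a $Z$-torsor, and since $Z$ is central the condition $\Ad^*(g')x=x$ depends only on $\pi(g')$. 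So $\mathscr{Z}'_{\mathscr{S}}$ is exactly the pullback of $\mathscr{Z}_{\mathscr{S}}$ along $\mathrm{id}\times\pi$.

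For the commutative diagram, we will use the fact that $\iota$ is pullback along the composite $\mathfrak{Z}\simeq\mathscr{Z}_{\mathscr{S}}\hookrightarrow \mathrm{T}^*G\to G$, i.e.\ along $\pi_2$. This holds because the identification in \eqref{eqn:Z-2} is given by restriction to $\bm{\mu}_\Le^{-1}(\mathscr{S})\cap\bm{\mu}_\Ri^{-1}(\mathscr{S})$, and the function $f\in\CC[G]^{N_\Le\times N_\Ri}$ is pulled back via the base projection. Hence $\iota(f)(x,g)=f(g)$ for $(x,g)\in\mathscr{Z}_{\mathscr{S}}$. We also need that $\pi^*$ maps $\CC[G]^{N\times N}$ into $\CC[G']^{N\times N}$, which is clear since $\pi$ is equivariant and maps $N'=N$ isomorphically. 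Commutativity then reduces to the identity $\pi_2\circ(\mathrm{id}\times\pi)=\pi\circ\pi_2'$ on $\mathscr{Z}'_{\mathscr{S}}$, which is tautological.

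For the cells, note that $\pi^{-1}(BwB)=B'wB'$. Indeed, $\pi^{-1}(B)=B'$ because $Z\subset T'\subset B'$ (a central subgroup lies in every maximal torus), and representatives of $w$ may be chosen in $N_{G'}(T')$ mapping to $N_G(T)$. Combining this with $\pi_2\circ(\mathrm{id}\times\pi)=\pi\circ\pi'_2$, we obtain
\[ \mathfrak{B}'_w=\pi_2'^{-1}(\pi^{-1}(BwB))=(\mathrm{id}\times\pi)^{-1}(\mathfrak{B}_w). \]
Since the restriction of a torsor to the preimage of a locally closed subset is again a torsor, this shows $\mathfrak{B}'_w\to\mathfrak{B}_w$ is a $Z$-torsor.

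The only step requiring genuine care is the compatibility of the torsor of Proposition \ref{prop:isogeny-Z}, which is defined through categorical quotients, with the explicit slice model. We will handle it by observing that $\mathscr{Z}_{\mathscr{S}}\to\mathscr{Z}_{\mathrm{reg}}\sslash G$ is natural in $G$, so the two constructions agree.
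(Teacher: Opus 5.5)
Your proposal is correct and takes the same route as the paper. The paper only says the lemma is clear from $\mathscr{S}=\mathscr{S}'$ and from the compatibility of the bi-Whittaker model of $\mathfrak{Z}$ with $G'\to G$; you supply the details it leaves implicit, namely $\mathscr{Z}'_{\mathscr{S}}=(\mathrm{id}\times\pi)^{-1}(\mathscr{Z}_{\mathscr{S}})$, the formula $\iota(f)(x,g)=f(g)$, and $\pi^{-1}(BwB)=B'wB'$ because $Z\subset T'\subset B'$.
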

\begin{proof}
	Clear from the preceding discussions.
\end{proof}

\subsection{Intersection of fibers with the big cell}
Fix a connected reductive $\CC$-group $G$, $B = TN$ and $(\mathsf{e}, \mathsf{h}, \mathsf{f})$ as before, so that the cells $\mathfrak{B}_w \subset \mathfrak{Z}$ are defined. We use the previous result to prove the following fact.

\begin{lemma}\label{prop:fiber-cell}
	Let $\nu$ be any point of $\mathfrak{c}$. Then $\mathfrak{B}_{w_0}$ intersects all connected components of $\kappa^{-1}(\nu)$.
\end{lemma}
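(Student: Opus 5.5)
The plan is to first reduce to the simply connected semisimple case, and then use the concrete description of the fiber $\kappa^{-1}(\nu)$ as a centralizer group.

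\emph{Reduction.} By Lemma \ref{prop:isogeny-cell} and Proposition \ref{prop:isogeny-Z}, $\mathfrak{Z}' \to \mathfrak{Z}$ is a $Z$-torsor compatible with $\kappa$ and with the cells. Hence every connected component of $\kappa^{-1}(\nu)$ is the image of a connected component of $\kappa'^{-1}(\nu)$, and $\mathfrak{B}'_{w_0}$ maps into $\mathfrak{B}_{w_0}$. It therefore suffices to treat $G' = G_{\mathrm{SC}} \times A_G$. Products split compatibly, and the torus case is trivial because $\mathfrak{B}_{w_0} = \mathfrak{Z}$ there. So we may assume $G$ is semisimple and simply connected.

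\emph{Identifying the fiber.} Via Proposition \ref{prop:universal-centralizer-slice}, let $x \in \mathscr{S}$ be the point over $\nu$. Then $\kappa^{-1}(\nu) \simeq Z_G(x)$, the centralizer of the regular element $x$, and $\mathfrak{B}_{w_0} \cap \kappa^{-1}(\nu) = Z_G(x) \cap Bw_0B$. Write $x = x_s + x_n$ in its Jordan decomposition. Then $Z_G(x) = Z_{L}(x_n)$, where $L = Z_G(x_s)$ is a Levi subgroup; since $G$ is simply connected, $L$ is connected. Because $x_n$ is regular nilpotent in $L$, we get $Z_L(x_n) = Z_L \cdot U$ with $U$ connected unipotent. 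So $\pi_0(\kappa^{-1}(\nu)) \simeq \pi_0(Z_L) = Z_L/Z_L^\circ$, which is a finite abelian group.

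\emph{Meeting each component.} The task is to show that every coset of $Z_L^\circ U$ in $Z_G(x)$ meets $Bw_0B$. This is the content of Kostant \cite{Ko79}: for regular $x \in \mathsf{e} + \mathfrak{b}$, the set $Z_G(x) \cap Bw_0B$ is dense in $Z_G(x)$. Equivalently, in Jin's formulation \cite[Section 5.2]{Jin22}, the restriction of $\kappa$ to $\mathfrak{B}_{w_0}$ has image meeting every component of every fiber, and the open cell is fiberwise dense. I would invoke this directly. As a sanity check and partial independent argument, Lemma \ref{prop:big-cell} parametrizes $\mathfrak{B}_{w_0}$ by $(h,t) \in T \times \mathfrak{t}$, which shows $\dim \mathfrak{B}_{w_0} = 2\dim T = \dim \mathfrak{Z}$ and hence that $\mathfrak{B}_{w_0}$ is dense open.

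\emph{Expected obstacle.} Density of $\mathfrak{B}_{w_0}$ in $\mathfrak{Z}$ alone does not give density in each fiber, because $\kappa$ is flat but a fiber component could lie entirely inside the divisor $\iota(d)=0$ from Lemma \ref{prop:B-d}. The key step is to exclude this. One route is to argue that $\iota(d)$ restricted to each component of $Z_G(x)$ is nonzero, by evaluating $d$ on a point $z \cdot u$ with $z \in Z_L$. For regular semisimple $x$ one has $Z_G(x) = T_x$, a torus conjugate to $T$ by an element of $N^-$-type, and explicit points of $Bw_0B$ can be exhibited. The general case would then follow by degenerating along the slice, or more cleanly by citing Kostant's result. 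I expect to rely on \cite{Ko79, Jin22} for exactly this point.
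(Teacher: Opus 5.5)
There is a genuine gap: the proposal stops exactly where the content of the lemma begins. After your reduction, what you need is that every coset of $Z_L^\circ U$ in $Z_G(x)$ meets $Bw_0B$. For this you propose to cite ``$Z_G(x)\cap Bw_0B$ is dense in $Z_G(x)$''. Since the components of $Z_G(x)$ are irreducible and $Bw_0B$ is open, that density statement \emph{is} the lemma, so quoting it (even granting it is implicit in \cite[Section 5.2]{Jin22}) proves nothing. The inputs actually taken from \cite{Ko79} are connectedness of $Z_G(x)$ for \emph{adjoint} $G$ \cite[Proposition 2.4]{Ko79} and a surjectivity statement \cite[Proposition 2.5.1]{Ko79}. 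Reducing to the simply connected case therefore goes the wrong way: it lands in the case where $\pi_0(\kappa^{-1}(\nu))$ is largest.

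You also never show $\mathfrak{B}_{w_0}\cap\kappa^{-1}(\nu)\neq\emptyset$. The dimension count via Lemma \ref{prop:big-cell} only gives density of $\mathfrak{B}_{w_0}$ in $\mathfrak{Z}$, which, as you note, says nothing about a given fiber. ``Degenerating along the slice'' cannot fix this, because limits of points of an open set may fall into its complement.

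Two ideas are missing.

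\emph{Non-emptiness.} By Lemma \ref{prop:big-cell}, $\kappa$ sends the point represented by $(\mathsf{g}+t,\tilde{w}_0^{-1}h)$, where $\mathsf{g}=\mathsf{e}+\Ad(\tilde{w}_0^{-1}h)^{-1}(\mathsf{e})$, to the class of $\mathsf{g}+t$. For fixed $h\in T$, $\mathsf{g}$ is $\mathsf{e}$ plus a combination of $B$-simple root vectors, and \cite[Proposition 2.5.1]{Ko79} says $t\mapsto[\mathsf{g}+t]$ maps $\mathfrak{t}$ onto $\mathfrak{c}$.

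\emph{From one component to all.} Because $Z_G\subset B$, $\mathfrak{B}_{w_0}$ is stable under $(x,g)\mapsto(x,zg)$ for $z\in Z_G$. Moreover $Z_G$ permutes the components of $\kappa^{-1}(\nu)$ transitively: by Proposition \ref{prop:isogeny-Z}, this fiber is a $Z_G$-torsor over the corresponding fiber for $G/Z_G$, which is connected by \cite[Proposition 2.4]{Ko79}. In your notation, this transitivity is the identity $Z_L=Z_G\cdot Z_L^\circ$. To see it, conjugate $L$ to a standard Levi subgroup containing $T$. Then the image of $Z_L$ in $T/Z_G$ is the common kernel of a set of simple roots. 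These form part of a basis of $\mathbf{X}^*(T/Z_G)$, so the image is a torus.

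With these two inputs your simply connected case closes at once: take one point of $\kappa^{-1}(\nu)\cap\mathfrak{B}_{w_0}$ and translate it by $Z_G$. The paper organizes the same argument in the opposite direction. It first treats the adjoint case (connected fiber plus surjectivity), then the semisimple case via the $Z_G$-torsor $\mathfrak{Z}\to\underline{\mathfrak{Z}}$ and Lemma \ref{prop:isogeny-cell}, then the reductive case via $G\to G_{\mathrm{ab}}\times G/Z_G$.
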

\begin{proof}
	We begin with the case when $G$ is adjoint. The fiber $\kappa^{-1}(\nu)$ is isomorphic to the centralizer of the element $x \in \mathscr{S}$ corresponding to $\nu$, thus connected by \cite[Proposition 2.4]{Ko79}.
	
	With the notations of Proposition \ref{prop:big-cell}, elements of $\mathfrak{B}_{w_0}$ may be represented by $(\mathsf{g} + t, \tilde{w}_0^{-1} h)$ where $\mathsf{g} := \mathsf{e} + \Ad(\tilde{w}_0^{-1} h)^{-1}(\mathsf{e})$; its image under $\kappa$ is the image of $\mathsf{g} + t$ under $\mathfrak{g}^* \to \mathfrak{c}$. Fix $h \in T$. It suffices to show that
	\begin{equation}\label{eqn:fiber-cell-aux}
		\exists t \in \Lie(T), \; \mathsf{g} + t \mapsto \nu.
	\end{equation}

	We may assume $(\mathsf{e}, \mathsf{h}, \mathsf{f})$ arises from a pinning $(B^-, T, (E_\alpha)_\alpha)$ of $G$ where $\alpha$ ranges over the $B^-$-simple roots and $\mathfrak{g}_\alpha = \CC E_\alpha$, so that
	\[ \mathsf{e} = \sum_{\alpha} E_\alpha, \]
	see eg. \cite[Section 2]{Gr97}. Identify $\mathfrak{c}$ with $\CC^{\mathrm{rk}(G)}$ using the fundamental invariants in $\CC[\mathfrak{g}]^G$. Then:
	\begin{itemize}
		\item $\Ad(\tilde{w}_0^{-1} h)^{-1}(\mathsf{e}) = \sum_\alpha t_\alpha E_\alpha$ where $\alpha$ ranges over the $B$-simple roots and $t_\alpha \in \CC$;
		\item $\Ad(\tilde{w}_0^{-1} h)^{-1}(\mathsf{e})$ belongs to the subspace $d_1 \subset \mathfrak{g}$ of \cite[(2.2.1)]{Ko79}, thus our $\mathsf{g}$ meets the requirements in \cite[Proposition 2.5.1]{Ko79}; note that our $\mathsf{e}$ matches the $\mathsf{f}$ in \text{loc.\ cit.};
		\item the map $\mathscr{J}$ in \cite[(2.3.1)]{Ko79} is exactly the quotient morphism $\mathfrak{g} \to \mathfrak{c}$.
	\end{itemize}
	Therefore \cite[Proposition 2.5.1]{Ko79} yields \eqref{eqn:fiber-cell-aux}; here our $t$ corresponds to the $x$ in \textit{loc.\ cit.}
	
	Next, suppose $G$ is semisimple. Take $\underline{G} = G/Z_G$ and denote by $\underline{\kappa}: \underline{\mathfrak{Z}} \to \mathfrak{c}$ the characteristic morphism for $\underline{G}$. Proposition \ref{prop:isogeny-Z} implies that $\kappa^{-1}(\nu)$ is the preimage of $\underline{\kappa}^{-1}(\nu)$ under the $Z_G$-torsor $\mathfrak{Z} \to \underline{\mathfrak{Z}}$.
	
	By general properties of coverings, $Z_G$ acts on $\pi_0(\kappa^{-1}(\nu))$ transitively, whereas $\mathfrak{B}_{w_0}$ is the preimage of $\underline{\mathfrak{B}}_{w_0}$ by Lemma \ref{prop:isogeny-cell}. The adjoint case implies that some connected component of $\kappa^{-1}(\nu)$ meets $\mathfrak{B}_{w_0}$; by the transitive action of $Z_G$, every connected component meets $\mathfrak{B}_{w_0}$.
	
	The case where $G$ is a torus is trivial, and so is the case when $G$ is a direct product of a torus and a semisimple group. For general $G$, consider the isogeny $G \to G_{\mathrm{ab}} \times (G/Z_G)$ and reason as above.
\end{proof}

\begin{remark}
	The result above is also implicit in \cite[Section 5.2]{Jin22}. The author is grateful to Xin Jin for pointing this out.
\end{remark}

\section{Properties of \texorpdfstring{$\Wh$}{W}}\label{sec:W-properties}
Throughout this section, $G$ will be a connected reductive $\CC$-group. Fix the data in Subsection \ref{subsec:W} to define $\Wh = \Wh_G$.

\subsection{Filtration and grading}
We summarize certain results from \cite{Gin18}, extended to the reductive setting.

\begin{theorem}\label{prop:separating}
	The filtration $\mathrm{F}_\bullet \Wh$ is separating.
\end{theorem}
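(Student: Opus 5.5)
We reduce first to the simply connected semisimple case, where separatedness is available from \cite{Gin18}. By \eqref{eqn:W-isogeny}, $\Wh_G$ is identified with the $Z$-invariants in $\Wh_{G'}$, and this inclusion is a homomorphism of filtered algebras. We also need the inclusion to be strict, in the sense that $\mathrm{F}_n \Wh_G = \Wh_G \cap \mathrm{F}_n \Wh_{G'}$. To check this, we use that $\pi: G' \to G$ is finite étale, so $D(G) = D(G')^{Z}$ as filtered algebras for the order filtration. The $\mathsf{h}$-grading is compatible with this identification, so the Kazhdan filtrations also match. Passing to the reduction $\myfatslash \mathfrak{n}_{\LeRi}^{\bpsi}$ commutes with taking $Z$-invariants, since $Z$ is finite and acts compatibly with everything. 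Granting strictness, we get $\bigcap_n \mathrm{F}_n \Wh_G \subset \bigcap_n \mathrm{F}_n \Wh_{G'}$, so it suffices to treat $G' = G_{\mathrm{SC}} \times A_G$.

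By \eqref{eqn:W-isogeny-prep}, $\Wh_{G'} \simeq \Wh_{G_{\mathrm{SC}}} \otimes D(A_G)$ as filtered algebras, with the tensor product filtration $\mathrm{F}_n = \sum_{p+q=n} \mathrm{F}_p \Wh_{G_{\mathrm{SC}}} \otimes D(A_G)_{\leq q}$. Since $D(A_G)$ is positively filtered and each $D(A_G)_{\leq q}$ has a complement, we can pick a basis of $D(A_G)$ adapted to its filtration, say by monomials in $\CC[A_G]\otimes \Sym \mathfrak{a}_G$ graded by order. Every element of $\Wh_{G'}$ then expands uniquely as $\sum_j w_j \otimes b_j$. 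The element lies in $\mathrm{F}_n$ if and only if each $w_j \in \mathrm{F}_{n - \mathrm{ord}(b_j)} \Wh_{G_{\mathrm{SC}}}$. So if the element lies in every $\mathrm{F}_n$, each coefficient $w_j$ lies in $\bigcap_m \mathrm{F}_m \Wh_{G_{\mathrm{SC}}}$. This reduces everything to $G$ semisimple and simply connected.

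In that case, separatedness should follow from \cite{Gin18}, via the identification $\gr \Wh \simeq \CC[\mathfrak{Z}]$ and the realization of $\Wh$ as the spherical subalgebra of the degenerate nil-DAHA. If no explicit statement is available, we argue through the Rees algebra. By \cite[Lemma 3.3.2]{Gin18}, $\Wh_\hbar \simeq D(G)_\hbar \myfatslash \mathfrak{n}_{\LeRi}^{\bpsi}$ is a flat deformation over $\CC[\hbar]$ with $\Wh_\hbar/\hbar \Wh_\hbar \simeq \CC[\mathfrak{Z}]$. Separatedness of $\mathrm{F}_\bullet \Wh$ is equivalent to $\bigcap_k \hbar^k \Wh_\hbar$ containing no nonzero element $x\hbar^n$. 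To rule this out, we use the $\CC^\times$-action on $\Wh_\hbar$ combining the $\hbar$-grading with the $\mathrm{ad}(\mathsf{h})$-grading. On the commutative side, this corresponds to the contracting $\Gm$-action on $\mathfrak{Z}$ coming from the Kazhdan action on the Kostant slice. We would argue that $\Wh$ embeds into $\Wh_\hbar|_{\hbar=1}$, with the filtration detected by the $\mathsf{h}$-weight and order pair.

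A more concrete alternative is to use a faithful module: the action of $\Wh$ on $\CC[N^- \times T]$-type Whittaker functions, or the embedding into a localization of $D(T)^{\mathrm{W}}$ from Section~\ref{sec:regular-inf-char}. One would check that the filtration there is separated and that the embedding is strict.

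The main obstacle is this last step. Because the filtration is unbounded below, separatedness does not follow formally from Noetherianity of $\Wh_\hbar$, and indeed $\Wh$ is not Zariskian by Proposition~\ref{prop:non-Zariskian}. The plan is to invoke the precise statement in \cite{Gin18}, namely the description of $\Wh_\hbar$ as the spherical nil-DAHA Rees algebra, which is $\hbar$-torsion free and $\hbar$-adically separated in each graded component. The remaining work is then to verify that the isogeny and tensor-product reductions above preserve strictness.
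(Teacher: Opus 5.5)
Your two reductions are the ones the paper uses: the isogeny \eqref{eqn:W-isogeny} to $G' = G_{\mathrm{SC}} \times A_G$, then the tensor decomposition \eqref{eqn:W-isogeny-prep}. Your adapted-basis argument for the factor $D(A_G)$ is correct. You do not need strictness of $\Wh_G \subset \Wh_{G'}$, though: any injective filtered homomorphism already carries $\bigcap_n \mathrm{F}_n \Wh_G$ into $\bigcap_n \mathrm{F}_n \Wh_{G'}$.

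The genuine gap is the simply connected case. It carries the whole content of the statement, and your proposal leaves it unproved. The paper settles it by citing \cite[Corollary 4.3.2]{Gin18}, which asserts exactly that $\mathrm{F}_\bullet \Wh$ is separating for simply connected $G$. Your fallback sketches do not replace this:
\begin{itemize}
\item The $\Gm$-action on $\mathfrak{Z}$ coming from the Kazhdan grading is not contracting. By Lemma \ref{prop:bi-invariant-grading}, $\CC[G]^{N_{\Le} \times N_{\Ri}}$ lives in degrees $\leq 0$, while $\kappa^*\CC[\mathfrak{c}]$ lives in degrees $\geq 0$. So $\gr \Wh \simeq \CC[\mathfrak{Z}]$ is graded unboundedly in both directions.
\item ``$\Wh$ embeds into $\Wh_\hbar|_{\hbar=1}$'' is a tautology.
\item The faithful-module route is not carried out.
\item ``$\hbar$-adically separated in each graded component'' only restates the claim, since the degree-$n$ part of $\hbar^k \Wh_\hbar$ is $(\mathrm{F}_{n-k}\Wh)\hbar^n$.
\end{itemize}

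Your nil-DAHA idea can nevertheless be turned into an honest proof in a few lines, if you want to avoid the direct citation.
\begin{enumerate}
\item By Theorem \ref{prop:nil-DAHA}, $\Wh_\hbar \simeq \mathbf{e}\mathscr{H}(\mathfrak{t}_{\mathrm{aff}}, \widetilde{\mathrm{W}})\mathbf{e} \subset \mathscr{H}(\mathfrak{t}_{\mathrm{aff}}, \widetilde{\mathrm{W}})$, compatibly with the central element $\hbar$.
\item By \cite[Theorem 4.6]{KK86}, as used in Proposition \ref{prop:Z-flatness}, $\mathscr{H}(\mathfrak{t}_{\mathrm{aff}}, \widetilde{\mathrm{W}})$ is a free left $\Sym(\mathfrak{t})[\hbar]$-module.
\item Comparing coefficients in a basis gives $\bigcap_k \hbar^k \mathscr{H}(\mathfrak{t}_{\mathrm{aff}}, \widetilde{\mathrm{W}}) = 0$, because $\bigcap_k \hbar^k \Sym(\mathfrak{t})[\hbar] = 0$.
\item Hence $\bigcap_k \hbar^k \Wh_\hbar = 0$. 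Reading this in Rees degree $n$ gives $\bigcap_k \mathrm{F}_{n-k}\Wh = 0$.
\end{enumerate}
Either this argument or the direct citation of \cite[Corollary 4.3.2]{Gin18} is needed to close your proposal.
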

\begin{proof}
	For simply connected $G$, this is \cite[Corollary 4.3.2]{Gin18}. The general case follows easily from \eqref{eqn:W-isogeny-prep} and \eqref{eqn:W-isogeny}.
\end{proof}

\begin{theorem}\label{prop:grW-Z}
	There is an isomorphism of graded Poisson algebras $\gr \Wh \simeq \CC[\mathfrak{Z}]$ that restricts to an isomorphism of maximal commutative subalgebras $\gr\mathcal{Z}(\mathfrak{g}) \simeq \kappa^*(\CC[\mathfrak{c}])$, where $\mathcal{Z}(\mathfrak{g})$ carries the filtration induced from $\mathrm{F}_\bullet \Wh$; note that $\kappa^*$ is injective.
\end{theorem}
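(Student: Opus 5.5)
The plan is to reduce to the simply connected semisimple case treated in \cite{Gin18}, and then descend along the isogeny \eqref{eqn:G-SES}, in the same way Theorem \ref{prop:separating} was handled.

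\emph{Step 1: the simply connected case.} For $G$ semisimple simply connected, \cite{Gin18} constructs an isomorphism $\gr \Wh \simeq \CC[\mathfrak{Z}]$, where $\mathfrak{Z}$ is realized as the bi-Whittaker Hamiltonian reduction \eqref{eqn:Z-2}. Concretely:
\begin{itemize}
	\item By \cite[Lemma 3.3.2]{Gin18}, $\Wh_\hbar \simeq D(G)_\hbar \myfatslash \mathfrak{n}_{\LeRi}^{\bpsi}$.
	\item Passing to $\hbar = 0$, one uses that quantum Hamiltonian reduction commutes with taking $\gr$ in the Kazhdan setting. This relies on flatness and the freeness of the $N_{\Le}\times N_{\Ri}$-action on $\bm{\mu}^{-1}(\bpsi+\mathfrak{n}^\perp)^2$.
	\item One thus obtains $\gr^{\mathrm{F}} \Wh \simeq \CC[\mathrm{T}^*G \myfatslash (N_{\Le}\times N_{\Ri},\bpsi\times\bpsi)]$.
\end{itemize}
Proposition \ref{prop:regrading} shows that the Poisson brackets match. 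The regrading comes from the Kazhdan filtration, which corresponds to a $\Gm$-action on $\mathfrak{Z}$ combining dilation with $\mathsf{h}$, so the isomorphism is also graded. I would quote all of this from \cite{Gin18}.

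\emph{Step 2: the center.} The map $\mathcal{Z}(\mathfrak{g}) \hookrightarrow \Wh$ is filtered. On $\gr$, by Harish-Chandra and Kostant, it becomes $\CC[\mathfrak{g}^*]^G \to \CC[\mathfrak{Z}]$, namely pullback along $\bm{\mu}_{\Ri}$ composed with the quotient map. This is exactly $\kappa^*$ under \eqref{eqn:Z-2}, since $\mathscr{S}\rightiso\mathfrak{c}$.

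To see that the induced filtration on $\mathcal{Z}(\mathfrak{g})$ has $\gr$ equal to the image, I use strictness. The Kazhdan filtration on $\mathcal{Z}(\mathfrak{g})$ (for $\mathrm{ad}\,\mathsf{h}$-weight $0$) has $\gr = \CC[\mathfrak{c}]$. The composite $\CC[\mathfrak{c}] \to \gr\Wh \simeq \CC[\mathfrak{Z}]$ is $\kappa^*$, which is injective because $\kappa$ is surjective (it has a section, the identity section of the group scheme). Injectivity on $\gr$ forces the filtration induced from $\Wh$ to coincide with the intrinsic one, so $\gr\mathcal{Z}(\mathfrak{g}) \simeq \kappa^*\CC[\mathfrak{c}]$.

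For maximal commutativity, $\kappa^*\CC[\mathfrak{c}]$ is the algebra of a Lagrangian fibration: the fibers of $\kappa$ are Lagrangian, of dimension $\mathrm{rk}\,G$ by Remark \ref{rem:Z-dim}. Any function Poisson-commuting with $\kappa^*\CC[\mathfrak{c}]$ is constant along the fibers, which are connected in the adjoint case and in general permuted by the center. It is therefore in $\kappa^*\CC[\mathfrak{c}]$ by normality of $\mathfrak{c}$ and flatness of $\kappa$. I would cite \cite{Gin18} for this as well.

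\emph{Step 3: general reductive $G$.} By \eqref{eqn:W-isogeny-prep}, $\Wh_{G'} \simeq \Wh_{G_{\mathrm{SC}}} \otimes D(A_G)$. Hence
\[ \gr\Wh_{G'} \simeq \CC[\mathfrak{Z}_{G_{\mathrm{SC}}}] \otimes \CC[\mathrm{T}^*A_G] = \CC[\mathfrak{Z}_{G'}], \]
compatibly with the centers and the Poisson structures.

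Then by \eqref{eqn:W-isogeny}, $\Wh_G = \Wh_{G'}^{Z}$. Since $Z$ is finite and $\CC$ has characteristic zero, taking $Z$-invariants is exact and commutes with $\gr$. This gives $\gr\Wh_G \simeq \CC[\mathfrak{Z}_{G'}]^Z = \CC[\mathfrak{Z}_G]$ by Proposition \ref{prop:isogeny-Z}, which also supplies compatibility with $\kappa$ and with the symplectic forms.

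The main obstacle is checking that the $Z$-action on $\Wh_{G'}$ induced by translation matches, on $\gr$, the torsor action $(x,g)\mapsto(x,zg)$ on $\mathfrak{Z}'$. I would verify this through the second construction \eqref{eqn:Z-2}, where translation by central $z$ visibly acts on the $G$-coordinate and commutes with the symbol maps.
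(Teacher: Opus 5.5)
Your proposal follows the paper's proof. You quote \cite[Theorem 1.2.2]{Gin18} for simply connected $G$, pass to $G' = G_{\mathrm{SC}} \times A_G$ via \eqref{eqn:W-isogeny-prep}, and descend to $G$ by taking $Z$-invariants using \eqref{eqn:W-isogeny} and Proposition \ref{prop:isogeny-Z}; your extra checks (strictness of $\mathcal{Z}(\mathfrak{g}) \hookrightarrow \Wh$ from the injectivity of $\kappa^*$ on $\gr$, and $Z$-equivariance of the symbol isomorphism) make explicit what the paper leaves implicit, with one small correction in your maximal-commutativity remark: a function Poisson-commuting with $\kappa^*\CC[\mathfrak{c}]$ is a priori only locally constant on the fibres of $\kappa$, and what forces it into $\kappa^*\CC[\mathfrak{c}]$ is that the fibres over the dense set of regular semisimple points are maximal tori, hence connected (so it agrees with $\kappa^*$ of its pullback along the identity section on a dense open subset), not the fact that the center permutes components.
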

\begin{proof}
	For simply connected $G$, this is \cite[Theorem 1.2.2]{Gin18}, the general case follows from \eqref{eqn:W-isogeny-prep}, \eqref{eqn:W-isogeny} on the side of $\Wh$, and Proposition \ref{prop:isogeny-Z} on the side of $\mathfrak{Z}$.
\end{proof}

\begin{proposition}\label{prop:PBW-center}
	The filtration above on $\mathcal{Z}(\mathfrak{g})$ is the one induced from the PBW filtration on $\mathcal{U}(\mathfrak{g})$. In particular, it is positively filtered.
\end{proposition}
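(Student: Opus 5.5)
The plan is to compare two filtrations on $\mathcal{Z}(\mathfrak{g})$. The first, $\mathrm{F}^{\mathrm{PBW}}_\bullet$, is the Kazhdan filtration obtained from the PBW filtration and the $\mathrm{ad}(\mathsf{h})$-grading on $\mathcal{U}\mathfrak{g}$. Since $\mathcal{Z}(\mathfrak{g})$ lies in $\mathrm{ad}(\mathsf{h})$-weight $0$, Definition \ref{def:Kazhdan-filtration} gives $\mathrm{F}^{\mathrm{PBW}}_n \mathcal{Z}(\mathfrak{g}) = \mathcal{Z}(\mathfrak{g}) \cap \mathcal{U}\mathfrak{g}_{\leq \lfloor n/2 \rfloor}$. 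This is the PBW filtration with doubled indices, so it is positive, and $\mathrm{F}^{\mathrm{PBW}}_{-1} = 0$. The second is $\mathrm{F}_\bullet \mathcal{Z}(\mathfrak{g}) := \mathcal{Z}(\mathfrak{g}) \cap \mathrm{F}_\bullet \Wh$. Since $\mathcal{Z}(\mathfrak{g}) \hookrightarrow \Wh$ is filtered (stated in Subsection \ref{subsec:W}), we already have $\mathrm{F}^{\mathrm{PBW}}_n \subset \mathrm{F}_n$ for all $n$. It remains to prove the reverse inclusion, i.e.\ that the embedding is strict.

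I would use the standard principle: a filtered map from a positively filtered (exhaustive) module whose associated graded map is injective is strict. Concretely, let $0 \neq z \in \mathcal{Z}(\mathfrak{g})$ and let $m$ be minimal with $z \in \mathrm{F}^{\mathrm{PBW}}_m$; such $m \geq 0$ exists by positivity and exhaustiveness. Suppose $z \in \mathrm{F}_{n} \Wh$ with $n < m$. Then $z \in \mathrm{F}_{m-1}\Wh$, so the image of the nonzero symbol $\sigma_m(z) \in \gr^{\mathrm{PBW}}_m \mathcal{Z}(\mathfrak{g})$ in $\gr_m \Wh$ vanishes. This contradicts injectivity of $\gr^{\mathrm{PBW}} \mathcal{Z}(\mathfrak{g}) \to \gr \Wh$. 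Hence $z \in \mathrm{F}_n\Wh$ forces $n \geq m$, which gives $\mathrm{F}_n \subset \mathrm{F}^{\mathrm{PBW}}_n$. Positivity of the filtration then follows immediately.

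So everything reduces to showing that the graded map $\gr^{\mathrm{PBW}} \mathcal{Z}(\mathfrak{g}) \to \gr \Wh$ is injective. We have $\gr^{\mathrm{PBW}} \mathcal{Z}(\mathfrak{g}) = \Sym(\mathfrak{g})^G = \CC[\mathfrak{g}^*]^G = \CC[\mathfrak{c}]$, with degrees doubled. I would identify the composite
\[ \CC[\mathfrak{c}] \to \gr \Wh \simeq \CC[\mathfrak{Z}] \]
(Theorem \ref{prop:grW-Z}) with $\kappa^*$, which is injective. To do this, I would compute principal symbols. The image of $z$ in $D(G)$ has principal symbol equal to the pull-back of $\sigma(z) \in \CC[\mathfrak{g}^*]^G$ along a moment map $\bm{\mu}_{\Le}$ or $\bm{\mu}_{\Ri}$. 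By the Rees/regrading compatibility (Proposition \ref{prop:regrading}) and Ginzburg's identification $\Wh_\hbar \simeq D(G)_\hbar \myfatslash \mathfrak{n}^{\bpsi}_{\LeRi}$, the associated graded of the reduction is the function ring of the classical reduction \eqref{eqn:Z-2}. Under \eqref{eqn:Z-2}, restricting this pull-back to $\mathscr{Z}_{\mathscr{S}}$ gives $(x,g) \mapsto \sigma(z)(x)$, which is exactly $\kappa^*\sigma(z)$ since $\mathscr{S} \rightiso \mathfrak{c}$.

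The main obstacle is this compatibility: checking that the isomorphism of Theorem \ref{prop:grW-Z} is induced by taking symbols through the Rees construction, so that the symbol map on $\mathcal{Z}(\mathfrak{g})$ really lands on $\kappa^*$ degree by degree, including the doubling of degrees in the Kazhdan regrading. I would extract this from \cite[Section 3.3]{Gin18} in the simply connected case, which is where Theorem \ref{prop:grW-Z} restricts to $\gr \mathcal{Z}(\mathfrak{g}) \simeq \kappa^*\CC[\mathfrak{c}]$. I would then transport it to general $G$ via \eqref{eqn:W-isogeny-prep} and \eqref{eqn:W-isogeny}, since both filtrations on $\mathcal{Z}(\mathfrak{g})$ are compatible with isogenies and with the factor $\Sym \mathfrak{a}_G \subset D(A_G)$.
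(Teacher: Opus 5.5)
Your proof is correct. Its first paragraph is the paper's entire argument: $\mathcal{Z}(\mathfrak{g})$ lies in $\mathrm{ad}(\mathsf{h})$-weight $0$, so its Kazhdan filtration is the PBW filtration, with the index doubling $\mathcal{U}\mathfrak{g}_{\leq \lfloor n/2 \rfloor}$ that you correctly record.

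The difference is that the paper stops there. It takes for granted that the filtration $\mathcal{Z}(\mathfrak{g}) \cap \mathrm{F}_\bullet \Wh$ of Theorem \ref{prop:grW-Z} coincides with this Kazhdan filtration, i.e.\ that $\mathcal{Z}(\mathfrak{g}) \hookrightarrow \Wh$ is strict; this is implicitly imported from \cite{Gin18}. Your second and third paragraphs prove that strictness, and the argument is sound:
\begin{itemize}
\item The reduction to injectivity of $\gr^{\mathrm{PBW}} \mathcal{Z}(\mathfrak{g}) \to \gr \Wh$ only needs the source filtration to be exhaustive and bounded below.
\item For $z$ of PBW degree $k$, its image in $D(G)$ has order exactly $k$ and weight $0$. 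Hence it has Kazhdan degree exactly $2k$, with symbol $\bm{\mu}_{\Ri}^* \sigma(z)$.
\item The classical reduction of that symbol is $\kappa^* \sigma(z)$, which is nonzero because $\kappa$ is surjective.
\end{itemize}

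The one external input you need is the injectivity of Ginzburg's comparison map $\delta\colon \left(\gr^{\mathrm{F}} D(G)\right) \myfatslash (N_{\Le} \times N_{\Ri}, \bpsi \times \bpsi) \to \gr^{\mathrm{F}} \Wh$ from \cite[Lemma 3.3.2]{Gin18}. This is the same chain $\alpha, \beta, \gamma, \delta$ the paper uses for the commutative diagram following Lemma \ref{prop:bi-invariant-filtration}. Without it, the symbol could a priori vanish in the quantum reduction.

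So your route costs an appeal to the compatibility of $\gr$ with reduction. In exchange, it establishes the statement for the induced filtration exactly as claimed, whereas the paper's proof only compares the Kazhdan and PBW filtrations on $\mathcal{Z}(\mathfrak{g})$ itself. Strictness could also be read off from the graded splitting $\Wh_\hbar = (\mathcal{Z}\mathfrak{g})_\hbar \oplus L$ of Proposition \ref{prop:Z-summand}, which likewise rests on \cite{Gin18}.
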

\begin{proof}
	The adjoint $\mathsf{h}$-weights on $\mathcal{Z}(\mathfrak{g})$ must be zero.
\end{proof}

We require another compatibility result to be used with Theorem \ref{prop:grW-Z}. Recall that $\CC[G]^{N_{\Le} \times N_{\Ri}}$ embeds into $\Wh$ and inherits the induced filtration.

\begin{lemma}\label{prop:bi-invariant-filtration}
	The filtration on $\CC[G]^{N_{\Le} \times N_{\Ri}}$ induced from $\mathrm{F}_\bullet \Wh$ comes from a canonical $2\Z_{\leq 0}$-grading.
\end{lemma}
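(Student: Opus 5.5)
The plan is to grade $\CC[G]^{N_{\Le} \times N_{\Ri}}$ by the adjoint $\mathsf{h}$-weights, using the Peter--Weyl decomposition \eqref{eqn:bi-invariant-sum}, and then show that the induced filtration is read off from these weights. For $\lambda \in \mathbf{X}^*(T)^+$, place the summand $V_{-w_0\lambda}^N \otimes V_\lambda^N$ in degree $-\lrangle{\lambda, 4\check{\rho}}$. This number is an even nonpositive integer, because $\lrangle{\lambda, 2\check{\rho}} \in \Z$ for every character $\lambda$ of $T$: indeed $2\check{\rho}$ is a sum of coroots, each of which lies in $\mathbf{X}_*(T)$. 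The grading is canonical since it is just the $\mathrm{ad}(\mathsf{h})$-weight decomposition, and it is multiplicative. To see multiplicativity, note that a product of matrix coefficients indexed by $\lambda$ and $\lambda'$ lies in the summand indexed by $\lambda+\lambda'$, because $N$-invariants in $V_\lambda \otimes V_{\lambda'}$ of highest weight type project to $V_{\lambda+\lambda'}$. Alternatively, and more simply, $\mathrm{ad}(\mathsf{h})$ acts by derivations, so its weights add under multiplication.

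Next I show that $\mathrm{F}_n \cap \CC[G]^{N_{\Le} \times N_{\Ri}}$ equals $\bigoplus_{-\lrangle{\lambda,4\check\rho} \le n} V_{-w_0\lambda}^N \otimes V_\lambda^N$. This is exactly the filtration associated with the grading, i.e.\ $\mathrm{F}_n = \bigoplus_{m \le n}(\text{degree } m)$. One inclusion, ``$\supset$'', follows from Lemma \ref{prop:bi-invariant-grading} applied to each homogeneous piece. For ``$\subset$'', I work on $D(G)$ before passing to the quotient. Functions have order $0$, so a function $f$ of $\mathsf{h}$-weight $d$ lies in $\mathrm{F}_n D(G)$ iff $0 \le \lfloor (n-d)/2 \rfloor$, i.e.\ iff $n \ge d$. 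A general element $f=\sum_d f_d$ of $\CC[G]^{N\times N}$ lies in $\mathrm{F}_n D(G)$ iff each $f_d$ does, because the Kazhdan filtration is defined weight-by-weight as a direct sum over $d$.

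The main obstacle is comparing the filtration induced from $\mathrm{F}_\bullet\Wh$ with the one induced from $\mathrm{F}_\bullet D(G)$. An element of $\CC[G]^{N\times N}$ might lie in $\mathrm{F}_n\Wh$ via a representative $f + u$ with $u \in D(G)\mathfrak{n}^{\bpsi}_{\LeRi}$, where $f+u$ has lower Kazhdan degree than $f$. I would handle this by the same symbol argument used for the injectivity $\CC[G]\cap D(G)\mathfrak{n}^{\bpsi}_{\LeRi}=0$. Suppose $f + u \in \mathrm{F}_n D(G)$ with $f$ having a homogeneous component of weight $d>n$. Pass to the associated graded of the Kazhdan filtration, which by Proposition \ref{prop:regrading} is identified with $\gr^{\le} D(G) = \CC[\mathrm{T}^*G]$. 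There the symbol of $f$ would have to lie in the ideal generated by the symbols $\xi - \bpsi(\xi)$, up to the shift coming from the Kazhdan twist. This is the regrading in which $\bpsi$ becomes homogeneous: $\mathsf{e}$ has $\mathsf{h}$-weight $-2$, so $\xi - \bpsi(\xi)$ is Kazhdan-homogeneous of degree $2$. Restricting to the zero section is impossible here, since the moment-map equations force $x\in \bpsi+\mathfrak{n}^\perp$. Instead, I would restrict to $\bm{\mu}_{\Le}^{-1}(\bpsi+\mathfrak{n}^\perp)\cap\bm{\mu}_{\Ri}^{-1}(\bpsi+\mathfrak{n}^\perp)$, where the function $\iota(f_d)$ is nonzero by the injectivity of $\iota$ in \eqref{eqn:iota-B}. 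This gives a contradiction.

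Concretely, the cleanest route is this: $\gr\Wh\simeq\CC[\mathfrak{Z}]$ (Theorem \ref{prop:grW-Z}, which is built from the same Rees/regrading mechanism) restricts on $\CC[G]^{N\times N}$ to $\iota$, which is injective and respects weights. Hence the top-weight component of $f$ survives in $\gr_d\Wh$, and therefore $f\notin \mathrm{F}_{d-1}\Wh$. This identifies the induced filtration with the one coming from the $2\Z_{\le 0}$-grading.
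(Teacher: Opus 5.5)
Your proof is correct and has the same skeleton as the paper's, which simply combines the Peter--Weyl decomposition \eqref{eqn:bi-invariant-sum} with Lemma \ref{prop:bi-invariant-grading}. Where you differ is the ``only if'' direction.

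The paper regards ``$v^* \otimes v \in \mathrm{F}_n \Wh \iff n \geq -\lrangle{\lambda, 4\check{\rho}}$'' as clear from the definition of the Kazhdan filtration. It also leaves implicit the passage from single summands $v^* \otimes v$ to arbitrary sums over several $\lambda$. You correctly observe that $\mathrm{F}_\bullet \Wh$ is a quotient filtration, so this step needs an argument. You supply one: the top-weight component $f_d$ of $f$ has nonzero image $\iota(f_d)$ in $\gr_d \Wh \simeq \CC[\mathfrak{Z}]$, hence $f \notin \mathrm{F}_{d-1}\Wh$. This handles homogeneous and non-homogeneous $f$ uniformly.

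What your version buys is an honest proof of that step. What it costs is dependence on the strictness built into Ginzburg's identification, namely the isomorphism $\delta$ of \cite[Lemma 3.3.2]{Gin18}. This says that $\gr$ of the left ideal $D(G)\mathfrak{n}^{\bpsi}_{\LeRi}$ is generated by the symbols of the $\xi - \bpsi(\xi)$. Your first sketch (``the symbol of $f$ would have to lie in the ideal generated by the symbols'') is precisely this statement and is not automatic, so the second formulation is the one to keep.

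In that formulation, check that the class of $f_d$ in $\gr_d \Wh$ corresponds to $\iota(f_d)$ by following it directly through the regrading and $\delta$. Do not quote the compatibility diagram proved right after this Lemma in the paper: its bottom row presupposes the Lemma, so citing it would be circular.

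There is also a minor slip. With the paper's conventions, $\mathsf{e} \in \mathfrak{n}^-$ has $\mathsf{h}$-weight $+2$, not $-2$. For a simple root vector $\xi \in \mathfrak{n}$ (order $1$, weight $-2$), the element $\xi - \bpsi(\xi)$ is Kazhdan-homogeneous of degree $0$, not $2$. This does not affect the argument.
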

\begin{proof}
	Combine \eqref{eqn:bi-invariant-sum} and Lemma \ref{prop:bi-invariant-grading} to obtain the required grading indexed by $2\Z_{\leq 0}$.
\end{proof}

On the other hand, \eqref{eqn:iota-B} gives an embedding $\iota: \CC[G]^{N_{\Le} \times N_{\Ri}} \hookrightarrow \CC[\mathfrak{Z}]$ of algebras.

\begin{proposition}
	There is a commutative diagram
	\[\begin{tikzcd}[column sep=large, row sep=large]
		\CC[\mathfrak{Z}] \arrow[r, "\sim"', "\text{Theorem \ref{prop:grW-Z}}"] & \gr\Wh \\
		\CC[G]^{N_{\Le} \times N_{\Ri}} \arrow[hookrightarrow, u, "\iota"] \arrow[r, "\sim", "\text{Lemma \ref{prop:bi-invariant-filtration}}"'] & \gr \left(\CC[G]^{N_{\Le} \times N_{\Ri}}\right). \arrow[hookrightarrow, u]
	\end{tikzcd}\]
\end{proposition}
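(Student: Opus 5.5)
The plan is to trace the isomorphism of Theorem \ref{prop:grW-Z} back to its construction in \cite{Gin18} and check that elements of $\CC[G]^{N_{\Le} \times N_{\Ri}}$ are carried along by the obvious "principal symbol" maps at every step. Throughout I assume that Ginzburg's isomorphism is built this way; that assumption is exactly what step (c) has to verify.

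First I reduce to simply connected $G$. The isogeny $G' = G_{\mathrm{SC}} \times A_G \to G$ is compatible with everything in sight:
\begin{itemize}
	\item on $\Wh$, via \eqref{eqn:W-isogeny}, which realizes $\Wh_G$ as the $Z$-invariants of $\Wh_{G'}$, and via \eqref{eqn:W-isogeny-prep};
	\item on $\mathfrak{Z}$, via Proposition \ref{prop:isogeny-Z};
	\item on the maps $\iota, \iota'$, via Lemma \ref{prop:isogeny-cell}.
\end{itemize}
The maps $\CC[\mathfrak{Z}] \to \CC[\mathfrak{Z}']$ and $\gr \Wh_G \to \gr \Wh_{G'}$ are injective: the first because $\mathfrak{Z}' \to \mathfrak{Z}$ is a torsor, the second because $Z$ is finite, so taking invariants is exact and commutes with $\gr$. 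Hence commutativity for $G'$ implies commutativity for $G$. The torus factor $D(A_G)$ is a direct check. So we may assume $G$ is semisimple and simply connected, as in \cite{Gin18}.

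Next, the right vertical arrow is injective because $\CC[G]^{N_{\Le} \times N_{\Ri}}$ carries the induced filtration, so its associated graded embeds in $\gr \Wh$. It remains to compute the image of a homogeneous element $f$ in the summand indexed by $\lambda$.

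\begin{enumerate}[(a)]
	\item By Lemma \ref{prop:bi-invariant-grading}, $f$ has $\mathsf{h}$-weight $d_\lambda = -\lrangle{\lambda, 4\check{\rho}}$ and order $0$. So its class lies in $\gr^{\mathrm{F}}_{d_\lambda} D(G)$. Under the regrading isomorphism of Proposition \ref{prop:regrading}, $\gr^{\mathrm{F}} D(G) \simeq \gr^{\leq} D(G) = \CC[\mathrm{T}^* G]$, this class corresponds to the order-$0$ principal symbol, namely the pullback $p^* f$ along $p \colon \mathrm{T}^* G \to G$.
	\item Ginzburg's isomorphism comes from the Rees-level identification $\Wh_\hbar \simeq D(G)_\hbar \myfatslash \mathfrak{n}_{\LeRi}^{\bpsi}$ of \cite[Lemma 3.3.2]{Gin18}. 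Specializing at $\hbar = 0$ and using the flatness and commutation statements of \cite[Proposition 3.1.4]{Gin18} gives
	\[ \gr \Wh \simeq \CC[\mathrm{T}^* G] \myfatslash (N_{\Le} \times N_{\Ri}, \bpsi \times \bpsi). \]
	Concretely, an element of $\gr D(G)$ is sent to its restriction to $\bm{\mu}_{\Le}^{-1}(\bpsi + \mathfrak{n}^\perp) \cap \bm{\mu}_{\Ri}^{-1}(\bpsi + \mathfrak{n}^\perp)$, then to $N_{\Le} \times N_{\Ri}$-invariants. Applied to $p^* f$, this is precisely the first arrow of \eqref{eqn:iota-B}.
	\item Finally, the identification of this reduction with $\mathfrak{Z}$ in \cite{Gin18} must be the same restriction to the Kostant slice as in \eqref{eqn:Z-2}. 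This is the second arrow of \eqref{eqn:iota-B}, and composing gives $\iota(f)$.
\end{enumerate}

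The main obstacle is step (c), together with the passage "$\gr$ of a reduction equals the reduction of $\gr$" in step (b). I will have to confirm from \cite[Sections 2--3]{Gin18} that the isomorphism in Theorem \ref{prop:grW-Z} really is the composite of regrading, specialization at $\hbar = 0$, and the slice isomorphism \eqref{eqn:Z-2}, rather than some other identification differing by an automorphism. I will also have to confirm that the choices $(\mathsf{e}, \mathsf{h}, \mathsf{f})$ and $(\cdot, \cdot)$ used in \cite{Gin18} coincide with ours. If a discrepancy appears, I would try to show that it is $N_{\Le} \times N_{\Ri}$- or $Z_G$-translation, which acts trivially on bi-invariant functions.
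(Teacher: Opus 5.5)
Your proposal is correct and follows essentially the paper's route. The paper also unpacks the isomorphism of Theorem \ref{prop:grW-Z}, via \cite[Section 4.3]{Gin18}, as the following composite: the reduction/slice identification \eqref{eqn:Z-2}, then $\gr^{\leq} D(G) \simeq \CC[\mathrm{T}^* G]$, then the regrading of Proposition \ref{prop:regrading}, then the isomorphism $(\gr^{\mathrm{F}} D(G)) \myfatslash (N_{\Le} \times N_{\Ri}, \bpsi \times \bpsi) \simeq \gr^{\mathrm{F}} \Wh$ of \cite[Lemma 3.3.2 (1)]{Gin18}. It then checks that bi-invariant functions are carried along by their principal symbols at each step, which is exactly the verification you flag as remaining; your preliminary reduction to $G_{\mathrm{SC}} \times A_G$ via \eqref{eqn:W-isogeny} and Lemma \ref{prop:isogeny-cell} is not spelled out in the paper but is a harmless extra precaution.
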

\begin{proof}
	We have to examine the proof of \cite[Theorem 1.2.2]{Gin18} in Section 4.3 of \textit{loc.\ cit.} The isomorphism $\CC[\mathfrak{Z}] \rightiso \gr^{\mathrm{F}} \Wh$ (resp.\ $\CC[G]^{N_{\Le} \times N_{\Ri}} \rightiso \gr^{\mathrm{F}} \left( \CC[G]^{N_{\Le} \times N_{\Ri}} \right)$) is the composite of the first (resp.\ second) column of the diagram
	\[\begin{tikzcd}
		\CC[\mathfrak{Z}] \arrow[d, "\sim" sloped, "\alpha"'] & \CC[G]^{N_{\Le} \times N_{\Ri}} \arrow[hookrightarrow, l] \arrow[equal, dd] \\
		\CC \left[ \mathrm{T}^* G \myfatslash (N_{\Le} \times N_{\Ri}, \bpsi \times \bpsi) \right] \arrow[d, "\sim" sloped, "\beta"'] & \\
		\left( \gr^{\leq} D(G) \right) \myfatslash (N_{\Le} \times N_{\Ri}, \bpsi \times \bpsi) \arrow[d, "\sim" sloped, "\gamma"'] & \CC[G]^{N_{\Le} \times N_{\Ri}} \arrow[hookrightarrow, l] \arrow[d, "\sim" sloped, "{\gamma'}"'] \\
		\left( \gr^{\mathrm{F}} D(G) \right) \myfatslash (N_{\Le} \times N_{\Ri}, \bpsi \times \bpsi) \arrow[d, "\sim" sloped, "\delta"'] & \gr^{\mathrm{F}} \CC[G]^{N_{\Le} \times N_{\Ri}} \arrow[hookrightarrow, l] \arrow[equal, d] \\
		\gr^{\mathrm{F}} \left( D(G) \myfatslash (N_{\Le} \times N_{\Ri}, \bpsi \times \bpsi) \right) \arrow[equal, d] & \gr^{\mathrm{F}} \CC[G]^{N_{\Le} \times N_{\Ri}} \arrow[hookrightarrow, l] \\
		\gr^{\mathrm{F}} \Wh. &
	\end{tikzcd}\]
	The notation $\myfatslash$ from Definition \ref{def:quantum-Hamiltonian-reduction} and \eqref{eqn:Z-2} remains in effect here. The horizontal inclusions are the evident ones, and the vertical arrows are explained below:
	\begin{itemize}
		\item $\alpha$ is the construction of $\mathfrak{Z}$ via bi-Whittaker quantum Hamiltonian reduction;
		\item $\beta$ is the composite of
		\begin{itemize}
			\item $\gr^{\leq} D(G) \simeq \CC[\mathrm{T}^* G]$, and
			\item $\CC \left[ \mathrm{T}^* G \myfatslash (N_{\Le} \times N_{\Ri}, \bpsi \times \bpsi) \right] = \CC[\mathrm{T}^* G] \myfatslash (N_{\Le} \times N_{\Ri}, \bpsi \times \bpsi)$, which is implicit in \cite[Section 4.3]{Gin18} and follows easily from Kostant slices;
		\end{itemize}
		\item $\gamma$ is the ``regrading'' isomorphism (Proposition \ref{prop:regrading});
		\item $\gamma'$ is a similar regrading, placing the $\lambda$-summand in \eqref{eqn:bi-invariant-sum} at grade $-\lrangle{\lambda, 4\check{\rho}}$ (cf.\ Lemma \ref{prop:bi-invariant-grading});
		\item $\delta$ is the second isomorphism in \cite[Lemma 3.3.2 (1)]{Gin18}, induced from the evident surjection
		\begin{multline*}
			\mathrm{F}_n D(G) \big/ \sum_i (\mathrm{F}_{n-1-i} D(G)) \mathfrak{n}_{\LeRi, i}^{\bpsi} \twoheadrightarrow \\
			\left(\mathrm{F}_n D(G) + D(G)\mathfrak{n}_{\LeRi}^{\bpsi}\right) \big/ \left( \mathrm{F}_{n-1} D(G) + D(G)\mathfrak{n}_{\LeRi}^{\bpsi} \right), \quad n \in \Z,
		\end{multline*}
		see \cite[(3.3.4)]{Gin18} where a description of $\mathfrak{n}_{\LeRi, i}^{\bpsi}$ can be found in a more general context.
	\end{itemize}
	
	The entire diagram commutes, completing the proof.
\end{proof}

We record the following ring-theoretic consequences.

\begin{proposition}\label{prop:regular-elements}
	Let $w \in \Wh \smallsetminus \{0\}$, then $w$ is not a zero-divisor on either side (i.e.\ $wx = 0 \iff x = 0 \iff xw = 0$ for all $x \in \Wh$).
\end{proposition}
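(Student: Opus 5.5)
The standard strategy is to use that $\gr \Wh \simeq \CC[\mathfrak{Z}]$ is an integral domain, which forces leading symbols to multiply without cancellation. The catch is that $\mathrm{F}_\bullet \Wh$ is unbounded below, so a nonzero element need not have a well-defined leading symbol in the naive sense. Separatedness (Theorem \ref{prop:separating}) is what rescues this.

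First, by Theorem \ref{prop:grW-Z}, $\gr \Wh \simeq \CC[\mathfrak{Z}]$. By Definition--Proposition \ref{def:universal-centralizer}, $\mathfrak{Z}$ is a smooth affine variety. It is connected, since via Proposition \ref{prop:universal-centralizer-slice} it is a commutative group scheme over the irreducible base $\mathfrak{c} \simeq \mathscr{S}$ with connected total space. (Alternatively, it is irreducible as the Hamiltonian reduction \eqref{eqn:Z-2}, being the closure of the open cell $\mathfrak{B}_{w_0} \simeq T \times \mathfrak{t}$, which is dense by Lemma \ref{prop:B-d}.) A smooth connected variety is irreducible and reduced, hence $\CC[\mathfrak{Z}]$ is an integral domain.

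Next, take $w \neq 0$ in $\Wh$. Since $\mathrm{F}_\bullet \Wh$ is exhaustive and separating (Theorem \ref{prop:separating}), there is a largest $p$ with $w \notin \mathrm{F}_{p-1}\Wh$ but $w \in \mathrm{F}_p \Wh$. To see this, note that $\{n : w \in \mathrm{F}_n\Wh\}$ is nonempty and bounded below, since otherwise $w \in \bigcap_n \mathrm{F}_n \Wh = 0$. Let $\sigma(w) \in \gr_p \Wh$ be the nonzero principal symbol. Do the same for $x \neq 0$, obtaining a degree $q$ and symbol $\sigma(x)$. Then $wx \in \mathrm{F}_{p+q}\Wh$, and its image in $\gr_{p+q}\Wh$ is $\sigma(w)\sigma(x)$. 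This product is nonzero because $\gr\Wh$ is a domain, so $wx \notin \mathrm{F}_{p+q-1}\Wh$ and in particular $wx \neq 0$. Symmetrically, $xw \neq 0$. Therefore $\Wh$ itself is a domain, which is exactly the assertion.

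The only delicate point is the second step: the existence of a principal symbol. It requires separatedness, and without it the argument collapses, since elements of $\bigcap_n \mathrm{F}_n$ would have no symbol. This is supplied by Theorem \ref{prop:separating}. The irreducibility of $\mathfrak{Z}$ is standard and is the other ingredient to cite carefully.
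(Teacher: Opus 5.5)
Your argument is the paper's argument. The paper compresses it into one sentence: separatedness (Theorem~\ref{prop:separating}) gives every nonzero element a principal symbol, and $\gr\Wh\simeq\CC[\mathfrak{Z}]$ is a domain, so symbols multiply without cancellation. Your treatment of the symbols is correct.

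The weak spot is the ingredient you flagged yourself: neither of your justifications for the irreducibility of $\mathfrak{Z}$ works as written.
\begin{itemize}
\item Calling $\mathfrak{Z}$ a group scheme over an irreducible base ``with connected total space'' assumes the conclusion. The group structure alone does not help, because fibers of $\kappa$ can be disconnected: for $G=\mathrm{SL}_2$, the fiber over the nilpotent point of $\mathfrak{c}$ is $\{\pm1\}\times\Ga$.
\item Lemma~\ref{prop:B-d} only identifies $\mathfrak{B}_{w_0}$ with the principal open set of $\iota(d)$. It does not say this set is dense, and density would fail if $\iota(d)$ vanished on a whole component.
\end{itemize}

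Either of the following repairs works.
\begin{itemize}
\item $\kappa$ is smooth, hence flat, so every irreducible component of $\mathfrak{Z}$ dominates $\mathfrak{c}$ and meets the generic fiber. That fiber is the centralizer of a generic regular semisimple element, hence a torus and irreducible. Together with smoothness, this makes $\mathfrak{Z}$ integral.
\item Each connected component $C$ of the smooth variety $\mathfrak{Z}$ is open and closed, so it contains a connected component of some fiber of $\kappa$. By Lemma~\ref{prop:fiber-cell}, $C$ then meets $\mathfrak{B}_{w_0}$. The set $\mathfrak{B}_{w_0}$ is nonempty and irreducible, being parametrized by $T\times\mathfrak{t}$ (Lemma~\ref{prop:big-cell}), so it lies inside $C$. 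Since this holds for every component, there is only one.
\end{itemize}
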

\begin{proof}
	As the filtration $\mathrm{F}_\bullet \Wh$ is separating by Theorem \ref{prop:separating}, the problem reduces immediately to $\CC[\mathfrak{Z}]$.
\end{proof}

\begin{proposition}\label{prop:Ore-subset}
	For all nonzero $f \in \CC[G]^{N_{\Le} \times N_{\Ri}} \subset \Wh$, the subset $f^{\Z_{\geq 0}}$ is an Ore set in $\Wh$ (see Subsection \ref{subsec:Ore-sets}).
\end{proposition}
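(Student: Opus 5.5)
We would show that $f^{\Z_{\geq 0}}$ consists of regular elements, and then use the standard criterion that local ad-nilpotence gives the Ore condition. Regularity is already Proposition \ref{prop:regular-elements}: $f \neq 0$ in $\Wh$ because $\CC[G]^{N_{\Le} \times N_{\Ri}}$ injects into $\Wh$. The real content is the Ore condition on both sides.

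\textbf{Step 1: $\mathrm{ad}(f)$ is locally nilpotent on $\Wh$.} In $D(G)$, $f$ is a function, so $\mathrm{ad}(f)$ lowers the order filtration $D(G)_{\leq \bullet}$ by one. Hence for any $P \in D(G)_{\leq k}$ we get $\mathrm{ad}(f)^{k+1}(P) = 0$. Since $f$ is $N_{\Le} \times N_{\Ri}$-invariant, $f$ commutes with $\mathfrak{n}_{\LeRi}^{\bpsi}$. Thus left multiplication by $f$ preserves $D(G)\mathfrak{n}_{\LeRi}^{\bpsi}$, and so does right multiplication (because $P\xi f = Pf\xi$). Therefore $\mathrm{ad}(f)$ descends to $D(G)/D(G)\mathfrak{n}_{\LeRi}^{\bpsi}$ and to its invariants $\Wh$, where it agrees with the commutator in $\Wh$ with the image of $f$. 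Every element of $\Wh$ lifts to some $D(G)_{\leq k}$, so $\mathrm{ad}(f)$ is locally nilpotent on $\Wh$.

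\textbf{Step 2: local ad-nilpotence implies the Ore condition.} This is a known lemma (e.g.\ \cite[Chapter 2]{McR01}, and as used for $D$-modules). Given $a \in \Wh$, choose $m$ with $\mathrm{ad}(f)^{m}(a) = 0$. Writing $L_f, R_f$ for left and right multiplication, $L_f$ and $R_f$ commute and $\mathrm{ad}(f) = L_f - R_f$. Hence
\[ f^{m} a = \sum_{i=0}^{m} \binom{m}{i} \mathrm{ad}(f)^{i}(a)\, f^{m-i}. \]
Every term with $i < m$ ends with a positive power of $f$, and the term $i=m$ vanishes. This gives $f^m a = a' f$ for some $a' \in \Wh$.

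\textbf{Step 3: upgrade to arbitrary powers.} We need $f^{s'} a = a' f^{s}$ for arbitrary $s$, which follows by iterating Step 2. Alternatively, we expand with respect to $\mathrm{ad}(f^s)$, which is also locally nilpotent. Indeed, $f^s$ is again a nonzero bi-invariant function, so Step 1 applies to it directly. The right Ore condition is symmetric, using $a f^{m} = \sum_i (-1)^i\binom{m}{i} f^{m-i}\mathrm{ad}(f)^i(a)$.

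The only point requiring care is Step 1, namely checking that the commutator with $f$ is well defined on the quotient and invariants. It does not interact badly with the Whittaker twist, since $f$ commutes exactly with the elements $\xi - \bpsi(\xi)$. Everything else is formal.
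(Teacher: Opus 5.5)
Your proposal is correct and follows the paper's route. The paper notes that $[f,\cdot]$ is locally nilpotent on $\Wh$ because it already is on $D(G)$, and then cites the standard argument of \cite[Lemma 4.8]{LS99}, which your binomial expansions in Steps 2--3 write out explicitly; your Step 1 check that $f$ commutes with $\mathfrak{n}_{\LeRi}^{\bpsi}$, so that $\mathrm{ad}(f)$ descends to $\Wh$, supplies a detail the paper leaves implicit.
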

\begin{proof}
	The commutator endomorphism $[f, \cdot]$ is locally nilpotent on $\Wh$, since it is already so on $D(G)$. This fact readily implies the required property, as in \cite[Lemma 4.8]{LS99}.
\end{proof}

\subsection{Relation with degenerate nil-DAHA}\label{subsec:DAHA}
The goal here is to fix notation following \cite[Section 7.2]{Gin18}; see also the foundational work \cite{KK86} on nil-Hecke algebras. Take $B = TN \subset G$ and write $\mathrm{W} = \mathrm{W}(G, T)$ as usual, then take
\begin{itemize}
	\item $\mathfrak{g}_{\mathrm{aff}} \supset \mathfrak{t}_{\mathrm{aff}}$: the affinization of $\mathfrak{g} \supset \mathfrak{t}$;
	\item $\Sigma_{\mathrm{aff}}$: the set of simple affine roots;
	\item $\mathrm{W}_{\mathrm{aff}} \supset \mathrm{W}$ the affine Weyl group generated by $\{s_\alpha: \alpha \in \Sigma_{\mathrm{aff}} \}$ acting on $\mathfrak{t}_{\mathrm{aff}}$ and on its linear dual $\mathfrak{t}_{\mathrm{aff}}^*$, where $s_\alpha$ stands for the root reflection attached to an affine root $\alpha$;
	\item $\hbar \in \mathfrak{t}_{\mathrm{aff}}$: the minimal imaginary positive coroot, so that
	\[ \mathfrak{t}_{\mathrm{aff}} = \mathfrak{t} \oplus \CC\hbar, \quad \Sym(\mathfrak{t}_{\mathrm{aff}}) = \Sym(\mathfrak{t})[\hbar]. \]
\end{itemize}
As a matter of fact, $\hbar$ is $\mathrm{W}$-invariant. We refer to \cite[Section 2.1]{BKP16} for details on affine Lie algebras.

The \emph{degenerate nil-DAHA} (double affine Hecke algebra) mentioned in the title is constructed within the algebra $\mathrm{W}_{\mathrm{aff}} \rtimes \CC(\mathfrak{t}_{\mathrm{aff}}^*)$: it is the $\CC$-algebra $\mathscr{H}(\mathfrak{t}_{\mathrm{aff}}, \mathrm{W}_{\mathrm{aff}})$ generated by the subalgebra $\Sym(\mathfrak{t}_{\mathrm{aff}})$ together with the Demazure elements
\begin{equation*}
	\theta_\alpha := \frac{s_\alpha - 1}{\check{\alpha}}, \quad \alpha \in \Sigma_{\mathrm{aff}}.
\end{equation*}
Set $m_{\alpha\beta} := \mathrm{ord}(s_\alpha s_\beta)$ for all $\alpha, \beta \in \Sigma_{\mathrm{aff}}$. By \cite[Proposition 7.1.2]{Gin18}, the relations in $\mathscr{H}(\mathfrak{t}_{\mathrm{aff}}, \mathrm{W}_{\mathrm{aff}})$ are generated by
\begin{gather*}
	\theta_\alpha^2 = 0, \quad (\theta_\alpha \theta_\beta)^{m_{\alpha\beta}} = (\theta_\beta \theta_\alpha)^{m_{\alpha\beta}}, \\
	\theta_\alpha s_\alpha(h) - h \theta_\alpha = \lrangle{\alpha, h},
\end{gather*}
where $h \in \mathfrak{t}_{\mathrm{aff}}$ and $\alpha, \beta \in \Sigma_{\mathrm{aff}}$.

Note that $\mathrm{W}_{\mathrm{aff}} \rtimes \CC(\mathfrak{t}_{\mathrm{aff}}^*)$ acts on $\CC(\mathfrak{t}_{\mathrm{aff}}^*)$. The subspace $\Sym(\mathfrak{t}_{\mathrm{aff}})$ of $\CC(\mathfrak{t}_{\mathrm{aff}}^*)$ affords a standard $\mathscr{H}(\mathfrak{t}_{\mathrm{aff}}, \mathrm{W}_{\mathrm{aff}})$-module, which is faithful.

Let $Q \subset \mathbf{X}^*(T)$ be the root lattice. Let $\widetilde{\mathrm{W}} := \mathrm{W} \ltimes \mathbf{X}^*(T) \supset \mathrm{W}_{\mathrm{aff}}$ be the extended Weyl group. It also acts on $\mathfrak{t}_{\mathrm{aff}}$, and $\widetilde{\mathrm{W}} \simeq (\mathbf{X}^*(T)/Q) \ltimes \mathrm{W}_{\mathrm{aff}}$. We enlarge $\mathscr{H}(\mathfrak{t}_{\mathrm{aff}}, \mathrm{W}_{\mathrm{aff}})$ by taking the smash product
\begin{equation*}
	\mathscr{H}(\mathfrak{t}_{\mathrm{aff}}, \widetilde{\mathrm{W}}) := \mathbf{X}^*(T) \ltimes_Q \mathscr{H}(\mathfrak{t}_{\mathrm{aff}}, \mathrm{W}_{\mathrm{aff}}).
\end{equation*}
Therefore, the group algebra of $\mathbf{X}^*(T)$ embeds into $\mathscr{H}(\mathfrak{t}_{\mathrm{aff}}, \widetilde{\mathrm{W}})$. Note that $\mathscr{H}(\mathfrak{t}_{\mathrm{aff}}, \widetilde{\mathrm{W}})$ still acts naturally and faithfully on $\Sym(\mathfrak{t}_{\mathrm{aff}})$.

For each $\mu \in \mathbf{X}^*(T)$, let $e^\mu$ denote the corresponding element of $\mathscr{H}(\mathfrak{t}_{\mathrm{aff}}, \widetilde{\mathrm{W}})$.

As verified in \cite[Section 7.2]{Gin18}, $\mathscr{H}(\mathfrak{t}_{\mathrm{aff}}, \widetilde{\mathrm{W}})$ is actually a $\Z$-graded $\CC[\hbar]$-algebra, such that
\begin{itemize}
	\item it is $\hbar$-torsion-free, which can also be deduced from \cite[Theorem 4.6]{KK86};
	\item $\Sym(\mathfrak{t}_{\mathrm{aff}})$ gains its natural $\Z_{\geq 0}$-grading;
	\item $\theta_\alpha$ is homogeneous of degree $-1$ for each $\alpha \in \Sigma_{\mathrm{aff}}$.
\end{itemize}

For all $c \in \CC$, follow the conventions of Subsection \ref{subsec:fil-completions} to define
\begin{equation*}
	\mathscr{H}_c := \mathscr{H}(\mathfrak{t}_{\mathrm{aff}}, \widetilde{\mathrm{W}})|_{\hbar = c}, \quad \Hk := \mathscr{H}_1.
\end{equation*}
Therefore $\Hk$ is a filtered $\CC$-algebra.

From \cite[Theorem 7.1.4]{Gin18}, the group algebra of $\mathrm{W}$ embeds naturally in $\mathscr{H}(\mathfrak{t}_{\mathrm{aff}}, \widetilde{\mathrm{W}})$, and the resulting $\mathrm{W}$-action on $\Sym(\mathfrak{t})$ is the standard one. Define the idempotent accordingly
\begin{equation*}
	\mathbf{e} := \frac{1}{|\mathrm{W}|} \sum_{w \in \mathrm{W}} w.
\end{equation*}
The spherical parts are defined as
\begin{equation*}
	\mathscr{H}(\mathfrak{t}_{\mathrm{aff}}, \widetilde{\mathrm{W}})^{\mathrm{sph}} := \mathbf{e} \mathscr{H}(\mathfrak{t}_{\mathrm{aff}}, \widetilde{\mathrm{W}}) \mathbf{e}, \quad \mathscr{H}_c^{\mathrm{sph}} := \mathbf{e} \mathscr{H}_c \mathbf{e}, \quad \Hk^{\mathrm{sph}} := \mathbf{e}\Hk\mathbf{e}.
\end{equation*}
The algebra $\Hk^{\mathrm{sph}}$ is still filtered and contains $\Sym(\mathfrak{t})^{\mathrm{W}}$ as a subalgebra.

Recall that we put the PBW filtration on $\mathcal{Z}(\mathfrak{g})$. The corresponding Rees algebra is denoted as $\mathcal{Z}(\mathfrak{g})_\hbar$.

\begin{theorem}[\protect{\cite[Theorems 1.2.1 and 8.1.2]{Gin18}}]
	\label{prop:nil-DAHA}
	Suppose $G$ is simply connected. There is a commutative diagram of graded algebras:
	\[\begin{tikzcd}
		\left( \Sym(\mathfrak{t})^{\mathrm{W}} \right)[\hbar] \arrow[r, "\sim"] \arrow[d] & (\mathcal{Z}\mathfrak{g})_{\hbar} \arrow[d] \\
		\mathscr{H}(\mathfrak{t}_{\mathrm{aff}}, \widetilde{\mathrm{W}})^{\mathrm{sph}} \arrow[r, "\sim"'] & \Wh_{\hbar}.
	\end{tikzcd}\]
	Specializing at $\hbar = 1$, we obtain an isomorphism $\Wh \simeq \Hk^{\mathrm{sph}}$ between filtered $\CC$-algebras, which restricts to $\mathcal{Z}(\mathfrak{g}) \simeq \Sym(\mathfrak{t})^{\mathrm{W}}$, and the latter isomorphism coincides with Harish-Chandra's isomorphism.
\end{theorem}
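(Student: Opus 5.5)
The plan is to realize both sides inside a common localized algebra of difference-differential operators on $T$, compare them on generators, and then conclude with a filtration argument using Theorem \ref{prop:grW-Z}. Everything should be done at the Rees level over $\CC[\hbar]$ and specialized to $\hbar = 1$ only at the end, since $\Wh_\hbar$ and $\mathscr{H}(\mathfrak{t}_{\mathrm{aff}}, \widetilde{\mathrm{W}})$ are both $\hbar$-torsion-free graded algebras. Take $G$ simply connected throughout.

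\emph{Step 1: a radial-part embedding of $\Wh_\hbar$.} Restrict bi-Whittaker differential operators to the big cell. By Lemma \ref{prop:big-cell}, $N \times \tilde{w}_0^{-1}T \times N \simeq Bw_0B$, and the conditions imposed by $\mathfrak{n}_{\LeRi}^{\bpsi}$ eliminate the $N$-directions. This yields an injective homomorphism of $\CC[\hbar]$-algebras
\[ \Wh_\hbar \hookrightarrow \bigl(D_\hbar(T)[\mathcal{S}^{-1}]\bigr)^{\mathrm{W}}. \]
Here $\mathcal{S} = d^{\Z_{\geq 0}}$ is the Ore set of Proposition \ref{prop:Ore-subset}; by Lemma \ref{prop:B-d} its image is invertible on the big cell. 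Injectivity follows from the facts that $\Wh$ has no zero divisors (Proposition \ref{prop:regular-elements}) and that $\mathfrak{B}_{w_0}$ is dense in $\mathfrak{Z}$. Under this embedding:
\begin{itemize}
	\item $\mathcal{Z}(\mathfrak{g})$ maps to the quantum Toda Hamiltonians; modulo lower-order exponential terms these are the Harish-Chandra images in $\Sym(\mathfrak{t})^{\mathrm{W}}[\hbar]$.
	\item $\CC[G]^{N_{\Le} \times N_{\Ri}}$, in the form \eqref{eqn:bi-invariant-sum}, maps to multiplication by the characters $e^{\lambda}$ on $T$, with $\lambda$ dominant.
\end{itemize}

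\emph{Step 2: the nil-DAHA inside the same localization.} Realize $\mathscr{H}(\mathfrak{t}_{\mathrm{aff}}, \widetilde{\mathrm{W}})$ inside $\widetilde{\mathrm{W}} \ltimes \CC(\mathfrak{t}^*_{\mathrm{aff}})$ by its faithful action on $\Sym(\mathfrak{t}_{\mathrm{aff}})$. A Fourier-type exchange then identifies $\Sym(\mathfrak{t})[\hbar]$ with $\hbar$-differential operators along $T$ and the translations $e^\mu \in \mathbf{X}^*(T)$ with functions on $T$.

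Under this exchange, the affine Demazure element $\theta_{\alpha_0}$ becomes a rational operator with denominator $1 - e^{\theta}$; this is invertible after inverting $d$, because $d$ involves a regular dominant weight. Symmetrizing by $\mathbf{e}$ produces an embedding of $\mathscr{H}(\mathfrak{t}_{\mathrm{aff}}, \widetilde{\mathrm{W}})^{\mathrm{sph}}$ into the same localized algebra. I then check on generators that the images of $\mathcal{Z}(\mathfrak{g})_\hbar$ and $\CC[G]^{N_{\Le} \times N_{\Ri}}$ coincide with $\Sym(\mathfrak{t})^{\mathrm{W}}[\hbar]$ and with $\mathbf{e}\, e^{\lambda}\, \mathbf{e}$ (corrected by Demazure operators), respectively. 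This produces the commutative square of the theorem, with horizontal maps defined on the subalgebras generated by these elements. The left-hand identification is forced to be the Harish-Chandra isomorphism, because both compute the constant-coefficient part of the radial operator.

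\emph{Step 3: bijectivity via associated gradeds.} To upgrade to an isomorphism, I need two facts:
\begin{enumerate}[(a)]
	\item $\Wh_\hbar$ is generated by $\mathcal{Z}(\mathfrak{g})_\hbar$ together with the Rees images of $\CC[G]^{N_{\Le} \times N_{\Ri}}$ and finitely many further elements;
	\item the resulting injective graded map induces an isomorphism modulo $\hbar$.
\end{enumerate}
For (b), Theorem \ref{prop:grW-Z} gives $\gr\Wh \simeq \CC[\mathfrak{Z}]$. On the other side, the spherical nil-DAHA at $\hbar = 0$ should be the ring of functions on the universal centralizer, as in \cite{BFM05}: it is the $\mathrm{W}$-invariants of the nil-Hecke algebra, which is glued from $T \times \mathfrak{t}^*$ along root hyperplanes. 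Compatibility with $\kappa^*$ and with $\iota$ then follows from the diagram relating $\iota$ to $\gr$ in Section \ref{sec:W-properties}. A graded injective map of $\hbar$-torsion-free algebras that is an isomorphism modulo $\hbar$ is an isomorphism, provided the grading is bounded below in each fixed module degree or a completion argument applies. Here I would use separatedness (Theorem \ref{prop:separating}) together with Noetherianity (Proposition \ref{prop:W-hbar-Noetherian}).

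The main obstacle is Step 2 together with (a): proving that the image of $\Wh_\hbar$ is exactly the spherical nil-DAHA, not merely a subalgebra with the same generators. I would try first to control denominators. An element of the localization that lies in both algebras has poles only along the walls $e^{\alpha} = 1$, and $\mathrm{W}$-invariance plus the relation $\theta_\alpha^2 = 0$ force these poles to cancel. Combined with the equality of associated gradeds, this pins down the image.
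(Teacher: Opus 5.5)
Since the paper simply imports this result from \cite{Gin18}, your sketch has to stand on its own. As written it has a genuine gap: it never constructs a homomorphism between $\mathscr{H}(\mathfrak{t}_{\mathrm{aff}}, \widetilde{\mathrm{W}})^{\mathrm{sph}}$ and $\Wh_\hbar$, because Steps 1 and 2 realize the two algebras in incompatible coordinates.

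Restriction to $Bw_0B \simeq N \times \tilde{w}_0^{-1}T \times N$ (Lemma \ref{prop:big-cell}) does embed $\Wh$ into $D(T)$; no localization is even needed, since $d$ restricts to a nonzero multiple of a character. But there $\mathcal{Z}(\mathfrak{g})$ becomes the algebra of quantum Toda operators, which carry exponential potential terms in the simple roots. For $\mathrm{SL}_2$, the image of the Casimir contains a nonzero term $c\,e^{\pm 2x}$ but no $e^{\mp 2x}$ term, so it is not invariant under $x \mapsto -x$. Hence the target $(D_\hbar(T)[\mathcal{S}^{-1}])^{\mathrm{W}}$ is wrong, and the image of $\mathcal{Z}(\mathfrak{g})_\hbar$ is not $\Sym(\mathfrak{t})^{\mathrm{W}}[\hbar]$; agreement ``modulo lower-order exponential terms'' is not equality.

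On the nil-DAHA side you misplace the denominators. We have $\theta_\alpha = (s_\alpha - 1)/\check{\alpha}$ with $\check{\alpha} \in \mathfrak{t}_{\mathrm{aff}}$, so under $\mathrm{W} \ltimes D(T)_\hbar \hookrightarrow \mathscr{H}(\mathfrak{t}_{\mathrm{aff}}, \widetilde{\mathrm{W}})$ the denominators are invariant differential operators $\check{\alpha} - c\hbar$, not functions like $1 - e^{\theta}$. This is why Section \ref{sec:regular-inf-char} localizes at $\mathcal{R}$ and $\mathcal{U}$ (Lemmas \ref{prop:R-Ore} and \ref{prop:WD-embeddings}); inverting $d$ does nothing for these denominators. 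In that realization $\Sym(\mathfrak{t})^{\mathrm{W}}$ acts by constant-coefficient operators, while $d_\lambda$ corresponds to a symmetrized, Demazure-corrected element rather than to $e^\lambda$.

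Geometrically, Step 1 quantizes the chart $\mathfrak{B}_{w_0}$ of $\mathfrak{Z}$, parametrized by $T \times \mathfrak{t}$, in which $\iota(d)$ is a character and $\kappa^*\CC[\mathfrak{c}]$ consists of Toda Hamiltonians. Step 2 quantizes $\kappa^{-1}(\mathfrak{c}_{\mathrm{reg}}) \simeq (T \times \mathfrak{t}^*_{\mathrm{reg}})/\mathrm{W}$, in which $\kappa$ is the projection. Already classically the transition between these charts is a nontrivial birational map, so a ``check on generators'' inside one copy of $D(T)$ cannot succeed. Supplying the intertwining map is the real content of Ginzburg's theorem, and your plan has no mechanism for it.

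Step 3 fails independently. Item (a) is unproved, and the identification of the $\hbar = 0$ specialization of the spherical nil-DAHA with $\CC[\mathfrak{Z}]$ is assumed rather than proved. More seriously, the principle you rely on is false for Kazhdan filtrations: namely, that an injective graded map of $\hbar$-torsion-free Rees algebras which is an isomorphism modulo $\hbar$ must be bijective, given separatedness and Noetherianity.

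The inclusion $\Wh \hookrightarrow \widehat{\Wh}$ is a counterexample. It meets all these hypotheses (Theorem \ref{prop:separating}, Proposition \ref{prop:W-hbar-Noetherian}, Lemma \ref{prop:Ah-Noetherian}), yet it is not surjective. Since $d \in \mathrm{F}_{-1}\Wh$, the element $1 - d$ is a unit in $\widehat{\Wh}$, but it is not a unit in $\Wh$ by Proposition \ref{prop:non-Zariskian}. At the Rees level, the cokernel $C$ of such a map only has $\hbar$ acting bijectively on it, and that forces nothing when degrees are unbounded below.

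Surjectivity would therefore need a genuinely different argument, for instance an equality of images inside a common localization such as $\mathcal{U}^{-1}D(T)^{\mathrm{W}}$. That again presupposes the missing comparison map.
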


\subsection{Flatness}
In this subsection, we exploit the connection with degenerate nil-DAHA to prove the following results.

\begin{proposition}\label{prop:Z-flatness}
	The algebra $\Wh$ is flat as a left (resp.\ right) $\mathcal{Z}(\mathfrak{g})$-module.
\end{proposition}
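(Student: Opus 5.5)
The plan is to reduce flatness to a statement about associated graded objects, and then settle that statement using the geometry of $\kappa$. Equip $\mathcal{Z}(\mathfrak{g})$ with the PBW filtration. By Proposition \ref{prop:PBW-center} this is the filtration induced from $\mathrm{F}_\bullet \Wh$, and by Theorem \ref{prop:grW-Z} we have $\gr \mathcal{Z}(\mathfrak{g}) \simeq \kappa^* \CC[\mathfrak{c}]$ inside $\gr\Wh \simeq \CC[\mathfrak{Z}]$. The first step is to show that $\CC[\mathfrak{Z}]$ is flat over $\CC[\mathfrak{c}]$. We know $\mathfrak{Z}$ is smooth of dimension $2\,\mathrm{rk}(G)$ (Remark \ref{rem:Z-dim}), and $\mathfrak{c}$ is an affine space of dimension $\mathrm{rk}(G)$. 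Each fiber $\kappa^{-1}(\nu)$ is a centralizer of a regular element, hence of dimension exactly $\mathrm{rk}(G)$. Miracle flatness (smooth source, regular target, equidimensional fibers) then shows that $\kappa$ is flat.

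The second step lifts flatness from the graded level to the Rees level. Consider $\Wh_\hbar$ as a module over $\mathcal{Z}(\mathfrak{g})_\hbar = \CC[\mathfrak{c}][\hbar]$, which is commutative and Noetherian. The module $\Wh_\hbar$ is $\hbar$-torsion-free, and $\Wh_\hbar/\hbar \simeq \CC[\mathfrak{Z}]$ is flat over $\CC[\mathfrak{c}] = \mathcal{Z}(\mathfrak{g})_\hbar/\hbar$. For graded modules over a positively graded commutative Noetherian ring with $\hbar$ in positive degree, the local flatness criterion then gives flatness of $\Wh_\hbar$ over $\mathcal{Z}(\mathfrak{g})_\hbar$.

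The main obstacle sits here, because the grading on $\Wh_\hbar$ is unbounded below, so the graded Nakayama argument is not automatic. To get around this, I would use Theorem \ref{prop:nil-DAHA}, after reducing to $G$ simply connected. The reduction goes through \eqref{eqn:W-isogeny-prep}, noting that $D(A_G)$ is flat over $\Sym \mathfrak{a}_G$. It also uses \eqref{eqn:W-isogeny}: $\Wh_G$ is the direct summand of $Z$-invariants of $\Wh_{G'}$, with $Z$ acting $\mathcal{Z}(\mathfrak{g})$-linearly, and a direct summand of a flat module is flat.

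In the simply connected case, Theorem \ref{prop:nil-DAHA} gives $\Wh_\hbar \simeq \mathbf{e}\mathscr{H}(\mathfrak{t}_{\mathrm{aff}},\widetilde{\mathrm{W}})\mathbf{e}$ over $\Sym(\mathfrak{t})^{\mathrm{W}}[\hbar]$. I would then use the PBW-type basis of nil-DAHA from \cite{KK86}, and show that $\mathscr{H}(\mathfrak{t}_{\mathrm{aff}},\widetilde{\mathrm{W}})$ is free as a left and as a right $\Sym(\mathfrak{t})[\hbar]$-module, with a basis indexed by $\widetilde{\mathrm{W}}$ (elements $e^\mu\theta_w$). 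Since $\Sym(\mathfrak{t})$ is free over $\Sym(\mathfrak{t})^{\mathrm{W}}$ by Chevalley, $\mathscr{H}$ is then free, hence flat, over $\Sym(\mathfrak{t})^{\mathrm{W}}[\hbar]$. Because $\mathbf{e}$ commutes with $\Sym(\mathfrak{t})^{\mathrm{W}}$, the spherical subalgebra $\mathbf{e}\mathscr{H}\mathbf{e}$ is a direct summand of $\mathscr{H}$ as a $\Sym(\mathfrak{t})^{\mathrm{W}}[\hbar]$-module on either side, and is therefore flat.

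Finally, specialize at $\hbar=1$. Flatness is preserved under base change along $\CC[\mathfrak{c}][\hbar]\to\CC[\mathfrak{c}]$, which yields flatness of $\Wh$ over $\mathcal{Z}(\mathfrak{g})$ on both sides. If the freeness over $\Sym(\mathfrak{t})[\hbar]$ proves awkward, the fallback is to check $\Tor_1^{\mathcal{Z}(\mathfrak{g})}(\Wh, \mathcal{Z}(\mathfrak{g})/I)=0$ directly. This would use the filtration, together with Theorem \ref{prop:separating} and the graded flatness from the first step.
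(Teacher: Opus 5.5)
Your operative argument is correct and is essentially the paper's proof. You reduce to $G$ simply connected via $\Wh_{G'} \simeq \Wh_{G_{\mathrm{SC}}} \otimes D(A_G)$ and the $Z$-invariant direct summand, then invoke Theorem~\ref{prop:nil-DAHA}, the Kostant--Kumar freeness over $\Sym(\mathfrak{t}_{\mathrm{aff}})$, Chevalley's theorem, and the fact that $\mathbf{e}\mathscr{H}\mathbf{e}$ is a direct summand because $\mathbf{e}$ commutes with $\Sym(\mathfrak{t})^{\mathrm{W}}$. The paper differs only in specializing to $\hbar = 1$ first and working with $\Hk$; your miracle-flatness and graded-lifting detour is unnecessary (and, as you observe, the lifting step fails as stated because $\mathrm{F}_\bullet \Wh$ is unbounded below), so you can drop it.
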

\begin{proof}
	To begin with, assume $G$ is simply connected. By Theorem \ref{prop:nil-DAHA}, it suffices to show that $\Hk^{\mathrm{sph}}$ is flat as a left (resp.\ right) $\Sym(\mathfrak{t})^{\mathrm{W}}$-module.
	
	First, \cite[Theorem 4.6]{KK86} implies that $\mathscr{H}(\mathfrak{t}_{\mathrm{aff}}, \mathrm{W}_{\mathrm{aff}})$ is free as a left (resp.\ right) $\Sym(\mathfrak{t}_{\mathrm{aff}})$-module. Taking smash products with $\mathbf{X}^*(T)/Q$, the same is true for $\mathscr{H}(\mathfrak{t}_{\mathrm{aff}}, \widetilde{\mathrm{W}})$.
	
	We focus on the version for right modules. Specialization to $\hbar = 1$ shows $\Hk$ is free over $\Sym(\mathfrak{t})$, thus free over $\Sym(\mathfrak{t})^{\mathrm{W}}$. Since $\Sym(\mathfrak{t})^{\mathrm{W}} \subset \Hk$ commutes with $\mathbf{e}$ and $\mathbf{e}$ is idempotent, $\Hk\mathbf{e}$ is a direct summand of $\Hk$ as $\Sym(\mathfrak{t})^{\mathrm{W}}$-modules, and so is $\Hk^{\mathrm{sph}} = \mathbf{e}\Hk\mathbf{e}$. This proves the flatness of $\Hk^{\mathrm{sph}}$.
	
	Next, consider the case where $G = T$ is a torus. Then $\Wh = D(T)$, $\mathcal{Z}(\mathfrak{g}) = \Sym(\mathfrak{t})$ and the result is obvious. For the general case, take the isogeny $G' \twoheadrightarrow G$ with finite central kernel $Z$ as in \eqref{eqn:G-SES}. Consider the decomposition into $Z$-eigenspaces
	\[ \Wh_{G'} = \bigoplus_{\chi: Z \to \CC^{\times}} \Wh_{G'}^\chi. \]
	Each summand above is a $\mathcal{Z}(\mathfrak{g}')$-submodule. From \eqref{eqn:W-isogeny} we get
	\[ \Wh_G = \Wh_{G'}^{Z\text{-inv}}, \quad \mathcal{Z}(\mathfrak{g}) = \mathcal{Z}(\mathfrak{g}'). \]
	
	The flatness of $\Wh_{G'}$ over $\mathcal{Z}(\mathfrak{g})$ is known. Being the direct summand with $\chi = \mathrm{triv}$ of $\Wh_{G'}$, it follows that $\Wh = \Wh_G$ is flat as well.
\end{proof}

\begin{proposition}\label{prop:Z-summand}
	View the Rees algebra $\Wh_{\hbar}$ as a left (resp.\ right) $(\mathcal{Z}\mathfrak{g})_{\hbar}$-module, compatibly with gradings. Then there exists a decomposition
	\[ \Wh_{\hbar} = (\mathcal{Z}\mathfrak{g})_{\hbar} \oplus L \]
	into a direct sum of $(\mathcal{Z}\mathfrak{g})_{\hbar}$-stable graded subspaces, and $L$ can be chosen to be $Z_G$-invariant.
	
	Consequently, $\Wh$ is faithfully flat as a left (resp.\ right) $\mathcal{Z}(\mathfrak{g})$-module.
\end{proposition}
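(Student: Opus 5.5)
Following the proof of Proposition \ref{prop:Z-flatness}, I first treat simply connected $G$, where Theorem \ref{prop:nil-DAHA} gives graded isomorphisms $\Wh_\hbar \simeq \mathscr{H}(\mathfrak{t}_{\mathrm{aff}}, \widetilde{\mathrm{W}})^{\mathrm{sph}}$ and $(\mathcal{Z}\mathfrak{g})_\hbar \simeq \Sym(\mathfrak{t})^{\mathrm{W}}[\hbar] = \Sym(\mathfrak{t}_{\mathrm{aff}})^{\mathrm{W}}$, using that $\hbar$ is $\mathrm{W}$-invariant. So it suffices to split off $\Sym(\mathfrak{t}_{\mathrm{aff}})^{\mathrm{W}}$ as a graded direct summand of $\mathscr{H}^{\mathrm{sph}} := \mathbf{e}\mathscr{H}(\mathfrak{t}_{\mathrm{aff}}, \widetilde{\mathrm{W}})\mathbf{e}$. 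By \cite[Theorem 4.6]{KK86}, $\mathscr{H}(\mathfrak{t}_{\mathrm{aff}}, \mathrm{W}_{\mathrm{aff}})$ is a free graded right (resp.\ left) $\Sym(\mathfrak{t}_{\mathrm{aff}})$-module on a homogeneous basis $\{\theta_w\}_{w \in \mathrm{W}_{\mathrm{aff}}}$ with $\theta_1 = 1$, and the smash product with $\mathbf{X}^*(T)/Q$ gives a basis of $\mathscr{H}(\mathfrak{t}_{\mathrm{aff}}, \widetilde{\mathrm{W}})$ of the form $e^{\mu}\theta_w$, with $\mu$ running over coset representatives containing $0$. Hence $\mathscr{H}(\mathfrak{t}_{\mathrm{aff}}, \widetilde{\mathrm{W}}) = \Sym(\mathfrak{t}_{\mathrm{aff}}) \oplus L_0$ as graded right $\Sym(\mathfrak{t}_{\mathrm{aff}})$-modules, where $L_0$ is spanned by the basis elements other than $1$.

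To reach the spherical part I would not use $\mathbf{e}$ on $L_0$ directly, since $L_0$ need not be $\mathrm{W}$-stable. Instead I would use a $\Sym(\mathfrak{t}_{\mathrm{aff}})^{\mathrm{W}}$-linear retraction. Let $p: \mathscr{H}(\mathfrak{t}_{\mathrm{aff}}, \widetilde{\mathrm{W}}) \to \Sym(\mathfrak{t}_{\mathrm{aff}})$ be the projection along $L_0$; it is graded and right $\Sym(\mathfrak{t}_{\mathrm{aff}})$-linear. For $x \in \mathscr{H}^{\mathrm{sph}}$, set $r(x) := \mathbf{e}\, p(x)\, \mathbf{e}$, which lies in $\mathbf{e}\Sym(\mathfrak{t}_{\mathrm{aff}})\mathbf{e}$. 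Since $w f w^{-1} = w(f)$ in $\mathscr{H}$, we get $\mathbf{e} f \mathbf{e} = \operatorname{Av}(f)\mathbf{e}$, where $\operatorname{Av}$ is the $\mathrm{W}$-average; so $r$ lands in $\Sym(\mathfrak{t}_{\mathrm{aff}})^{\mathrm{W}}\mathbf{e} \simeq \Sym(\mathfrak{t}_{\mathrm{aff}})^{\mathrm{W}}$. The map $r$ is graded and right linear over invariants, because these commute with $\mathbf{e}$ and $p$ is right linear. On $f\mathbf{e}$ with $f$ invariant, $p(f\mathbf{e}) = f\,p(\mathbf{e})$, and $r(f\mathbf{e}) = f\,\operatorname{Av}(p(\mathbf{e}))\,\mathbf{e}$.

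The main obstacle is therefore to show that $c := \operatorname{Av}(p(\mathbf{e}))$ is invertible. The element $\mathbf{e}$ has degree $0$, so $c$ is a degree-zero polynomial, i.e.\ a constant. Its value can be read off from the faithful polynomial representation. Each $w \in \mathrm{W}$ equals $1 + (\text{element of the augmentation-type ideal spanned by } \theta$'s$)$, since $s_\alpha = 1 + \check\alpha\theta_\alpha$. The coefficient of $\theta_1$ is then $1$ for every $w$, so $p(\mathbf{e}) = 1$ up to positive-degree terms, which vanish by degree; thus $c = 1$. This requires checking that expanding a product of $(1 + \check\alpha\theta_\alpha)$ in the $\theta_w$ basis gives a $\theta_1$-coefficient of exactly $1$, which I expect to follow from KK86's triangularity. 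Then $L := \ker r$ is a graded complement, and the left version follows by symmetry.

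For $Z_G$-invariance and general $G$: for a torus, $\Wh_\hbar = D(T)_\hbar$ and the complement spanned by $e^{\mu}\Sym$ with $\mu \neq 0$ works and is $T$-invariant. For $G' = G_{\mathrm{SC}} \times A_G$, tensor the two decompositions using \eqref{eqn:W-isogeny-prep}, and average over the finite group $Z_G$ (or $Z$) acting through the $e^{\mu}$-grading. Since $Z$ fixes $(\mathcal{Z}\mathfrak{g})_\hbar$ pointwise, the retraction can be replaced by its $Z$-average, which keeps it a retraction and makes the kernel $Z$-invariant. Taking $Z$-invariants via \eqref{eqn:W-isogeny} gives the decomposition for $\Wh_G$. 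Specializing at $\hbar = 1$ makes $\mathcal{Z}(\mathfrak{g})$ a direct summand of $\Wh$. Combined with the flatness in Proposition \ref{prop:Z-flatness}, this gives faithful flatness, since $M \otimes \Wh \supset M$ is nonzero whenever $M \neq 0$.
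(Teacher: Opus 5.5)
The outer steps of your proposal are sound, but the explicit retraction fails at its key step $c=1$, and the right-module version does not follow ``by symmetry''. Your overall route is the nil-DAHA alternative the paper mentions: reduce to $\mathbf{e}\,\mathscr{H}(\mathfrak{t}_{\mathrm{aff}},\widetilde{\mathrm{W}})\,\mathbf{e}$ via Theorem~\ref{prop:nil-DAHA} and the basis of \cite[Theorem 4.6]{KK86}. The paper's own main argument just cites \cite[Lemma 8.2.1]{Gin18}. Your handling of $Z$ by averaging the retraction, of $A_G$ by tensoring, and of faithful flatness via ``direct summand plus Proposition~\ref{prop:Z-flatness}'' is fine.

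\textbf{Why $c=1$ fails.} Your $L_0=\bigoplus_{(\mu,w)\neq(0,1)}e^{\mu}\theta_w\Sym(\mathfrak{t}_{\mathrm{aff}})$ is a \emph{right} span, whereas $s_\alpha=1+\check\alpha\theta_\alpha$ involves a \emph{left} multiple of $\theta_\alpha$.
\begin{itemize}
\item The relation $\theta_\alpha s_\alpha(h)-h\theta_\alpha=\langle\alpha,h\rangle$ at $h=\check\alpha$ gives $\check\alpha\theta_\alpha=-2-\theta_\alpha\check\alpha$. Hence $s_\alpha=-1-\theta_\alpha\check\alpha$ and $p(s_\alpha)=-1$.
\item More generally, right multiplication by $s_\alpha$ changes $p$ by $f\mapsto -s_\alpha(f)$, because $\theta_v\theta_\alpha\in\{\theta_{vs_\alpha},0\}$ never contributes to $\theta_1$.
\item Hence $p(w)=(-1)^{\ell(w)}$ for $w\in\mathrm{W}$, and $c=|\mathrm{W}|^{-1}\sum_{w\in\mathrm{W}}(-1)^{\ell(w)}=0$ whenever $\mathrm{W}\neq\{1\}$.
\end{itemize}
Concretely, for $\mathrm{SL}_2$ one has $\mathbf{e}=\tfrac12(1+s_\alpha)=-\tfrac12\theta_\alpha\check\alpha\in L_0$. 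So your $r$ kills $\Sym(\mathfrak{t}_{\mathrm{aff}})^{\mathrm{W}}\mathbf{e}$ and is not a retraction. The ``triangularity'' you appeal to only controls \emph{left} expansions. If $x=\sum_w g_w\theta_w$ with coefficients on the left, the $\theta_1$-coefficient is $x(1)$ in the polynomial representation, and that is indeed $1$ for every $w\in\mathrm{W}$.

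\textbf{What survives and how to repair.} Your construction, run with left expansions $\bigoplus\Sym(\mathfrak{t}_{\mathrm{aff}})\,\theta_w e^{\mu}$ and the left-linear projection, does prove the \emph{left} version, since then $p(\mathbf{e})=\mathbf{e}(1)=1$. The right version needs a different retraction. One that works:
\begin{itemize}
\item In right expansions, $\sum_{w\in\mathrm{W}}w=\pm\theta_{w_0}\Delta$ with $\Delta=\prod_{\alpha>0}\check\alpha$. This follows from the classical formula $\theta_{w_0}=\pm\Delta^{-1}\sum_{w}(-1)^{\ell(w)}w$ and $w(\Delta)=(-1)^{\ell(w)}\Delta$.
\item Let $q$ be the right-linear coefficient of $\theta_{w_0}$ and $\mathrm{Alt}$ the $\mathrm{W}$-antisymmetrizer. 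Then $x\mapsto\Delta^{-1}\mathrm{Alt}(q(x))$ is a graded, right $\Sym(\mathfrak{t}_{\mathrm{aff}})^{\mathrm{W}}$-linear map $\mathscr{H}(\mathfrak{t}_{\mathrm{aff}},\widetilde{\mathrm{W}})^{\mathrm{sph}}\to\Sym(\mathfrak{t}_{\mathrm{aff}})^{\mathrm{W}}$.
\item It equals $\pm|\mathrm{W}|^{-1}$ times the identity on $\Sym(\mathfrak{t}_{\mathrm{aff}})^{\mathrm{W}}\mathbf{e}$, so after rescaling its kernel is a valid graded complement $L$.
\end{itemize}
With this repair, the rest of your argument goes through, including the $Z$-averaging, which preserves retractions because $Z$ fixes $(\mathcal{Z}\mathfrak{g})_\hbar$ pointwise.
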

\begin{proof}
	The second assertion follows from the first by specialization at $\hbar = 1$.
	
	To prove the first assertion about direct sum, first suppose that $G$ is simply connected, then a precise statement for left modules is in \cite[Lemma 8.2.1]{Gin18}, and the arguments there apply to right modules as well. The explicit definition of $L$ in \textit{loc.\ cit.} shows that it is $Z_G$-invariant.
	
	Alternatively, for the simply connected case one can translate the problem to the nil-DAHA side via Theorem \ref{prop:nil-DAHA}, and deduce it from \cite[Theorem 4.6]{KK86}.

	Next, the case $G = T$ is clear. For the general case, take the isogeny \eqref{eqn:G-SES} and argue as in Proposition \ref{prop:Z-flatness} via the identification $(\Wh_{G'})_\hbar^{Z\text{-inv}} = (\Wh_G)_{\hbar}$, by noting that $(\mathcal{Z}\mathfrak{g})_\hbar = (\mathcal{Z}\mathfrak{g}')_\hbar$ and $Z$ stabilizes $\hbar$.
\end{proof}

\section{On certain \texorpdfstring{$\Wh$}{W}-modules}\label{sec:W-modules}
Retain the conventions made in Section \ref{sec:W-properties}. Unless otherwise stated, the modules below are left modules.

\subsection{Review of monodromic modules}
Denote by $(D(G), \mathfrak{n}_{\LeRi}^{\bpsi})\dcate{Mod}$ the full subcategory of $D(G)\dcate{Mod}$ consisting of modules on which $\mathfrak{n}_{\LeRi}^{\bpsi} \subset \mathcal{U}\mathfrak{g}_{\Le} \otimes \mathcal{U}\mathfrak{g}_{\Ri}$ acts locally nilpotently. This is a Serre subcategory, and should be regarded as the category of $(N_{\Le} \times N_{\Ri}, \bpsi \times \bpsi)$-monodromic $D(G)$-modules.

For every object $M^\natural$ of $(D(G), \mathfrak{n}_{\LeRi}^{\bpsi})\dcate{Mod}$, denote by $(M^\natural)^{\mathfrak{n}_{\LeRi}^{\bpsi}} \subset M^\natural$ the subspace annihilated by $\mathfrak{n}_{\LeRi}^{\bpsi}$; it is then a $\Wh$-module.

Note that $D(G)/D(G)\mathfrak{n}_{\LeRi}^{\bpsi}$ is a $(D(G), \Wh)$-bimodule. By \cite[Proposition 3.1.4 (i)]{Gin18}, there is an equivalence of categories à la Skryabin:
\begin{equation}\label{eqn:monodromic-equivalence}
	\begin{tikzcd}[row sep=tiny]
		(D(G), \mathfrak{n}_{\LeRi}^{\bpsi})\dcate{Mod} \arrow[r, shift left] & \Wh\dcate{Mod} \arrow[l, shift left] \\
		M^{\natural} \arrow[mapsto, r] & (M^\natural)^{\mathfrak{n}_{\LeRi}^{\bpsi}} \\
		\left( D(G)/D(G)\mathfrak{n}_{\LeRi}^{\bpsi} \right) \dotimes{\Wh} M & M \arrow[mapsto, l];
	\end{tikzcd}
\end{equation}
the unit and co-unit morphisms here are the evident ones.

For brevity, the equivalence \eqref{eqn:monodromic-equivalence} will be written as $M^\natural \leftrightarrow M$. The simple fact below will frequently be invoked without explicit mention.

\begin{lemma}
	The $\Wh$-module $M$ is finitely generated if and only if $M^\natural$ is finitely generated as a $D(G)$-module.
\end{lemma}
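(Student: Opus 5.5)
The plan is to pass generators back and forth through the Skryabin equivalence \eqref{eqn:monodromic-equivalence}, using the two explicit functors $M^\natural \mapsto (M^\natural)^{\mathfrak{n}_{\LeRi}^{\bpsi}}$ and $M \mapsto \left( D(G)/D(G)\mathfrak{n}_{\LeRi}^{\bpsi} \right) \otimes_{\Wh} M$, together with Noetherianity of $D(G)$ and of $\Wh$ (Proposition \ref{prop:W-hbar-Noetherian} and Lemma \ref{prop:Ah-Noetherian}).

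First suppose $M$ is finitely generated over $\Wh$, say by $x_1, \ldots, x_k$. Up to the equivalence, $M^\natural = \left( D(G)/D(G)\mathfrak{n}_{\LeRi}^{\bpsi} \right) \otimes_{\Wh} M$. The bimodule $D(G)/D(G)\mathfrak{n}_{\LeRi}^{\bpsi}$ is cyclic as a left $D(G)$-module, generated by the class $\bar 1$. Every element $\bar a \otimes m$ with $m = \sum_i w_i x_i$ equals $\sum_i \overline{a w_i'} \otimes x_i$ for lifts $w_i'$ of the $w_i$. Hence the elements $\bar 1 \otimes x_i$ generate $M^\natural$ over $D(G)$, so this direction is immediate.

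Conversely, suppose $M^\natural$ is finitely generated over $D(G)$. Since $M^\natural$ is an object of $(D(G), \mathfrak{n}_{\LeRi}^{\bpsi})\dcate{Mod}$, it is the union of its $D(G)$-submodules generated by finite subsets of $M = (M^\natural)^{\mathfrak{n}_{\LeRi}^{\bpsi}}$. Indeed, by the co-unit isomorphism $\left( D(G)/D(G)\mathfrak{n}_{\LeRi}^{\bpsi} \right) \otimes_{\Wh} M \rightiso M^\natural$, the invariants $M$ generate $M^\natural$ as a $D(G)$-module. Since $M^\natural$ is finitely generated, finitely many $m_1, \ldots, m_k \in M$ already generate it. Let $M_0 := \sum_i \Wh m_i \subset M$.

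Then $M_0^\natural \to M^\natural$ is injective by exactness of the equivalence, and it is surjective because its image contains the generators $m_i$. So $M_0^\natural \rightiso M^\natural$, and applying the quasi-inverse gives $M_0 = M$. Therefore $M$ is finitely generated.

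The only point that needs care is the claim that the invariants generate $M^\natural$ over $D(G)$. This is exactly surjectivity of the co-unit in \eqref{eqn:monodromic-equivalence}, which is part of the cited equivalence \cite[Proposition 3.1.4 (i)]{Gin18}. I expect no genuine obstacle here.
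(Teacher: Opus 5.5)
Your proof is correct. The forward direction is the same as the paper's, which just calls it obvious from the equivalence. For the converse you take a more hands-on route, so a brief comparison is worthwhile.

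\emph{The paper's converse.} It never uses the explicit form of the functors. It characterizes finite generation lattice-theoretically: an object is finitely generated if and only if no directed family of proper subobjects has the whole object as its supremum. It then notes that in $(D(G), \mathfrak{n}_{\LeRi}^{\bpsi})\dcate{Mod}$ such suprema are the set-theoretic unions, because this subcategory is closed under directed unions in $D(G)\dcate{Mod}$. Since the equivalence preserves subobject lattices and their suprema, the property transfers from $M^\natural$ to $M$.

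\emph{Your converse.} You use surjectivity of the co-unit to see that $M$ generates $M^\natural$ over $D(G)$, and extract finitely many invariant generators $m_1, \ldots, m_k$. Exactness then shows that the $\Wh$-submodule they span corresponds to all of $M^\natural$, hence equals $M$.

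\emph{What each buys.} The paper's argument is purely categorical and symmetric: it would give both directions at once from the equivalence alone. Yours relies on the concrete description of the co-unit, but yields a bit more. Namely, any finite set of $\mathfrak{n}_{\LeRi}^{\bpsi}$-invariant vectors generating $M^\natural$ over $D(G)$ already generates $M$ over $\Wh$.

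\emph{Minor point.} The Noetherianity of $D(G)$ and $\Wh$ that you list as a tool at the outset is never used in your argument and can be dropped.
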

\begin{proof}
	The ``only if'' direction is obvious from \eqref{eqn:monodromic-equivalence}. Conversely, if $M^\natural$ is finitely generated over $D(G)$, then any family of proper subobjects $(M_\alpha^\natural)_{\alpha \in \mathcal{A}}$ of $M^\natural$, where $\mathcal{A}$ is a filtered poset, has a proper supremum in $(D(G), \mathfrak{n}_{\LeRi}^{\bpsi})\dcate{Mod}$ given by $\bigcup_\alpha M_\alpha^\natural$; this categorical property transmits to $M$ and implies $M$ is finitely generated over $\Wh$.
\end{proof}

\subsection{Admissible modules}
In what follows, the embedding $\mathcal{Z}(\mathfrak{g}) \hookrightarrow \Wh$ will often be omitted in the notation.

\begin{definition}\label{def:admissible-module}
	A $\Wh$-module is said to be \emph{admissible} if it is finitely generated over $\Wh$, and locally $\mathcal{Z}(\mathfrak{g})$-finite. Denote the full subcategory of $\Wh\dcate{Mod}$ formed by admissible $\Wh$-modules as $\cate{Adm}$.
\end{definition}

It is clear that $\cate{Adm}$ is a Serre subcategory of $\Wh\dcate{Mod}$.

\begin{lemma}\label{prop:admissible-generation}
	If a $\Wh$-module $M$ is generated by finitely many $\mathcal{Z}(\mathfrak{g})$-finite elements, then $M$ is admissible.
\end{lemma}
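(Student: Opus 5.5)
Finite generation is given, so the task is to show that every element of $M$ is $\mathcal{Z}(\mathfrak{g})$-finite. Let $x_1, \ldots, x_k$ be the generators, each $\mathcal{Z}(\mathfrak{g})$-finite. Then $V := \sum_i \mathcal{Z}(\mathfrak{g}) x_i$ is a finite-dimensional $\mathcal{Z}(\mathfrak{g})$-stable subspace generating $M$, and $M = \Wh \cdot V$. It therefore suffices to show that for every $w \in \Wh$ and every $v \in V$, the subspace $\mathcal{Z}(\mathfrak{g}) w v$ is finite-dimensional; sums of $\mathcal{Z}(\mathfrak{g})$-finite elements are again $\mathcal{Z}(\mathfrak{g})$-finite.

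The central point is that $\mathcal{Z}(\mathfrak{g})$ commutes with all of $\Wh$. The image of $\mathcal{Z}(\mathfrak{g})$ in $D(G)$, via either the left or the right copy, consists of bi-invariant differential operators. These commute with all of $\tau(\mathcal{U}\mathfrak{g}_{\Le} \otimes \mathcal{U}\mathfrak{g}_{\Ri})$, and in fact with the whole of $D(G)$, because on a group the center of $\mathcal{U}\mathfrak{g}$ acts by bi-invariant operators, which are central in $D(G)^{G\times G}$. That last claim is not quite right for all of $D(G)$, so I would instead argue inside $\Wh$ directly. Theorem \ref{prop:grW-Z} identifies the image of $\mathcal{Z}(\mathfrak{g})$ with a maximal commutative subalgebra, which is not enough on its own. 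The cleaner route is Theorem \ref{prop:nil-DAHA}, combined with the isogeny reduction \eqref{eqn:W-isogeny-prep}, \eqref{eqn:W-isogeny}. Under that isomorphism, $\mathcal{Z}(\mathfrak{g})$ corresponds to $\mathbf{e}\,\Sym(\mathfrak{t})^{\mathrm{W}}$, and the question is whether this is central in $\Hk^{\mathrm{sph}}$.

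I expect it is not central, since $\Sym(\mathfrak{t})^{\mathrm{W}}$ does not commute with $e^\mu$. This matches the fact that $\mathcal{Z}(\mathfrak{g}) \subset \Wh$ is only the Toda Hamiltonians, quantizing $\kappa^*\CC[\mathfrak{c}]$. So commutation fails, and the argument has to use local finiteness of commutators instead. This is the main obstacle. My plan for it is as follows. For $z \in \mathcal{Z}(\mathfrak{g})$, the operator $\mathrm{ad}(z)$ acts locally nilpotently on $\Wh$. I would check this on generators. On $\mathcal{Z}(\mathfrak{g})$ itself it acts by zero. On $\CC[G]^{N_{\Le}\times N_{\Ri}} \subset \Wh$ it is locally nilpotent because the commutator of a differential operator of order $m$ with a function lowers order. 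Iterating $\mathrm{ad}(f)$, as in Proposition \ref{prop:Ore-subset}, gives nilpotence of $\mathrm{ad}(f)$. From this I need nilpotence of $\mathrm{ad}(z)$, and I would deduce it by noting that $\mathrm{ad}(z)^{m+1} f = 0$ when $z$ has order $m$: each commutator with a function drops the order by one, and this must be symmetrized.

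Granting this, $\Wh$ is generated as an algebra by $\mathcal{Z}(\mathfrak{g})$ and $\CC[G]^{N_{\Le}\times N_{\Ri}}$. This follows from the DAHA description, where the generators are $\Sym(\mathfrak{t})^{\mathrm{W}}$ and the spherical $e^\mu$ symmetrizations; if needed I would instead use that $\gr\Wh = \CC[\mathfrak{Z}]$ is generated by $\kappa^*\CC[\mathfrak{c}]$ and $\iota(\CC[G]^{N\times N})$ together with separatedness. Since $\mathrm{ad}(z)$ is a derivation, it is then locally nilpotent on all of $\Wh$. A standard lemma finishes the proof: if $\mathrm{ad}(z)$ is locally nilpotent on $w$ and $z$ acts locally finitely on $v$, then the identity $z^n w v = \sum_j \binom{n}{j} \mathrm{ad}(z)^j(w)\, z^{n-j} v$ shows that $\CC[z] w v$ lies in the finite-dimensional space spanned by the $\mathrm{ad}(z)^j(w)\, \CC[z]v$. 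Applying this to finitely many generators $z_1, \ldots, z_n$ of $\mathcal{Z}(\mathfrak{g})$, using commutativity of $\mathcal{Z}(\mathfrak{g})$ and an induction on $n$, gives $\dim \mathcal{Z}(\mathfrak{g}) w v < \infty$.
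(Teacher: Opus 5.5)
There is a genuine gap. The step everything rests on is that $\mathrm{ad}(z)$ is locally nilpotent on $\Wh$ for $z \in \mathcal{Z}(\mathfrak{g})$. This is false, and the order count you invoke runs the wrong way. It is $\mathrm{ad}(f)$, for a function $f$, that lowers the order of differential operators: this is what Proposition \ref{prop:Ore-subset} uses, and it gives $\mathrm{ad}(f)^{m+1}(z) = 0$ for $z$ of order $m$. By contrast, $\mathrm{ad}(z)$ sends a function to an operator of order $m-1$, and iterating it raises the order.

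Already for $G = \Gm$, where $\Wh = D(\Gm)$ and $\mathcal{Z}(\mathfrak{g}) = \CC[\theta]$ with $\theta = t\partial_t$, one has $\mathrm{ad}(\theta)(t) = t$ and $\mathrm{ad}(\theta^2)^k(t) = t(2\theta+1)^k$ for all $k$. Weakening nilpotence to local finiteness does not help. For $G = \mathrm{SL}_2$, inside $D(T)^{\mathrm{W}} \subset \Wh$ (Proposition \ref{prop:W-sandwich}) one has $\mathcal{Z}(\mathfrak{g}) = \CC[\theta^2]$ and $\mathrm{ad}(\theta^2)^k(t+t^{-1}) = t(2\theta+1)^k + t^{-1}(1-2\theta)^k$. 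The same happens for every non-constant $p(\theta^2)$, so no non-constant element of $\mathcal{Z}(\mathfrak{g})$ acts locally finitely by commutators on $\Wh$. Your binomial-identity lemma therefore never gets its hypothesis.

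Your auxiliary claim that $\Wh$ is generated by $\mathcal{Z}(\mathfrak{g})$ and $\CC[G]^{N_{\Le}\times N_{\Ri}}$ is also unsupported, and its graded fallback is false. For $\mathrm{SL}_2$, $\mathfrak{Z} = \{(a,b,c) : a^2 - b^2c = 1\}$, via $g = \left(\begin{smallmatrix} a & bc \\ b & a \end{smallmatrix}\right)$ centralizing $\left(\begin{smallmatrix} 0 & c \\ 1 & 0 \end{smallmatrix}\right) \in \mathscr{S}$. Here $\kappa^*\CC[\mathfrak{c}] = \CC[c]$ and $\iota(\CC[G]^{N_{\Le}\times N_{\Ri}}) = \CC[b]$, which do not generate $\CC[\mathfrak{Z}]$.

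The missing idea is to leave $\Wh$ and work in $M^\natural$, where all of $\mathfrak{g}$ acts rather than just $\mathcal{Z}(\mathfrak{g})$. The paper takes the $\mathfrak{g}$-submodule $M_0 \subset M^\natural$ generated by the $x_i$. This is $\mathcal{Z}(\mathfrak{g})$-finite because $\mathcal{Z}(\mathfrak{g})$ is central in $\mathcal{U}\mathfrak{g}$. When $\mathfrak{g}$ acts on $D(G)$ by commutators, the map $D(G) \otimes M_0 \to M^\natural$, $P \otimes x \mapsto Px$, is a $\mathfrak{g}$-linear surjection.

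The correct local-finiteness input is that this commutator action of $\mathfrak{g}$ integrates to the algebraic $G$-action on $D(G)$. Hence $D(G)$ is a union of finite-dimensional $\mathfrak{g}$-modules. Kostant's theorem \cite[Theorem 7.133]{KV95} says $V \otimes M_0$ is locally $\mathcal{Z}(\mathfrak{g})$-finite for finite-dimensional $V$. This handles the higher-degree central elements, which act on the tensor product through their coproduct, not through a commutator with a single factor. Since $M = (M^\natural)^{\mathfrak{n}_{\LeRi}^{\bpsi}} \subset M^\natural$ compatibly with the $\mathcal{Z}(\mathfrak{g})$-actions, $M$ is admissible.
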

\begin{proof}
	Suppose $M$ is attached to $M^\natural$ as in \eqref{eqn:monodromic-equivalence}. Let $x_1, \ldots, x_n$ be $\mathcal{Z}(\mathfrak{g})$-finite generators of $M$, and let $M_0$ be the $\mathfrak{g}$-submodule of $M^\natural$ they generate. Consider the linear surjection
	\[ D(G) \otimes M_0 \twoheadrightarrow M^\natural , \quad P \otimes x \mapsto Px. \]
	With the $\mathfrak{g}$-module structure on $D(G)$ (resp.\ $D(G) \otimes M_0$) given by $\xi \cdot P := [\xi, P]$ where the commutator is taken in $D(G)$ (resp.\ the tensor product $\mathfrak{g}$-module structure), the map above is $\mathfrak{g}$-linear. It suffices to show that $D(G) \otimes M_0$ is locally $\mathcal{Z}(\mathfrak{g})$-finite.
	
	The $\mathfrak{g}$-module $D(G)$ defined above lifts naturally to a $G$-module, thus $D(G)$ is a filtered union of finite-dimensional submodules. To conclude the proof, it remains to apply a result of Kostant \cite[Theorem 7.133]{KV95} to $D(G) \otimes M_0$.
\end{proof}

The following discussions are based on the isomorphisms furnished by Theorem \ref{prop:grW-Z}.

\begin{definition}
	Recall that the filtration on $\mathcal{Z}(\mathfrak{g})$ induced from $\mathrm{F}_\bullet \Wh$ is the PBW filtration (Proposition \ref{prop:PBW-center}). Denote by $J^+$ the augmentation ideal of $\gr(\mathcal{Z}(\mathfrak{g}))$; since $\gr_0(\mathcal{Z}(\mathfrak{g})) = \CC = \gr_0 (\mathcal{U}(\mathfrak{g}))$, we see that $J^+$ contains every proper graded ideal of $\gr(\mathcal{Z}(\mathfrak{g}))$ and
	\[ \gr(\mathcal{Z}(\mathfrak{g})) \big/ J^+ \simeq \CC. \]
	In particular, $J^+$ is a maximal ideal. Let $\nu_0 \in \mathfrak{c}$ be the point corresponding to $J^+$.
\end{definition}

\begin{lemma}\label{prop:SS-bound}
	Let $M$ be an admissible $\Wh$-module. Then
	\[ \mathrm{SS}(M) \subset \kappa^{-1}(\nu_0), \]
	and equality holds when $\mathrm{SS}(M)$ is nonempty.
\end{lemma}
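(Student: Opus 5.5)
Fix a good filtration on $M$; since $\Wh_\hbar$ is Noetherian, $\gr M$ is a finitely generated $\gr\Wh \simeq \CC[\mathfrak{Z}]$-module (Theorem \ref{prop:grW-Z}). Under this isomorphism $\gr\mathcal{Z}(\mathfrak{g}) \simeq \kappa^*\CC[\mathfrak{c}]$, and $\kappa^{-1}(\nu_0) = V(\kappa^* J^+)$. The inclusion therefore reduces to showing that every element of $\kappa^*(J^+)$ acts nilpotently on $\gr M$. Since $\mathcal{Z}(\mathfrak{g})$ is generated by finitely many homogeneous elements, and each is a polynomial in the symbols of finitely many central elements, it suffices to handle $\sigma(z)$ for $z \in \mathcal{Z}(\mathfrak{g})$ with $z \in \mathrm{F}_k\mathcal{Z}(\mathfrak{g})$ of positive degree $k$; here I use Proposition \ref{prop:PBW-center}, which gives $J^+ = \bigoplus_{k>0}\gr_k\mathcal{Z}(\mathfrak{g})$.

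To get nilpotence, pick finitely many generators $x_1,\dots,x_r$ of $M$ and use the good filtration \eqref{eqn:good-filtration-fg}. Because $M$ is locally $\mathcal{Z}(\mathfrak{g})$-finite, the finite-dimensional space $V = \mathcal{Z}(\mathfrak{g})\cdot\langle x_1,\dots,x_r\rangle$ is $\mathcal{Z}(\mathfrak{g})$-stable. The filtration $\mathrm{F}_p M = \sum_j \mathrm{F}_p\Wh\cdot v_j$, built from a basis $v_j$ of $V$, is also good. Since $\mathcal{Z}(\mathfrak{g})$ is central, $z^m v_j \in V \subset \mathrm{F}_{p_0}M$ for a fixed $p_0$ and all $m$. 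For $a \in \mathrm{F}_p\Wh$ this gives $z^m a v_j = a z^m v_j \in \mathrm{F}_{p+p_0}M$. On the other hand, $\sigma(z)^m$ raises degree by $mk$, so $\sigma(z)^m$ sends $\gr_p M$ into $\gr_{p+mk} M$. The class of $z^m a v_j$ in $\gr_{p+mk}M$ vanishes as soon as $p+p_0 < p+mk$, that is, once $mk > p_0$; this bound is uniform in $p$. So $\sigma(z)^m$ annihilates $\gr M$ for large $m$. This gives $\mathrm{SS}(M)\subset\kappa^{-1}(\nu_0)$.

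For equality when $\mathrm{SS}(M)\neq\emptyset$, I will combine two facts.
\begin{enumerate}
\item By Theorem \ref{prop:integrability}, $\mathrm{SS}(M)$ is coisotropic in the symplectic variety $\mathfrak{Z}$. Every irreducible component of a nonempty coisotropic subvariety has dimension $\geq \frac12\dim\mathfrak{Z} = \dim T$ (Remark \ref{rem:Z-dim}).
\item $\kappa$ is flat with fibers of pure dimension $\dim T$. Indeed, $\mathfrak{Z}\simeq\mathscr{Z}_{\mathscr{S}}$ is a smooth commutative group scheme over $\mathfrak{c}$ whose fibers are centralizers of regular elements, each of dimension $\mathrm{rk} G$. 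Moreover, $\kappa^{-1}(\nu_0)$ is such a group, hence smooth and equidimensional.
\end{enumerate}
A closed subset of $\kappa^{-1}(\nu_0)$ of full dimension is a union of its irreducible components. So $\mathrm{SS}(M)$ is a nonempty union of connected components of $\kappa^{-1}(\nu_0)$.

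The main obstacle is showing that $\mathrm{SS}(M)$ contains all connected components, not just some; for non-adjoint $G$ the fiber can be disconnected. Homogeneity will handle this. $\mathrm{SS}(M)$ is conic for the $\Gm$-action on $\mathfrak{Z}$ coming from the grading, since $J(M)$ is a graded ideal. The $\Gm$-action contracts $\mathfrak{c}$ to $\nu_0$, and $\nu_0$ is the unique fixed point. I would then argue that this action permutes the components of $\kappa^{-1}(\nu_0)$ compatibly with the central group $Z_G$, which acts transitively on $\pi_0$ (as in Lemma \ref{prop:fiber-cell}). If that fails, a fallback is to note that $Z_G$ acts on $\Wh$ trivially on $\mathcal{Z}(\mathfrak{g})$. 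One would then show directly that $\mathrm{SS}(M)$ is $Z_G$-stable, for instance via the $Z_G$-eigenspace decomposition used in Proposition \ref{prop:Z-flatness}. I expect this stability step to require the most care.
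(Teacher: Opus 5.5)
\emph{Summary.} Your inclusion argument uses a false step that is easy to repair. Your equality argument stalls exactly where it must, because the equality does not hold in the stated generality.

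\emph{Inclusion.} The step ``since $\mathcal{Z}(\mathfrak{g})$ is central, $z^m a v_j = a z^m v_j$'' is wrong. $\mathcal{Z}(\mathfrak{g})$ is central in $\mathcal{U}\mathfrak{g}$, but not in $\Wh$. If it were central in $\Wh$, then $\kappa^*\CC[\mathfrak{c}]$ would Poisson-commute with all of $\CC[\mathfrak{Z}]$, which is impossible on a symplectic variety. This non-centrality is also why Kostant's theorem is needed in Lemma~\ref{prop:admissible-generation}, and why eigenvalues shift by $\mathbf{X}^*(T)$ in Subsection~\ref{subsec:inf-character}.

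The fix is to argue in $\gr M$, where $\gr\Wh$ \emph{is} commutative. For $z \in \mathrm{F}_k\mathcal{Z}(\mathfrak{g})$ with $k \geq 1$ you have $z v_j \in V \subset \mathrm{F}_0 M \subset \mathrm{F}_{k-1}M$, so $\sigma_k(z)\bar v_j = 0$. Commutativity then shows that $J^+\gr(\Wh)$ kills $\gr M$. This is the paper's argument in different packaging: the paper uses an ideal $I$ of finite codimension killing the generators, together with $\gr(I) \supset (J^+)^k$.

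\emph{Equality.} Your Gabber/dimension step is the same as the paper's. It finishes the proof when $\kappa^{-1}(\nu_0)$ is irreducible ($G$ adjoint, or adjoint times a torus), which is how the paper proceeds. Neither of your routes for the remaining step can work.
\begin{itemize}
\item $\Gm$ is connected and preserves the fiber, so it fixes each connected component of it. Conicity therefore says nothing about which components occur.
\item $Z_G$-stability of $\mathrm{SS}(M)$ holds when $M$ carries a compatible $Z_G$-action with a $Z_G$-stable good filtration, for example $\mathcal{G}_\nu$. It does not hold in general, and in fact the equality itself fails.
\end{itemize}

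\emph{Counterexample sketch for $G = SL_2$.} In suitable coordinates $\mathfrak{Z} = \{a^2 - cb^2 = 1\}$, with $c = \kappa$ and $b = \iota(d)$. Thus $\kappa^{-1}(\nu_0) = C_+ \sqcup C_-$ with $C_\pm = \{c = 0,\ a = \pm 1\}$. On the big cell, $\Wh[d^{-1}] \simeq D(T)$ with $T \simeq \Gm$. Here $y \leftrightarrow d$ has Kazhdan degree $-2$ and $\vartheta = y\partial_y$ has degree $2$. Also $\mathfrak{B}_{w_0} \simeq \mathrm{T}^*T$ via $(y,t) \mapsto (a,b,c) = (ty,\, y,\, t^2 - y^{-2})$.

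The image of $\mathfrak{m}_\nu$ is generated by the Toda operator. Choose $\nu$ so that, after normalizing constants, this operator is
\[ \vartheta^2 - \tfrac14 - y^{-2} = (\vartheta + \tfrac12 \pm y^{-1})(\vartheta - \tfrac12 \mp y^{-1}). \]
This is the half-integral Bessel case, $\langle \dot\nu, \check\alpha\rangle \in \frac12 + \Z$: non-integral but not $\widetilde{\mathrm{W}}$-regular. Then $\mathcal{G}_\nu[d^{-1}] \simeq R_+ \oplus R_-$, where $R_\pm := D(T)/D(T)(\vartheta - \frac12 \mp y^{-1})$ and $\mathrm{SS}(R_\pm) = \{ty = \pm 1\} = C_\pm \cap \mathfrak{B}_{w_0}$.

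Let $L_\pm$ be the image of $\mathcal{G}_\nu$ in $R_\pm$. It is admissible and satisfies $L_\pm[d^{-1}] = R_\pm$. Localization at the homogeneous Ore set $d^{\Z_{\geq 0}}$ commutes with $\gr$ (Rees argument as in Proposition~\ref{prop:Ore-Rees}). Hence $\mathrm{SS}(L_\pm) \cap \mathfrak{B}_{w_0} \subset C_\pm$, and so $\mathrm{SS}(L_\pm) \not\supset C_\mp$ by Lemma~\ref{prop:fiber-cell}.

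Modulo its $\mathcal{S}$-torsion, $\mathcal{G}_\nu$ embeds in $L_+ \oplus L_-$. Corollary~\ref{prop:torsion-cor} and Proposition~\ref{prop:Gnu-nonzero} then give $\widehat{L}_\pm \neq 0$ for at least one sign. For that sign, $\emptyset \neq \mathrm{SS}(L_\pm) = C_\pm \subsetneq \kappa^{-1}(\nu_0)$.

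\emph{Consequence for the paper's proof.} No argument can close your last step as stated. The paper's own reduction for general $G$, via the isogeny $G \to G_{\mathrm{ab}} \times G/Z_G$, has the same hole. Passing to the invariant subalgebra only controls the image of $\mathrm{SS}(M)$ downstairs, and every component of $\kappa^{-1}(\nu_0)$ already maps onto the connected fiber there.

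What is true is the inclusion, together with equality in two cases: when $\kappa^{-1}(\nu_0)$ is irreducible, or when $M$ is $Z_G$-equivariant (since $Z_G$ acts transitively on the components). This is all that is used later, namely for $M = \mathcal{G}_\nu$ in Proposition~\ref{prop:Gnu-nonzero}.
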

\begin{proof}
	Take $\mathcal{Z}(\mathfrak{g})$-finite generators $x_1, \ldots, x_n \in M$. There exists an ideal $I \subset \mathcal{Z}(\mathfrak{g})$ of finite codimension annihilating all $x_i$. Equip $I$ and $\mathcal{Z}(\mathfrak{g})/I$ with the filtrations induced from $\mathcal{Z}(\mathfrak{g})$. Then $\gr\left(\mathcal{Z}(\mathfrak{g})\right)/\gr(I) \simeq \gr\left(\mathcal{Z}(\mathfrak{g})/I\right)$ is finite-dimensional.
	
	If $\gr(I)$ is proper, then $\gr(I) \subset J^+$; on the other hand, by reasons of codimension $\gr_k(I) = \gr_k \mathcal{Z}(\mathfrak{g})$ for $k \gg 0$, hence $\gr(I) \supset (J^+)^k$ for $k \gg 0$.
	
	Equip $M$ with the good filtration \eqref{eqn:good-filtration-fg} determined by $x_1, \ldots, x_n$. For every $1 \leq i \leq n$, the image of $x_i$ in $\gr_0(M)$ is annihilated by $\gr(I)$. These images generate $\gr(M)$ over $\gr(\Wh)$, hence $\mathrm{SS}(M) \subset V(\gr(I) \gr(\Wh))$, but $V(\gr(I) \gr(\Wh))$ is either empty (when $\gr(I) = \gr(\mathcal{Z}(\mathfrak{g}))$) or equal to $V(J^+ \gr(\Wh)) = \kappa^{-1}(\nu_0)$.
	
	This establishes the inclusion. When $G$ is adjoint, the fiber $\kappa^{-1}(\nu_0)$ is connected (see the proof of Lemma \ref{prop:fiber-cell}), hence irreducible since it is an algebraic group. When $G$ is the product of an adjoint group with a torus, then $\kappa^{-1}(\nu_0)$ is also irreducible. In both cases, since the fibers of $\kappa$ are Lagrangian in $\mathfrak{Z}$, by Theorem \ref{prop:integrability} we must have $\mathrm{SS}(M) = \kappa^{-1}(\nu_0)$.
	
	For the general case, we leverage the isogeny $G \to G_{\mathrm{ab}} \times G/Z_G$ as in the proof of Lemma \ref{prop:fiber-cell} to conclude.
\end{proof}

\begin{remark}
	The precise value $\nu_0$ is not needed in the sequel. However, by identifying $\mathfrak{c}$ with $\CC^{\mathrm{rk}(G)}$ via fundamental invariants, the discussions in \cite[Section 6]{BG17} entail that $\kappa$ is $\Gm$-equivariant if we let $\Gm$ act on $\mathfrak{c}$ by twice the exponents, which in turn imply $\nu_0 = 0$.
\end{remark}

Next, we inspect the $(D(G), \mathfrak{n}_{\LeRi}^{\bpsi})$-module corresponding to an admissible $\Wh$-module.

\begin{proposition}\label{prop:connection-big-cell}
	Let $M$ be an admissible $\Wh$-module and $M^\natural$ the corresponding object of $(D(G), \mathfrak{n}_{\LeRi}^{\bpsi})\dcate{Mod}$ via \eqref{eqn:monodromic-equivalence}. Then $M^\natural$ is holonomic as a $D(G)$-module, and its restriction to $Bw_0 B$ is a flat connection.
\end{proposition}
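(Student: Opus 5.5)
Since $\cate{Adm}$ is a Serre subcategory and holonomicity, as well as being a flat connection on an open subset, are stable under extensions and quotients, the plan is to reduce first to the case where $M$ is a quotient of $\mathcal{G}_\nu$ for a single $\nu$. Concretely, take $\mathcal{Z}(\mathfrak{g})$-finite generators $x_1, \ldots, x_n$ of $M$. Each $x_i$ is annihilated by a product of powers of maximal ideals $\mathfrak{m}_{\nu}$. Filtering $\Wh x_i$ by the submodules $\Wh \mathfrak{m}_{\nu_1}\cdots\mathfrak{m}_{\nu_k} x_i$, we see that $M$ has a finite filtration whose graded pieces are quotients of some $\mathcal{G}_\nu$. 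Under the equivalence \eqref{eqn:monodromic-equivalence}, these pieces correspond to quotients of $\mathcal{G}^\natural_\nu$. Holonomic modules form a Serre subcategory of $D(G)\dcate{Mod}$, and exactness of $j^*$ handles the connection statement. It therefore suffices to treat $\mathcal{G}^\natural_\nu = D(G)/(D(G)\mathfrak{n}_{\LeRi}^{\bpsi} + D(G)\mathfrak{m}_\nu)$.

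For holonomicity, I would give $\mathcal{G}^\natural_\nu$ the quotient of the order filtration. Its characteristic variety is then contained in the common zero locus of the principal symbols of $\mathfrak{n}_{\LeRi}^{\bpsi}$ and $\mathfrak{m}_\nu$. The symbols of $\xi - \bpsi(\xi)$ give $\bm{\mu}_{\Le}(x',g,x) = x' \in \mathfrak{n}^\perp$ and $x \in \mathfrak{n}^\perp$. The symbols of $\mathfrak{m}_\nu$ are the homogeneous invariants of positive degree, forcing $x$ to be nilpotent. So $\mathrm{Ch}(\mathcal{G}^\natural_\nu)$ lies in $\{(x',g,x) : x \in \mathfrak{b}\cap\mathcal{N},\ \Ad^*(g)x \in \mathfrak{b}\}$ after identifying $\mathfrak{n}^\perp \simeq \mathfrak{b}$. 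This set is a union of conormal bundles to the $B\times B$-orbits $BwB$, possibly up to the nilpotent-cone constraint. More precisely, it is contained in the Steinberg-type variety $\bigcup_w \{(g,x): g\in BwB,\ x\in \mathfrak{b}\cap\Ad(g)^{-1}\mathfrak{b}\cap\mathcal{N}\}$. Each piece has dimension at most $\dim BwB + \dim(\mathfrak{n}\cap \Ad(w)^{-1}\mathfrak{n}) = \dim G$ by the standard Steinberg count. Hence the characteristic variety has dimension at most $\dim G$, and $\mathcal{G}^\natural_\nu$ is holonomic. The main obstacle is this dimension count, in particular checking carefully that the $B$-part (not just $N$) of the Bruhat cell stratification is matched correctly against the nilpotency constraint.

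For the flat connection statement, I would restrict to $Bw_0B$, where $g = n_1 \tilde w_0 t n_2$. Over this cell, $\mathfrak{b}\cap\Ad(g)^{-1}\mathfrak{b}$ is conjugate to $\mathfrak{t}$, and nilpotent elements of a Cartan are zero. The characteristic variety over $Bw_0B$ is therefore contained in the zero section, so $j^*\mathcal{G}^\natural_\nu$ is a coherent $\mathscr{O}$-module with a flat connection, i.e.\ a vector bundle with integrable connection. Alternatively, one can argue directly: using the Whittaker conditions on both sides, $N\times N$ derivatives are determined, and on $N\tilde w_0 T N$ the remaining $T$-directions are controlled by the central elements via a Harish-Chandra-type radial part. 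This is the standard route to coherence, but the symbol computation above should suffice.
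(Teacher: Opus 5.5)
Your proposal is correct. Its last step is exactly the paper's: for $g\in Bw_0B$ one has $\mathfrak{n}\cap\Ad(g)^{-1}\mathfrak{n}=0$, so the characteristic variety lies in the zero section over $Bw_0B$. The difference lies in how you obtain the singular-support bound and holonomicity.

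The paper treats an arbitrary admissible $M$ at once. It observes that $M^\natural$ is $\mathfrak{n}_{\LeRi}$-admissible, then cites \cite[Corollary 3.9]{Li22} (via sphericity of $N\times N$) for holonomicity and \cite[Proposition 3.4]{Li22} for the bound $\mathrm{SS}(M^\natural)\subset(\bm{\mu}_{\Le}\times\bm{\mu}_{\Ri})^{-1}\bigl(\mathcal{N}\cap(\mathfrak{n}_{\Le}\times\mathfrak{n}_{\Ri})^\perp\bigr)$. You instead reduce to $\mathcal{G}^\natural_\nu$ by an explicit dévissage. You read off the same bound from the principal symbols of the generators of the left ideal, and holonomicity then follows from the Bruhat decomposition alone.

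Two comments on your version.

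\begin{itemize}
\item Your dévissage is valid because $\mathcal{Z}(\mathfrak{g})$ is commutative; it is not central in $\Wh$, so commutativity is exactly what makes $\mathfrak{m}_{\nu_k}$ kill the successive quotients. The graded pieces are quotients of finite sums $\mathcal{G}_\nu^{\oplus r}$ rather than of a single $\mathcal{G}_\nu$, which is harmless. It is also right to build the filtration by hand rather than quote Remark~\ref{rem:HC-admissible}. That remark's induction on length rests on Corollary~\ref{prop:finite-length}, which is itself deduced from the present proposition, so quoting it would be circular.
\item The ``obstacle'' you flag is not one. Since $\mathcal{N}\cap\mathfrak{n}^\perp=\mathfrak{b}^\perp\simeq\mathfrak{n}$, the set you obtain is precisely $\bigcup_w\mathrm{T}^*_{BwB}G$, the union of the conormal bundles to the $B\times B$-double cosets. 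Each has dimension $\dim BwB+\dim(\mathfrak{n}\cap\Ad(\dot w)^{-1}\mathfrak{n})=\dim B+|\Phi^+|=\dim G$. The radial-part alternative in your last paragraph is therefore unnecessary.
\end{itemize}

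What each route buys: yours is self-contained and elementary, and it exhibits an explicit Lagrangian containing the characteristic variety. The paper's is shorter and needs no dévissage, at the cost of relying on the general theory of \cite{Li22} (or \cite{AGM16}).
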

\begin{proof}
	Due to the local nilpotence of the $\mathfrak{n}_{\LeRi}^{\bpsi}$-action, $M^\natural$ is $\mathfrak{n}_{\LeRi}$-admissible in the sense of \cite[Definition 3.1]{Li22}. As $N \times N$ is a spherical subgroup of $G \times G$, by \cite[Corollary 3.9]{Li22} we know $M^\natural$ is holonomic. Alternatively, one can invoke \cite{AGM16}.
	
	Denoting by $\mathcal{N} \subset \mathfrak{g}_{\Le}^* \times \mathfrak{g}_{\Ri}^*$ the nilpotent cone, \cite[Proposition 3.4]{Li22} gives
	\begin{equation*}
		\mathrm{SS}(M^\natural) \subset (\bm{\mu}_{\Le} \times \bm{\mu}_{\Ri})^{-1}\left( \mathcal{N} \cap (\mathfrak{n}_{\Le} \times \mathfrak{n}_{\Ri})^\perp \right).
	\end{equation*}
	Identifying $\mathfrak{g}$ with $\mathfrak{g}^*$ via $(\cdot, \cdot)$ so that $\mathfrak{n}^\perp = \mathfrak{b}$, the right hand side becomes
	\begin{equation*}
		\left\{ (x', g, x)  \in \mathfrak{g} \times G \times \mathfrak{g} : x, x' \in \mathfrak{n}, \; x' = \Ad(g)(x) \right\}.
	\end{equation*}
	Therefore $x \in \mathfrak{n} \cap g^{-1} \mathfrak{n} g$. If $g \in B w_0 B$ then $(\mathfrak{b}, g^{-1}\mathfrak{b}g) = (h\mathfrak{b}h^{-1}, h\mathfrak{b}^- h^{-1})$ for some $h \in G$, hence $\mathfrak{n} \cap g^{-1} \mathfrak{n} g = \{0\}$. This shows $M^\natural$ is a flat connection over $B w_0 B$.
\end{proof}

\begin{remark}
	The flat connection $j^* M^\natural$ on $Bw_0 B$ is never regular, except when $j^* M^\natural$ is zero or when $G$ is a torus. This is due to its exponential behavior along $N_{\Le} \times N_{\Ri}$-orbits.
\end{remark}

\begin{corollary}\label{prop:finite-length}
	Admissible $\Wh$-modules are of finite length.
\end{corollary}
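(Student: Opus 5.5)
The plan is to transport the problem to $D(G)$-modules through the Skryabin equivalence \eqref{eqn:monodromic-equivalence}. There, finite length follows from Proposition \ref{prop:connection-big-cell}, since holonomic $D(G)$-modules have finite length.

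\textbf{Step 1: the dictionary.} Let $M$ be admissible and let $M^\natural$ be the corresponding object of $(D(G), \mathfrak{n}_{\LeRi}^{\bpsi})\dcate{Mod}$. The equivalence \eqref{eqn:monodromic-equivalence} is an equivalence of abelian categories. It therefore induces an isomorphism between the lattice of $\Wh$-submodules of $M$ and the lattice of subobjects of $M^\natural$ inside $(D(G), \mathfrak{n}_{\LeRi}^{\bpsi})\dcate{Mod}$.

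Because this category is a Serre subcategory of $D(G)\dcate{Mod}$, every $D(G)$-submodule of $M^\natural$ again has locally nilpotent $\mathfrak{n}_{\LeRi}^{\bpsi}$-action. So these subobjects are exactly the $D(G)$-submodules of $M^\natural$.

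It follows that any strictly increasing chain of $\Wh$-submodules of $M$ produces a strictly increasing chain of $D(G)$-submodules of $M^\natural$ of the same length.

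\textbf{Step 2: bound the length.} By Proposition \ref{prop:connection-big-cell}, $M^\natural$ is holonomic. A standard result (for instance \cite[Proposition 3.1.2 / Theorem 3.1.2]{HTT08}, applied to the affine smooth variety $G$) says holonomic $D(G)$-modules have finite length.

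Hence every chain as in Step 1 has length at most $\ell(M^\natural)$. So $M$ is both Noetherian and Artinian, i.e. of finite length. In fact $\ell(M) = \ell(M^\natural)$.

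\textbf{Caveat.} There is no real obstacle here. The only point to check is that the subobject lattices genuinely correspond, and that requires the Serre-subcategory property rather than only the equivalence itself. One could avoid this by using simples: the equivalence sends simple objects to simple objects, and simplicity in the Serre subcategory coincides with simplicity as a $D(G)$-module, again by closure under submodules. A composition series of $M^\natural$ then transports to one of $M$.
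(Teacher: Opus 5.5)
Your proposal is correct and follows the same route as the paper's proof. That proof combines the Skryabin equivalence \eqref{eqn:monodromic-equivalence}, the holonomicity from Proposition \ref{prop:connection-big-cell}, and the finite length of holonomic $D(G)$-modules. Your remark that subobjects of $M^\natural$ in the Serre subcategory are exactly its $D(G)$-submodules makes explicit a step the paper leaves implicit.
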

\begin{proof}
	Use the equivalence \eqref{eqn:monodromic-equivalence}, Proposition \ref{prop:connection-big-cell} and the fact that holonomic $D(G)$-modules are of finite length.
\end{proof}

\subsection{Harish-Chandra modules}
For each $\nu \in \mathfrak{c}$, let $\mathfrak{m}_\nu$ be the corresponding maximal ideal of $\mathcal{Z}(\mathfrak{g})$.

\begin{definition}\label{def:HC-module}
	The Harish-Chandra $\Wh$-module with infinitesimal character $\nu$ is the $\Wh$-module
	\[ \mathcal{G}_\nu := \Wh / \Wh\mathfrak{m}_\nu. \]
\end{definition}

\begin{remark}\label{rem:HC-admissible}
	Harish-Chandra $\Wh$-modules are clearly admissible. Conversely, given a nonzero admissible $\Wh$-module $M$, one may take $x \in M \smallsetminus \{0\}$ annihilated by some $\mathfrak{m}_{\nu_1} \subset \mathcal{Z}(\mathfrak{g})$, so that $M_1 := \Wh x \subset M$ is a quotient of $\mathcal{G}_{\nu_1}$. By induction on length, we see that $M$ admits a filtration
	\[ \{0\} = M_0 \subset \cdots \subset M_n = M \]
	such that each $M_i/M_{i-1}$ is a quotient of $\mathcal{G}_{\nu_i}$ for some $\nu_i \in \mathfrak{c}$.
\end{remark}

\begin{remark}
	Via the equivalence in \cite[Proposition 3.1.4 (i)]{Gin18}, $\mathcal{G}_\nu$ corresponds to the $(N_{\Le} \times N_{\Ri}, \bpsi \times \bpsi)$-monodromic $D(G)$-module
	\[ \mathcal{G}_\nu^\natural := D(G) \big/ \left( D(G) \mathfrak{n}_{\LeRi}^{\bpsi} + D(G)\mathfrak{m}_\nu \right). \]
\end{remark}

\begin{remark}
	If $G = G'/Z$ where $G'$ is connected reductive and $Z \subset G'$ is a finite central subgroup, then the corresponding Harish-Chandra $\Wh_{G'}$-module $\mathcal{G}'_\nu$ carries a natural $Z$-action with $(\mathcal{G}'_\nu)^{Z\text{-inv}} \simeq \mathcal{G}_\nu$, compatible with $\Wh_G = \Wh_{G'}^{Z\text{-inv}}$; see \eqref{eqn:W-isogeny}.
\end{remark}

The completion of $\mathcal{G}_\nu$ can be identified as
\[ \widehat{\mathcal{G}}_\nu \simeq \widehat{\Wh} \dotimes{\Wh} (\Wh/\Wh\mathfrak{m}_\nu) \simeq \widehat{\Wh}/\widehat{\Wh}\mathfrak{m}_\nu, \]
and $\mathrm{SS}(\mathcal{G}_\nu) = \mathrm{SS}(\widehat{\mathcal{G}}_\nu)$ (Proposition \ref{prop:SS-completion}).

\begin{proposition}\label{prop:Gnu-nonzero}
	We have $\widehat{\mathcal{G}}_\nu \neq 0$ and $\mathrm{SS}(\mathcal{G}_\nu) = \mathrm{SS}(\widehat{\mathcal{G}}_\nu) = \kappa^{-1}(\nu_0)$.
\end{proposition}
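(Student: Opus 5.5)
The plan is to reduce everything to a statement about the associated graded module, using the flatness results of the preceding subsection. By Lemma \ref{prop:null-completion} and Proposition \ref{prop:SS-completion}, it suffices to show that $\gr(\mathcal{G}_\nu) \neq 0$ for some (hence any) good filtration. Then Lemma \ref{prop:SS-bound} shows that $\mathrm{SS}(\mathcal{G}_\nu)$, being nonempty, equals $\kappa^{-1}(\nu_0)$. We also have $\mathrm{SS}(\mathcal{G}_\nu) = \mathrm{SS}(\widehat{\mathcal{G}}_\nu)$ from Proposition \ref{prop:SS-completion}. So the whole problem is to show that the good filtration on $\mathcal{G}_\nu = \Wh/\Wh\mathfrak{m}_\nu$ induced from $\mathrm{F}_\bullet \Wh$, with generator $1$ in degree $0$, does not exhaust $\mathcal{G}_\nu$ in every degree. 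Equivalently, by Lemma \ref{prop:null-completion}, we need some $n$ with $\mathrm{F}_n \Wh + \Wh\mathfrak{m}_\nu \neq \Wh$.

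To produce this, I will use the graded splitting of Proposition \ref{prop:Z-summand} (in its right-module version): $\Wh_\hbar = (\mathcal{Z}\mathfrak{g})_\hbar \oplus L$ as graded right $(\mathcal{Z}\mathfrak{g})_\hbar$-modules. Specializing at $\hbar = 1$ gives a decomposition $\Wh = \mathcal{Z}(\mathfrak{g}) \oplus L_1$ of right $\mathcal{Z}(\mathfrak{g})$-modules. Because the splitting is graded, it is compatible with filtrations: $\mathrm{F}_n \Wh = \mathrm{F}_n \mathcal{Z}(\mathfrak{g}) \oplus \mathrm{F}_n L_1$. Tensoring with $\mathcal{Z}(\mathfrak{g})/\mathfrak{m}_\nu$ then yields
\[ \mathcal{G}_\nu = \mathcal{Z}(\mathfrak{g})/\mathfrak{m}_\nu \oplus L_1/L_1\mathfrak{m}_\nu = \CC \oplus L_1/L_1\mathfrak{m}_\nu, \]
and the image of $\mathrm{F}_n\Wh$ is $\mathrm{F}_n\mathcal{Z}(\mathfrak{g})$ modulo $\mathfrak{m}_\nu$, direct sum with the image of $\mathrm{F}_n L_1$. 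Since $\mathcal{Z}(\mathfrak{g})$ is positively filtered (Proposition \ref{prop:PBW-center}), we have $\mathrm{F}_{-1}\mathcal{Z}(\mathfrak{g}) = 0$. Hence the projection of $\mathrm{F}_{-1}\mathcal{G}_\nu$ to the summand $\CC$ vanishes, so $1 \notin \mathrm{F}_{-1}\mathcal{G}_\nu$. This gives $\gr(\mathcal{G}_\nu) \neq 0$, in fact with nonzero image of $1$ in $\gr_0$.

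The main obstacle is justifying that the decomposition really is filtration-compatible after specialization. Concretely, we need $\mathrm{F}_n \Wh \cap (\mathcal{Z}(\mathfrak{g}) \oplus L_1)$ to split componentwise, and the $\hbar = 1$ specialization of the graded $L$ to be a filtered module whose filtration is induced from $\Wh$. I would argue this directly in the Rees algebra: a homogeneous element of degree $n$ in $\Wh_\hbar$ decomposes into homogeneous components of degree $n$ in $(\mathcal{Z}\mathfrak{g})_\hbar$ and in $L$. Moreover, $\hbar$-torsion-freeness holds because $L$ is a graded summand of the torsion-free $\Wh_\hbar$. So the Rees module of the induced filtration on $\Wh$ is exactly the direct sum of the two Rees modules. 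For non-simply-connected $G$, the $Z_G$-invariance of $L$ asserted in Proposition \ref{prop:Z-summand} is what allows the same argument to descend via \eqref{eqn:W-isogeny}. Alternatively, one can take $Z$-invariants in $\mathcal{G}'_\nu$ directly, since the $1$ there is $Z$-invariant.
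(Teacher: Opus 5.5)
Your proposal is correct and follows the paper's proof: both specialize the graded splitting $\Wh_\hbar = (\mathcal{Z}\mathfrak{g})_\hbar \oplus L$ of Proposition \ref{prop:Z-summand} at $\hbar = 1$ as a filtered decomposition of right $\mathcal{Z}(\mathfrak{g})$-modules. Both then use $\mathrm{F}_{-1}\mathcal{Z}(\mathfrak{g}) = 0$ to rule out $\mathrm{F}_{-1}\Wh + \Wh\mathfrak{m}_\nu = \Wh$; the paper argues by contradiction via projection onto $\mathcal{Z}(\mathfrak{g})$, whereas you compute the quotient directly and also verify the filtration-compatibility of the splitting, which the paper takes for granted. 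Your isogeny remark is unnecessary, since Proposition \ref{prop:Z-summand} is already stated for arbitrary connected reductive $G$.
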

\begin{proof}
	It suffices to prove that $\widehat{\mathcal{G}}_\nu \neq 0$ for any good filtration, which will imply $\gr(\widehat{\mathcal{G}}_\nu) \neq 0$ and then $\mathrm{SS}(\mathcal{G}_\nu) = \mathrm{SS}(\widehat{\mathcal{G}}_\nu) = \kappa^{-1}(\nu_0)$, by Lemma \ref{prop:SS-bound}.
	
	Put the quotient filtration induced from $\mathrm{F}_\bullet \Wh$ on $\mathcal{G}_\nu$. Suppose on the contrary that $\widehat{\mathcal{G}}_\nu = 0$, then Lemma \ref{prop:null-completion} would imply
	\begin{equation}\label{eqn:Gnu-nonzero}
		\forall n \in \Z, \; \mathrm{F}_n \Wh + \Wh\mathfrak{m}_\nu = \Wh.
	\end{equation}
	
	The graded direct sum decomposition in Proposition \ref{prop:Z-summand} specialized at $\hbar = 1$ yields
	\[ \Wh = \mathcal{Z}(\mathfrak{g}) \oplus (L|_{\hbar = 1}), \]
	a direct sum decomposition of \emph{filtered} right $\mathcal{Z}(\mathfrak{g})$-modules, where $\mathcal{Z}(\mathfrak{g})$ carries the PBW filtration $\mathrm{F}^{\mathrm{PBW}}_\bullet \mathcal{Z}(\mathfrak{g})$. Taking the projection of \eqref{eqn:Gnu-nonzero} to the factor $\mathcal{Z}(\mathfrak{g})$ would give
	\[ \forall n \in \Z, \; \mathrm{F}^{\mathrm{PBW}}_n \mathcal{Z}(\mathfrak{g}) + \mathcal{Z}(\mathfrak{g}) \mathfrak{m}_\nu = \mathcal{Z}(\mathfrak{g}). \]
	This is clearly absurd when $n < 0$.
\end{proof}

All definitions and results above have counterparts for right $\Wh$-modules. Viewing $\Wh$ as a $(\Wh, \Wh)$-bimodule, $\Ext^m_{\Wh}(M, \Wh)$ becomes a right $\Wh$-module for every left $\Wh$-module $M$ and $m \in \Z_{\geq 0}$. Similarly for $\Ext^m_{\widehat{\Wh}}(\widehat{M}, \widehat{\Wh})$ for every left $\widehat{\Wh}$-module $\widehat{M}$.

The following variant of \cite[Corollary 4.7]{BNS24} will be used later on.

\begin{lemma}\label{prop:Ext-HC}
	Denote by $\mathcal{G}_\nu^{\mathrm{r}}$ the right version of $\mathcal{G}_\nu$. Let $n = \dim \mathfrak{c} = \dim T$. For all $\nu \in \mathfrak{c}$, we have
	\begin{gather*}
		\Ext^m_{\Wh}(\mathcal{G}_\nu, \Wh) \simeq \begin{cases}
			0, & \text{if}\; m > n \\
			\mathcal{G}_\nu^{\mathrm{r}}, & \text{if}\; m = n,
		\end{cases} \\
		\Ext^m_{\widehat{\Wh}}(\widehat{\mathcal{G}}_\nu, \widehat{\Wh}) \simeq \begin{cases}
			0, & \text{if}\; m > n \\
			\widehat{\mathcal{G}}_\nu^{\mathrm{r}}, & \text{if}\; m = n,
		\end{cases}
	\end{gather*}
	as right $\Wh$-modules and $\widehat{\Wh}$-modules, respectively.
\end{lemma}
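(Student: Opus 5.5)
The plan is to compute the Ext groups with a Koszul resolution over the center, following \cite[Corollary 4.7]{BNS24}. Since $\mathcal{Z}(\mathfrak{g}) \simeq \Sym(\mathfrak{t})^{\mathrm{W}}$ is a polynomial ring in $n = \dim \mathfrak{c}$ variables (Chevalley), the maximal ideal $\mathfrak{m}_\nu$ is generated by a regular sequence $z_1 - \nu(z_1), \ldots, z_n - \nu(z_n)$, where the $z_i$ are fundamental invariants. Let $K_\bullet$ be the Koszul complex of this sequence over $\mathcal{Z}(\mathfrak{g})$; it is a free resolution of $\mathcal{Z}(\mathfrak{g})/\mathfrak{m}_\nu = \CC$ of length $n$.

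Next tensor with $\Wh$ on the left: $\Wh \otimes_{\mathcal{Z}(\mathfrak{g})} K_\bullet$. By Proposition \ref{prop:Z-flatness} ($\Wh$ is flat as a right $\mathcal{Z}(\mathfrak{g})$-module), this complex stays exact and resolves $\Wh/\Wh\mathfrak{m}_\nu = \mathcal{G}_\nu$ by finite free left $\Wh$-modules. Here we use that $\mathcal{Z}(\mathfrak{g})$ is central in $\Wh$, so left and right structures over the center agree. Then apply $\Hom_{\Wh}(\cdot, \Wh)$. This gives $\Hom_{\Wh}(\Wh \otimes_{\mathcal{Z}} K_\bullet, \Wh) \simeq \Hom_{\mathcal{Z}}(K_\bullet, \mathcal{Z}) \otimes_{\mathcal{Z}} \Wh$ as right $\Wh$-modules. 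By the self-duality of the Koszul complex, $\Hom_{\mathcal{Z}}(K_\bullet, \mathcal{Z})$ is again a Koszul complex, shifted by $n$, with cohomology $\CC$ concentrated in degree $n$. Tensoring with the flat right $\mathcal{Z}$-module $\Wh$ (using flatness on the other side, again Proposition \ref{prop:Z-flatness}) keeps exactness. Hence $\Ext^m_{\Wh}(\mathcal{G}_\nu, \Wh) = 0$ for $m \neq n$, and in particular for $m > n$, which is also immediate from the length of the resolution. In degree $n$ we get $\mathfrak{m}_\nu\backslash\Wh = \Wh/\mathfrak{m}_\nu\Wh = \mathcal{G}_\nu^{\mathrm{r}}$, using centrality once more.

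For the completed version I would run the same argument after base change along $\Wh \to \widehat{\Wh}$, which is flat by Proposition \ref{prop:completion-tensor}(iii). The complex $\widehat{\Wh} \otimes_{\Wh} (\Wh \otimes_{\mathcal{Z}} K_\bullet) = \widehat{\Wh} \otimes_{\mathcal{Z}} K_\bullet$ is a finite free resolution of $\widehat{\Wh} \otimes_{\Wh} \mathcal{G}_\nu \simeq \widehat{\mathcal{G}}_\nu$ (Proposition \ref{prop:completion-tensor}(ii)). Its $\Hom_{\widehat{\Wh}}(\cdot, \widehat{\Wh})$-dual is $\Hom_{\Wh}(\Wh \otimes_{\mathcal{Z}} K_\bullet, \Wh) \otimes_{\Wh} \widehat{\Wh}$, because finite free modules are involved. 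Right flatness of $\widehat{\Wh}$ over $\Wh$ then transports the cohomology computed above. This yields $\mathcal{G}_\nu^{\mathrm{r}} \otimes_{\Wh} \widehat{\Wh} \simeq \widehat{\mathcal{G}}^{\mathrm{r}}_\nu$ in degree $n$ and zero elsewhere, using the right-module version of Proposition \ref{prop:completion-tensor}(ii).

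The only point needing care is that centrality of $\mathcal{Z}(\mathfrak{g})$ in $\Wh$ makes the identifications of left/right quotients and of the dualized Koszul complex legitimate. I will also double-check that the center's image in $\widehat{\Wh}$ is still central, which holds by continuity. I don't expect a genuine obstacle, since flatness was arranged precisely for this purpose.
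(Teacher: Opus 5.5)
Your argument is correct and is essentially the paper's: the Koszul resolution of $\mathcal{Z}(\mathfrak{g})/\mathfrak{m}_\nu$, tensored up to $\Wh$ using Proposition~\ref{prop:Z-flatness}, then dualized.

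\textbf{A false claim to remove.} $\mathcal{Z}(\mathfrak{g})$ is not central in $\Wh$; it is only a maximal commutative subalgebra. For $G = T$ one has $\Wh = D(T)$, and $\Sym(\mathfrak{t})$ does not commute with $\CC[T]$. In general, a central subalgebra would have Poisson-central associated graded in the symplectic algebra $\CC[\mathfrak{Z}]$, which forces it to be constants. So $\Wh\mathfrak{m}_\nu \neq \mathfrak{m}_\nu\Wh$ in general, and the image in $\widehat{\Wh}$ is not central either.

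Fortunately, none of your steps actually uses centrality:
\begin{itemize}
\item The resolution $\Wh \otimes_{\mathcal{Z}(\mathfrak{g})} K_\bullet$ needs only flatness of $\Wh$ as a right $\mathcal{Z}(\mathfrak{g})$-module.
\item The dual complex is $\Hom_{\mathcal{Z}(\mathfrak{g})}(K_\bullet, \mathcal{Z}(\mathfrak{g})) \otimes_{\mathcal{Z}(\mathfrak{g})} \Wh$, with $\Wh$ viewed as a left $\mathcal{Z}(\mathfrak{g})$-module. Its differentials are left multiplications by the generators of $\mathfrak{m}_\nu$.
\item Its top cohomology is $\mathcal{Z}(\mathfrak{g})/\mathfrak{m}_\nu \otimes_{\mathcal{Z}(\mathfrak{g})} \Wh = \Wh/\mathfrak{m}_\nu\Wh$, which is $\mathcal{G}_\nu^{\mathrm{r}}$ by definition.
\end{itemize}
Two-sided flatness, supplied by Proposition~\ref{prop:Z-flatness}, is all you need, so just drop the centrality remarks.

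\textbf{Comparison with the paper.} Because you use left flatness together with the exactness of the dual Koszul complex below the top degree ($\mathfrak{m}_\nu$ is generated by a regular sequence), you actually get $\Ext^m_{\Wh}(\mathcal{G}_\nu, \Wh) = 0$ for all $m \neq n$. This is stronger than what the paper extracts at this point. The paper only reads off vanishing for $m > n$ from the length of the resolution and computes the top cokernel. It then remarks that the lemma says nothing for $0 < m < n$, and obtains vanishing there only after completion, via holonomicity in Subsection~\ref{subsec:duality}.

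For the completed statement the two routes also differ slightly. The paper reruns the Koszul argument with $\widehat{\Wh}$ in place of $\Wh$, using that $\widehat{\Wh}$ is flat over $\mathcal{Z}(\mathfrak{g})$ as a composite of flat maps. You instead base change the uncompleted result along the flat map $\Wh \to \widehat{\Wh}$, which is essentially the paper's later Lemma~\ref{prop:Ext-completion}. Both work.
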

\begin{proof}
	Pick a $\CC$-subspace $U \subset \mathfrak{m}_\nu$ with $\dim U = n$ such that $U$ generates the ideal $\mathfrak{m}_\nu$ of $\mathcal{Z}(\mathfrak{g})$. The Koszul complex yields a free resolution
	\[ \cdots \to 0 \to \mathcal{Z}(\mathfrak{g}) \otimes \bigwedge\nolimits^n U \to \cdots \to \mathcal{Z}(\mathfrak{g}) \otimes \bigwedge\nolimits^0 U \to \mathcal{Z}(\mathfrak{g})/\mathfrak{m}_\nu \to 0 \]
	of the $\mathcal{Z}(\mathfrak{g})$-module $\mathcal{Z}(\mathfrak{g})/\mathfrak{m}_\nu$. In view of Proposition \ref{prop:Z-flatness}, tensoring with $\Wh$ over $\mathcal{Z}(\mathfrak{g})$ yields a projective resolution
	\[ \cdots \to 0 \to \Wh \otimes \bigwedge\nolimits^n U \to \cdots \to \Wh \otimes \bigwedge\nolimits^0 U \to \mathcal{G}_\nu \to 0 \]
	of $\mathcal{G}_\nu$. Now apply $\Hom_{\Wh}(\cdot, \Wh)$ to obtain $\Ext_{\Wh}^m(\mathcal{G}_\nu, \Wh) = 0$ when $m > n$. As for $m = n$, we fix a basis of $U$ and obtain
	\begin{align*}
		\Ext^n_{\Wh}(\mathcal{G}_\nu, \Wh) & \simeq \Coker\left[ \Hom_{\Wh}(\Wh \otimes \bigwedge\nolimits^{n-1} U, \Wh) \to \Hom_{\Wh}(\Wh \otimes \bigwedge\nolimits^n U, \Wh) \simeq \Wh \right] \\
		& \simeq \Wh/\mathfrak{m}_\nu \Wh = \mathcal{G}_\nu^{\mathrm{r}}
	\end{align*}
	by the description of Koszul complexes.
	
	The case for $\widehat{\Wh}$-modules is similar: simply take tensor products with $\widehat{\Wh}$ instead of $\Wh$ in the preceding arguments, and note that the composite of $\mathcal{Z}(\mathfrak{g}) \to \Wh \to \widehat{\Wh}$ is still flat by Proposition \ref{prop:completion-tensor} (iii).
\end{proof}

There is a version with left and right swapped, which we omit.

By Proposition \ref{prop:regular-elements}, $\Ext^0_{\Wh}(\mathcal{G}_\nu, \Wh) = \Hom_{\Wh}(\mathcal{G}_\nu, \Wh) = 0$. As for $\Ext^i_{\Wh}(\mathcal{G}_\nu, \Wh)$ where $0 < i < n$, Lemma \ref{prop:Ext-HC} provides no information; in Subsection \ref{subsec:duality} they will be shown to vanish after completion.

\subsection{Torsion-freeness}
Choose an element $d \in \CC[G]^{N_{\Le} \times N_{\Ri}} \smallsetminus \{0\}$ as in Lemma \ref{prop:d}, such that the defining ideal $\mathcal{I}^\natural$ of $G \smallsetminus Bw_0 B$ is $\sqrt{(d)}$. Define
\[ \mathcal{S} := \left\{ d^k : k \in \Z_{\geq 0} \right\}, \]
a multiplicative subset of $\Wh$, and embed it into $\widehat{\Wh}$.

\begin{definition}\label{def:S-torsion}
	An element of a $\Wh$-module (resp.\ $\widehat{\Wh}$-module) is said to be of $\mathcal{S}$-torsion if its annihilator intersects $\mathcal{S}$.

	A $\Wh$-module $M$ (resp.\ $\widehat{\Wh}$-module) is said to be of $\mathcal{S}$-torsion if it can be generated by finitely many elements of $\mathcal{S}$-torsion.
\end{definition}

For $\Wh$-modules, an alternative description is available.

\begin{lemma}
	A $\Wh$-module $M$ is of $\mathcal{S}$-torsion if and only if all elements of $M$ are of $\mathcal{S}$-torsion.
\end{lemma}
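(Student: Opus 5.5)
The "if" direction needs no work. If every element of $M$ is of $\mathcal{S}$-torsion, then any finite generating set of $M$ witnesses Definition \ref{def:S-torsion}. Strictly speaking, the definition requires finitely many generators, so I will read the lemma for modules where this makes sense, e.g.\ finitely generated ones. If $M$ is not finitely generated, I would interpret the statement as "$M$ is generated by torsion elements," and the argument below still applies.

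The substance is the "only if" direction: the set $M_{\mathcal{S}\text{-tors}}$ of $\mathcal{S}$-torsion elements must be a $\Wh$-submodule. Once this holds, a module generated by torsion elements equals its torsion submodule, so all its elements are torsion. To prove the submodule property, I will use that $\mathcal{S} = d^{\Z_{\geq 0}}$ is an Ore set in $\Wh$ (Proposition \ref{prop:Ore-subset}).

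\emph{Closure under addition.} If $d^a x = 0$ and $d^b y = 0$, then $d^{\max(a,b)}(x+y) = 0$.

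\emph{Closure under left multiplication by $w \in \Wh$.} Suppose $d^k x = 0$. By the left Ore condition applied to $w$ and $d^k$, there exist $w' \in \Wh$ and $d^{m} \in \mathcal{S}$ with $d^{m} w = w' d^k$. Then $d^m (w x) = w' d^k x = 0$, so $wx$ is torsion.

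Alternatively, one can argue directly from the local nilpotence of $\mathrm{ad}(d)$ on $\Wh$, as in the proof of Proposition \ref{prop:Ore-subset}. Suppose $\mathrm{ad}(d)^{r+1} w = 0$. Expanding $d^{N} w$ via $d^N w = \sum_{j} \binom{N}{j} (\mathrm{ad}(d)^j w)\, d^{N-j}$ gives a sum where every term has $N - j \geq N - r$. Taking $N = k + r$ makes every term kill $x$, since each ends in $d^{N-j}$ with $N - j \geq k$.

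No step here is a genuine obstacle. The only point requiring care is getting the sides right in the Ore condition: we need $d^m w \in \Wh d^k$, i.e.\ the left Ore condition, which Proposition \ref{prop:Ore-subset} supplies because $\mathcal{S}$ is an Ore set on both sides. The binomial expansion argument is a self-contained backup that avoids this bookkeeping.
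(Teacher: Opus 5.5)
Your proof is correct and follows the paper's route: the paper likewise deduces the lemma directly from the left Ore condition for $\mathcal{S}$ (Proposition \ref{prop:Ore-subset}), which is exactly what makes the $\mathcal{S}$-torsion elements into a submodule, and your binomial-expansion variant simply unpacks the local nilpotence of $[d,\cdot]$ that underlies that Ore condition. You are also right that the ``if'' direction tacitly needs $M$ to be finitely generated, given how Definition \ref{def:S-torsion} is phrased.
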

\begin{proof}
	This follows directly from the left Ore condition of $\mathcal{S}$ established in Lemma \ref{prop:Ore-subset}.
\end{proof}

\begin{lemma}\label{prop:support-torsion}
	Given $M^{\natural} \leftrightarrow M$ as in \eqref{eqn:monodromic-equivalence}, and assume $M$ is finitely generated. The following are equivalent:
	\begin{enumerate}[(i)]
		\item $M^\natural$ is supported off $Bw_0 B$ as a $\mathscr{O}_G$-module;
		\item every element of $M^\natural$ is annihilated by some element of $\mathcal{S}$;
		\item $M$ is of $\mathcal{S}$-torsion.
	\end{enumerate}
\end{lemma}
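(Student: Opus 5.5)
We prove (i)$\iff$(ii) within $D(G)$-module theory, and (ii)$\iff$(iii) through the equivalence \eqref{eqn:monodromic-equivalence}.

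\emph{(i)$\iff$(ii).} Here $M^\natural$ is a $\CC[G]$-module, since $\CC[G] \subset D(G)$. Because $\sqrt{(d)} = \mathcal{I}^\natural$ (Lemma \ref{prop:d}), the open subset $Bw_0 B$ is the principal open set $G_d$. Its $\mathscr{O}$-module restriction is the localization $\CC[G]_d \otimes_{\CC[G]} M^\natural$. This localization vanishes exactly when every element of $M^\natural$ is killed by some power of $d$. Since $G$ is affine, vanishing of the localization is the same as $M^\natural|_{Bw_0B} = 0$, i.e.\ (i). No finiteness hypothesis is needed for this step.

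\emph{(ii)$\Rightarrow$(iii).} We have $M = (M^\natural)^{\mathfrak{n}_{\LeRi}^{\bpsi}} \subset M^\natural$. The action of $d \in \Wh$ on $M$ is the restriction of the action of $d \in D(G)$. This holds because $d$ is $N_{\Le}\times N_{\Ri}$-invariant, so it commutes with $\mathfrak{n}_{\LeRi}^{\bpsi}$, and its image in $\Wh$ acts on invariants by left multiplication. Hence every element of $M$ is $\mathcal{S}$-torsion. Since $M$ is finitely generated, $M$ is of $\mathcal{S}$-torsion in the sense of Definition \ref{def:S-torsion}.

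\emph{(iii)$\Rightarrow$(ii).} This is the main step. By the preceding lemma, every element of $M$ is $\mathcal{S}$-torsion. The module $M^\natural \simeq (D(G)/D(G)\mathfrak{n}_{\LeRi}^{\bpsi}) \otimes_{\Wh} M$ is spanned by elements $Px$ with $P \in D(G)$ and $x \in M$, and $d^k x = 0$ for some $k$. The key input is that $\mathrm{ad}(d)$ is locally nilpotent on $D(G)$, since $d$ is a function. Choose $r$ with $\mathrm{ad}(d)^{r+1}P = 0$ and write
\[ d^{m}P = \sum_{j=0}^{r} \binom{m}{j} \mathrm{ad}(d)^j(P)\, d^{m-j}. \]
Taking $m \geq k + r$, every term ends in $d^{m-j}x$ with $m-j \geq k$, so every term vanishes on $x$. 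Thus $d^m(Px) = 0$. Since $\mathcal{S}$-torsion elements of $M^\natural$ form a subspace (use $d^{a+b}$), all of $M^\natural$ is $\mathcal{S}$-torsion, which is (ii).

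The step needing the most care is the compatibility of $d$'s actions on $M$ and on $M^\natural$ under the Skryabin equivalence. The point is to confirm that the image of $d$ in $\Wh$ acts through the right $\Wh$-module structure on $D(G)/D(G)\mathfrak{n}_{\LeRi}^{\bpsi}$ via right multiplication by $d$, which is well defined by invariance. Once this is settled, the rest is the formal Ore-type computation above.
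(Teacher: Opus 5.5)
Your proposal is correct and follows the paper's proof essentially step for step: (i)$\iff$(ii) by localizing at $d$, (ii)$\Rightarrow$(iii) via the injection $M \hookrightarrow M^\natural$ compatible with the action of $d$, and (iii)$\Rightarrow$(ii) via local nilpotence of $[d,\cdot]$ on $D(G)$. Your identity $d^m P = \sum_j \binom{m}{j}\mathrm{ad}(d)^j(P)\,d^{m-j}$ and your check that right multiplication by $d$ passes through the tensor product over $\Wh$ make explicit what the paper leaves implicit.
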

\begin{proof}
	(i) $\iff$ (ii). The support condition amounts to $M^\natural [d^{-1}] = 0$ as $\mathscr{O}_G$-module, i.e.\ every element $M^\natural$ is annihilated by some power of $d$.
	
	(ii) $\implies$ (iii). The natural $\CC[G]^{N_{\Le} \times N_{\Ri}}$-module homomorphism $M \to M^\natural$ is injective, which follows from the description of the equivalence \eqref{eqn:monodromic-equivalence}.
	
	(iii) $\implies$ (ii). Elements of $M^\natural$ can be expressed as linear combinations of $\overline{P} \otimes x$ where $P \in D(G)$ with image $\overline{P}$ in $D(G)/D(G)\mathfrak{n}_{\LeRi}^{\bpsi}$, and $x \in M$ is annihilated by some power of $d$. Since $[d, \cdot]$ is a locally nilpotent endomorphism on $D(G)$ (cf.\ the proof of Proposition \ref{prop:Ore-subset}), we see that a sufficiently high power of $d$ annihilates $\overline{P} \otimes x$, for each $\overline{P} \otimes x$ in the linear combination.
\end{proof}

\begin{remark}\label{rem:minimal-extension}
	The relevance of $\mathcal{S}$-torsion can be understood as follows. Let $j: Bw_0 B \hookrightarrow G$ be the open embedding of the big Bruhat cell, and denote by $j^*$ the pull-back functor of $D$-modules. Let $M$ be an admissible $\Wh$-module. If $M$ has neither quotients nor submodules that are non-zero of $\mathcal{S}$-torsion, then \eqref{eqn:monodromic-equivalence} and Lemma \ref{prop:support-torsion} would imply that $M^\natural$ has neither quotients nor submodules that are nonzero and supported off $B w_0 B$, hence
	\[ M^\natural \simeq j_{!*}\left( j^* M^\natural \right) \]
	where the right hand side is the minimal extension of the flat connection $j^* M^\natural$ (Lemma \ref{prop:support-torsion}), by the characterization of minimal extensions.
	
	A result in this direction will be given in Theorem \ref{prop:minimal-extension}.
\end{remark}

Although we are currently unable to prove the absence of nonzero subquotients of $M$ of $\mathcal{S}$-torsion in general, a version up to completion is within reach.

In what follows, completions are always taken relative to good filtrations on finitely generated $\Wh$-modules. Observe that by Definition \ref{def:S-torsion}, the completion of a finitely generated $\Wh$-module of $\mathcal{S}$-torsion is a $\widehat{\Wh}$-module of $\mathcal{S}$-torsion.

\begin{theorem}\label{prop:torsion}
	Let $M$ be an admissible $\Wh$-module. If $Q$ is a subquotient of $\widehat{M}$ such that $Q$ is of $\mathcal{S}$-torsion, then $Q = 0$.
\end{theorem}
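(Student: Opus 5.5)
We will run the Levasseur--Stafford / Bellamy--Nevins--Stafford strategy inside the Zariskian ring $\widehat{\Wh}$. The aim is to show that $\mathrm{SS}(Q)$ is simultaneously coisotropic and of dimension $< \dim \mathfrak{c}$, and hence empty.

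\textbf{Step 1: reduction to $\mathrm{SS}(Q)$.} Since $\widehat{M}$ is finitely generated over the Noetherian ring $\widehat{\Wh}$, so is $Q$. We equip $Q$ with a good filtration. By Lemma \ref{prop:Ah-Noetherian} we have $\gr \widehat{\Wh} \simeq \gr \Wh \simeq \CC[\mathfrak{Z}]$ by Theorem \ref{prop:grW-Z}, and $\widehat{\Wh}$ is complete with Noetherian graded ring. Hence $\widehat{\Wh}$ is Zariskian, so faithful (Remark \ref{rem:Zariskian}), and $Q = 0 \iff \gr Q = 0 \iff \mathrm{SS}(Q) = \emptyset$. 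The singular supports of subquotients are contained in that of the ambient module, because $\gr$ is exact for good filtrations over Zariskian rings. Therefore $\mathrm{SS}(Q) \subset \mathrm{SS}(\widehat{M}) = \mathrm{SS}(M) \subset \kappa^{-1}(\nu_0)$, using Proposition \ref{prop:SS-completion} and Lemma \ref{prop:SS-bound}.

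\textbf{Step 2: torsion forces avoidance of the big cell.} Take finitely many generators $q_i$ of $Q$ with $d^{k} q_i = 0$. We use a good filtration generated by the $q_i$, and let $\sigma(d) = \iota(d)$ be the principal symbol of $d$. This equality holds by the commutative diagram comparing $\iota$ with $\gr$, since $d$ is homogeneous for the grading of Lemma \ref{prop:bi-invariant-filtration}. Then $\sigma(d)^k$ kills the image of each $q_i$ in $\gr Q$. Because $\gr Q$ is generated by these images and $\CC[\mathfrak{Z}]$ is commutative, $\sigma(d)^k$ annihilates $\gr Q$. Consequently $\mathrm{SS}(Q) \subset V(\iota(d)) = \mathfrak{Z} \smallsetminus \mathfrak{B}_{w_0}$ by Lemma \ref{prop:B-d}.

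\textbf{Step 3: dimension count versus involutivity.} The fiber $\kappa^{-1}(\nu_0)$ is a smooth equidimensional variety (a torsor-like group fiber) of dimension $\dim \mathfrak{c} = \tfrac12 \dim \mathfrak{Z}$ (Remark \ref{rem:Z-dim}). By Lemma \ref{prop:fiber-cell}, the open set $\mathfrak{B}_{w_0}$ meets every connected component of $\kappa^{-1}(\nu_0)$, hence every irreducible component. So $\kappa^{-1}(\nu_0) \smallsetminus \mathfrak{B}_{w_0}$ is a proper closed subset of each component and has dimension $< \dim \mathfrak{c}$. Thus every irreducible component of $\mathrm{SS}(Q)$ has dimension $< \tfrac12 \dim \mathfrak{Z}$. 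On the other hand, Theorem \ref{prop:integrability}, applied to $\widehat{\Wh}$ (whose Rees algebra is Noetherian by Lemma \ref{prop:Ah-Noetherian}) together with the Poisson isomorphism of Theorem \ref{prop:grW-Z}, shows that $\mathrm{SS}(Q)$ is coisotropic in the symplectic variety $\mathfrak{Z}$. Every nonempty component of a coisotropic subvariety has dimension $\geq \tfrac12 \dim \mathfrak{Z}$. Therefore $\mathrm{SS}(Q) = \emptyset$, and $Q = 0$ by Step 1.

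\textbf{Main obstacle.} The delicate point is Step 1's claim that $\gr$ is well behaved on subquotients of $\widehat{M}$, that is, $\mathrm{SS}(Q) \subset \mathrm{SS}(\widehat{M})$ with induced filtrations good. This uses the Noetherian Rees algebra $\widehat{\Wh}_\hbar$ from Lemma \ref{prop:Ah-Noetherian}(i) and the facts about induced good filtrations recalled in Section \ref{sec:filtrations}. A second check is needed in Step 2: we must verify that the symbol of $d$ in $\gr\widehat{\Wh}$ really equals $\iota(d)$ rather than something of a lower degree. This is ensured by the commutative diagram preceding Proposition \ref{prop:regular-elements}, combined with $\gr \Wh \simeq \gr\widehat{\Wh}$.
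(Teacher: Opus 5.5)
Your proposal is correct and follows essentially the same route as the paper. You reduce to $\mathrm{SS}(Q)=\emptyset$ via the Zariskian property of $\widehat{\Wh}$, trap $\mathrm{SS}(Q)$ inside $\kappa^{-1}(\nu_0)\cap(\mathfrak{Z}\smallsetminus\mathfrak{B}_{w_0})$ using the symbol $\iota(d)$ and Lemma \ref{prop:SS-bound}, and then play Gabber's involutivity against the dimension bound from Lemma \ref{prop:fiber-cell}. One minor correction: the exactness of $\gr$ on subquotients comes from the Noetherianity of $\widehat{\Wh}_\hbar$, not from the Zariskian property, as your closing paragraph already recognizes.
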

\begin{proof}
	By the completeness of $\widehat{\Wh}$ and Remark \ref{rem:Zariskian}, it suffices to show $\mathrm{SS}(Q) = \emptyset$.

	Gabber's Theorem \ref{prop:integrability} for $\widehat{\Wh}$ implies that every irreducible component $C$ of $\mathrm{SS}(Q)$ satisfies $\dim C \geq \frac{1}{2} \dim\mathfrak{Z}$. Therefore, it suffices to show $\dim C < \frac{1}{2} \dim\mathfrak{Z}$ for each $C$.
	
	There exist generators $x_1, \ldots, x_n$ of $Q$ and $k \geq 1$ such that $d^k x_i = 0$ for all $i$. Equip $Q$ with the good filtration \eqref{eqn:good-filtration-fg} determined by $x_1, \ldots, x_n$. For every $i$, the image of $x_i$ in $\gr_0(Q)$ is annihilated by the homogeneous element $d^k$ of $\gr \CC[G]^{N_{\Le} \times N_{\Ri}}$ placed at the adequate degree (see Lemmas \ref{prop:bi-invariant-grading} and \ref{prop:bi-invariant-filtration}). These images generate $\gr(Q)$, hence Lemma \ref{prop:B-d} gives
	\[ \mathrm{SS}(Q) \subset \mathfrak{Z} \smallsetminus \mathfrak{B}_{w_0}. \]
	
	On the other hand, $\mathrm{SS}(Q) \subset \mathrm{SS}(\widehat{M})$ as $Q$ is a subquotient of $\widehat{M}$, and $\mathrm{SS}(\widehat{M}) = \mathrm{SS}(M) \subset \kappa^{-1}(\nu_0)$ by Lemma \ref{prop:SS-bound}. 
	
	It remains to show every irreducible component of $\kappa^{-1}(\nu_0) \cap (\mathfrak{Z} \smallsetminus \mathfrak{B}_{w_0})$ has dimension $< \frac{1}{2} \dim\mathfrak{Z}$. Since $\kappa^{-1}(\nu_0)$ is an algebraic group, its connected and irreducible components are the same, and they all have dimension $\frac{1}{2} \dim\mathfrak{Z}$ since $\kappa^{-1}(\nu_0)$ is Lagrangian. Lemma \ref{prop:fiber-cell} implies that $\mathfrak{B}_{w_0}$ intersects every component of $\kappa^{-1}(\nu_0)$. Therefore the closed subset $\mathfrak{Z} \smallsetminus \mathfrak{B}_{w_0}$ intersects every component of $\kappa^{-1}(\nu_0)$ in a proper subset. The desired inequality follows at once.
\end{proof}

\begin{corollary}\label{prop:torsion-cor}
	If $R$ is a subquotient of $\mathcal{S}$-torsion of an admissible $\Wh$-module $M$, then $\widehat{\Wh} \dotimes{\Wh} R \simeq \widehat{R} = 0$, or equivalently $\gr(R) = 0$ relative to any good filtration.
\end{corollary}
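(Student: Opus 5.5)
The plan is to reduce Corollary \ref{prop:torsion-cor} to Theorem \ref{prop:torsion}, using that completion is exact on finitely generated $\Wh$-modules. Write $R = M'/M''$ with $M'' \subset M' \subset M$ submodules of $M$, so $R$ is of $\mathcal{S}$-torsion. Since $\Wh$ is Noetherian (Proposition \ref{prop:W-hbar-Noetherian} and Lemma \ref{prop:Ah-Noetherian}), both $M'$ and $M''$ are finitely generated, and good filtrations on $M$ induce good filtrations on them and on $R$. Proposition \ref{prop:completion-tensor} (i) says completion is exact on $\Wh\dcate{Mod}_{\mathrm{fg}}$. Hence $\widehat{M''} \hookrightarrow \widehat{M'} \hookrightarrow \widehat{M}$ are injections, and $\widehat{R} \simeq \widehat{M'}/\widehat{M''}$. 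So $\widehat{R}$ is a subquotient of $\widehat{M}$.

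Next, $\widehat{R}$ is of $\mathcal{S}$-torsion in the sense of Definition \ref{def:S-torsion}. To see this, pick finitely many generators $x_1, \ldots, x_k$ of $R$. Each is killed by some power of $d$, since every element of $R$ is $\mathcal{S}$-torsion by the Ore-condition lemma. Their images in $\widehat{R}$ generate it, because $\widehat{R} \simeq \widehat{\Wh} \dotimes{\Wh} R$ by Proposition \ref{prop:completion-tensor} (ii), and they are still annihilated by the same powers of $d \in \Wh \subset \widehat{\Wh}$. This is exactly the observation recorded just before Theorem \ref{prop:torsion}.

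Applying Theorem \ref{prop:torsion} with $Q = \widehat{R}$ gives $\widehat{R} = 0$. The identification $\widehat{\Wh} \dotimes{\Wh} R \simeq \widehat{R}$ is Proposition \ref{prop:completion-tensor} (ii). The equivalence with $\gr(R) = 0$ for any good filtration is Lemma \ref{prop:null-completion}. Independence of the choice of good filtration holds because all good filtrations are equivalent, so they yield the same completion.

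The only delicate point is that the subquotient must be realized inside $\widehat{M}$ compatibly. Exactness of completion, with the induced filtrations being good, handles this, so no real obstacle remains.
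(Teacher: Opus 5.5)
Your proposal is correct and follows the same route as the paper: exactness of completion (Proposition~\ref{prop:completion-tensor}) realizes $\widehat{R}$ as an $\mathcal{S}$-torsion subquotient of $\widehat{M}$, Theorem~\ref{prop:torsion} then forces $\widehat{R}=0$, and Lemma~\ref{prop:null-completion} gives the equivalence with $\gr(R)=0$. You spell out details the paper leaves implicit, namely that the induced filtrations are good and that the images of torsion generators still generate $\widehat{R}$ and remain $\mathcal{S}$-torsion.
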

\begin{proof}
	The isomorphism $\widehat{\Wh} \dotimes{\Wh} R \simeq \widehat{R}$ and the equivalence $\widehat{R} = 0 \iff \gr(R) = 0$ follow from Proposition \ref{prop:completion-tensor} and Lemma \ref{prop:null-completion}, respectively. The rest comes from the exactness of completion and Theorem \ref{prop:torsion}.
\end{proof}

Let $M^\natural \leftrightarrow M$ be as in \eqref{eqn:monodromic-equivalence}. If one has a version of Corollary \ref{prop:torsion-cor} for $R$ instead of $\widehat{R}$, then by Lemma \ref{prop:support-torsion} the conclusion would be equivalent to $M^{\natural}$ having no nonzero quotients supported off $Bw_0 B$.

\subsection{Block decomposition}\label{subsec:inf-character}
Identify $\mathfrak{c}$ with $\mathfrak{t}^* / \mathrm{W}$. Suppose a $\Wh$-module $M$ is generated by generalized $\mathcal{Z}(\mathfrak{g})$-eigenvectors $x_1, \ldots, x_n$ with eigenvalues $\nu_1, \ldots, \nu_n \in \mathfrak{c}$, respectively, and $\dot{\nu}_i \in \mathfrak{t}^*$ is a representative of $\nu_i$ for each $i$. Kostant's result \cite[Theorem 7.133]{KV95} cited in the proof of Lemma \ref{prop:admissible-generation} implies that every $x \in M$ is a sum $x = y_1 + \cdots + y_n$ where $y_i$ is a generalized $\mathcal{Z}(\mathfrak{g})$-eigenvector of eigenvalue represented by some element of $\dot{\nu}_i + \mathbf{X}^*(T)$, for all $1 \leq i \leq n$.

Let the extended Weyl group $\widetilde{\mathrm{W}} = \mathrm{W} \ltimes \mathbf{X}^*(T)$ act on $\mathfrak{t}^*$ by affine transformations, where $\mathrm{W} = \mathrm{W}(G, T)$. The set-theoretic quotient map $\mathfrak{t}^* \to \mathfrak{t}^* / \widetilde{\mathrm{W}}$ factors through $\mathfrak{c}$. Elements of $\mathfrak{t}^* / \widetilde{\mathrm{W}}$ will be expressed as $[\nu]$, the image of $\nu \in \mathfrak{c}$, and $\dot{\nu}$ will stand for any representative of $[\nu]$ in $\mathfrak{t}^*$.

\begin{definition}\label{def:block}
	An admissible $\Wh$-module is said to be of \emph{infinitesimal character} $[\nu]$ if it is generated by generalized $\mathcal{Z}(\mathfrak{g})$-eigenvectors $x_1, \ldots, x_n$ whose eigenvalues $\nu_1, \ldots, \nu_n \in \mathfrak{c}$ satisfy $[\nu_i] = [\nu]$ for all $i$.
	
	Denote the full subcategory of $\cate{Adm}$ so obtained by $\cate{Adm}_{[\nu]}$.
\end{definition}

The following is justified by earlier discussions, and is familiar to experts.

\begin{proposition}\label{prop:Adm-decomp}
	Each $\cate{Adm}_{[\nu]}$ is a Serre subcategory of $\cate{Adm}$, and $\cate{Adm} = \bigoplus_{[\nu]} \cate{Adm}_{[\nu]}$.
\end{proposition}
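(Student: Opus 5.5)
We must show two things: each $\cate{Adm}_{[\nu]}$ is a Serre subcategory, and every admissible $M$ decomposes uniquely as a finite direct sum $M = \bigoplus_{[\nu]} M_{[\nu]}$ with $M_{[\nu]} \in \cate{Adm}_{[\nu]}$, with no nonzero morphisms between different blocks.

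The first step is to define, for any admissible $M$ and any class $[\nu] \in \mathfrak{t}^*/\widetilde{\mathrm{W}}$, the subspace $M_{[\nu]}$ spanned by the generalized $\mathcal{Z}(\mathfrak{g})$-eigenvectors whose eigenvalue $\mu \in \mathfrak{c}$ satisfies $[\mu] = [\nu]$. Since $M$ is locally $\mathcal{Z}(\mathfrak{g})$-finite and $\mathcal{Z}(\mathfrak{g})$ is commutative, $M$ is the direct sum of its generalized eigenspaces $M^{\mu}$ over $\mu \in \mathfrak{c}$. Grouping these by $\widetilde{\mathrm{W}}$-classes gives $M = \bigoplus_{[\nu]} M_{[\nu]}$ as vector spaces. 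Only finitely many classes occur: if $x_1, \ldots, x_n$ are generators, decompose each into finitely many generalized eigenvectors. The Kostant argument recalled before Definition \ref{def:block} then shows that every element of $M$ is a sum of generalized eigenvectors with eigenvalues in $\dot{\nu}_i + \mathbf{X}^*(T)$ modulo $\mathrm{W}$, that is, in the finitely many classes $[\nu_i]$.

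The key step is that each $M_{[\nu]}$ is a $\Wh$-submodule. Take $x \in M^{\mu}$ and $w \in \Wh$. Pass to $M^\natural$ via \eqref{eqn:monodromic-equivalence} and view $\Wh x$ inside the image of $D(G) \otimes \mathcal{U}(\mathfrak{g}) x$, as in the proof of Lemma \ref{prop:admissible-generation}. Here $D(G)$ is a locally finite $G$-module under the adjoint action, with weights in $\mathbf{X}^*(T)$. Kostant's theorem \cite[Theorem 7.133]{KV95} then says that generalized infinitesimal characters occurring in $E \otimes \mathcal{U}(\mathfrak{g})x$, for $E$ a finite-dimensional $G$-module, are represented by $\dot{\mu} + (\text{weights of } E) \subset \dot{\mu} + \mathbf{X}^*(T)$. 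Hence $wx \in \bigoplus_{[\mu'] = [\mu]} M^{\mu'}$. This is the point needing most care: one must check that the $\mathfrak{g}$-action used (diagonal adjoint) induces the same $\mathcal{Z}(\mathfrak{g})$-action on $\Wh$-invariants as the given embedding $\mathcal{Z}(\mathfrak{g}) \hookrightarrow \Wh$. This holds because $\mathcal{Z}(\mathfrak{g})$ acts via $\tau(\mathcal{Z}(\mathfrak{g}_{\Le}))$, equivalently via $\mathcal{Z}(\mathfrak{g}_{\Ri})$. I would therefore apply Kostant's theorem to the $\mathfrak{g}_{\Ri}$-action and use that $D(G)$ is locally finite under $G_{\Ri}$ as well.

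With this in hand, $M_{[\nu]}$ is a $\Wh$-submodule, finitely generated since $\Wh$ is Noetherian, hence admissible and lying in $\cate{Adm}_{[\nu]}$. Conversely, an object of $\cate{Adm}_{[\nu]}$ satisfies $M = M_{[\nu]}$ by the same argument. Any $\Wh$-linear map commutes with $\mathcal{Z}(\mathfrak{g})$, so it preserves generalized eigenspaces and hence the pieces $M_{[\nu]}$. This yields: Hom between distinct blocks vanishes; the decomposition is functorial, giving $\cate{Adm} = \bigoplus_{[\nu]} \cate{Adm}_{[\nu]}$; and $\cate{Adm}_{[\nu]}$ is closed under submodules and quotients, because $N_{[\nu']} \subset M_{[\nu']} = 0$ for $[\nu'] \neq [\nu]$, and similarly for quotients. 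For extensions $0 \to M' \to M \to M'' \to 0$ with outer terms in $\cate{Adm}_{[\nu]}$, exactness of $M \mapsto M_{[\nu']}$ gives $M_{[\nu']} = 0$ for $[\nu'] \neq [\nu]$. Hence $\cate{Adm}_{[\nu]}$ is Serre.
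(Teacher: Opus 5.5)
Your argument is correct and is essentially the paper's own: the paper justifies this proposition by the generalized-eigenspace splitting together with Kostant's tensor-product theorem \cite[Theorem 7.133]{KV95}, as in the paragraph preceding Definition \ref{def:block} and the proof of Lemma \ref{prop:admissible-generation}, and you have written out the Serre-subcategory and Hom-vanishing details that the paper leaves implicit. One correction to your aside: the diagonal adjoint action would \emph{not} work, since its center does not act through the given map $\mathcal{Z}(\mathfrak{g}) \to \Wh$, so the $\mathfrak{g}_{\Ri}$-action (commutators with $\tau(1 \otimes \xi)$) that you ultimately settle on is the right choice.
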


Clearly, $\mathcal{G}_\nu$ belongs to $\cate{Adm}_{[\nu]}$. Conversely, for $M$ in $\cate{Adm}_{[\nu]}$, the $\nu_1, \ldots, \nu_n \in \mathfrak{c}$ chosen in Remark \ref{rem:HC-admissible} satisfy $[\nu_i] = [\nu]$ for all $i$. Thus $\{ \mathcal{G}_\mu : [\mu] = [\nu] \}$ generates $\cate{Adm}_{[\nu]}$.

\begin{definition}\label{def:W-regular}
	Let $\dot{\nu} \in \mathfrak{t}^*$. If $\Stab_{\widetilde{\mathrm{W}}}(\dot{\nu}) = \{1\}$, then $\dot{\nu}$ is said to be \emph{$\widetilde{\mathrm{W}}$-regular}.
	
	Regularity depends only on the $\widetilde{\mathrm{W}}$-orbit of $\dot{\nu}$, therefore we can also talk about the $\widetilde{\mathrm{W}}$-regularity of $\nu \in \mathfrak{c}$ or of its image $[\nu]$ in $\mathfrak{t}^* / \widetilde{\mathrm{W}}$.
\end{definition}

\begin{lemma}\label{prop:affine-regular}
	Pick any preimage $\dot{\nu} \in \mathfrak{t}^*$ of $\nu \in \mathfrak{c}$. Then $\nu$ is $\widetilde{\mathrm{W}}$-regular if and only if $w\dot{\nu} - \dot{\nu} \notin \mathbf{X}^*(T)$ for all $w \in \mathrm{W} \smallsetminus \{1\}$.
\end{lemma}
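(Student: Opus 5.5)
This is a direct unwinding of the definitions. The extended Weyl group $\widetilde{\mathrm{W}} = \mathrm{W} \ltimes \mathbf{X}^*(T)$ acts on $\mathfrak{t}^*$ by $(w, \lambda) \cdot \mu = w\mu + \lambda$, with $w$ acting linearly. So $(w,\lambda)$ fixes $\dot{\nu}$ if and only if
\[ w\dot{\nu} - \dot{\nu} = -\lambda . \]
First, independence of the chosen preimage: any two preimages of $\nu$ in $\mathfrak{t}^*$ differ by an element of $\mathrm{W}$, and $\widetilde{\mathrm{W}}$-regularity depends only on the $\widetilde{\mathrm{W}}$-orbit (Definition \ref{def:W-regular}). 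It therefore suffices to prove the equivalence for one fixed $\dot{\nu}$. Alternatively, one can note directly that the right-hand condition is invariant under $\dot{\nu} \mapsto u\dot{\nu}$ for $u \in \mathrm{W}$, since $w u\dot{\nu} - u\dot{\nu} = u\bigl((u^{-1}wu)\dot{\nu} - \dot{\nu}\bigr)$ and $\mathbf{X}^*(T)$ is $\mathrm{W}$-stable.

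For the direction ``$\Leftarrow$'', suppose $w\dot{\nu} - \dot{\nu} \notin \mathbf{X}^*(T)$ for all $w \neq 1$. Let $(w,\lambda)$ stabilize $\dot{\nu}$. Then $w\dot{\nu} - \dot{\nu} = -\lambda \in \mathbf{X}^*(T)$, which forces $w = 1$. The stabilizer equation then reads $\dot{\nu} + \lambda = \dot{\nu}$, so $\lambda = 0$, and the stabilizer is trivial.

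For the direction ``$\Rightarrow$'', suppose some $w \neq 1$ satisfies $w\dot{\nu} - \dot{\nu} = \mu \in \mathbf{X}^*(T)$. Then $(w, -\mu)$ is a nontrivial element of $\widetilde{\mathrm{W}}$ fixing $\dot{\nu}$, so $\dot{\nu}$ is not $\widetilde{\mathrm{W}}$-regular.

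There is no real obstacle here. The only care needed is to pin down the convention for the semidirect product action and the sign of $\lambda$, both of which are harmless.
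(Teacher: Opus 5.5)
Your proposal is correct and follows essentially the same route as the paper: write a stabilizing element as a pair $(w,\lambda)$, observe that it fixes $\dot{\nu}$ exactly when $w\dot{\nu}-\dot{\nu}=-\lambda$, and note that $w=1$ forces $\lambda=0$. Your extra remark that the condition does not depend on the chosen preimage is a harmless addition that the paper handles through its remark after Definition~\ref{def:W-regular}.
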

\begin{proof}
	Express $\tilde{w} \in \widetilde{\mathrm{W}}$ as $\mu \rtimes w$ where $\mu \in \mathbf{X}^*(T)$ and $w \in \mathrm{W}$. Being $\widetilde{\mathrm{W}}$-regular means that $w(\dot{\nu}) + \mu \neq \dot{\nu}$ whenever $(\mu, w) \neq (0, 1)$; note that inequality holds whenever $\mu \neq 0$ and $w = 1$, hence $\nu$ is $\widetilde{\mathrm{W}}$-regular if and only if $w\dot{\nu} - \dot{\nu} \notin \mathbf{X}^*(T)$ for all $w \in \mathrm{W} \smallsetminus \{1\}$.
\end{proof}

\section{Homological properties of \texorpdfstring{$\widehat{\Wh}$}{hat W}}\label{sec:homological}
In this section, we consider the completion $\widehat{\Wh}$ of the filtered ring $\Wh = \Wh_G$ associated with a connected reductive group $G$ with auxiliary data $B = TN$ and $\bpsi$.

\subsection{Auslander regularity and dimension}
For the notion of Auslander regularity for a Noetherian ring $A$, see \cite[Appendix A:IV, (1.10)]{Bj93}. Its definition involves the number
\begin{equation}\label{eqn:grade}
	j_A(M) := \inf \underbracket{\{ k \in \Z: \Ext^k_A(M, A) \neq 0 \}}_{\neq \emptyset \;\text{by \textit{loc.\ cit.} when $M \neq \{0\}$}}
\end{equation}
defined for all finitely generated nonzero $A$-module $M$, called the \emph{grade number} of $M$, which will be used later on. 

Denote by $\gldim(A)$ the global dimension of $A$ as defined in \cite[Chapter 7, Sections 1.8--1.11]{McR01}.

Denote by $\injdim(A)$ the left injective dimension of $A$ as a left $A$-module. Define the right injective dimension of $A$ similarly. When $A$ is Noetherian and both injective dimensions are finite, then they are equal by \cite[Lemma A]{Za69}, and we simply say $\injdim(A)$ is the injective dimension of $A$.

We apply these notions to $A = \widehat{\Wh}$, recalling that $\widehat{\Wh}$ is Noetherian.

\begin{proposition}\label{prop:Auslander-regular}
	The ring $\widehat{\Wh}$ is Auslander regular, and $j_{\widehat{\Wh}}(M) = j_{\gr\widehat{\Wh}}(\gr(M))$ for all finitely generated nonzero $\widehat{\Wh}$-module $M$ equipped with a good filtration, in which case $\gr(M) \neq \{0\}$.
\end{proposition}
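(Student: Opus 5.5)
The plan is to apply the standard criterion for Zariskian filtered rings, for instance \cite[Appendix A:IV]{Bj93} or the results of \cite{ABO89} in the $\Z$-filtered setting. If $A$ is a Zariskian filtered ring and $\gr A$ is a commutative Noetherian regular ring, then $A$ is Auslander regular. Moreover, for every finitely generated $A$-module $M$ with a good filtration one has $j_A(M) = j_{\gr A}(\gr M)$. So the work reduces to checking the hypotheses for $A = \widehat{\Wh}$.

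First, $\widehat{\Wh}$ is Zariskian. Its filtration is complete and separating by construction (Subsection \ref{subsec:fil-completions}). By Lemma \ref{prop:Ah-Noetherian}, the Rees ring $\widehat{\Wh}_\hbar$ is Noetherian and $\gr \widehat{\Wh} \simeq \gr \Wh$. Noetherianity of $\Wh_\hbar$ itself comes from Proposition \ref{prop:W-hbar-Noetherian}. Remark \ref{rem:Zariskian} (via \cite[1.9 Example]{ABO89}) then shows that $\mathrm{F}_\bullet\widehat{\Wh}$ is Zariskian, hence faithful. In particular, $M \neq 0$ implies $\gr(M) \neq 0$ for any good filtration on $M$, which gives the last clause of the statement.

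Second, $\gr\widehat{\Wh} \simeq \gr\Wh \simeq \CC[\mathfrak{Z}]$ by Theorem \ref{prop:grW-Z}. Since $\mathfrak{Z}$ is a smooth affine variety (Definition--Proposition \ref{def:universal-centralizer}), $\CC[\mathfrak{Z}]$ is a commutative Noetherian regular ring of finite global dimension $\dim\mathfrak{Z} = 2\,\mathrm{rk}(G)$ (Remark \ref{rem:Z-dim}). Regular commutative Noetherian rings of finite Krull dimension are Auslander regular, since they are Gorenstein with the Auslander condition holding locally.

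Third, I would invoke the transfer theorem: for a Zariskian filtered ring whose associated graded ring is Auslander regular, the ring itself is Auslander regular, and grade numbers agree, $j_A(M)=j_{\gr A}(\gr M)$. The latter comes from comparing $\Ext$ via filtered free resolutions and a spectral sequence that converges thanks to the Zariskian property. This is \cite[Appendix A:IV, Theorem 4.15]{Bj93} for positively filtered rings; in our $\Z$-filtered case I would cite the version in Li--Van Oystaeyen's treatment of Zariskian filtrations, or rerun the argument through Rees algebras. The main obstacle is precisely making sure these results are available for $\Z$-filtrations unbounded below. I would handle it by working with $\widehat{\Wh}_\hbar$: being $\hbar$-adically complete graded Noetherian with $\widehat{\Wh}_\hbar/\hbar \simeq \CC[\mathfrak{Z}]$, it is graded Auslander regular by lifting along the central regular element $\hbar$, and specialising at $\hbar=1$ gives the claim for $\widehat{\Wh}$.
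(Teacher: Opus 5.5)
Your proposal is correct and follows the paper's proof: $\widehat{\Wh}$ is Zariskian by Remark \ref{rem:Zariskian}, $\gr\widehat{\Wh} \simeq \CC[\mathfrak{Z}]$ is regular because $\mathfrak{Z}$ is smooth affine, and \cite[Appendix A:IV, (4.15)]{Bj93} then gives Auslander regularity together with $j_{\widehat{\Wh}}(M) = j_{\gr\widehat{\Wh}}(\gr M)$, while the clause $\gr(M) \neq \{0\}$ comes from faithfulness. Your concern about filtrations unbounded below, and the Rees-algebra workaround it prompts, is unnecessary: Bj\"{o}rk's appendix already treats Zariskian $\Z$-filtered rings (it is designed to cover microdifferential operators), so the citation applies directly.
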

\begin{proof}
	By Remark \ref{rem:Zariskian}, the filtered ring $\widehat{\Wh}$ is Zariskian. Since $\gr\widehat{\Wh} \simeq \CC[\mathfrak{Z}]$ and $\mathfrak{Z}$ is smooth affine, the assertion follows from \cite[Appendix A:IV, (4.15)]{Bj93}.
\end{proof}

\begin{proposition}\label{prop:gldim}
	Let $n := \dim T$. Then $\injdim \widehat{\Wh} = n = \gldim \widehat{\Wh}$.
\end{proposition}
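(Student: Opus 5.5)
We prove the two bounds $\gldim \widehat{\Wh} \leq n$ and $\injdim \widehat{\Wh} \geq n$, then combine them with the standard inequality $\injdim \leq \gldim$ for Noetherian rings of finite global dimension.

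\emph{Upper bound.} Since $\widehat{\Wh}$ is Zariskian with $\gr\widehat{\Wh} \simeq \CC[\mathfrak{Z}]$ (Lemma \ref{prop:Ah-Noetherian} (iii) and Theorem \ref{prop:grW-Z}), we apply the standard result for Zariskian filtered rings \cite[Appendix A:IV]{Bj93}: $\gldim \widehat{\Wh} \leq \gldim \gr\widehat{\Wh}$. The same bound follows from lifting graded free resolutions of $\gr M$ to filtered free resolutions of $M$, using that good filtrations are complete and separated. Because $\mathfrak{Z}$ is smooth affine of dimension $2n$ (Remark \ref{rem:Z-dim}), this only yields $\gldim \widehat{\Wh} \leq 2n$, which is too weak. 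To improve it to $n$, we use Auslander regularity (Proposition \ref{prop:Auslander-regular}) together with the grade formula $j_{\widehat{\Wh}}(M) = j_{\CC[\mathfrak{Z}]}(\gr M) = 2n - \dim \mathrm{SS}(M)$, valid since $\CC[\mathfrak{Z}]$ is regular and hence Cohen--Macaulay. Gabber's Theorem \ref{prop:integrability} gives $\dim \mathrm{SS}(M) \geq n$ for every nonzero $M$, because $\mathrm{SS}(M)$ is nonempty and co-isotropic. Hence $j(M) \leq n$ for every nonzero finitely generated $M$.

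\emph{Finite injective dimension.} For an Auslander-regular ring, the global dimension, which is finite, equals $\injdim$. Moreover, for Auslander--Gorenstein rings one has $\injdim A = \sup\{ j_A(M) : M \neq 0 \}$, obtained by applying the Auslander condition to the top nonvanishing $\Ext$ of a module. This gives $\injdim \widehat{\Wh} \leq n$ directly, and therefore $\gldim \widehat{\Wh} = \injdim \widehat{\Wh} \leq n$. I expect the main obstacle to be citing this sup-formula cleanly. If no clean citation is available, I would argue directly: let $k = \gldim$ and choose $M$ with $\Ext^k(M, \widehat{\Wh}) \neq 0$. By the Auslander condition, $E = \Ext^k(M, \widehat{\Wh})$ satisfies $j(E) \geq k$ and $\Ext^k(E, \widehat{\Wh}) \neq 0$ through the double-Ext spectral sequence. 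Combined with $j \leq n$, this gives $k \leq n$.

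\emph{Lower bound.} Take $M = \widehat{\mathcal{G}}_\nu$, which is nonzero by Proposition \ref{prop:Gnu-nonzero}. Lemma \ref{prop:Ext-HC} gives $\Ext^n_{\widehat{\Wh}}(\widehat{\mathcal{G}}_\nu, \widehat{\Wh}) \simeq \widehat{\mathcal{G}}^{\mathrm{r}}_\nu$, and this is nonzero by the right-module version of Proposition \ref{prop:Gnu-nonzero}. Since $\Ext^n(\cdot, \widehat{\Wh}) \neq 0$, we get $\injdim \widehat{\Wh} \geq n$, and hence $\gldim \widehat{\Wh} \geq n$. Combining this with the upper bound yields $\injdim \widehat{\Wh} = n = \gldim \widehat{\Wh}$. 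As a consistency check, $\mathrm{SS}(\widehat{\mathcal{G}}_\nu) = \kappa^{-1}(\nu_0)$ has dimension $n$, so its grade is exactly $n$.
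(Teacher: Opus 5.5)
Your proposal is correct and follows essentially the same route as the paper. Auslander regularity reduces the global dimension to a supremum of grades; the formula $j(M) = 2n - \dim \mathrm{SS}(M)$ over the regular ring $\CC[\mathfrak{Z}]$, together with Gabber's involutivity, gives $j(M) \leq n$; and the lower bound comes from $\Ext^n_{\widehat{\Wh}}(\widehat{\mathcal{G}}_\nu, \widehat{\Wh}) \simeq \widehat{\mathcal{G}}^{\mathrm{r}}_\nu \neq 0$. The only cosmetic difference is that the paper cites Bj\"ork's (1.11) for $\gldim = \sup_M j(M)$ directly, whereas you pass through $\injdim = \sup_M j(M)$ and $\gldim = \injdim$, which also needs the bound $j \leq n$ for right modules (obtained by the same argument).
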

\begin{proof}
	Remark \ref{rem:Z-dim} gives $\dim \mathfrak{Z} = 2n$. We contend that $\gldim \widehat{\Wh} \leq n$.
	
	First, $\gldim(\widehat{\Wh}) = \sup_M j_{\widehat{\Wh}}(M) = \sup_M j_{\gr\widehat{\Wh}}(\gr(M))$ by Auslander-regularity (Proposition \ref{prop:Auslander-regular}) and \cite[Appendix IV: (1.11)]{Bj93}, where $M$ ranges over finitely generated nonzero $\widehat{\Wh}$-modules and $\gr(M)$ is relative to any good filtration.
	
	Secondly, since $M \neq \{0\} \implies \gr(M) \neq \{0\}$ and $\CC[\mathfrak{Z}]$ is regular, standard results from commutative algebra \cite[Theorem D.4.4 (i)]{HTT08} give
	\[ \dim \gr(M) + j_{\gr(\widehat{\Wh})}(\gr(M)) = \gldim \CC[\mathfrak{Z}] = 2n; \]
	here $\dim \gr(M)$ is the Krull dimension of $\gr(\widehat{\Wh})/\mathrm{ann}(\gr(M))$.
	
	Theorem \ref{prop:integrability} implies $\dim \gr(M) \geq n$, and then $j_{\gr(\widehat{\Wh})}(\gr(M)) \leq n$, as asserted.
	
	The foregoing arguments are adapted from the case for $\mathscr{D}$-modules.

	Now we have $\injdim \widehat{\Wh} \leq \gldim \widehat{\Wh} \leq n$. It remains to show $n \leq \injdim \widehat{\Wh}$. Indeed,
	\[ \Ext^n_{\widehat{\Wh}}(\widehat{\mathcal{G}}_\nu, \widehat{\Wh}) \simeq \widehat{\mathcal{G}}_\nu^{\mathrm{r}} \neq 0, \]
	which in turn follows from Lemma \ref{prop:Ext-HC} and the version of Lemma \ref{prop:Gnu-nonzero} for right modules.
\end{proof}

Consequently, all the ring-theoretic and homological tools from \cite[Appendix A:IV]{Bj93} for the study of $\mathscr{D}$-modules (or $\mathscr{E}$-modules) are applicable to $\widehat{\Wh}$-modules as well.

\subsection{Duality}\label{subsec:duality}
Abbreviate the grade \eqref{eqn:grade} of a finitely generated $\widehat{\Wh}$-module $M$ by $j(M)$. Set $n := \dim T$.

\begin{definition}[\protect{\cite[Section 3]{Iwa97}}]
	\label{def:holonomic}
	A finitely generated $\widehat{\Wh}$-module $M$ is said to be \emph{holonomic} if either $j(M) = n$ or $M = 0$. There is an analogous notion for right $\widehat{\Wh}$-modules.
\end{definition}

For a finitely generated $\widehat{\Wh}$-module $M$, being holonomic\ means that $\RHom_{\widehat{\Wh}}(M, \widehat{\Wh})$ is concentrated at the maximal possible degree $n$. This is related to the usual notion in terms of dimensions of singular supports as follows.

\begin{proposition}\label{prop:admissible-hh}
	Let $M$ be a finitely generated nonzero $\widehat{\Wh}$-module. Then:
	\begin{enumerate}[(i)]
		\item $\dim \mathrm{SS}(M) \leq n$ if and only if $\dim \mathrm{SS}(M) = n$, if and only if $M$ is holonomic;
		\item if $M = \widehat{N}$ for some admissible $\Wh$-module $N$, then $M$ is holonomic.
	\end{enumerate}
\end{proposition}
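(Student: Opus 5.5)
For (i), I would work throughout with a good filtration on $M$, so that $\gr(M) \neq 0$ by Proposition \ref{prop:Auslander-regular}, and translate between $\dim \mathrm{SS}(M)$ and the grade. The identity used in the proof of Proposition \ref{prop:gldim} (from \cite[Theorem D.4.4 (i)]{HTT08}, valid because $\gr\widehat{\Wh} \simeq \CC[\mathfrak{Z}]$ is regular of dimension $2n$) reads
\[ \dim \gr(M) + j_{\gr\widehat{\Wh}}(\gr(M)) = 2n. \]
By Proposition \ref{prop:Auslander-regular} we have $j(M) = j_{\gr\widehat{\Wh}}(\gr(M))$, and $\dim \gr(M) = \dim \mathrm{SS}(M)$ since $\mathrm{SS}(M) = \Supp \gr(M)$. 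Hence $j(M) = 2n - \dim \mathrm{SS}(M)$.

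Gabber's Theorem \ref{prop:integrability}, applied to $\widehat{\Wh}$, shows that $\mathrm{SS}(M)$ is co-isotropic and nonempty in the $2n$-dimensional symplectic variety $\mathfrak{Z}$ (Remark \ref{rem:Z-dim}), so $\dim \mathrm{SS}(M) \geq n$. Thus $\dim \mathrm{SS}(M) \leq n$ is equivalent to $\dim \mathrm{SS}(M) = n$. By the displayed identity this is equivalent to $j(M) = n$, which is the definition of holonomicity for nonzero $M$ (Definition \ref{def:holonomic}). I would also note that $j(M) \leq \gldim \widehat{\Wh} = n$ by Proposition \ref{prop:gldim}, which is consistent with this conclusion.

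For (ii), let $N$ be admissible with $M = \widehat{N} \neq 0$. By Proposition \ref{prop:SS-completion}, $\mathrm{SS}(\widehat{N}) = \mathrm{SS}(N)$. Lemma \ref{prop:SS-bound} gives $\mathrm{SS}(N) \subset \kappa^{-1}(\nu_0)$, and this fiber is Lagrangian, so it has dimension $n$. Therefore $\dim \mathrm{SS}(M) \leq n$, and (i) shows that $M$ is holonomic.

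I do not expect a serious obstacle. The one point to check is that the grade/dimension identity and Gabber's theorem really apply to $\widehat{\Wh}$-modules with their good filtrations. This is supplied by the Zariskian property (Remark \ref{rem:Zariskian}) together with $\gr\widehat{\Wh} \simeq \gr\Wh$ (Lemma \ref{prop:Ah-Noetherian}), exactly as in the proof of Proposition \ref{prop:gldim}.
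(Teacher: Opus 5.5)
Your proposal is correct and follows essentially the same route as the paper. The paper also derives (i) from $j(M) = j_{\gr\widehat{\Wh}}(\gr(M))$, the identity $\dim \gr(M) + j(M) = 2n$, and Gabber's bound $\dim \mathrm{SS}(M) \geq n$, and it derives (ii) from $\mathrm{SS}(\widehat{N}) = \mathrm{SS}(N) \subset \kappa^{-1}(\nu_0)$ together with the fact that the fibers of $\kappa$ are Lagrangian.
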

\begin{proof}
	Take any good filtration on $M$. Proposition \ref{prop:Auslander-regular} gives $j(M) = j_{\gr(\widehat{\Wh})}(\gr(M))$, and it is shown in the proof of Proposition \ref{prop:gldim} that
	\begin{align*}
		\dim \gr(M) + j_{\widehat{\Wh}}(M) & = 2n, \\
		\dim \mathrm{SS}(M) = \dim \gr(M) & \geq n.
	\end{align*}
	The assertion (i) follows at once.
	
	For (ii), since $\mathrm{SS}(M) = \mathrm{SS}(N)$ by Proposition \ref{prop:SS-completion}, it suffices to apply Lemma \ref{prop:SS-bound} and note that the fibers of $\kappa: \mathfrak{Z} \to \mathfrak{c}$ are all Lagrangian.
\end{proof}

\begin{theorem}[Y.\ Iwanaga \protect{\cite{Iwa97}}]\label{prop:Iwanaga}
	The holonomic $\widehat{\Wh}$-modules have the following properties.
	\begin{enumerate}[(i)]
		\item Every holonomic module has finite length.
		\item Holonomic modules form a Serre subcategory of $\widehat{\Wh}\dcate{Mod}$.
		\item There is an equivalence of Abelian categories
		\[\begin{tikzcd}[row sep=tiny]
			\left\{ \text{holonomic left $\widehat{\Wh}$-modules} \right\} \arrow[shift left, r] & \left\{ \text{holonomic right $\widehat{\Wh}$-modules} \right\} \arrow[shift left, l] \\
			M \arrow[mapsto, r] & M^\vee \\
			{}^\vee M & M \arrow[mapsto, l]
		\end{tikzcd}\]
		where $M^\vee$ and ${}^\vee M$ are both defined by the expression $\Ext^n_{\widehat{\Wh}}(M, \widehat{\Wh})$.
	\end{enumerate}
\end{theorem}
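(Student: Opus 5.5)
The plan is to check that $\widehat{\Wh}$ satisfies the hypotheses of Iwanaga's general framework \cite{Iwa97}, and then to derive the three assertions by the standard arguments for Auslander--Gorenstein rings. By Lemma \ref{prop:Ah-Noetherian}, $\widehat{\Wh}$ is Noetherian. By Proposition \ref{prop:Auslander-regular} it is Auslander regular, in particular Auslander--Gorenstein. By Proposition \ref{prop:gldim} its injective dimension equals $n$ on both sides, using \cite[Lemma A]{Za69}. Definition \ref{def:holonomic} defines holonomic modules as those of maximal grade $j(M) = n = \injdim \widehat{\Wh}$, which is exactly the class treated in \cite{Iwa97}. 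So the first step is to record that these hypotheses hold, and to note that the same holds for right modules, since Auslander regularity and Zariski's lemma are left--right symmetric.

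For (ii), I would use the Auslander condition. Let $0 \to M' \to M \to M'' \to 0$ be exact with $M$ holonomic. Then $\Ext^k(M', \widehat{\Wh})$ for $k < n$ is controlled by the long exact sequence together with the inequality $j(N) \geq j(M)$ for any submodule or quotient $N$ of $M$. This inequality is a standard consequence of Auslander--Gorenstein; see \cite[Appendix A:IV]{Bj93}. It gives $j(M'), j(M'') \geq n$. Since $n$ is the global dimension, these grades equal $n$ unless the module vanishes. Conversely, if $M'$ and $M''$ are holonomic, the long exact Ext sequence gives $\Ext^k(M, \widehat{\Wh}) = 0$ for $k < n$. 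Closure under direct sums and subquotients then follows.

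For (iii), I would apply the bidualizing spectral sequence $E_2^{p,q} = \Ext^p(\Ext^{-q}(M, \widehat{\Wh}), \widehat{\Wh}) \Rightarrow M$, valid since $\widehat{\Wh}$ has finite injective dimension. For holonomic $M$ only $q = -n$ survives, so $M \simeq \Ext^n(\Ext^n(M, \widehat{\Wh}), \widehat{\Wh})$ and the higher terms vanish. The Auslander condition gives $j(\Ext^n(M, \widehat{\Wh})) \geq n$, so $M^\vee$ is holonomic, and the spectral sequence collapse forces $\Ext^k(M^\vee, \widehat{\Wh}) = 0$ for $k \neq n$. Exactness of $M \mapsto M^\vee$ on holonomic modules follows from the vanishing of $\Ext^{n\pm1}$. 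Together with the natural isomorphisms ${}^\vee(M^\vee) \simeq M$ and $({}^\vee N)^\vee \simeq N$, this yields the contravariant equivalence.

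Part (i) is the step I expect to be most delicate. I would argue via multiplicities. For holonomic $M \neq 0$ with a good filtration, Proposition \ref{prop:admissible-hh} shows that $\mathrm{SS}(M)$ is pure of dimension $n$. Define $e(M)$ as the sum, over the $n$-dimensional components $C$ of $\mathrm{SS}(M)$, of the length of $\gr(M)$ localized at the generic point of $C$. This is independent of the good filtration, since good filtrations are equivalent, and it is additive on short exact sequences of holonomic modules, because good filtrations induce exact sequences of $\gr$ with supports of dimension $\leq n$. Since $\widehat{\Wh}$ is Zariskian (Remark \ref{rem:Zariskian}), $M \neq 0$ implies $\gr(M) \neq 0$, and then $\dim \mathrm{SS}(M) = n$, so $e(M) \geq 1$. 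Hence any chain of holonomic submodules of $M$, which are holonomic by (ii), has length at most $e(M)$. The main point to check carefully is the additivity of $e$ for the induced filtrations, which reduces to the commutative statement for finitely generated $\CC[\mathfrak{Z}]$-modules supported in dimension $\leq n$.
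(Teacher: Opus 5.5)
Your proposal is correct. For (ii) and (iii) it unpacks what the paper only cites. The paper's proof records that $\widehat{\Wh}$ is Auslander regular with $\injdim \widehat{\Wh} = n$ and then invokes Theorems 6--8 of \cite{Iwa97}. Those theorems rest on the homological arguments you give: grades of subquotients via the Auslander condition, $j(M) \leq n$ for $M \neq 0$, collapse of the bidualizing spectral sequence onto the row $q=-n$, and exactness from $\Ext^{n-1}=0$ on holonomic modules together with $\Ext^{n+1}=0$.

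The real difference is in (i). You prove finite length by a multiplicity count, which uses structure specific to $\widehat{\Wh}$:
\begin{itemize}
\item the Zariskian property, so that nonzero modules have nonzero $\gr$;
\item Gabber's theorem applied componentwise, which is what makes $\mathrm{SS}(M)$ pure of dimension $n$ (Proposition \ref{prop:admissible-hh} by itself only gives $\dim \mathrm{SS}(M) = n$);
\item the identification $\gr\widehat{\Wh} \simeq \CC[\mathfrak{Z}]$.
\end{itemize}

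In the purely homological setting of \cite{Iwa97}, finite length follows directly from (iii). Let $M \supset M_1 \supset M_2 \supset \cdots$ be a descending chain in a holonomic $M$. The quotients $M/M_i$ are holonomic by (ii). Exactness of $(\cdot)^\vee$ turns the surjections $M \twoheadrightarrow M/M_i$ into an ascending chain $(M/M_1)^\vee \subset (M/M_2)^\vee \subset \cdots$ of submodules of $M^\vee$. This chain stabilizes, because $M^\vee$ is finitely generated over the Noetherian ring $\widehat{\Wh}$. Applying the quasi-inverse ${}^\vee(\cdot)$ then shows that the $M_i$ stabilize. Hence $M$ is Artinian as well as Noetherian.

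So the step you expected to be most delicate is actually the cheapest once (iii) is in hand. It also works for any Auslander--Gorenstein ring of injective dimension $n$, with no geometric input.

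What your route gives in exchange is an explicit bound: the length of $M$ is at most the total multiplicity of $\gr M$ along the $n$-dimensional components of $\mathrm{SS}(M)$, as in the theory of $\mathscr{D}$-modules. For that bound you do not even need the multiplicity to be independent of the good filtration. Fix one good filtration on $M$ and give the terms and subquotients of a given chain the induced filtrations. These are good because the Rees ring is Noetherian. Then $\gr M$ acquires a filtration whose subquotients $\gr(M_i/M_{i-1})$ are nonzero, and each contributes at least $1$.
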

\begin{proof}
	This is an application of the Theorems 6---8 of \cite{Iwa97}, by noting that $\injdim \widehat{\Wh} = n$.
\end{proof}

Given a finitely generated $\widehat{\Wh}$-module $M$ (resp.\ finitely generated $\Wh$-module $N$), recall that $\Ext^i_{\widehat{\Wh}}(M, \widehat{\Wh})$ (resp.\ $\Ext^i_{\Wh}(N, \Wh)$) is finitely generated over $\widehat{\Wh}$ (resp.\ $\Wh$) for all $i$. Hence one may consider $\Ext_{\Wh}^i(N, \Wh)^{\wedge}$ with respect to any good filtration.

The $\Ext$-modules of a finitely generated $\Wh$-module $N$ can be linked to those of $M := \widehat{N}$ by the observation below.

\begin{lemma}\label{prop:Ext-completion}
	Let $N$ be a finitely generated $\Wh$-module, then $\Ext^i_{\widehat{\Wh}}(\widehat{N}, \widehat{\Wh}) \simeq \Ext^i_{\Wh}(N, \Wh) \dotimes{\Wh} \widehat{\Wh} \simeq \Ext^i_{\Wh}(N, \Wh)^{\wedge}$ as right $\Wh$-modules, for all $i$.
\end{lemma}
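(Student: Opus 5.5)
The plan is to use the flatness of $\widehat{\Wh}$ over $\Wh$ (Proposition \ref{prop:completion-tensor} (iii)) together with a finite free resolution argument, as in the commutative case. Since $\Wh$ is Noetherian, $N$ admits a resolution $P_\bullet \to N \to 0$ by finitely generated free left $\Wh$-modules $P_k = \Wh^{\oplus r_k}$. Apply $\widehat{\Wh} \dotimes{\Wh} (\cdot)$. By flatness this gives a resolution $\widehat{\Wh} \dotimes{\Wh} P_\bullet \to \widehat{\Wh}\dotimes{\Wh} N \to 0$ by finitely generated free $\widehat{\Wh}$-modules. By Proposition \ref{prop:completion-tensor} (ii), $\widehat{\Wh}\dotimes{\Wh} N \simeq \widehat{N}$. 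Hence $\Ext^i_{\widehat{\Wh}}(\widehat{N}, \widehat{\Wh})$ is the $i$-th cohomology of $\Hom_{\widehat{\Wh}}(\widehat{\Wh}\dotimes{\Wh} P_\bullet, \widehat{\Wh})$.

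Next identify this complex with the base change of the complex computing $\Ext_{\Wh}$. For a finitely generated free module $P = \Wh^{\oplus r}$ there are canonical isomorphisms of right $\widehat{\Wh}$-modules
\[ \Hom_{\widehat{\Wh}}(\widehat{\Wh}\dotimes{\Wh} P, \widehat{\Wh}) \simeq \Hom_{\Wh}(P, \widehat{\Wh}) \simeq \Hom_{\Wh}(P, \Wh) \dotimes{\Wh} \widehat{\Wh}. \]
Both are identified with $\widehat{\Wh}^{\oplus r}$, and they are natural in $P$, so they are compatible with the differentials given by right multiplication by matrices over $\Wh$. Therefore
\[ \Hom_{\widehat{\Wh}}(\widehat{\Wh}\dotimes{\Wh} P_\bullet, \widehat{\Wh}) \simeq \Hom_{\Wh}(P_\bullet, \Wh)\dotimes{\Wh}\widehat{\Wh} \]
as complexes of right modules. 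Since $\widehat{\Wh}$ is flat as a left $\Wh$-module (the left case of Proposition \ref{prop:completion-tensor} (iii)), the functor $(\cdot)\dotimes{\Wh}\widehat{\Wh}$ is exact on right $\Wh$-modules and commutes with cohomology. This gives $\Ext^i_{\widehat{\Wh}}(\widehat{N}, \widehat{\Wh}) \simeq \Ext^i_{\Wh}(N,\Wh)\dotimes{\Wh}\widehat{\Wh}$.

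The second isomorphism is the right-module version of Proposition \ref{prop:completion-tensor} (ii), which holds by symmetry. The module $\Ext^i_{\Wh}(N,\Wh)$ is a finitely generated right $\Wh$-module, being a subquotient of $\Wh^{\oplus r_i}$ over the Noetherian ring $\Wh$. Its completion with respect to any good filtration is therefore $\Ext^i_{\Wh}(N,\Wh)\dotimes{\Wh}\widehat{\Wh}$.

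The only point needing care is bookkeeping of sides. We use flatness on the right to base-change the resolution of the left module $N$, and flatness on the left to commute $\dotimes{\Wh}\widehat{\Wh}$ with the cohomology of the dual complex of right modules. Both sides are covered by Proposition \ref{prop:completion-tensor} (iii), so no step is expected to be a genuine obstacle. One should also check that the isomorphisms are $\Wh$-linear on the right, which is clear from their construction.
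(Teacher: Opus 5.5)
Your proposal is correct and is essentially the paper's proof. Both base-change a finite free resolution of $N$ along $\Wh \to \widehat{\Wh}$, which is flat on both sides, using $\widehat{N} \simeq \widehat{\Wh} \dotimes{\Wh} N$, and then apply the right-module version of Proposition~\ref{prop:completion-tensor}~(ii) to the finitely generated right module $\Ext^i_{\Wh}(N,\Wh)$; you simply make explicit the Hom-complex identification and the left/right bookkeeping that the paper leaves implicit.
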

\begin{proof}
	Recall that $\Wh \to \widehat{\Wh}$ is flat as left and right modules, and $\widehat{N} \simeq \widehat{\Wh} \dotimes{\Wh} N$. Similarly, $L \dotimes{\Wh} \widehat{\Wh} \simeq \widehat{L}$ for every finitely generated right $\Wh$-module $L$. The assertion follows by taking a free resolution for $N$. See also \cite[Lemma 7.15]{BNS24} in the case $X = \Wh \subset \widehat{\Wh} = Y$.
\end{proof}

Admissibility is also defined for right $\Wh$-modules. The next result says that $\Ext_{\Wh}^n(\cdot, \Wh)$ preserves admissibility, up to completions and filtrations.

\begin{proposition}\label{prop:Ext-admissible}
	Let $N$ be an admissible $\Wh$-module, then $\Ext_{\Wh}^n(N, \Wh)^{\wedge}$ is a successive extension of right $\widehat{\Wh}$-modules which are completions of admissible right $\Wh$-modules.
\end{proposition}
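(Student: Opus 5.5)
We argue by induction on the length of $N$, which is finite by Corollary \ref{prop:finite-length}, using the filtration of Remark \ref{rem:HC-admissible}. If $N = 0$ there is nothing to prove. Otherwise choose $N_1 \subset N$ which is a quotient of some $\mathcal{G}_{\nu_1}$, and set $N' := N/N_1$, which is admissible of smaller length. Pass to completions: by Lemma \ref{prop:Ext-completion}, $\Ext^n_{\Wh}(\cdot, \Wh)^{\wedge} \simeq \Ext^n_{\widehat{\Wh}}((\cdot)^{\wedge}, \widehat{\Wh})$. Since completion is exact (Proposition \ref{prop:completion-tensor}), we get a short exact sequence $0 \to \widehat{N}_1 \to \widehat{N} \to \widehat{N'} \to 0$ of holonomic $\widehat{\Wh}$-modules (Proposition \ref{prop:admissible-hh} (ii)).

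By Theorem \ref{prop:Iwanaga}, $\Ext^i_{\widehat{\Wh}}(\cdot, \widehat{\Wh})$ vanishes on holonomic modules for $i \neq n$. Hence the long exact sequence collapses to
\[ 0 \to \Ext^n_{\widehat{\Wh}}(\widehat{N'}, \widehat{\Wh}) \to \Ext^n_{\widehat{\Wh}}(\widehat{N}, \widehat{\Wh}) \to \Ext^n_{\widehat{\Wh}}(\widehat{N}_1, \widehat{\Wh}) \to 0. \]
The left term is a successive extension of the required type by the induction hypothesis. So it suffices to treat the case where $N$ is a quotient of $\mathcal{G}_\nu$.

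In that case, write $0 \to K \to \mathcal{G}_\nu \to N \to 0$, with $K$ admissible because $\cate{Adm}$ is a Serre subcategory. Completing and applying $\Ext^n$ as above gives
\[ 0 \to \Ext^n_{\widehat{\Wh}}(\widehat{N}, \widehat{\Wh}) \to \Ext^n_{\widehat{\Wh}}(\widehat{\mathcal{G}}_\nu, \widehat{\Wh}) \simeq \widehat{\mathcal{G}}^{\mathrm{r}}_\nu \to \Ext^n_{\widehat{\Wh}}(\widehat{K}, \widehat{\Wh}) \to 0, \]
where the middle isomorphism is Lemma \ref{prop:Ext-HC}. Thus $\Ext^n(\widehat{N})$ is a $\widehat{\Wh}$-submodule of $\widehat{\mathcal{G}}^{\mathrm{r}}_\nu$.

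This is the main obstacle: a submodule of a completion need not be the completion of a submodule. We resolve it by the Noetherian property of $\Wh$ and the flatness of $\Wh \to \widehat{\Wh}$. Consider the map $\Ext^n_{\Wh}(N, \Wh) \to \Ext^n_{\Wh}(\mathcal{G}_\nu, \Wh) \simeq \mathcal{G}^{\mathrm{r}}_\nu$ at the uncompleted level, and let $P$ be its image. Then $P$ is a finitely generated submodule of the admissible right module $\mathcal{G}^{\mathrm{r}}_\nu$, hence admissible. Its kernel $C$ is also admissible, being a submodule of $\Ext^n_{\Wh}(N, \Wh)$, which is finitely generated and locally $\mathcal{Z}(\mathfrak{g})$-finite.

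For this last point, note that $\mathcal{Z}(\mathfrak{g})$ is central and some ideal of finite codimension annihilates $N$. Hence the same ideal annihilates $\Ext^n_{\Wh}(N, \Wh)$, via the action through the first argument, which agrees with the action through $\Wh$ since $\mathcal{Z}(\mathfrak{g})$ is central. This argument in fact shows directly that $\Ext^n_{\Wh}(N, \Wh)$ is admissible whenever $N$ is killed by a finite-codimensional ideal. For general admissible $N$, finite generation produces a finite-codimensional ideal $I$ with $IN = 0$.

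If this centrality argument holds, $\Ext^n_{\Wh}(N, \Wh)$ is itself an admissible right module. Its completion, which equals $\Ext^n_{\Wh}(N, \Wh)^{\wedge}$ by Lemma \ref{prop:Ext-completion}, is then a completion of an admissible module, giving a one-step extension. The induction above then serves only as a fallback, should the naive identification of the $\mathcal{Z}(\mathfrak{g})$-actions require more care with the bimodule structure. The step needing the most care is therefore verifying that the right $\mathcal{Z}(\mathfrak{g})$-action on $\Ext^n_{\Wh}(N,\Wh)$ coincides with the one induced by functoriality from $N$.
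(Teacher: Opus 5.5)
There is a genuine gap: your ``centrality argument'' rests on a false premise, and your fallback also relies on it at its last step. The image of $\mathcal{Z}(\mathfrak{g})$ in $\Wh$ is a maximal commutative subalgebra (the quantum Toda Hamiltonians), not a central one. Already for $G = T$ one has $\Wh = D(T)$ and $\mathcal{Z}(\mathfrak{g}) = \Sym(\mathfrak{t})$, the invariant vector fields, which do not commute with the characters $e^\mu \in \CC[T]$. Consequently admissible modules are only \emph{locally} $\mathcal{Z}(\mathfrak{g})$-finite, and in general no ideal of finite codimension kills them. For $G = T$, the module $\mathcal{G}_\nu = D(T)/D(T)\mathfrak{m}_\nu$ has eigenvalues filling a whole coset of $\mathbf{X}^*(T)$ in $\mathfrak{t}^*$. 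This is why Lemma \ref{prop:admissible-generation} needs Kostant's theorem, and why eigenvalues shift by $\mathbf{X}^*(T)$ in Subsection \ref{subsec:inf-character}. Nor is there any action of $\mathcal{Z}(\mathfrak{g})$ on $\Ext^n_{\Wh}(N,\Wh)$ ``through the first argument'', since $\mathcal{Z}(\mathfrak{g})$ does not act on $N$ by $\Wh$-linear endomorphisms. So neither the admissibility of $\Ext^n_{\Wh}(N,\Wh)$ nor that of your kernel $C$ is justified. This is precisely why the statement is formulated only up to completion and extensions.

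Your fallback is one observation away from a complete proof, and that completed version is exactly the paper's argument. You never need $C$ to be admissible, only $\widehat{C} = 0$. Completion is exact on finitely generated modules, so the completion of $\Ext^n_{\Wh}(N,\Wh) \to \mathcal{G}^{\mathrm{r}}_\nu$ has kernel $\widehat{C}$ and image $\widehat{P} \subset \widehat{\mathcal{G}}^{\mathrm{r}}_\nu$. By naturality of the flat base change isomorphism of Lemma \ref{prop:Ext-completion}, this completed map is identified with $\Ext^n_{\widehat{\Wh}}(\widehat{N},\widehat{\Wh}) \to \Ext^n_{\widehat{\Wh}}(\widehat{\mathcal{G}}_\nu,\widehat{\Wh})$. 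You have already shown that map is injective, using Theorem \ref{prop:Iwanaga} (the vanishing of $\Ext^{n-1}_{\widehat{\Wh}}(\widehat{K},\widehat{\Wh})$ for the holonomic or zero module $\widehat{K}$). Hence $\widehat{C} = 0$ and $\Ext^n_{\Wh}(N,\Wh)^{\wedge} \simeq \widehat{P}$ with $P$ an admissible right $\Wh$-module. Combine this with your induction along the filtration of Remark \ref{rem:HC-admissible}, which uses the exactness of $\Ext^n_{\widehat{\Wh}}((\cdot)^{\wedge},\widehat{\Wh})$ on admissible modules. That proves the proposition, and you should drop the centrality claim entirely.
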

\begin{proof}
	For all finitely generated $\Wh$-module $N'$, set $E(N') := \Ext_{\Wh}^n(N', \Wh)$ and $\widehat{E}(N') := E(N')^{\wedge}$. Theorem \ref{prop:Iwanaga} and Lemma \ref{prop:Ext-completion} imply the exactness of $\widehat{E}: \Wh\dcate{Mod}_{\mathrm{fg}}^{\mathrm{opp}} \to \cated{Mod}\widehat{\Wh}_{\mathrm{fg}}$.
	
	Now apply Remark \ref{rem:HC-admissible} to $N$ to obtain a filtration
	\[ \{0\} = N_0 \subset \cdots \subset N_n = N \]
	together with surjections $\mathcal{G}_{\nu_i} \twoheadrightarrow N_i/N_{i-1}$ for all $i$. Utilizing the exactness of $\widehat{E}$, we infer that $\widehat{E}(N)$ admits a filtration of length $n$, whose subquotients come with embeddings
	\[ \left[ \widehat{E}(N_i/N_{i-1}) \hookrightarrow \widehat{\mathcal{G}}_{\nu_i}^{\mathrm{r}} \right] = \left[ E(N_i/N_{i-1}) \to \mathcal{G}_{\nu_i}^{\mathrm{r}} \right]^{\wedge} \]
	by Lemma \ref{prop:Ext-HC}. Hence the subquotients are completions of admissible right $\Wh$-modules.
\end{proof}

\section{The case with \texorpdfstring{$\widetilde{\mathrm{W}}$}{tilde-W}-regular infinitesimal characters}\label{sec:regular-inf-char}
\subsection{Localizations of nil-DAHA}
In this subsection, $G$ is simply connected and equipped with a Borel pair $(B, T)$. We make use of the formalism of degenerate nil-DAHA in Subsection \ref{subsec:DAHA}.

Define $\widetilde{\mathrm{W}} = \mathrm{W} \ltimes \mathbf{X}^*(T)$ as in Subsection \ref{subsec:inf-character}. Let $D(T)_\hbar$ be the Rees algebra of $D(T)$ relative to the order filtration. By \cite[(7.2.2)]{Gin18} (replacing the $\Hk$ in \textit{loc.\ cit.} by $\mathscr{H}(\mathfrak{t}_{\mathrm{aff}}, \widetilde{\mathrm{W}})$), there is an embedding of $\CC[\hbar]$-algebras
\begin{equation}\label{eqn:WD-embedding-1}
	\mathrm{W} \ltimes D(T)_\hbar \hookrightarrow \mathscr{H}(\mathfrak{t}_{\mathrm{aff}}, \widetilde{\mathrm{W}}).
\end{equation}
Make $\mathrm{W} \ltimes D(T)_\hbar$ into a graded $\CC[\hbar]$-algebra by placing $\mathrm{W}$ at degree zero, then \eqref{eqn:WD-embedding-1} is a graded embedding, $\mathrm{W} \ltimes D(T)_\hbar$ equals the Rees algebra of $\mathrm{W} \ltimes D(T)$, and $\Sym(\mathfrak{t}_{\mathrm{aff}}) = \Sym(\mathfrak{t})[\hbar]$ embeds in $D(T)_\hbar$ via invariant vector fields on $T$.

Let $\Phi(G, T)$ be the set of all roots.

\begin{lemma}\label{prop:R-Ore}
	Let $\mathcal{R}$ be the multiplicative subset of $\Sym(\mathfrak{t}_{\mathrm{aff}}) \subset D(T)_\hbar$ generated by all
	\[ \check{\alpha} - c\hbar, \quad \alpha \in \Phi(G, T), \quad c \in \Z. \]
	Then $\mathcal{R}$ contains no zero-divisors of $D(T)_\hbar$ on either side, and $\mathcal{R}$ is an Ore set in $D(T)_\hbar$ (see Subsection \ref{subsec:Ore-sets}).
\end{lemma}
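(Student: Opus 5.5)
Write $D(T)_\hbar = \CC[T][\hbar] \otimes \Sym(\mathfrak{t})[\hbar]$ as a vector space. Here $\CC[T] = \bigoplus_{\mu \in \mathbf{X}^*(T)} \CC e^\mu$, $\Sym(\mathfrak{t}_{\mathrm{aff}}) = \Sym(\mathfrak{t})[\hbar]$ is the commutative subalgebra of invariant differential operators, and the commutation rule is $\xi e^\mu = e^\mu(\xi + \lrangle{\mu, \xi}\hbar)$ for $\xi \in \mathfrak{t}$. This holds up to a sign convention; only its shape matters below. So conjugation by $e^\mu$ acts on $\Sym(\mathfrak{t}_{\mathrm{aff}})$ by the affine shift $\sigma_\mu: p(\xi, \hbar) \mapsto p(\xi + \lrangle{\mu, \xi}\hbar, \hbar)$, and $p e^\mu = e^\mu \sigma_\mu(p)$.

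\textbf{No zero-divisors.} $D(T)_\hbar$ is a domain: it is the Rees algebra of the domain $D(T)$, whose associated graded $\CC[\mathrm{T}^*T]$ is a domain, so it embeds into $D(T)[\hbar]$. Every element of $\mathcal{R}$ is nonzero because it is a product of nonzero linear forms $\check{\alpha} - c\hbar$. Hence elements of $\mathcal{R}$ are not zero-divisors on either side.

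\textbf{Stability under shifts.} The key observation is that $\mathcal{R}$ is stable under every $\sigma_\mu$ and $\sigma_\mu^{-1} = \sigma_{-\mu}$. Indeed,
\[ \sigma_\mu(\check{\alpha} - c\hbar) = \check{\alpha} + (\lrangle{\mu, \check{\alpha}} - c)\hbar, \]
and $\lrangle{\mu, \check{\alpha}} \in \Z$ because $\mu \in \mathbf{X}^*(T)$ and $\check{\alpha}$ is a coroot. So the generator is sent to another generator, and multiplicativity of $\sigma_\mu$ gives $\sigma_\mu(\mathcal{R}) = \mathcal{R}$. This integrality is the only point where the specific choice of $\mathcal{R}$ matters, and it is why $c$ ranges over all of $\Z$.

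\textbf{Ore conditions.} Let $a = \sum_\mu e^\mu p_\mu$ with $p_\mu \in \Sym(\mathfrak{t})[\hbar]$, a finite sum, and let $s \in \mathcal{R}$.
\begin{itemize}
\item Right Ore condition (find $s', a'$ with $as' = sa'$): take $s' := \prod_{\mu} \sigma_\mu(s) \in \mathcal{R}$, the product running over the $\mu$ occurring in $a$. Then $\sigma_\mu(s)$ divides $s'$ in the commutative ring $\Sym(\mathfrak{t}_{\mathrm{aff}})$, say $s' = \sigma_\mu(s) q_\mu$. Hence
\[ e^\mu p_\mu s' = e^\mu \sigma_\mu(s) q_\mu p_\mu = s\, e^\mu q_\mu p_\mu, \]
using $e^\mu \sigma_\mu(s) = s e^\mu$. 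Summing over $\mu$ gives $a s' = s a'$ with $a' = \sum_\mu e^\mu q_\mu p_\mu$.
\item Left Ore condition: write $a = \sum_\mu p'_\mu e^\mu$ instead and argue symmetrically with $s' = \prod_\mu \sigma_\mu^{-1}(s)$. Alternatively, apply the anti-involution of $D(T)_\hbar$ fixing $\CC[T]$ and sending $\xi \mapsto -\xi$; it preserves $\mathcal{R}$ up to the sign of generators, since $-\check{\alpha} - c\hbar = -(\check{\alpha} + c\hbar)$.
\end{itemize}

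The only delicate step is checking the shift formula and its integrality carefully. Once that is in place, the Ore conditions follow from a routine computation.
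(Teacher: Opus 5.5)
Your proof is correct and follows the paper's route. The paper likewise gets the zero-divisor claim from $D(T)$ being a domain (via Proposition \ref{prop:Ore-Rees}), and for the Ore conditions it notes that $\mathcal{R}$ commutes with $\Sym(\mathfrak{t}_{\mathrm{aff}})$, reduces to the $e^\mu$, and cites Ginzburg's commutation formula \cite[(7.2.1)]{Gin18}; your shift identity $p e^\mu = e^\mu \sigma_\mu(p)$, the integrality $\lrangle{\mu, \check{\alpha}} \in \Z$ giving $\sigma_\mu(\mathcal{R}) = \mathcal{R}$, and the choice $s' = \prod_\mu \sigma_\mu(s)$ simply write out explicitly what the paper leaves to that citation.
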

\begin{proof}
	To show $\mathcal{R}$ contains no zero-divisors on either side, note that $D(T)$ has no zero-divisors on either side except zero, and then apply the first part of Proposition \ref{prop:Ore-Rees}.
	
	Next, $\mathcal{R}$ commutes with $\Sym(\mathfrak{t}_{\mathrm{aff}})$, hence it remains to check the Ore conditions relative to elements $\mu \in \mathbf{X}^*(T)$. This is settled by \cite[(7.2.1)]{Gin18} by taking $\xi = \check{\alpha}$ in \text{loc.\ cit.}
\end{proof}

In \textit{loc.\ cit.}, the Ore condition is established for the larger subset $\mathcal{S} := \Sym(\mathfrak{t}_{\mathrm{aff}}) \smallsetminus \{0\}$.

\begin{remark}
	The generators of $\mathcal{R}$ are all homogeneous of degree $1$. This makes the Ore localization $\mathcal{R}^{-1} D(T)_\hbar$ of $D(T)_\hbar$ (see Proposition \ref{prop:Ore-Rees}) into a $\Z$-graded $\CC[\hbar]$-algebra. Moreover, $\mathrm{W}$ acts on $\mathcal{R}^{-1} D(T)_\hbar$.
\end{remark}

\begin{lemma}\label{prop:WD-embeddings}
	There is an embedding $\mathscr{H}(\mathfrak{t}_{\mathrm{aff}}, \widetilde{\mathrm{W}}) \hookrightarrow \mathrm{W} \ltimes \mathcal{R}^{-1} D(T)_\hbar$ of graded $\CC[\hbar]$-algebras, whose composition with \eqref{eqn:WD-embedding-1} is the natural homomorphism $\mathrm{W} \ltimes D(T)_\hbar \to \mathrm{W} \ltimes \mathcal{R}^{-1} D(T)_\hbar$.
\end{lemma}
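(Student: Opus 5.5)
We will construct the embedding by working inside the larger algebra $\mathrm{W}_{\mathrm{aff}} \rtimes \CC(\mathfrak{t}_{\mathrm{aff}}^*)$, or rather its $\widetilde{\mathrm{W}}$-version, and check that the generators of $\mathscr{H}(\mathfrak{t}_{\mathrm{aff}}, \widetilde{\mathrm{W}})$ already land in $\mathrm{W} \ltimes \mathcal{R}^{-1} D(T)_\hbar$. First we realize both sides in a common ambient algebra. Since $\mathcal{R} \subset \Sym(\mathfrak{t}_{\mathrm{aff}}) \smallsetminus \{0\}$, and the latter is an Ore set in $D(T)_\hbar$ by \cite[(7.2.1)]{Gin18}, we get injections
\[ \mathrm{W} \ltimes D(T)_\hbar \hookrightarrow \mathrm{W} \ltimes \mathcal{R}^{-1} D(T)_\hbar \hookrightarrow \mathrm{W} \ltimes \left(\Sym(\mathfrak{t}_{\mathrm{aff}}) \smallsetminus \{0\}\right)^{-1} D(T)_\hbar. \]
These are injective by Lemma \ref{prop:R-Ore} and Remark \ref{rem:Ore-zero-divisor}. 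Moreover $(\Sym(\mathfrak{t}_{\mathrm{aff}})\smallsetminus\{0\})^{-1} D(T)_\hbar \simeq \mathbf{X}^*(T) \ltimes \CC(\mathfrak{t}_{\mathrm{aff}}^*)$, where $e^\mu$ acts on $\CC(\mathfrak{t}^*_{\mathrm{aff}})$ by the translation $\check{\xi} \mapsto \check{\xi} - \lrangle{\mu,\check{\xi}}\hbar$. So the outer algebra is $\widetilde{\mathrm{W}} \ltimes \CC(\mathfrak{t}_{\mathrm{aff}}^*)$, which is exactly where $\mathscr{H}(\mathfrak{t}_{\mathrm{aff}}, \widetilde{\mathrm{W}})$ lives by definition. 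Under these identifications, the embedding \eqref{eqn:WD-embedding-1} of \cite[(7.2.2)]{Gin18} should be the restriction of this identity. We will verify this on generators: $\Sym(\mathfrak{t}_{\mathrm{aff}})$, the group $\mathrm{W}$, and the $e^\mu$.

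Granting this, it suffices to show that $\mathscr{H}(\mathfrak{t}_{\mathrm{aff}}, \widetilde{\mathrm{W}})$, viewed inside $\widetilde{\mathrm{W}} \ltimes \CC(\mathfrak{t}_{\mathrm{aff}}^*)$, lies in the subalgebra $\mathrm{W} \ltimes \mathcal{R}^{-1}D(T)_\hbar$. Injectivity is then automatic, gradedness follows because all generators are homogeneous, and compatibility with \eqref{eqn:WD-embedding-1} holds by construction. The algebra $\mathscr{H}(\mathfrak{t}_{\mathrm{aff}}, \widetilde{\mathrm{W}})$ is generated by:
\begin{itemize}
\item $\Sym(\mathfrak{t}_{\mathrm{aff}})$, which lies in $D(T)_\hbar$;
\item the elements $e^\mu$ for $\mu \in \mathbf{X}^*(T)$, which lie in $D(T)_\hbar$;
\item the Demazure elements $\theta_\alpha = (s_\alpha - 1)\check{\alpha}^{-1}$ for $\alpha \in \Sigma_{\mathrm{aff}}$.
\end{itemize}
For a finite simple root $\alpha$, $s_\alpha \in \mathrm{W}$ and $\check{\alpha} \in \mathcal{R}$, so $\theta_\alpha \in \mathrm{W} \ltimes \mathcal{R}^{-1}D(T)_\hbar$.

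The main obstacle is the affine simple root $\alpha_0 = \delta - \theta$, where $\theta$ is the highest root. Here $\check{\alpha}_0 = \hbar - \check{\theta}$ up to the normalization of $\hbar$ as the minimal imaginary coroot, which is an element of $\mathcal{R}$ up to sign. The reflection is $s_{\alpha_0} = e^{\theta} s_\theta$, or $s_\theta e^{\pm\theta}$ depending on conventions; this is the usual factorization of the affine reflection into a translation by the coroot-dual and a finite reflection. Since $\theta \in Q \subset \mathbf{X}^*(T)$ and $s_\theta \in \mathrm{W}$, we have $s_{\alpha_0} \in \mathrm{W} \ltimes D(T)_\hbar$, and hence $\theta_{\alpha_0} \in \mathrm{W}\ltimes \mathcal{R}^{-1}D(T)_\hbar$.

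The care needed lies in matching conventions. We must check that the translation action of $e^\mu$ on $\Sym(\mathfrak{t}_{\mathrm{aff}})$ uses integer multiples of $\hbar$, so that the denominators arising as $\mathrm{W}$- and $\mathbf{X}^*(T)$-conjugates of $\check{\alpha}$ are of the form $\check{\beta} - c\hbar$ with $\beta \in \Phi$ and $c \in \Z$. This is why $\mathcal{R}$ was chosen this way, and why its Ore property in Lemma \ref{prop:R-Ore} makes the products of generators stay in the localization.
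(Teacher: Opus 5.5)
Your proposal is correct and follows essentially the paper's route. The paper likewise adapts Ginzburg's embedding \cite[(7.2.2)]{Gin18} into the localization of $\mathrm{W} \ltimes D(T)_\hbar$ at $\Sym(\mathfrak{t}_{\mathrm{aff}}) \smallsetminus \{0\}$, observes that the Demazure elements only need the denominators $\check{\alpha}$ for $\alpha \in \Sigma_{\mathrm{aff}}$ (all in $\mathcal{R}$ up to sign), and cites \cite[(7.2.5)]{Gin18} for the affine simple reflection, which your factorization $s_{\alpha_0} = e^{\theta} s_\theta$ spells out. Your argument only uses that $\check{\alpha}_0 = \pm(\check{\beta} - c\hbar)$ for some root $\beta$ and some $c \in \Z$, with $s_{\alpha_0} \in \mathrm{W} \ltimes D(T)_\hbar$, so whether $\beta$ is the highest root or the highest short root (a matter of conventions) does not matter.
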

\begin{proof}
	This is similar to \cite[(7.2.2)]{Gin18}, by noting that to define Demazure operators in $\mathscr{H}(\mathfrak{t}_{\mathrm{aff}}, \widetilde{\mathrm{W}})$, one only needs $\check{\alpha}$ in the denominators where $\alpha \in \Sigma_{\mathrm{aff}}$, and these are all captured by $\mathcal{R}$; for a description in the case where $\alpha \in \Sigma_{\mathrm{aff}}$ is the simple affine reflection, we refer to \cite[(7.2.5)]{Gin18}.
\end{proof}

\begin{definition}
	Let $\mathcal{U}_0$ be the multiplicative subset of $\Sym(\mathfrak{t})$ generated by $\check{\alpha} - c$, where $\alpha \in \Phi(G, T)$ and $c \in \Z$. Let $\mathcal{U} := \mathcal{U}_0 \cap \Sym(\mathfrak{t})^{\mathrm{W}}$.
\end{definition}

Since $\mathcal{U}_0$ is obtained from $\mathcal{R}$ by specializing at $\hbar = 1$, it is an Ore set in $D(T)$ by Proposition \ref{prop:Ore-Rees}.

Proposition \ref{prop:Ore-Rees} also implies that all the three graded $\CC[\hbar]$-algebras in Lemma \ref{prop:WD-embeddings} arise as Rees algebras, and specialization at $\hbar = 1$ yields embeddings of filtered algebras
\begin{equation}\label{eqn:WD-embedding-2}
	\mathrm{W} \ltimes D(T) \hookrightarrow \Hk \hookrightarrow  \mathrm{W} \ltimes \mathcal{U}_0^{-1} D(T).
\end{equation}

\begin{proposition}\label{prop:Hsph-sandwich}
	With the notations above, $\mathcal{U}$ is an Ore set in $D(T)^{\mathrm{W}}$, and there are canonical embeddings of filtered $\CC$-algebras
	\begin{equation*}
		D(T)^{\mathrm{W}} \hookrightarrow \Hk^{\mathrm{sph}} \hookrightarrow \mathcal{U}^{-1} D(T)^{\mathrm{W}}.
	\end{equation*}
	Their composition is the natural homomorphism $D(T)^{\mathrm{W}} \to \mathcal{U}^{-1} D(T)^{\mathrm{W}}$.
\end{proposition}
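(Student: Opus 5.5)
The plan is to apply the idempotent $\mathbf{e}$ to the chain of filtered embeddings \eqref{eqn:WD-embedding-2},
\[ \mathrm{W} \ltimes D(T) \hookrightarrow \Hk \hookrightarrow \mathrm{W} \ltimes \mathcal{U}_0^{-1} D(T), \]
and then identify the two outer spherical subalgebras.

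\emph{Step 1: the Ore property.} First I would show that $\mathcal{U}$ is an Ore set in $D(T)^{\mathrm{W}}$. Elements of $\mathcal{U}$ have no zero-divisors, since $D(T)$ is a domain. For the Ore condition, take $a \in D(T)^{\mathrm{W}}$ and $s \in \mathcal{U}$. Since $\mathcal{U}_0$ is Ore in $D(T)$, we can write $as' = s a'$ with $s' \in \mathcal{U}_0$ and $a' \in D(T)$. I would then replace $s'$ by the $\mathrm{W}$-norm $s'' := \prod_{w \in \mathrm{W}} w(s')$. This lies in $\mathcal{U}$, because $\mathcal{U}_0$ is $\mathrm{W}$-stable: $\mathrm{W}$ permutes the roots and fixes the integers $c$. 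It remains to produce $a''$ with $a s'' = s a''$ and then average $a''$ over $\mathrm{W}$.

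Here is the cleaner route I would take. The commutator $[s,\cdot]$ is locally nilpotent on $D(T)$: $s \in \Sym(\mathfrak{t})$, and $\operatorname{ad}(\xi)$ acts on $e^\mu$ by the scalar $\lrangle{\mu,\xi}$ and kills $\Sym(\mathfrak{t})$. More precisely, $f(\xi) e^\mu = e^\mu f(\xi + \lrangle{\mu,\xi})$, and $\mathcal{U}_0$ is stable under the integer shifts $\check\alpha \mapsto \check\alpha + \lrangle{\mu,\check\alpha}$. So for $a = \sum_\mu e^\mu f_\mu$ one can take $s'$ to be a product of shifted copies of $s$, and then $\mathrm{W}$-symmetrize. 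Symmetrization is harmless: if $a, s, s''$ are $\mathrm{W}$-invariant and $as'' = sa''$, then $a''$ is $\mathrm{W}$-invariant, because $s$ is not a zero-divisor. This gives $\mathcal{U}^{-1}D(T)^{\mathrm{W}} \subset \mathcal{U}_0^{-1}D(T)$, and in fact $\mathcal{U}^{-1}D(T)^{\mathrm{W}} = (\mathcal{U}_0^{-1}D(T))^{\mathrm{W}}$: any $x s^{-1}$ with $s \in \mathcal{U}_0$ can be rewritten with the $\mathrm{W}$-invariant denominator $\prod_w w(s)$, and then its numerator is invariant whenever the fraction is.

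\emph{Step 2: spherical subalgebras.} For any $\mathrm{W}$-algebra $A$ over $\CC$, the map $\mathbf{e}(\mathrm{W}\ltimes A)\mathbf{e} \to A^{\mathrm{W}}$, $\mathbf{e}a\mathbf{e} \mapsto$ the average of $a$, is a filtered algebra isomorphism, with inverse $b \mapsto b\mathbf{e}$. This uses $\mathbf{e} b \mathbf{e} = b\mathbf{e}$ for $b \in A^{\mathrm{W}}$ and $\mathbf{e} w = \mathbf{e}$. The isomorphism is compatible with $\mathrm{W}$-equivariant maps. Since \eqref{eqn:WD-embedding-2} consists of algebra maps containing $\mathrm{W}$ (which matches the embedding of $\CC\mathrm{W}$ in $\Hk$ from \cite[Theorem 7.1.4]{Gin18}), they send $\mathbf{e}$ to $\mathbf{e}$. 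Multiplying by $\mathbf{e}$ on both sides therefore yields injective filtered maps
\[ D(T)^{\mathrm{W}} \hookrightarrow \Hk^{\mathrm{sph}} \hookrightarrow (\mathcal{U}_0^{-1}D(T))^{\mathrm{W}} = \mathcal{U}^{-1}D(T)^{\mathrm{W}}. \]
Their composite is the localization map, by Lemma \ref{prop:WD-embeddings} specialized at $\hbar=1$. Filtrations on the localizations are the ones from Proposition \ref{prop:Ore-Rees}, and they are compatible.

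\emph{Main obstacle.} The main obstacle I expect is Step 1, namely getting the Ore condition inside the invariant subring while controlling the denominators. The key point is the stability of $\mathcal{U}_0$ under $\mathrm{W}$ and under the integer shifts coming from conjugation by $e^\mu$. Given that stability, averaging over $\mathrm{W}$ finishes the argument.
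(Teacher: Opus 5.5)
Your proposal is correct and follows essentially the same route as the paper: take the $\mathrm{W}$-norm $\prod_{w} w(s')$ of the denominator supplied by the Ore property of $\mathcal{U}_0$, and deduce invariance of the numerator because $D(T)$ has no zero-divisors. Then prove $(\mathcal{U}_0^{-1}D(T))^{\mathrm{W}} = \mathcal{U}^{-1}D(T)^{\mathrm{W}}$ by the same norm trick and cut \eqref{eqn:WD-embedding-2} down by $\mathbf{e}$.

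The detour in Step 1 is unnecessary, since $as'' = s\,a'\prod_{w\neq 1}w(s')$ follows at once from $as' = sa'$ and the commutativity of $\Sym(\mathfrak{t})$. Your aside that $[s,\cdot]$ is locally nilpotent on $D(T)$ is false, because $\operatorname{ad}\xi$ multiplies $e^\mu$ by $\lrangle{\mu,\xi}$; nothing in your argument depends on it.
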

\begin{proof}
	Let $P \in D(T)^{\mathrm{W}}$ and $u \in \mathcal{U}$ be given. To check the right Ore condition for $\mathcal{U}$, we seek $P' \in D(T)^{\mathrm{W}}$ and $u' \in \mathcal{U}$ such that $Pu' = uP'$.
	
	Note that $\mathcal{U}_0$ is $\mathrm{W}$-stable. By the right Ore condition satisfied by $\mathcal{U}_0$ in $D(T)$, there exists $Q' \in D(T)$ and $v' \in \mathcal{U}_0$ such that $Pv' = uQ'$. Let $v_1 \in \mathcal{U}_0$ be the product of all $w$-translates of $v'$ where $w \in \mathrm{W} \smallsetminus \{1\}$, so that $u' := v' v_1 \in \mathcal{U}$. Then $P u' = u Q' v_1$; the left hand side is $\mathrm{W}$-invariant, hence so is $u Q' v_1$. As $u$ is $\mathrm{W}$-invariant, and nonzero elements of $D(T)$ are never zero-divisors on either side (cf.\ the proof of Proposition \ref{prop:regular-elements}), we infer that $P' := Q' v_1 \in D(T)^{\mathrm{W}}$.
	
	This settles the right Ore condition. The left case is similar.
	
	To obtain the displayed embeddings, multiply \eqref{eqn:WD-embedding-2} by $\mathbf{e}$ on the left and right. The middle term is $\Hk^{\mathrm{sph}}$. The left term is $(\mathbf{e} \ltimes D(T)) \mathbf{e} = \mathbf{e} \ltimes D(T)^{\mathrm{W}}$ since $\mathbf{e}w = \mathbf{e} = w\mathbf{e}$ for all $w \in \mathrm{W}$. Similarly, the right term is $\mathbf{e} \ltimes (\mathcal{U}_0^{-1} D(T))^{\mathrm{W}}$, and we claim that
	\[ (\mathcal{U}_0^{-1} D(T))^{\mathrm{W}} = \mathcal{U}^{-1} D(T)^{\mathrm{W}} \quad \text{inside}\; \mathcal{U}^{-1} D(T). \]
	It suffices to show $\subset$. By the left Ore condition of $\mathcal{U}_0$, the elements of $\mathcal{U}_0^{-1} D(T)$ are expressed formally as fractions $u^{-1} P$. Suppose $u^{-1} P$ is $\mathrm{W}$-invariant. As in the first step of the proof, one can choose $u_1 \in \mathcal{U}_0$ such that $v := u_1 u \in \mathcal{U}$. Therefore $u^{-1} P = v^{-1} u_1 P$, and we see as before that $Q := u_1 P \in D(T)^{\mathrm{W}}$. Hence $u^{-1} P = v^{-1} Q \in \mathcal{U}^{-1} D(T)^{\mathrm{W}}$ and the claim follows.
	
	The desired embeddings follow at once. The assertion about their composition results from Lemma \ref{prop:WD-embeddings}.
\end{proof}

\subsection{Reductive case}\label{subsec:reductive-case}
Hereafter, we consider a connected reductive group $G$ with chosen Borel pair $(B, T)$ where $B = TN$, an adapted principal $\mathfrak{sl}(2)$-triple $(\mathsf{e}, \mathsf{h}, \mathsf{f})$ where $\mathsf{e} \in \mathfrak{n}^-$, and an invariant symmetric form $(\cdot, \cdot)$ on $\mathfrak{g}$ with $\bpsi = (\mathsf{e}, \cdot): \mathfrak{n} \to \CC$. Write $\mathrm{W} = \mathrm{W}(G, T)$.

Let $T_{\mathrm{SC}}$ be the preimage of $T$ in $G_{\mathrm{SC}}$, the simply connected cover of $G_{\mathrm{der}}$. Take the isogeny $G' = G_{\mathrm{SC}} \times A_G \to G$ in \eqref{eqn:G-SES} with kernel $Z$, and set
\[ T' := T_{\mathrm{SC}} \times A_G. \]
Therefore
\begin{gather*}
	D(T') = D(T_{\mathrm{SC}}) \otimes D(A_G), \quad D(T')^{\mathrm{W}} = D(T_{\mathrm{SC}})^{\mathrm{W}} \otimes D(A_G).
\end{gather*}

The $Z$-action on $D(T)$ induced by translations preserves ring structure and commutes with $\mathrm{W}$. Thus
\begin{gather*}
	D(T) \simeq D(T')^{Z\text{-inv}}, \quad D(T)^{\mathrm{W}} \simeq D(T')^{\mathrm{W}, Z\text{-inv}}.
\end{gather*}

Let $\mathcal{U}_0$ (resp.\ $\mathcal{U}$) be the Ore set in $D(T_{\mathrm{SC}})$ (resp.\ $D(T_{\mathrm{SC}})^{\mathrm{W}}$) defined for $G_{\mathrm{SC}}$. Their elements are all $Z$-invariant, hence $\mathcal{U}_0$ (resp.\ $\mathcal{U}$) is also a multiplicative subset of $D(T)$ (resp.\ $D(T)^{\mathrm{W}}$), containing no zero-divisors on either side (cf.\ the proof of Lemma \ref{prop:R-Ore}).

\begin{proposition}\label{prop:W-sandwich}
	With the notations above, $\mathcal{U}$ is an Ore set in $D(T)^{\mathrm{W}}$, and there are canonical embeddings of filtered $\CC$-algebras
	\begin{equation*}
		D(T)^{\mathrm{W}} \hookrightarrow \Wh \hookrightarrow \mathcal{U}^{-1} D(T)^{\mathrm{W}}
	\end{equation*}
	commuting with the homomorphisms from $\Sym(\mathfrak{t})^{\mathrm{W}} \simeq \mathcal{Z}(\mathfrak{g})$ to each term. They compose to the natural homomorphism $D(T)^{\mathrm{W}} \to \mathcal{U}^{-1} D(T)^{\mathrm{W}}$.
\end{proposition}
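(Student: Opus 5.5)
The plan is to reduce to the simply connected case, where Proposition \ref{prop:Hsph-sandwich} combined with Theorem \ref{prop:nil-DAHA} already gives the sandwich, and then descend along the isogeny $G' = G_{\mathrm{SC}} \times A_G \to G$ by taking $Z$-invariants.

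\textbf{Step 1: the group $G'$.} By \eqref{eqn:W-isogeny-prep}, $\Wh_{G'} \simeq \Wh_{G_{\mathrm{SC}}} \otimes D(A_G)$. Theorem \ref{prop:nil-DAHA} gives $\Wh_{G_{\mathrm{SC}}} \simeq \Hk^{\mathrm{sph}}$ as filtered algebras, compatibly with the Harish-Chandra isomorphism $\mathcal{Z}(\mathfrak{g}_{\mathrm{SC}}) \simeq \Sym(\mathfrak{t}_{\mathrm{SC}})^{\mathrm{W}}$. Proposition \ref{prop:Hsph-sandwich} then yields
\[ D(T_{\mathrm{SC}})^{\mathrm{W}} \hookrightarrow \Wh_{G_{\mathrm{SC}}} \hookrightarrow \mathcal{U}^{-1} D(T_{\mathrm{SC}})^{\mathrm{W}}. \]
Tensoring with $D(A_G)$, which commutes with everything and is untouched by $\mathcal{U}$, gives
\[ D(T')^{\mathrm{W}} \hookrightarrow \Wh_{G'} \hookrightarrow \mathcal{U}^{-1} D(T')^{\mathrm{W}}. \]
Here $\mathcal{U}^{-1}$ commutes with $\otimes D(A_G)$, since localizing at a central-in-the-second-factor Ore set commutes with tensoring. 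Filtrations are respected because the tensor product filtrations match, with the order filtration on $D(A_G)$. Compatibility with $\Sym(\mathfrak{t}')^{\mathrm{W}} = \Sym(\mathfrak{t}_{\mathrm{SC}})^{\mathrm{W}} \otimes \Sym(\mathfrak{a}_G)$ is inherited.

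\textbf{Step 2: $Z$-equivariance.} I must check that both embeddings intertwine the $Z$-actions. On $\Wh_{G'}$ the action comes from translation by $Z$ on $G'$; on $D(T')$ it comes from translation by $Z \subset T'$. This is the main obstacle, since it requires unwinding Ginzburg's construction. On the nil-DAHA side, translation by $z \in Z \subset T'$ acts trivially on $\Sym(\mathfrak{t}_{\mathrm{aff}})$ and on $\mathrm{W}$, and acts on $e^\mu$ by the scalar $\mu(z)$. I would then check that the isomorphism of Theorem \ref{prop:nil-DAHA} sends the $\mathbf{X}^*(T)$-graded pieces to $Z$-eigenspaces by the same characters. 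This holds because the $e^\mu$ correspond to the bi-invariant functions and their relatives (cf.\ \eqref{eqn:bi-invariant-sum}), on which $Z$ acts through the central character of $V_\lambda$. The Demazure elements are built from $e^\mu$ and elements of $\Sym$, so $Z$-equivariance follows generator by generator. Alternatively, one can characterize the $Z$-action on both sides as the grading by $\mathbf{X}^*(T')/\mathbf{X}^*(T)$-cosets, i.e.\ the dual of $Z$, and check that the embeddings preserve this grading.

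\textbf{Step 3: descent.} Take $Z$-invariants. This gives $D(T)^{\mathrm{W}} = D(T')^{\mathrm{W},Z\text{-inv}}$ and, by \eqref{eqn:W-isogeny}, $\Wh_G = \Wh_{G'}^{Z\text{-inv}}$. Every element of $\mathcal{U}$ is $Z$-invariant, so $(\mathcal{U}^{-1} D(T')^{\mathrm{W}})^{Z\text{-inv}} = \mathcal{U}^{-1}(D(T')^{\mathrm{W},Z\text{-inv}})$: writing $u^{-1}P$ with $u$ invariant, the fraction is invariant iff $P$ is. The Ore property of $\mathcal{U}$ in $D(T)^{\mathrm{W}}$ follows by the averaging argument of Proposition \ref{prop:Hsph-sandwich}. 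Given $Pu' = uP'$ in $D(T')^{\mathrm{W}}$ with $P$ and $u$ $Z$-invariant, apply the Reynolds projector for the finite group $Z$ to $P'$; since $u'$ is invariant, this preserves the equation. Filtrations and the maps from $\mathcal{Z}(\mathfrak{g}) = \mathcal{Z}(\mathfrak{g}')$ pass to invariants. Finally, the composite equals the natural map because this holds for $G'$ by Proposition \ref{prop:Hsph-sandwich}.
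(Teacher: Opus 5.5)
Your proposal is correct and follows the paper's proof essentially step for step: first the case of $G_{\mathrm{SC}}$ via $\Wh_{G_{\mathrm{SC}}} \simeq \Hk^{\mathrm{sph}}$ and Proposition~\ref{prop:Hsph-sandwich}, then $(\cdot) \otimes D(A_G)$ for $G'$, then passing to $Z$-invariants, with the Ore conditions for $\mathcal{U}$ in $D(T)^{\mathrm{W}}$ obtained by extracting $Z$-invariant parts. The paper uses the $Z$-equivariance of the embeddings (your Step 2) only tacitly, so it is good that you flag it; note, however, that your justification there is still a sketch rather than a proof, since it amounts to checking that Ginzburg's isomorphism matches the $\mathbf{X}^*(T_{\mathrm{SC}})/Q$-grading of the nil-DAHA with the $Z$-eigenspaces of $\Wh_{G_{\mathrm{SC}}}$.
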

\begin{proof}
	As $\mathcal{U}_0$ (resp.\ $\mathcal{U}$) is an Ore set in $D(T_{\mathrm{SC}})$ (resp.\ $D(T_{\mathrm{SC}})^{\mathrm{W}}$), so is it inside $D(T')$ (resp.\ $D(T')^{\mathrm{W}}$). Since the $Z$-action preserves ring structure, the left and right Ore conditions of $\mathcal{U}_0$ (resp.\ $\mathcal{U}$) in $D(T)$ (resp.\ $D(T)^{\mathrm{W}}$) follow from those in $D(T')$ (resp.\ $D(T')^{\mathrm{W}}$) by extracting $Z$-invariant parts.
	
	Identify $\Wh_{G_{\mathrm{SC}}}$ with $\Hk^{\mathrm{sph}}$. Then the embeddings of filtered algebras for the $G_{\mathrm{SC}}$ case reduces to Proposition \ref{prop:Hsph-sandwich}; commutation with homomorphisms from $\Sym(\mathfrak{t}_{\mathrm{SC}})^{\mathrm{W}}$ is immediate. The case for $G'$ follows by $(\cdot) \otimes D(A_G)$, since localization with respect to $\mathcal{U}$ has no effect on the factor $D(A_G)$. Finally, the case for $G$ follows by taking $Z$-invariants.
\end{proof}

\begin{proposition}\label{prop:DT-Morita}
	Let $(D(T), \mathrm{W})\dcate{Mod}$ be the category of $\mathrm{W}$-equivariant $D(T)$-modules. There is an equivalence
	\[ D(T) \dotimes{D(T)^{\mathrm{W}}} (\cdot): D(T)^{\mathrm{W}}\dcate{Mod} \rightiso (D(T), \mathrm{W})\dcate{Mod}, \]
	where $\mathrm{W}$ acts on the tensor slot $D(T)$.
\end{proposition}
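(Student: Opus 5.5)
The plan is to realize this as the standard Morita equivalence for a finite group acting on a simple ring. Put $A := D(T)$ and let $\mathrm{W}$ act on $A$ by automorphisms. A $\mathrm{W}$-equivariant $A$-module is the same thing as a module over the smash product $\mathrm{W} \ltimes A$, so the target category is $(\mathrm{W} \ltimes A)\dcate{Mod}$. The idempotent $\mathbf{e} = \frac{1}{|\mathrm{W}|}\sum_{w} w$ gives $\mathbf{e}(\mathrm{W}\ltimes A)\mathbf{e} \simeq A^{\mathrm{W}}$, via $a \mapsto \mathbf{e}a\mathbf{e} = \mathbf{e} a = a\mathbf{e}$ for $a \in A^{\mathrm{W}}$; this is exactly the manipulation already used in the proof of Proposition \ref{prop:Hsph-sandwich}. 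Also $(\mathrm{W}\ltimes A)\mathbf{e} \simeq A$ as a $(\mathrm{W}\ltimes A, A^{\mathrm{W}})$-bimodule, via $a\mathbf{e} \mapsto a$. Under these identifications, the functor $(\mathrm{W}\ltimes A)\mathbf{e} \otimes_{\mathbf{e}(\mathrm{W}\ltimes A)\mathbf{e}} (\cdot)$ becomes $D(T)\otimes_{D(T)^{\mathrm{W}}}(\cdot)$, with $\mathrm{W}$ acting on the first slot. Its quasi-inverse is $M \mapsto \mathbf{e}M = M^{\mathrm{W}}$.

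By the general idempotent Morita criterion, this functor is an equivalence if and only if $(\mathrm{W}\ltimes A)\mathbf{e}(\mathrm{W}\ltimes A) = \mathrm{W}\ltimes A$. So the whole proof reduces to showing that the two-sided ideal generated by $\mathbf{e}$ is everything.

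For this I would use the simplicity of $\mathrm{W}\ltimes A$. The steps are as follows.
\begin{enumerate}[(i)]
\item $D(T)$ is a simple ring. Indeed $D(T) \simeq \CC[x_1^{\pm1},\dots,x_n^{\pm1}] \otimes \Sym(\mathfrak{t})$, which is a localized Weyl algebra, and localizations of simple Noetherian domains at Ore sets remain simple.
\item $\mathrm{W}$ acts by outer automorphisms. A nontrivial $w$ is not inner: the units of $D(T)$ are $\CC^\times \cdot \mathbf{X}^*(T)$, and conjugation by a character $e^\mu$ fixes $\CC[T]$ pointwise. Since $w \neq 1$ acts nontrivially on $\CC[T]$, it cannot be such a conjugation.
\item For a simple ring with an outer action of a finite group, the skew group ring is simple (a classical theorem of Montgomery, via the lemma that a nonzero ideal meets $A$ nontrivially). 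Hence the nonzero ideal generated by $\mathbf{e}$ is the whole ring.
\end{enumerate}

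The main obstacle is step (ii)–(iii): making the outer-action hypothesis precise and quoting the right simplicity theorem in characteristic zero. If I want to avoid that citation, I would instead argue directly. Any two-sided ideal $I$ of $\mathrm{W}\ltimes A$ satisfies $I \cap A \neq 0$ whenever $I \neq 0$, by a minimal-length argument on $\sum_w a_w w$. To eliminate terms, one commutes with elements of $\CC[T]$ on which $w$ and $1$ act differently, using that $A$ is a domain. Then simplicity of $A$ forces $1 \in I$.
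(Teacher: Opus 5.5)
Your proposal is correct and follows the paper's proof. Both reduce to the Morita equivalence between $D(T)^{\mathrm{W}}$ and $\mathrm{W}\ltimes D(T)$ via the bimodule $D(T)$. Both derive it from the simplicity of $\mathrm{W}\ltimes D(T)$, which in turn comes from the simplicity of $D(T)$ and the fact that $\mathrm{W}\smallsetminus\{1\}$ acts by non-inner automorphisms; the paper simply cites McConnell--Robson for these inputs, while you spell out the idempotent criterion $(\mathrm{W}\ltimes A)\mathbf{e}(\mathrm{W}\ltimes A)=\mathrm{W}\ltimes A$ and check outerness via the units of $D(T)$.
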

\begin{proof}
	Make $D(T)$ into a $\left( \mathrm{W} \ltimes D(T), D(T)^{\mathrm{W}} \right)$-bimodule. Then it induces a Morita equivalence $D(T)^{\mathrm{W}}\dcate{Mod} \rightiso \mathrm{W} \ltimes D(T)\dcate{Mod}$. This is indeed the content of \cite[Chapter 7, Sections 8.3--8.6]{McR01}, the inputs being the simplicities of
	\begin{itemize}
		\item the ring $D(T)$, see \cite[Chapter 15, Theorem 3.8]{McR01};
		\item the smash product $\mathrm{W} \ltimes D(T)$, following from the previous item and \cite[Chapter 7, Proposition 8.12]{McR01} since $\mathrm{W} \smallsetminus \{1\}$ acts on $D(T)$ by non-inner automorphisms.
	\end{itemize}
	
	Finally, there is an evident equivalence $\mathrm{W} \ltimes D(T)\dcate{Mod} \simeq (D(T), \mathrm{W})\dcate{Mod}$ that commutes with forgetful functors to $D(T)\dcate{Mod}$.
\end{proof}

Combining Propositions \ref{prop:W-sandwich} and \ref{prop:DT-Morita}, we obtain functors
\begin{equation}\label{eqn:Morita-T}
	\Wh\dcate{Mod} \to D(T)^{\mathrm{W}}\dcate{Mod} \rightiso (D(T), \mathrm{W})\dcate{Mod}.
\end{equation}

\begin{remark}
	Longergan \cite[(1.2.2) and (1.2.3)]{Lo18} proved that \eqref{eqn:Morita-T} identifies $\Wh\dcate{Mod}$ with a precisely defined full subcategory of $(D(T), \mathrm{W})\dcate{Mod}$. See also \cite[Theorem 1.5.1]{Gin18} for the case of holonomic modules and simply connected $G$. Although these deep results can be used to prove the upcoming results in Subsections \ref{subsec:HC-W-regular} and \ref{subsec:decompletion-W-regular}, we opt for a more direct and elementary approach in what follows.
\end{remark}

\subsection{Non-integral parameters}
Retain the conventions in Subsection \ref{subsec:reductive-case}, and recall that $\widetilde{\mathrm{W}} := \mathrm{W} \ltimes \mathbf{X}^*(T)$ acts on $\mathfrak{t}^*$ by affine transformations.

\begin{definition}\label{def:integral-nu}
	Let $\nu \in \mathfrak{c}$ with a preimage $\dot{\nu} \in \mathfrak{t}^*$. If $\lrangle{\dot{\nu}, \check{\alpha}} \notin \Z$ for all coroots $\check{\alpha}$ for $T_{\mathrm{SC}} \subset G_{\mathrm{SC}}$, identified with their images in $\mathbf{X}_*(T)$, then $\nu$ is said to be \emph{non-integral}.
\end{definition}

The conditions above depend only on the $\widetilde{\mathrm{W}}$-orbit of $\dot{\nu}$, hence the non-integrality depends only on the class $[\nu]$ in the quotient set $\mathfrak{t}^* / \widetilde{\mathrm{W}}$.

\begin{lemma}\label{prop:nonintegral}
	If $[\nu]$ is $\widetilde{\mathrm{W}}$-regular (Definition \ref{def:W-regular}), then it is non-integral.
\end{lemma}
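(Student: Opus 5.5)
We prove the contrapositive: if $\nu$ is integral, i.e.\ $\lrangle{\dot{\nu}, \check{\alpha}} = k \in \Z$ for some coroot $\check{\alpha}$ of $T_{\mathrm{SC}} \subset G_{\mathrm{SC}}$, we produce a nontrivial element of $\widetilde{\mathrm{W}}$ fixing $\dot{\nu}$. By Lemma \ref{prop:affine-regular}, it suffices to exhibit $w \in \mathrm{W} \smallsetminus \{1\}$ with $w\dot{\nu} - \dot{\nu} \in \mathbf{X}^*(T)$. We take $w = s_\alpha$, the reflection attached to the root $\alpha$ whose coroot is $\check{\alpha}$; it is a nontrivial element of $\mathrm{W}(G,T) = \mathrm{W}(G_{\mathrm{SC}}, T_{\mathrm{SC}})$.

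The key computation is the reflection formula
\[ s_\alpha \dot{\nu} - \dot{\nu} = -\lrangle{\dot{\nu}, \check{\alpha}}\, \alpha = -k\alpha, \]
valid for the linear $\mathrm{W}$-action on $\mathfrak{t}^*$. Since $\alpha \in \Phi(G, T) \subset \mathbf{X}^*(T)$, the difference $-k\alpha$ lies in $\mathbf{X}^*(T)$. Then $(k\alpha) \rtimes s_\alpha \in \widetilde{\mathrm{W}}$ is a nontrivial element fixing $\dot{\nu}$, contradicting $\widetilde{\mathrm{W}}$-regularity.

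The only point requiring care is bookkeeping. The coroots in Definition \ref{def:integral-nu} are those of $G_{\mathrm{SC}}$, viewed in $\mathbf{X}_*(T)$ via $T_{\mathrm{SC}} \to T$. We must check that these are exactly the coroots $\check{\alpha}$ of $(G,T)$, so that the pairing $\lrangle{\dot{\nu}, \check{\alpha}}$ with $\dot{\nu} \in \mathfrak{t}^*$ is the usual one and the reflection formula above holds in $\mathfrak{t}^*$. This is standard, since the isogeny $G_{\mathrm{SC}} \times A_G \to G$ identifies root data up to the lattices and the roots of $G$ lie in $\mathbf{X}^*(T)$. I expect no genuine obstacle here.
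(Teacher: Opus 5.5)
Your proposal is correct and takes the same approach as the paper: both note that the affine reflection $s_\alpha + c\alpha \in \widetilde{\mathrm{W}}$ fixes $\dot{\nu}$ exactly when $\lrangle{\dot{\nu}, \check{\alpha}} = c$, so an integral pairing gives a nontrivial stabilizer. Your detour through Lemma \ref{prop:affine-regular} is harmless but unnecessary, since you exhibit the stabilizing element $(k\alpha) \rtimes s_\alpha$ directly anyway.
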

\begin{proof}
	Pick any representative $\dot{\nu} \in \mathfrak{t}^*$. Let $\check{\alpha}$ be any coroot for $T_{\mathrm{SC}} \subset G_{\mathrm{SC}}$ and $c \in \Z$. The affine transformation $\tilde{w} = s_\alpha + c\alpha$ on $\mathfrak{t}^*$ lies in $\widetilde{\mathrm{W}}$, and $\tilde{w}\dot{\nu} = \dot{\nu}$ if and only if $\lrangle{\dot{\nu}, \check{\alpha}} = c$.
\end{proof}

In what follows, $\Sym(\mathfrak{t})^{\mathrm{W}}$, $\mathcal{Z}(\mathfrak{g})$ and $\CC[\mathfrak{c}]$ are identified. For every $\nu \in \mathfrak{c}$ (resp.\ $\dot{\nu} \in \mathfrak{t}^*$), denote by $\mathfrak{m}_\nu \subset \Sym(\mathfrak{t})^{\mathrm{W}}$ (resp.\ $\mathfrak{m}_{\dot{\nu}} \subset \Sym(\mathfrak{t})$) the corresponding maximal ideal.

\begin{lemma}\label{prop:DT-eigenvalue}
	Let $N$ be a $D(T)$-module. Suppose that there exist
	\begin{itemize}
		\item a family $(\dot{\nu}_i)_{i \in I}$ of elements of $\mathfrak{t}^*$,
		\item generators $(x_i)_{i \in I}$ of $N$ such that $V_i := \Sym(\mathfrak{t}) x_i$ is finite-dimensional for each $i \in I$; denote the set of $\mathfrak{t}$-eigenvalues for $V_i$ by $E_i \subset \mathfrak{t}^*$.
	\end{itemize}
	Then $N$ is locally finite as a $\mathfrak{t}$-module, and each $\mathfrak{t}$-eigenvalue for $N$ lies in $\mathbf{X}^*(T) + E_i$ for some $i \in I$.
\end{lemma}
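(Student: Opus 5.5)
The plan is to use the structure of $D(T)$ as generated by $\CC[T] = \bigoplus_{\mu \in \mathbf{X}^*(T)} \CC e^\mu$ together with $\Sym(\mathfrak{t})$, the latter acting through invariant vector fields. The key commutation relation is $\xi e^\mu = e^\mu(\xi + \lrangle{\mu, \xi})$ for $\xi \in \mathfrak{t}$ and $\mu \in \mathbf{X}^*(T)$; equivalently, left multiplication by $e^\mu$ shifts $\mathfrak{t}$-eigenvalues by $\mu$. By PBW for $D(T)$ we have $D(T) = \sum_{\mu} e^\mu \Sym(\mathfrak{t})$, so every element of $N$ is a finite sum of elements $e^\mu v$ with $v \in V_i$ for suitable $i$ and $\mu$.

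First, for fixed $i$ and $\mu$, I will show that $e^\mu V_i$ is a finite-dimensional $\mathfrak{t}$-stable subspace. Indeed, for $v \in V_i$ the relation gives $\xi(e^\mu v) = e^\mu((\xi + \lrangle{\mu,\xi})v) \in e^\mu V_i$, since $V_i$ is $\Sym(\mathfrak{t})$-stable. Moreover the linear surjection $V_i \to e^\mu V_i$, $v \mapsto e^\mu v$, intertwines the action of $\xi + \lrangle{\mu, \xi}$ on $V_i$ with that of $\xi$ on $e^\mu V_i$. Triangularizing $V_i$ then shows that the generalized $\mathfrak{t}$-eigenvalues on $e^\mu V_i$ lie in $\mu + E_i$. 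This requires a little care, because the map is only surjective (the image is a quotient $\mathfrak{t}$-module up to the twist), but eigenvalues of a quotient are among those of the source, so the conclusion holds.

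Next, $N = \sum_{i,\mu} e^\mu V_i$, since $N = \sum_i D(T) x_i = \sum_i \sum_\mu e^\mu \Sym(\mathfrak{t}) x_i$. Every element of $N$ therefore lies in a finite sum of finite-dimensional $\mathfrak{t}$-stable subspaces, which gives local finiteness. A finite sum of such subspaces is a quotient of their direct sum, so its eigenvalues are contained in the union $\bigcup (\mu + E_i) \subset \bigcup_i (\mathbf{X}^*(T) + E_i)$. Any $\mathfrak{t}$-eigenvalue of $N$ occurs in some such finite-dimensional subspace, which proves the claim.

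There is no real obstacle here. The only points needing attention are the sign convention in the commutation relation, which is immaterial since $\mathbf{X}^*(T) = -\mathbf{X}^*(T)$, and the fact that the eigenvalue statement is about generalized eigenvalues in quotients. The family $(\dot{\nu}_i)$ plays no role beyond bookkeeping.
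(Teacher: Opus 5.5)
Your proposal is correct and follows essentially the paper's route. The paper realizes $D(T)x_i$ as a $\mathfrak{t}$-equivariant quotient of $D(T)\otimes V_i$, with $\mathfrak{t}$ acting on $D(T)$ by commutators, an action that integrates to $T$; your subspaces $e^\mu V_i$ are the images of its weight-$\mu$ pieces $e^\mu\Sym(\mathfrak{t})\otimes V_i$, so you have made that weight decomposition explicit. A small simplification: $e^\mu$ is a unit in $D(T)$, so $v\mapsto e^\mu v$ is bijective and $e^\mu V_i\simeq V_i\otimes\CC_\mu$ as $\mathfrak{t}$-modules, which makes the remark about quotients unnecessary at that step.
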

\begin{proof}
	Fix $i \in I$. The same arguments as in Lemma \ref{prop:admissible-generation} realizes the $\mathfrak{t}$-module $D(T)x_i$ as a quotient of $D(T) \otimes V_i$, where $\mathfrak{t}$ acts on $D(T)$ by $\xi \cdot P = [\xi, P]$ and $D(T) \otimes V_i$ is the tensor product $\mathfrak{t}$-module. Moreover $D(T)$ upgrades to a $T$-module. We infer that $D(T)x_i$ is locally $\mathfrak{t}$-finite with eigenvalues in $\mathbf{X}^*(T) + E_i$, and conclude by varying $i$.
\end{proof}

Let $\mathcal{U}$ and $\mathcal{U}_0$ be as in Subsection \ref{subsec:reductive-case}.

\begin{lemma}\label{prop:U-invertible}
	Let $L$ be a $D(T)^{\mathrm{W}}$-module, hence also a $\Sym(\mathfrak{t})^{\mathrm{W}}$-module. If there exist
	\begin{itemize}
		\item a family $(\nu_i)_{i \in I}$ of non-integral elements of $\mathfrak{c}$,
		\item generators $(y_i)_{i \in I}$ of $L$ such that each $y_i$ is annihilated by some power of $\mathfrak{m}_{\nu_i}$,
	\end{itemize}
	then $\mathcal{U}$ acts invertibly on $L$.
\end{lemma}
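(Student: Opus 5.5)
To show that $\mathcal{U}$ acts invertibly on $L$, I would pass to the $\mathrm{W}$-equivariant $D(T)$-module $N := D(T) \dotimes{D(T)^{\mathrm{W}}} L$ of Proposition \ref{prop:DT-Morita} and do the computation there, where $\Sym(\mathfrak{t})$ acts and eigenvalues are visible. The Morita equivalence recovers $L$ as $N^{\mathrm{W}}$, via the unit $L \rightiso N^{\mathrm{W}}$. Each $u \in \mathcal{U}$ is $\mathrm{W}$-invariant and lies in $D(T)^{\mathrm{W}}$, so its action on $N$ commutes with $\mathrm{W}$. Hence it suffices to prove that every element of $\mathcal{U}_0 \supset \mathcal{U}$ acts bijectively on $N$; the inverse then automatically commutes with $\mathrm{W}$ and preserves $N^{\mathrm{W}} = L$.

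For bijectivity on $N$, I would apply Lemma \ref{prop:DT-eigenvalue}, using the generators $x_i := 1 \otimes y_i$ of $N$. Since $y_i$ is killed by $\mathfrak{m}_{\nu_i}^k$ for some $k$, the space $\Sym(\mathfrak{t})^{\mathrm{W}} y_i$ is finite-dimensional. As $\Sym(\mathfrak{t})$ is finite over $\Sym(\mathfrak{t})^{\mathrm{W}}$, the space $V_i = \Sym(\mathfrak{t}) x_i$ is also finite-dimensional. Its $\mathfrak{t}$-eigenvalues are points of $\mathfrak{t}^*$ lying over $\nu_i$, so $E_i \subset \mathrm{W}\dot{\nu}_i$. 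Lemma \ref{prop:DT-eigenvalue} then says that $N$ is a locally finite $\mathfrak{t}$-module whose eigenvalues all lie in $\mathbf{X}^*(T) + \mathrm{W}\dot{\nu}_i$ for various $i$. Consequently $N$ is a direct sum of finite-dimensional generalized $\Sym(\mathfrak{t})$-eigenspaces, and on each such summand a generator $\check{\alpha} - c$ of $\mathcal{U}_0$ acts by the scalar $\lrangle{\lambda, \check{\alpha}} - c$ plus a nilpotent operator, where $\lambda$ is the eigenvalue of that summand.

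The key numerical check is that $\lrangle{\mu + w\dot{\nu}_i, \check{\alpha}} \neq c$ for every $\mu \in \mathbf{X}^*(T)$, $w \in \mathrm{W}$, root $\alpha$ and $c \in \Z$. Write $\lrangle{\mu + w\dot{\nu}_i, \check{\alpha}} = \lrangle{\mu, \check{\alpha}} + \lrangle{\dot{\nu}_i, w^{-1}\check{\alpha}}$. The first term is an integer because $\check{\alpha} \in \mathbf{X}_*(T)$. The second is non-integral because $w^{-1}\check{\alpha}$ is again a coroot of $T_{\mathrm{SC}}$ and $\nu_i$ is non-integral (Definition \ref{def:integral-nu}). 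So each generator of $\mathcal{U}_0$ acts invertibly on every generalized eigenspace, hence on $N$, and therefore so does every product of generators.

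The main obstacle is the bookkeeping in the reductive (non-simply connected) setting. One must make sure that the coroots used to define $\mathcal{U}_0$, which come from $T_{\mathrm{SC}}$ and are mapped into $\mathbf{X}_*(T)$, pair integrally with $\mathbf{X}^*(T)$, and that $\mathrm{W}$ permutes these coroots. Both facts are standard. One must also make sure that the Morita unit identifies $L$ with $N^{\mathrm{W}}$ compatibly with the $\mathcal{U}$-action, which follows from the equivalence in Proposition \ref{prop:DT-Morita}.
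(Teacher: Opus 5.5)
Your proposal is correct and follows essentially the paper's route. You pass to $N = D(T)\otimes_{D(T)^{\mathrm{W}}} L$ generated by the $1\otimes y_i$, apply Lemma~\ref{prop:DT-eigenvalue} to get local $\mathfrak{t}$-finiteness with eigenvalues in $\bigcup_i (\mathbf{X}^*(T) + \mathrm{W}\dot{\nu}_i)$, and conclude by non-integrality; your identification $L = N^{\mathrm{W}}$ together with the $\mathrm{W}$-equivariance of $u \in \mathcal{U}$ plays the role of the paper's remark that $D(T)^{\mathrm{W}}$ is a ($\mathrm{W}$-isotypic) direct summand of $D(T)$, and it makes explicit why the inverse of $u$ preserves $L$.

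One harmless imprecision: the generalized eigenspaces of $N$ need not be finite-dimensional (e.g.\ for infinite $I$), but local finiteness suffices, since $\check{\alpha} - c$ acts on each of them as a nonzero scalar plus a locally nilpotent operator, which is still bijective.
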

\begin{proof}
	Observe that every $\mathrm{W}$-isotypic component of $D(T)$ is a left (resp.\ right) $D(T)^{\mathrm{W}}$-module. It follows that $D(T)^{\mathrm{W}}$ is a direct summand of $D(T)$ where $D(T)$ is viewed as a left (resp.\ right) $D(T)^{\mathrm{W}}$-module.
	
	Therefore, $L$ embeds into the $D(T)$-module $N := D(T) \dotimes{D(T)^{\mathrm{W}}} L$, which is generated by all $x_i := 1 \otimes y_i$. It suffices to show that $\mathcal{U}_0$ acts invertibly on $N$.
	
	Fix $i \in I$ and pick any preimage $\dot{\nu}_i \in \mathfrak{t}^*$ of $\nu_i$. Note that $V_i := \Sym(\mathfrak{t}) x_i$ is finite-dimensional with $\mathfrak{t}$-eigenvalues lying in $\mathrm{W} \dot{\nu}_i$. Lemma \ref{prop:DT-eigenvalue} now implies $N$ is locally $\mathfrak{t}$-finite with eigenvalues in $\bigcup_i (\mathbf{X}^*(T) + \mathrm{W} \dot{\nu}_i)$.
	
	Elements in $\mathbf{X}^*(T) + \mathrm{W}\dot{\nu}_i$ pair non-integrally with every coroot for $T_{\mathrm{SC}} \subset G_{\mathrm{SC}}$, hence $\mathcal{U}_0$ acts invertibly on $N$ as desired.
\end{proof}

\begin{proposition}\label{prop:non-integral-isom}
	For non-integral $\nu \in \mathfrak{c}$, there is a homomorphism
	\[ \rho: D(T)^{\mathrm{W}} / D(T)^{\mathrm{W}}\mathfrak{m}_\nu \to \Wh/\Wh \mathfrak{m}_\nu \]
	of $D(T)^{\mathrm{W}}$-modules induced by those in Proposition \ref{prop:W-sandwich}; in fact, $\rho$ is an isomorphism.
\end{proposition}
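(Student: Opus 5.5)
We will use the sandwich $D(T)^{\mathrm{W}} \hookrightarrow \Wh \hookrightarrow \mathcal{U}^{-1} D(T)^{\mathrm{W}}$ of Proposition \ref{prop:W-sandwich}, which is compatible with the maps from $\Sym(\mathfrak{t})^{\mathrm{W}} \simeq \mathcal{Z}(\mathfrak{g})$. Write $A := D(T)^{\mathrm{W}}$, $A' := \mathcal{U}^{-1}A$, and let $\mathfrak{m} := \mathfrak{m}_\nu$. The first inclusion $A \subset \Wh$ carries $A\mathfrak{m}$ into $\Wh\mathfrak{m}$, so it induces a left $A$-linear map $\rho: A/A\mathfrak{m} \to \Wh/\Wh\mathfrak{m}$. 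The second inclusion similarly induces $\sigma: \Wh/\Wh\mathfrak{m} \to A'/A'\mathfrak{m}$. The composite $\sigma\rho$ is the map $A/A\mathfrak{m} \to A'/A'\mathfrak{m}$ induced by localization. The plan is to show that $\sigma\rho$ is an isomorphism and that $\sigma$ is injective; together these give that $\rho$ is bijective.

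\textbf{Step 1: $\sigma\rho$ is an isomorphism.} Since $\mathcal{U}$ is a left Ore set in $A$, localization is flat, and $A'/A'\mathfrak{m} \simeq \mathcal{U}^{-1}A \otimes_A A/A\mathfrak{m}$ as left modules. The left $A$-module $L := A/A\mathfrak{m}$ is generated by $\bar 1$, which is annihilated by $\mathfrak{m}$ acting on the right. Because $\mathfrak{m}$ lies in the central subalgebra $\Sym(\mathfrak{t})^{\mathrm{W}}$ of $A$, $\bar 1$ is also annihilated by $\mathfrak{m}$ acting on the left. Since $\nu$ is non-integral, Lemma \ref{prop:U-invertible} shows that $\mathcal{U}$ acts invertibly on $L$. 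Hence $L \to \mathcal{U}^{-1}L$ is bijective, with injectivity following because $\mathcal{U}$-torsion is killed by invertibility.

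\textbf{Step 2: $\sigma$ is injective.} Here we apply the same idea to $M := \Wh/\Wh\mathfrak{m}$, regarded as an $A$-module by restriction. First we show that $\mathcal{U}$ acts invertibly on $M$. The module $M$ is generated over $\Wh$ by $\bar 1$, but it need not be generated over $A$ by elements killed by powers of $\mathfrak{m}$. Instead we use Lemma \ref{prop:admissible-generation} and the block discussion of Subsection \ref{subsec:inf-character}: $M$ is locally $\mathcal{Z}(\mathfrak{g})$-finite, and every generalized eigenvalue has the form $[\mu] = [\nu]$, so each such $\mu$ is non-integral. Decompose $M$ into generalized $\mathcal{Z}(\mathfrak{g})$-eigenspaces. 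Each eigenvector $y$ is killed by a power of $\mathfrak{m}_\mu$, where $\mu$ is non-integral because non-integrality depends only on $[\mu]$. Taking all such $y$ as generators of $M$ over $A$, Lemma \ref{prop:U-invertible} applies.

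Granting this, $M \simeq \mathcal{U}^{-1}M$. It then remains to identify $\mathcal{U}^{-1}M = \mathcal{U}^{-1}A \otimes_A M$ with $A'/A'\mathfrak{m}$. One checks that $\mathcal{U}$ is Ore in $\Wh$ and that $\mathcal{U}^{-1}\Wh = A'$; the latter holds because $A \subset \Wh \subset A'$. With this, $\mathcal{U}^{-1}M = \mathcal{U}^{-1}\Wh/\mathcal{U}^{-1}\Wh\,\mathfrak{m} = A'/A'\mathfrak{m}$, and $\sigma$ becomes the localization map, which is injective by invertibility. Combined with Step 1, $\rho$ is an isomorphism.

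The main obstacle is the Ore and localization compatibility $\mathcal{U}^{-1}\Wh = A'$. I would check the Ore condition for $\mathcal{U}$ in $\Wh$ using the local nilpotence of $\mathrm{ad}$ of $\Sym(\mathfrak{t})^{\mathrm{W}}$ on the relevant pieces, or via the nil-DAHA embedding of Lemma \ref{prop:WD-embeddings}. Alternatively, Step 2 can avoid this: $\sigma\rho$ bijective already makes $\rho$ injective and $\sigma$ surjective. For surjectivity of $\rho$, note that $\Wh\cdot\bar1$ equals $M$, and every $w \in \Wh$ can be written as $u^{-1}a$ in $A'$. Since $\mathcal{U}$ acts invertibly on $M$, $\bar w = u^{-1}\bar a$ lies in the image of $\rho$, because the image of $\rho$ is an $A$-submodule stable under $\mathcal{U}^{-1}$ by Step 1.
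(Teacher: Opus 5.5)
Your argument is correct, but it reaches surjectivity by a different and heavier route than the paper.

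Your Step~1 matches the paper's injectivity argument. There too, Lemma~\ref{prop:U-invertible} is applied to the cyclic module $D(T)^{\mathrm{W}}/D(T)^{\mathrm{W}}\mathfrak{m}_\nu$ to show that the composite $\eta\rho$ (your $\sigma\rho$) is injective.

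For surjectivity, the paper never looks at the left $\mathcal{Z}(\mathfrak{g})$-action on $\mathcal{G}_\nu$. It writes $w \in \Wh$ as a \emph{right} fraction $w = Pu^{-1}$ with $P \in D(T)^{\mathrm{W}}$ and $u \in \mathcal{U}$. It then observes that $u - c \in \mathfrak{m}_\nu$ for a constant $c$ that is nonzero precisely because $\nu$ is non-integral. From $w(u-c) \in \Wh\mathfrak{m}_\nu$ it concludes $\bar{w} = c^{-1}\overline{P} \in \Image(\rho)$. Since $\mathfrak{m}_\nu$ multiplies from the right, $u$ collapses to a scalar at no cost.

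Your left fractions $w = u^{-1}a$ instead force you to prove that $\mathcal{U}$ acts invertibly on $\mathcal{G}_\nu$ from the left. That needs three inputs: admissibility of $\mathcal{G}_\nu$ (Lemma~\ref{prop:admissible-generation}, hence Kostant's tensor product theorem), the eigenvalue control of Subsection~\ref{subsec:inf-character}, and the $\widetilde{\mathrm{W}}$-invariance of non-integrality. In exchange you get more. The map $\eta$ is an isomorphism too, so $\mathcal{G}_\nu \simeq \mathcal{U}^{-1}D(T)^{\mathrm{W}}/\mathcal{U}^{-1}D(T)^{\mathrm{W}}\mathfrak{m}_\nu$. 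The same reasoning also shows that $\mathcal{U}$ acts invertibly on every admissible module with non-integral infinitesimal character.

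Two of your side remarks are false, though neither is load-bearing.
\begin{itemize}
\item $\Sym(\mathfrak{t})^{\mathrm{W}}$ is not central in $D(T)^{\mathrm{W}}$. For $T = \Gm$ and $\partial = z\,\mathrm{d}/\mathrm{d}z$ one has $[\partial^2, z + z^{-1}] = 2(z - z^{-1})\partial + z + z^{-1} \neq 0$. You only need $\mathfrak{m}_\nu\bar{1} = 0$, which is trivial because $\mathfrak{m}_\nu \subset D(T)^{\mathrm{W}}\mathfrak{m}_\nu$.
\item $\mathrm{ad}$ of elements of $\Sym(\mathfrak{t})^{\mathrm{W}}$ is not locally nilpotent; these are differential operators, not functions. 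For instance, $\mathrm{ad}(\partial^2)^m(z+z^{-1})$ has order exactly $m$. So your suggested route to the Ore condition would fail.
\end{itemize}

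No such check is needed anyway. The Ore property of $\mathcal{U}$ in $\Wh$ is formal from the sandwich: writing $u^{-1}w = au'^{-1}$ in $\mathcal{U}^{-1}D(T)^{\mathrm{W}}$ gives $wu' = ua$ with $a \in D(T)^{\mathrm{W}} \subset \Wh$. Injectivity of $\sigma$ is even more direct. If $\sigma(\bar{w}) = 0$, then $w = u^{-1}Q$ with $u \in \mathcal{U}$ and $Q \in D(T)^{\mathrm{W}}\mathfrak{m}_\nu$. Hence $u\bar{w} = \overline{Q} = 0$ in $\mathcal{G}_\nu$, so $\bar{w} = 0$ by invertibility of $u$.
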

\begin{proof}
	Consider the embeddings of filtered algebras in Proposition \ref{prop:W-sandwich}. They commute with homomorphisms from $\CC[\mathfrak{c}]$, whence the homomorphisms of $D(T)^{\mathrm{W}}$-modules
	\[ D(T)^{\mathrm{W}} \big/ D(T)^{\mathrm{W}} \mathfrak{m}_\nu \xrightarrow{\rho} \Wh/\Wh\mathfrak{m}_\nu \xrightarrow{\eta} \mathcal{U}^{-1} D(T)^{\mathrm{W}} \big/ \mathcal{U}^{-1} D(T)^{\mathrm{W}} \mathfrak{m}_\nu . \]
	
	We contend that $\rho$ is surjective. Given $w \in \Wh$, apply the right Ore condition of $\mathcal{U}$ to get $w = Pu^{-1}$ where $P \in D(T)^{\mathrm{W}}$ and $u \in \mathcal{U} \subset \Sym(\mathfrak{t})^{\mathrm{W}}$. Hence $wu \in \Image[D(T)^{\mathrm{W}} \to \Wh]$.
	
	Express $u$ as a $\mathrm{W}$-invariant product $\prod_{j=1}^m (\check{\alpha}_j - c_j)$ where $\check{\alpha}_j$ are coroots for $G_{\mathrm{SC}}$ and $c_j \in \Z$. Taking a preimage $\dot{\nu} \in \mathfrak{t}^*$ of $\nu$, we see in $\Sym(\mathfrak{t})^{\mathrm{W}}$ that $u$ is congruent modulo $\mathfrak{m}_\nu$ to
	\[ c := \prod_{j=1}^m \left( \lrangle{\dot{\nu}, \check{\alpha}_j} - c_j \right) \in \CC^{\times}. \]
	This implies $u(1 + \Wh\mathfrak{m}_\nu) = c + \Wh\mathfrak{m}_\nu$, and then $w + \Wh\mathfrak{m}_\nu = c^{-1} (wu + \Wh\mathfrak{m}_\nu) \in \Image(\rho)$ as required.
	
	It remains to show $\rho$ is injective. We contend that
	\[ \eta\rho: D(T)^{\mathrm{W}} \big/ D(T)^{\mathrm{W}} \mathfrak{m}_\nu \to \mathcal{U}^{-1} D(T)^{\mathrm{W}} \big/ \mathcal{U}^{-1} D(T)^{\mathrm{W}} \mathfrak{m}_\nu \]
	is injective. If $P + D(T)^{\mathrm{W}}\mathfrak{m}_\nu \in \Ker(\eta\rho)$, then by the construction of Ore localization via left fractions, there exists $u \in \mathcal{U}$ such that $uP \in D(T)^{\mathrm{W}} \mathfrak{m}_\nu$; equivalently, $uP$ annihilates the element $1 + D(T)^{\mathrm{W}} \mathfrak{m}_\nu$ of $D(T)^{\mathrm{W}}/D(T)^{\mathrm{W}}\mathfrak{m}_\nu$.
	
	The cyclic $D(T)^{\mathrm{W}}$-module $D(T)^{\mathrm{W}}/D(T)^{\mathrm{W}}\mathfrak{m}_\nu$ meets the premises of Lemma \ref{prop:U-invertible}, thus $u$ acts invertibly. So $P$ also annihilates $1 + D(T)^{\mathrm{W}} \mathfrak{m}_\nu$; equivalently, $P \in D(T)^{\mathrm{W}} \mathfrak{m}_\nu$ as desired.
\end{proof}

When viewed as a $\Wh$-module, $\Wh/\Wh \mathfrak{m}_\nu$ is the Harish-Chandra module $\mathcal{G}_\nu$ (Definition \ref{def:HC-module}).

\subsection{Harish-Chandra modules with \texorpdfstring{$\widetilde{\mathrm{W}}$}{W}-regular parameters}
\label{subsec:HC-W-regular}
We are going to apply the prior results to study $\mathcal{G}_\nu$ for $\widetilde{\mathrm{W}}$-regular $\nu$ (Definition \ref{def:W-regular}).

\begin{theorem}\label{prop:simplicity}
	For $\widetilde{\mathrm{W}}$-regular $\nu$, the Harish-Chandra $\Wh$-module $\mathcal{G}_\nu$ is simple.
\end{theorem}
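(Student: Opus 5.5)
The plan is to transport the question to $\mathrm{W}$-equivariant $D(T)$-modules and prove simplicity there. By Lemma \ref{prop:nonintegral}, $\nu$ is non-integral, so Proposition \ref{prop:non-integral-isom} identifies $\mathcal{G}_\nu = \Wh/\Wh\mathfrak{m}_\nu$ with $D(T)^{\mathrm{W}}/D(T)^{\mathrm{W}}\mathfrak{m}_\nu$ as $D(T)^{\mathrm{W}}$-modules.

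First, reduce to simplicity over $D(T)^{\mathrm{W}}$. Let $M' \subset \mathcal{G}_\nu$ be a nonzero $\Wh$-submodule. Since $\mathcal{G}_\nu$ is generated by a vector killed by $\mathfrak{m}_\nu$, Kostant's result (as in Subsection \ref{subsec:inf-character}) shows that $\mathcal{Z}(\mathfrak{g})$ acts locally finitely on $M'$. Its generalized eigenvalues lie in $[\nu]$, so they are all non-integral. Lemma \ref{prop:U-invertible} then shows that $\mathcal{U}$ acts invertibly on $M'$. Proposition \ref{prop:W-sandwich} lets us write any $w \in \Wh$ as $Pu^{-1}$ with $P \in D(T)^{\mathrm{W}}$ and $u \in \mathcal{U}$. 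Hence a $D(T)^{\mathrm{W}}$-submodule of $\mathcal{G}_\nu$ on which $\mathcal{U}$ is invertible is automatically $\Wh$-stable. Since $\Wh \supset D(T)^{\mathrm{W}}$, it therefore suffices to prove that $D(T)^{\mathrm{W}}/D(T)^{\mathrm{W}}\mathfrak{m}_\nu$ is a simple $D(T)^{\mathrm{W}}$-module.

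Next, apply the Morita equivalence of Proposition \ref{prop:DT-Morita}. The module above corresponds to
\[ D(T) \dotimes{D(T)^{\mathrm{W}}} D(T)^{\mathrm{W}}/D(T)^{\mathrm{W}}\mathfrak{m}_\nu = D(T)/D(T)\mathfrak{m}_\nu, \]
with its natural $\mathrm{W}$-action, where $\mathfrak{m}_\nu \subset \Sym(\mathfrak{t})^{\mathrm{W}}$. It suffices to show this is simple in $(D(T),\mathrm{W})\dcate{Mod}$. Write $D(T) = \CC[\mathbf{X}^*(T)] \otimes \Sym(\mathfrak{t})$. Then $D(T)/D(T)\mathfrak{m}_\nu \simeq \CC[\mathbf{X}^*(T)] \otimes \Sym(\mathfrak{t})/\Sym(\mathfrak{t})\mathfrak{m}_\nu$. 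The algebra $\Sym(\mathfrak{t})/\mathfrak{m}_\nu\Sym(\mathfrak{t})$ is the coinvariant-type quotient, of dimension $|\mathrm{W}|$. Its generalized $\mathfrak{t}$-eigenvalues are the orbit $\mathrm{W}\dot{\nu}$, and these are pairwise distinct because $\widetilde{\mathrm{W}}$-regularity gives a trivial $\mathrm{W}$-stabilizer. So that quotient is semisimple and equals $\bigoplus_{w} \CC_{w\dot\nu}$.

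Consequently $D(T)/D(T)\mathfrak{m}_\nu$ is a $\mathfrak{t}$-semisimple module with weights $\mathbf{X}^*(T) + \mathrm{W}\dot{\nu}$. Each weight space is one-dimensional, since Lemma \ref{prop:affine-regular} says that $w\dot\nu - w'\dot\nu \notin \mathbf{X}^*(T)$ for $w \neq w'$. The module decomposes as $\bigoplus_{w} D(T)/D(T)\mathfrak{m}_{w\dot\nu}$. Each summand $\CC[\mathbf{X}^*(T)]\cdot v_{w\dot\nu}$ is a simple $D(T)$-module, because $e^\mu$ acts invertibly and permutes the one-dimensional weight spaces transitively. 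These summands are pairwise non-isomorphic, since their weight supports are disjoint cosets. The group $\mathrm{W}$ permutes them simply transitively. A nonzero $\mathrm{W}$-stable $D(T)$-submodule is a sum of some of these summands, and $\mathrm{W}$-stability forces it to be everything. So the equivariant module is simple, and hence so is $\mathcal{G}_\nu$.

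The main obstacle is the first reduction step. One must check that $\Wh$-submodules correspond to $D(T)^{\mathrm{W}}$-submodules, which relies on the invertibility of $\mathcal{U}$ on every subquotient and on writing elements of $\Wh$ as fractions. One must also verify that the splitting of $\Sym(\mathfrak{t})/\mathfrak{m}_\nu\Sym(\mathfrak{t})$ really needs only $\mathrm{W}$-regularity of $\dot\nu$, which is implied by $\widetilde{\mathrm{W}}$-regularity.
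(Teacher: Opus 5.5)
Your proposal is correct and follows the paper's proof essentially step by step. It uses non-integrality plus Proposition~\ref{prop:non-integral-isom} to pass to $D(T)^{\mathrm{W}}/D(T)^{\mathrm{W}}\mathfrak{m}_\nu$, then the Morita equivalence and the splitting of $\Sym(\mathfrak{t})/\Sym(\mathfrak{t})\mathfrak{m}_\nu$ to get $\bigoplus_{\dot\nu \mapsto \nu} D(T)/D(T)\mathfrak{m}_{\dot\nu}$, and finally disjointness of the weight cosets $\mathbf{X}^*(T) + w\dot\nu$ to conclude simplicity over $\mathrm{W} \ltimes D(T)$.

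The step you flag as the main obstacle is in fact immediate. Since $\rho$ is an isomorphism of $D(T)^{\mathrm{W}}$-modules and $D(T)^{\mathrm{W}} \subset \Wh$, every $\Wh$-submodule of $\mathcal{G}_\nu$ is already a $D(T)^{\mathrm{W}}$-submodule. So your $\mathcal{U}$-invertibility argument is correct but unnecessary, and the paper simply says ``it suffices''.
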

\begin{proof}
	Note that $\nu$ is non-integral by Lemma \ref{prop:nonintegral}, and $\nu \in \mathfrak{c}_{\mathrm{reg}}$. Hence
	\begin{equation}\label{eqn:simplicity-aux0}
		\Sym(\mathfrak{t}) \big/ \Sym(\mathfrak{t})\mathfrak{m}_\nu \simeq \prod_{\dot{\nu} \mapsto \nu} \Sym(\mathfrak{t}) \big/ \Sym(\mathfrak{t}) \mathfrak{m}_{\dot{\nu}}
	\end{equation}
	as $\Sym(\mathfrak{t})$-algebras, where $\dot{\nu} \in \mathfrak{t}^*$ ranges over the preimages of $\nu$. These preimages form a $\mathrm{W}$-torsor in $\mathfrak{t}^*_{\mathrm{reg}}$.
	
	In view of Proposition \ref{prop:non-integral-isom}, it suffices to show $D(T)^{\mathrm{W}} / D(T)^{\mathrm{W}} \mathfrak{m}_\nu$ is simple as a $D(T)^{\mathrm{W}}$-module.	By \eqref{eqn:Morita-T}, this is equivalent to the simplicity of
	\begin{equation*}
		D(T) \dotimes{D(T)^{\mathrm{W}}} \left( D(T)^{\mathrm{W}} \big/ D(T)^{\mathrm{W}} \mathfrak{m}_\nu \right) \simeq D(T) \dotimes{D(T)^{\mathrm{W}}} \left( D(T)^{\mathrm{W}} \dotimes{\Sym(\mathfrak{t})^{\mathrm{W}}} \left(\Sym(\mathfrak{t})^{\mathrm{W}} / \mathfrak{m}_\nu \right) \right)
	\end{equation*}
	as an object of $(D(T), \mathrm{W})\dcate{Mod}$ or $(W \ltimes D(T)) \dcate{Mod}$. Using the commutative diagram of algebras
	\[\begin{tikzcd}
		\Sym(\mathfrak{t})^{\mathrm{W}} \arrow[r] \arrow[d] & \Sym(\mathfrak{t}) \arrow[d] \\
		D(T)^{\mathrm{W}} \arrow[r] & D(T)
	\end{tikzcd}\]
	the aforementioned object is isomorphic to
	\begin{multline*}
	D(T) \dotimes{\Sym(\mathfrak{t})} \left( \Sym(\mathfrak{t}) \dotimes{\Sym(\mathfrak{t})^{\mathrm{W}}} \left( \Sym(\mathfrak{t})^{\mathrm{W}} / \mathfrak{m}_\nu \right)\right) \simeq D(T) \dotimes{\Sym(\mathfrak{t})} \left( \Sym(\mathfrak{t}) \big/ \Sym(\mathfrak{t}) \mathfrak{m}_\nu \right) \\
	\stackrel{\text{\eqref{eqn:simplicity-aux0}}}{\simeq} \bigoplus_{\dot{\nu} \mapsto \nu} \left( D(T) \dotimes{\Sym(\mathfrak{t})} \left( \Sym(\mathfrak{t}) / \mathfrak{m}_{\dot{\nu}} \right)\right) \simeq \bigoplus_{\dot{\nu} \mapsto \nu} D(T) \big/ D(T) \mathfrak{m}_{\dot{\nu}}.
	\end{multline*}
	In the rightmost expression, $\mathrm{W}$ acts on $D(T)$ and permutes the summands.
	
	For every $\dot{\nu} \mapsto \nu$, the $D(T)$-module $D(T) \big/ D(T) \mathfrak{m}_{\dot{\nu}}$ is a flat connection on $T$; it is simple since the corresponding vector bundle is readily seen to be of rank $1$.
	
	Lemma \ref{prop:DT-eigenvalue} implies $D(T) \big/ D(T) \mathfrak{m}_{\dot{\nu}}$ is locally $\mathfrak{t}$-finite with $\mathfrak{t}$-eigenvalues included in $\mathbf{X}^*(T) + \dot{\nu} \subset \mathfrak{t}^*$. Given $w \in \mathrm{W} \smallsetminus \{1\}$, the $\mathfrak{t}$-eigenvalues in $D(T) \big/ D(T) \mathfrak{m}_{\dot{\nu}}$ and $D(T) \big/ D(T) \mathfrak{m}_{w\dot{\nu}}$ are disjoint, since otherwise one would get $w\dot{\nu} - \dot{\nu} \in \mathbf{X}^*(T)$, contradicting the $\widetilde{\mathrm{W}}$-regularity of $\nu$ and Lemma \ref{prop:affine-regular}.
	
	All in all, $D(T) \big/ D(T) \mathfrak{m}_{\dot{\nu}}$ are non-isomorphic simple $D(T)$-modules when $\dot{\nu} \mapsto \nu$ varies. Their direct sum is thus a simple $\mathrm{W} \ltimes D(T)$-module, as asserted.
\end{proof}

\subsection{A result of decompletion}\label{subsec:decompletion-W-regular}
We will use the formalism of Subsection \ref{subsec:inf-character}, especially the decomposition of the category $\cate{Adm} = \bigoplus_{[\nu]} \cate{Adm}_{[\nu]}$ according to infinitesimal characters; see Proposition \ref{prop:Adm-decomp}.

Define the full subcategory
\begin{equation*}
	\cate{Adm}_{\text{reg}} := \bigoplus_{[\nu]: \, \widetilde{\mathrm{W}}\text{-regular}} \cate{Adm}_{[\nu]}
\end{equation*}
of $\cate{Adm}$. It is still a Serre subcategory.

\begin{theorem}\label{prop:decompletion-reg}
	Let $M$ be an object of $\cate{Adm}_{\text{reg}}$, equipped with a good filtration. Then
	\[ \widehat{M} = 0 \iff \gr(M) = 0 \iff M = 0. \]
\end{theorem}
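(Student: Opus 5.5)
The first equivalence $\widehat{M} = 0 \iff \gr(M) = 0$ is Lemma \ref{prop:null-completion}, and $M = 0 \implies \gr(M) = 0$ is trivial. So the content is: if $M \neq 0$ lies in $\cate{Adm}_{\text{reg}}$, then $\widehat{M} \neq 0$. I would argue by induction on the length of $M$, which is finite by Corollary \ref{prop:finite-length}, using that completion is exact (Proposition \ref{prop:completion-tensor} (i)).

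First, by Proposition \ref{prop:Adm-decomp}, $M$ decomposes as a finite direct sum of objects in blocks $\cate{Adm}_{[\nu]}$ with $[\nu]$ $\widetilde{\mathrm{W}}$-regular. Completion commutes with finite direct sums, so we may assume $M \in \cate{Adm}_{[\nu]}$ for a single regular $[\nu]$. Since $M \neq 0$ and completion is exact, it suffices to find a nonzero quotient (or submodule) of $M$ whose completion is nonzero. Exactness gives $\widehat{M} \twoheadrightarrow \widehat{M/M'}$, and likewise $\widehat{M'} \hookrightarrow \widehat{M}$.

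Next, take a simple quotient $S$ of $M$. By Remark \ref{rem:HC-admissible}, $S$ is a quotient of some $\mathcal{G}_\mu$. Indeed, choosing $0 \neq x \in S$ annihilated by some $\mathfrak{m}_\mu$, we get $S = \Wh x$, which is a quotient of $\mathcal{G}_\mu$, and $[\mu] = [\nu]$ because $S \in \cate{Adm}_{[\nu]}$. Thus $\mu$ is $\widetilde{\mathrm{W}}$-regular, so $\mathcal{G}_\mu$ is simple by Theorem \ref{prop:simplicity}. Since $S \neq 0$, the surjection $\mathcal{G}_\mu \twoheadrightarrow S$ is an isomorphism. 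Proposition \ref{prop:Gnu-nonzero} gives $\widehat{\mathcal{G}}_\mu \neq 0$, hence $\widehat{S} \neq 0$. Since $\widehat{M} \twoheadrightarrow \widehat{S}$, we conclude $\widehat{M} \neq 0$.

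No real induction is therefore needed: a single simple quotient suffices. The only step requiring care is showing that every simple object of $\cate{Adm}_{[\nu]}$ is some $\mathcal{G}_\mu$ with $[\mu] = [\nu]$. Once Theorem \ref{prop:simplicity} and Proposition \ref{prop:Gnu-nonzero} are available, this is routine. Along the way one checks that an element annihilated by $\mathfrak{m}_\mu$ exists in $S$, which holds by local $\mathcal{Z}(\mathfrak{g})$-finiteness, and that its eigenvalue lies in the block.
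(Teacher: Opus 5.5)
Your proof is correct and is essentially the paper's argument: both reduce to a nonzero quotient of some $\mathcal{G}_\mu$ with $\mu$ $\widetilde{\mathrm{W}}$-regular, identify that quotient with $\mathcal{G}_\mu$ via Theorem~\ref{prop:simplicity}, and conclude using Proposition~\ref{prop:Gnu-nonzero} and the exactness of completion. The only difference is that the paper runs through the whole filtration of Remark~\ref{rem:HC-admissible}, whereas you observe, rightly, that a single simple quotient already suffices.
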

\begin{proof}
	The first equivalence is just Lemma \ref{prop:null-completion}, so it remains to show $M \neq 0 \implies \widehat{M} \neq 0$. We first address the case when $M$ is a nonzero quotient of some $\mathcal{G}_\nu$. By assumption $\nu$ is $\widetilde{\mathrm{W}}$-regular, hence $M \simeq \mathcal{G}_\nu$ by Theorem \ref{prop:simplicity}. Therefore $\widehat{M} \simeq \widehat{\mathcal{G}}_\nu$, which is nonzero by Proposition \ref{prop:Gnu-nonzero}.
	
	For general $M$, take a filtration $\{0\} = M_0 \subsetneq \cdots \subsetneq M_n = M$ as in Remark \ref{rem:HC-admissible}; every subquotient $M_i/M_{i-1}$ is a nonzero quotient of $\mathcal{G}_{\nu_i}$ for some $\nu_i \in \mathfrak{c}$, for $1 \leq i \leq n$.
	
	Note that $\nu_i$ is $\widetilde{\mathrm{W}}$-regular for all $i$. Hence the previous step implies $(M_i/M_{i-1})^{\wedge} \neq 0$ for all $i$. In view of the exactness of completion, we deduce that $\widehat{M} \neq 0$.
\end{proof}

\begin{theorem}\label{prop:minimal-extension}
	Let $M$ be an object of $\cate{Adm}_{\text{reg}}$, and let $M^\natural$ be the corresponding object of $(D(G), \mathfrak{n}_{\LeRi}^{\bpsi})\dcate{Mod}$ via \eqref{eqn:monodromic-equivalence}. Denote by $j$ the open embedding $Bw_0 B \hookrightarrow G$. Then $j^* M^\natural$ is a flat connection on $Bw_0 B$, and there is a canonical isomorphism of $D(G)$-modules
	\[ M^\natural \rightiso j_{!*} \left( j^* M^\natural \right). \]
\end{theorem}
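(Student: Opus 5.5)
The flat connection part is Proposition \ref{prop:connection-big-cell}, since $M$ is admissible. For the isomorphism I will follow Remark \ref{rem:minimal-extension}. It suffices to show that $M$ has neither nonzero submodules nor nonzero quotients of $\mathcal{S}$-torsion. Then, via \eqref{eqn:monodromic-equivalence} and Lemma \ref{prop:support-torsion}, $M^\natural$ has no nonzero submodules or quotients supported off $Bw_0B$. One point needs a check here: subobjects and quotients of $M^\natural$ in $D(G)\dcate{Mod}$ automatically lie in the Serre subcategory $(D(G), \mathfrak{n}_{\LeRi}^{\bpsi})\dcate{Mod}$, so they correspond to $\Wh$-submodules and quotients of $M$. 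The characterization of minimal extensions then gives the canonical isomorphism $M^\natural \rightiso j_{!*}(j^*M^\natural)$, induced by the adjunction maps $j_! j^* M^\natural \to M^\natural \to j_* j^* M^\natural$, whose image is $j_{!*}$. Holonomicity of $M^\natural$ (Proposition \ref{prop:connection-big-cell}) makes these functors available.

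Next, let $R$ be a submodule or quotient of $M$ that is of $\mathcal{S}$-torsion. Since $\cate{Adm}$ is a Serre subcategory, $R$ is admissible. Because $\cate{Adm}_{\mathrm{reg}}$ is a Serre subcategory of $\cate{Adm}$ (Proposition \ref{prop:Adm-decomp}), $R$ also lies in $\cate{Adm}_{\mathrm{reg}}$. Corollary \ref{prop:torsion-cor} gives $\widehat{R} = 0$, computed with respect to any good filtration on $R$. Theorem \ref{prop:decompletion-reg} then forces $R = 0$.

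The main subtlety I expect is the bookkeeping in the first step rather than any real difficulty. One must confirm that $\mathcal{S}$-torsion of $R$ as a $\Wh$-module matches being supported off $Bw_0B$ for $R^\natural$; this is Lemma \ref{prop:support-torsion}, and $R$ is finitely generated because $\Wh$ is Noetherian. One must also confirm that the isomorphism is canonical. For the latter, I will use that $j_{!*}$ is the image of $j_!j^* \to j_*j^*$ (equivalently, $j^*$ is exact and $j_{!*}$ is the unique extension without subobjects or quotients supported on the complement), and that $j^*M^\natural$ is holonomic, so $j_!$ is defined.
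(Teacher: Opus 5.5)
Your proposal is correct and is essentially the paper's own proof. You reduce via Remark~\ref{rem:minimal-extension} to showing that $M$ has no nonzero submodules or quotients of $\mathcal{S}$-torsion, use that $\cate{Adm}_{\mathrm{reg}}$ is closed under subquotients, and then combine Corollary~\ref{prop:torsion-cor} ($\widehat{R}=0$) with Theorem~\ref{prop:decompletion-reg} ($\widehat{R}=0 \Rightarrow R=0$). Your extra checks make explicit what the paper leaves to the Remark: subobjects and quotients of $M^\natural$ stay in the Serre subcategory of monodromic modules, and the flat connection on $Bw_0B$ comes from Proposition~\ref{prop:connection-big-cell}.
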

\begin{proof}
	In view of Remark \ref{rem:minimal-extension}, it suffices to show that $M$ has neither quotients nor submodules that are non-zero of $\mathcal{S}$-torsion. But $\cate{Adm}_{\text{reg}}$ is closed under subquotients, so combining Corollary \ref{prop:torsion-cor} and Theorem \ref{prop:decompletion-reg} yields the desired result.
\end{proof}

In particular, Theorem \ref{prop:minimal-extension} applies to the Harish-Chandra modules $\mathcal{G}_\nu$ such that $\nu$ is $\widetilde{\mathrm{W}}$-regular.


\printbibliography[heading=bibintoc]

\vspace{1em}
\begin{flushleft} \small
	W.-W. Li: Beijing International Center for Mathematical Research / School of Mathematical Sciences, Peking University. No.\ 5 Yiheyuan Road, 100871 Beijing, People's Republic of China. \\
	E-mail address: \href{mailto:wwli@bicmr.pku.edu.cn}{\texttt{wwli@bicmr.pku.edu.cn}}
\end{flushleft}

\end{document}